\theoremstyle{plain}{
    \newtheorem{theorem}{Theorem}[section]
    \newtheorem{lemma}[theorem]{Lemma}
    \newtheorem{corollary}[theorem]{Corollary}
    \newtheorem{proposition}[theorem]{Proposition}
    
}
\theoremstyle{definition}{
    \newtheorem{definition}[theorem]{Definition}
    \newtheorem{defn-thm}[theorem]{Definition-Theorem}
    \newtheorem{example}[theorem]{Example}
    
}
\theoremstyle{remark}{
    \newtheorem{remark}[theorem]{Remark}

}
\def\rank{\mathrm{rank}}
\def\um{{\bf m}}
\def\uc{{\bf c}}
\def\cF{\mathcal{F}}
\def\cE{\mathcal{E}}
\def\cO{\mathcal{O}}
\def\cG{\mathcal{G}}
\def\cK{\mathcal{K}}
\def\cQ{\mathcal{Q}}
\def\R{\mathbb{R}}
\def\Q{\mathbb{Q}}
\def\Z{\mathbb{Z}}
\def\N{\mathbb{N}}
\def\P{\mathbb{P}}
\def\C{\mathbb{C}}
\def\>{\rangle}
\def\<{\langle}
\def\>{\rangle}
\def\Hom{\mathrm{Hom}}
\def\Spec{\mathrm{Spec}}
\def\codim{\mathrm{codim}}  
\def\Supp{\mathrm{Supp}}
\def\Gr{\mathrm{Gr}}
\def\up{\underline{p}}
\begin{document}

\title[Stable Toric sheaves. I]{Stable toric sheaves. I \\
Chern classes}

\address{Univ Brest, UMR CNRS 6205, Laboratoire de Mathématiques de Bretagne
Atlantique, France}

\author[C. Tipler]{Carl TIPLER}
\email{carl.tipler@univ-brest.fr}

%\date{\today}
%\keywords{Toric variety, equivariant sheaf, (semi)stability}

\begin{abstract}
We study rank $2$ torus-equivariant torsion-free sheaves on the complex projective space. For reflexive sheaves we derive a simple formula for the Chern polynomial, and in the general torsion-free case we introduce an iterative construction method based on elementary injections, allowing us to prescribe Chern classes. This yields infinite families of explicit examples on $\mathbb{P}^4$ and $\mathbb{P}^5$, and establishes existence on $\mathbb{P}^n$ for all $n \geq 3$, with Chern classes satisfying all known constraints arising from locally freeness and indecomposability. We also provide simple obstructions for smoothability.
\end{abstract}

\maketitle

\section{Introduction}

The existence of indecomposable low rank vector bundles on complex projective spaces is a longstanding problem in algebraic geometry \cite{Har79,OSS}. While in low dimension there are many constructions \cite{OSS}, in rank $2$ and for $n\geq 4$, there is an essentially unique one : the Horrocks--Mumford bundle on 
 $\C\P^4$ \cite{HoMu} (by ``essentially unique'' we mean that it comes in moduli, and can be twisted by line bundles or pulled back along finite covers). Given the close relation between  codimension two subvarieties and rank two vector bundles,  Hartshorne conjectured that for $n\geq 7$ all such bundles should split \cite[Conjecture 6.3]{Har74}. He also raised the problems of whether examples might exist on $\C\P^5$  or new ones could be found on $\C\P^4$ (\cite[Problems 1 and 2]{Har79}). While these questions remain open in general, substantial progress has been made in the equivariant setting. Viewing $\C\P^n$ as a toric variety, Kaneyama \cite{Kan} and Klyachko \cite{Kly90} proved that any torus-equivariant vector bundle of rank $r<n$ over $\C\P^n$ splits as a direct sum of line bundles. More recently, Ilten and S\"uss extended this splitting result to bundles equivariant under a torus action of lower rank \cite{IlSu}. In this paper, we propose a new approach to this construction problem, through toric methods.

Our starting point is the observation that the existence of a semistable rank $2$ vector bundle is equivalent to that of a smoothable, torus-equivariant, semistable torsion-free sheaf with the same Chern classes (cf. Section \ref{sec:new approach}, Corollary \ref{cor:existence reduction projective space}). This suggests a strategy for approaching Hartshorne's conjecture via the study of deformations of torus-equivariant sheaves. Conversely, potential counterexamples to the conjecture may also arise as deformations of such sheaves. For this program to succeed, one must begin with a torsion-free sheaf with the desired Chern classes, subject to the same constraints as those of locally free ones. As a first step in this direction, we establish an inductive method for constructing torus-equivariant torsion-free sheaves with prescribed Chern classes.

We will denote by $(\uc_1,\ldots,\uc_n)\in \Z^n$ the Chern classes of a torsion-free sheaf $\cE$ where we used an isomorphism $H^{2k}(\P^n,\Z)\simeq \Z\cdot H^k$, where $H$ is (the Poincar\'e dual of) some hyperplane section, so that
$$
H^{2\bullet}(\P^n,\Z)\simeq \Z[H]/\langle H^{n+1}\rangle
$$
and 
$$
\forall k\in\lbrace 1,\ldots,n\rbrace,\: \uc_k(\cE)=\uc_k\cdot H^k.
$$
As a warm up, we first study torus-equivariant reflexive sheaves, also called {\it toric sheaves}. Up to normalisation obtained by tensoring with a line bundle (see Section \ref{sec:normal forms}), by work of Perling \cite{Per04}, a rank $2$ toric sheaf  is entirely characterised by a couple $(c_\rho,L_\rho)_{\rho\in\Sigma(1)}$, where $\Sigma(1)$ denotes the set of rays of the fan of $\C\P^n$ viewed as a toric variety, $(c_\rho)_{\rho\in\Sigma(1)}\in\N^{n+1}$, and $L_\rho\subset \C^2$ is a line for each $\rho\in\Sigma(1)$. With the help of Perling's resolution \cite{Per03}, and our choice of normalisation, we easily obtain :
\begin{proposition}[Corollary \ref{cor:chern classes normalized}]
 \label{prop:intro chern classes reflexive case}
Let $\cE$ be a normalised rank $2$ toric sheaf defined by $(c_\rho,L_\rho)_{\rho\in\Sigma(1)}$. Then the Chern classes of $\cE$ are given by
\begin{equation}
 \label{eq:intro chern formulae}
\forall k\in\N,\:  0\leq k\leq n,\:\uc_k=\sum_{\sigma\in\Sigma(k)} \prod_{\rho\in\sigma(1)}c_\rho.
\end{equation}
\end{proposition}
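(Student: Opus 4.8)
The plan is to reduce the statement to the single identity
\[
c(\cE)\;=\;\prod_{\rho\in\Sigma(1)}\bigl(1+c_\rho\,H\bigr)
\]
for the total Chern class. Indeed, expanding this product the coefficient of $H^k$ is the $k$-th elementary symmetric function of $(c_\rho)_{\rho\in\Sigma(1)}$, and since for $\P^n$ the cones in $\Sigma(k)$ are precisely the $k$-element subsets of $\Sigma(1)$ (for $0\le k\le n$), this coefficient equals $\sum_{\sigma\in\Sigma(k)}\prod_{\rho\in\sigma(1)}c_\rho$; for $k>n$ both sides vanish. So \eqref{eq:intro chern formulae} is a product formula, and it should fall out of multiplicativity of $c(-)$ in short exact sequences.

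I would prove the displayed identity by induction on $N(\cE):=\#\{\rho\in\Sigma(1):c_\rho>0\}$. If $N(\cE)=0$, the normal form of Section \ref{sec:normal forms} forces $\cE\simeq\cO_{\P^n}^{2}$, so $c(\cE)=1$, the empty product. For the inductive step pick $\rho_0$ with $c_{\rho_0}>0$ and let $\cE'$ be the normalised toric sheaf with the same data $(c_\rho,L_\rho)$ for $\rho\ne\rho_0$ but with $c_{\rho_0}$ replaced by $0$; then $N(\cE')=N(\cE)-1$ and $c(\cE')=\prod_{\rho\ne\rho_0}(1+c_\rho H)$ by induction. Because the filtration of $\cE$ along $\rho_0$ dominates the trivial one while all other filtrations agree, Perling's description yields a canonical inclusion of toric sheaves $\cE'\hookrightarrow\cE$, and the point is to identify the cokernel.

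The crux is the claim
\[
\cE/\cE'\;\simeq\;L_{\rho_0}\otimes\bigl(\cO_{\P^n}(c_{\rho_0}D_{\rho_0})/\cO_{\P^n}\bigr),
\]
where $D_{\rho_0}$ is the invariant divisor attached to $\rho_0$ and the one-dimensional factor $L_{\rho_0}$ is cosmetic. On the charts $U_\sigma$ with $\rho_0\notin\sigma(1)$ the module computing $\cE$ over $\C[\sigma^\vee\cap M]$ does not involve $\rho_0$, so $\cE=\cE'$ there; on a chart with $\rho_0\in\sigma(1)$ one reads off from Perling's resolution that the weight-$m$ piece of the quotient is nonzero exactly for the characters $\chi^m$ with $\langle m,v_{\rho_0}\rangle\in\{-c_{\rho_0},\dots,-1\}$ and $\langle m,v_\rho\rangle\ge 0$ for all $\rho\in\sigma(1)\setminus\{\rho_0\}$ --- precisely the module of $\cO(c_{\rho_0}D_{\rho_0})/\cO$. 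This uses that the normal form puts the $\C^2$-part of every filtration in non-positive degree (so no poles along $D_\rho$ for $\rho\ne\rho_0$ occur) together with the normal-form constraints on the lines $L_\rho$, which ensure that over the deeper orbit closures $V(\sigma)$ with $\rho_0\in\sigma(1)$ no extra torsion survives. Granting this, the sequence $0\to\cE'\to\cE\to L_{\rho_0}\otimes(\cO(c_{\rho_0}D_{\rho_0})/\cO)\to 0$, combined with $0\to\cO\to\cO(c_{\rho_0}D_{\rho_0})\to\cO(c_{\rho_0}D_{\rho_0})/\cO\to 0$ and $[D_{\rho_0}]=H$ in $H^2(\P^n,\Z)$, gives
\[
c(\cE)\;=\;c(\cE')\cdot c\bigl(\cO(c_{\rho_0}D_{\rho_0})\bigr)\;=\;\Bigl(\prod_{\rho\ne\rho_0}(1+c_\rho H)\Bigr)\bigl(1+c_{\rho_0}H\bigr),
\]
completing the induction.

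The real work is the cokernel identification: one must show that deleting one ray contributes only the clean factor $1+c_{\rho_0}H$, i.e. that the interplay between the filtration at $\rho_0$ and the others over the strata $V(\sigma)$ with $\rho_0\in\sigma(1)$ produces nothing beyond $\cO(c_{\rho_0}D_{\rho_0})/\cO$. This is exactly where Perling's resolution \cite{Per03} and the normal form of Section \ref{sec:normal forms} do the work; alternatively one can skip the induction and feed the full Perling resolution of $\cE$ --- whose $k$-th term is a sum over $\sigma\in\Sigma(k)$ of a sheaf supported on $V(\sigma)\simeq\P^{\,n-k}$ of generic rank $\prod_{\rho\in\sigma(1)}c_\rho$ --- into the alternating sum of Chern characters, which reassembles $\prod_{\rho}(1+c_\rho H)$ directly.
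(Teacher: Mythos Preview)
Your approach is genuinely different from the paper's. The paper does not induct ray by ray; instead it writes down in one shot Perling's two--term locally free resolution
\[
0 \to \cO\bigl(-\textstyle\sum_\rho b_\rho D_\rho\bigr)^{\oplus(n-1)} \to \bigoplus_{\rho\in\Sigma(1)}\cO\bigl(-a_\rho D_\rho-\textstyle\sum_{\rho'\neq\rho}b_{\rho'}D_{\rho'}\bigr)\to \cE\to 0,
\]
reads off $\uc(\cE)=(1-bH)^2\prod_\rho\bigl(1+\frac{c_\rho H}{1-bH}\bigr)$, and then sets $b=0$. Your inductive peeling off of one ray at a time is more hands--on and in principle gives a nice geometric picture of each factor $(1+c_\rho H)$, whereas the paper's global resolution delivers the product formula in a single stroke and also yields the non-normalised formula of Proposition~\ref{prop:chern class} along the way.

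There is, however, a real gap in your inductive step. The cokernel identification $\cE/\cE'\simeq L_{\rho_0}\otimes\bigl(\cO(c_{\rho_0}D_{\rho_0})/\cO\bigr)$ is only correct when $L_{\rho_0}$ is distinct from every other $L_\rho$ with $c_\rho>0$. Concretely, on a chart $U_\sigma$ with $\rho_0,\rho\in\sigma(1)$ and a weight $m$ satisfying $-c_{\rho_0}\le\langle m,u_{\rho_0}\rangle<0$ and $-c_\rho\le\langle m,u_\rho\rangle<0$, the piece $\Gamma(U_\sigma,\cE)_m$ equals $L_{\rho_0}\cap L_\rho$, which is $L_{\rho_0}$ rather than $0$ when the two lines coincide; so $\cE/\cE'$ picks up extra weight pieces that $\cO(c_{\rho_0}D_{\rho_0})/\cO$ does not see. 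You attribute the vanishing of this extra torsion to ``the normal-form constraints on the lines $L_\rho$'', but the normalisation of Section~\ref{sec:normal forms} only fixes the integers $a_\rho,b_\rho$ and says nothing about the lines. (Relatedly, your remark that the normal form puts the $\C^2$-part in \emph{non-positive} degree has the sign reversed: with $b_\rho=0$ one has $E^\rho(i)=\C^2$ for $i\ge 0$.) A clean fix is to assume from the outset that the $L_\rho$ with $c_\rho>0$ are pairwise distinct---which is exactly what the paper's applications use (cf.\ \eqref{eq:starting sheaf})---and then your cokernel computation goes through verbatim. Finally, the ``alternative'' you sketch at the end, with a $k$-th term supported on $V(\sigma)$ of generic rank $\prod_{\rho\in\sigma(1)}c_\rho$, is not the resolution the paper actually uses; the paper's resolution has only two locally free terms, not a stratified complex indexed by cones.
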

Here, $\sigma(1)$ is the set of rays of the $k$-dimensional cone $\sigma\in\Sigma(k)$. These expressions are  the elementary symmetric polynomials in the $(c_\rho)_{\rho\in\Sigma(1)}$'s.  From those formulae, we derive a simple proof of the Bogomolov--Gieseker inequality (Corollary \ref{cor:BogGies}), and show that a rank $2$ toric sheaf is locally free if and only if its third Chern class vanishes (Corollary \ref{cor:loc free iff c3 vanishes}). As rank $2$ toric vector bundles split (for $n\geq 3$), they are rigid. Hence we deduce that for $n\geq 3$, a rank $2$ toric sheaf will never deform to a non-split locally free one - a fact already observed by Hartshorne \cite[Introduction]{Har80}.

We then turn to the more general torsion-free torus-equivariant sheaves. Let $\cE$ be such a sheaf and consider the canonical sequence induced by the evaluation map:
$$
0\to\cE\to(\cE^\vee)^\vee\to\cQ\to 0
$$
where $\cQ$ is the quotient sheaf $(\cE^\vee)^\vee/\cE$. In general, it is not so clear how to compute the Chern classes of $\cE$ from those of $(\cE^\vee)^\vee$. To help us do so, we consider a factorization of the canonical injection by simpler injections. We will say that an injection $\cE\to\cF$ between two torsion-free sheaves of the same rank is {\it elementary} if the induced quotient $\cQ=\cF/\cE$ is pure with irreducible support $S$, and restricts to a rank one sheaf on $S$. Our terminology recalls Maruyama's notion of elementary transformations for vector bundles \cite{Maru}, where the support $S$ is a divisor (but $\cQ_{\vert S}$ may not be of rank $1$). In Section \ref{sec:elementary injections}, based on Perling's description of equivariant torsion-free sheaves \cite{Per04}, and Kool's description of pure equivariant sheaves \cite{Koo11}, we obtain the following (Theorem \ref{theo:existence decomposition elementary injections} and Corollary \ref{cor:splitting geometric}) :
\begin{theorem}
 \label{theo:intro factorization}
 Let $X$ be a smooth toric variety, and let $\cE\to\cF$ be an equivariant injection between two torus-equivariant torsion-free sheaves of the same rank. Then $\cE\to \cF$ factorizes through a finite number of elementary injections, whose quotients' supports have increasing dimension.
\end{theorem}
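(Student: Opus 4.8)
The plan is to realise the given injection $\cE\to\cF$ as a composite of equivariant elementary injections by first producing a suitable increasing equivariant filtration of the torsion cokernel $\cQ=\cF/\cE$ and then pulling it back to $\cF$ along the surjection $\cF\to\cQ$. Because $\cE$ and $\cF$ have the same rank, $\cQ$ is a torsion equivariant coherent sheaf, and $\Supp\cQ$ is a finite union of torus-invariant subvarieties of $X$ (the torus is connected, so it fixes each irreducible component of the invariant closed set $\Supp\cQ$, and these components are orbit closures by the structure theory of toric varieties). Once we have a filtration of $\cQ$ whose graded pieces are elementary, its preimage in $\cF$ is a chain of subsheaves of the torsion-free sheaf $\cF$: all of them are torsion-free of the same rank as $\cE$, so each successive inclusion is automatically an equivariant elementary injection.

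I would build the filtration of $\cQ$ in three nested stages. First, take the canonical dimension filtration $0=\cQ_{-1}\subseteq\cQ_0\subseteq\cdots\subseteq\cQ_N=\cQ$, $N=\dim X$, where $\cQ_d$ is the largest subsheaf of $\cQ$ of dimension $\le d$; it is equivariant by canonicity, and $\cQ_d/\cQ_{d-1}$ is zero or pure of dimension exactly $d$ --- this is what will make the dimensions of the successive supports non-decreasing along the final chain. Second, given a pure graded piece $\cG$ of dimension $d$, write $\Supp\cG=S_1\cup\cdots\cup S_m$ as the union of its ($d$-dimensional, torus-invariant) irreducible components, with generic points $\eta_i$. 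Purity forces a section of $\cG$ vanishing at $\eta_i$ to be supported on $\bigcup_{l\ne i}S_l$, so $\cG_i:=\cG/\Gamma_{\bigcup_{l\ne i}S_l}(\cG)$ is an equivariant coherent quotient of $\cG$ that is pure of dimension $d$ with irreducible support $S_i$; the map $\cG\to\bigoplus_i\cG_i$ is then injective (its kernel is supported on the pairwise intersections $S_l\cap S_{l'}$, of dimension $<d$, hence vanishes by purity), and $\cG^{\le k}:=\cG\cap\bigoplus_{i\le k}\cG_i$ yields an equivariant filtration whose $k$-th graded piece embeds into $\cG_k$, hence is pure of dimension $d$ with irreducible support $S_k$.

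Third, let $\cG$ now be pure of dimension $d$ with irreducible torus-invariant support $S$ and rank $r=\dim_{k(S)}\cG_\eta$ on $S$, where $\eta$ is the generic point of $S$. The stalk $\cG_\eta$ is a finite-dimensional $k(S)$-vector space carrying a compatible torus action, hence admits a torus-stable $k(S)$-hyperplane $K$. The canonical map $\cG\hookrightarrow(\iota_\eta)_*\cG_\eta$ (with $\iota_\eta$ the inclusion of the generic point of $S$) is injective by purity, and composing it with $(\iota_\eta)_*(\cG_\eta\twoheadrightarrow\cG_\eta/K)$ gives an equivariant surjection $\cG\twoheadrightarrow\cL$ onto a coherent subsheaf $\cL\subseteq(\iota_\eta)_*(\cG_\eta/K)$, with kernel $\cG'$. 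Since $(\iota_\eta)_*(\cG_\eta/K)$ has $\eta$ as its unique associated point, $\cL$ is pure of dimension $d$, supported on $S$, and of rank one there, while $\cG'$ is pure of dimension $d$, supported on $S$ (when $r\ge 2$), and of rank $r-1$ there; induction on $r$ then produces an equivariant filtration of $\cG$ with elementary graded pieces. Concatenating the three stages gives the filtration of $\cQ$, and pulling it back to $\cF$ delivers the factorisation, with the supports of non-decreasing dimension as in the statement.

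The delicate points are the equivariance and purity assertions used in the second and third stages: that $\Gamma_Z(-)$ and passage to the generic stalk preserve both, and that each graded piece has support of exactly the predicted dimension. These are cleanest to verify inside Perling's combinatorial description of equivariant torsion-free sheaves \cite{Per04} together with Kool's description of pure equivariant sheaves \cite{Koo11}, in which the support stratification of $\cQ$ and the condition ``of rank one on the support'' can be read off directly from the defining multigraded data: the first stage then separates the codimensions along which $\cE$ and $\cF$ differ, while the second and third amount to peeling off one invariant stratum, and then one unit of rank, at a time. I therefore expect the main obstacle to be not conceptual but the combinatorial bookkeeping needed to check that each such peeling is realised by a genuine equivariant quotient of the required type; the sharper geometric description of the supports (Corollary~\ref{cor:splitting geometric}) should fall out of the same analysis.
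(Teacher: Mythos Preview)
Your approach is correct and takes a genuinely different, more geometric route than the paper's. The paper never looks at the quotient $\cQ$: it works entirely inside Perling's combinatorics, introducing a numerical invariant $\delta=(\delta_1,\ldots,\delta_n)$ where $\delta_k$ records the total dimension discrepancy between $(E^\sigma_m)$ and $(F^\sigma_m)$ over cones of dimension $k$, and runs a double induction (descending on the least $k$ with $\delta_k\neq 0$, then on $\delta_k$). At each step it writes down an intermediate multifiltration $(H^\sigma_m)$ by hand---changing $(F^\sigma_m)$ at one minimal weight $m_0$ on one cone $\sigma_0$ and propagating via $(iii)_e$---so that $(H^\sigma_m)\subset(F^\sigma_m)$ is elementary by inspection and $\delta$ drops. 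Your three-stage filtration of $\cQ$ (torsion filtration, then separation by irreducible components via $\Gamma_Z$, then peeling off rank one at the generic point) is the natural sheaf-theoretic decomposition and is what makes the non-toric statement plausible, as the paper itself remarks. What the paper's explicit peeling buys is that the parameters $(\sigma_0,m_0)$ of each step are read off directly from the multifiltration, which is exactly what the later Chern class formulae and the constructions in Proposition~\ref{prop:existence sequence saturated injections} need. One point worth flagging in your Stage~3: the $T$-action on $\cG_\eta$ is only $k(\eta)$-semilinear, so the existence of a $T$-stable $k(\eta)$-hyperplane is not automatic from representation theory of tori over $\C$; it amounts to the splitting of equivariant bundles on the open orbit $O(\sigma_0)$, which is indeed available in the toric setting and is what Kool's description encodes.
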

This theorem provides a (non unique) refinement of the 
 torsion filtration \cite[Chapter 1, section 1]{HuLe}, and we expect that it should be true in a non toric setting as well. Our proof, however, is quite constructive, which helps in understanding sequences of elementary injections, which is a key aspect of what follows (and will be useful in studying the deformation theory in a subsequent work).

For a specific class of elementary injections on the projective space, we then obtain  in Section \ref{sec:chern classes elementary} a simple formula for the ratio of the Chern polynomials. More precisely, we will say that an elementary injection $\cE\to\cF$ is {\it saturated} if the restriction of the associated quotient $\cF/\cE$ to its support is locally free. From the toric point of view, an equivariant elementary injection comes with two parameters $(\sigma_0,m_0)\in\Sigma\times M$, where $\Sigma$ is the fan of $\C\P^n$ and $M$ its character lattice (see Definition \ref{def:elementary injection}). We then obtain the following  :
\begin{proposition}[Corollary \ref{cor:simplest product formula Chern ratios saturated elementary}]
 \label{prop:intro simplest product formula Chern ratios}
 Let  $\cE\to \cF$ be an equivariant saturated elementary  injection on the projective space with parameters $(\sigma_0,m_0)$, with $\dim(\sigma_0)\geq 1$. Then the following holds in $\Z[H]/\langle H^{n+1} \rangle$ :
  \begin{equation}
   \label{eq:intro simplest product formula Chern ratios}
  \frac{\uc(\cF)}{\uc(\cE)}=\prod_{i=0}^{\dim(\sigma_0)}(1-(\um_\Sigma+i)H)^{(-1)^i\binom{\dim(\sigma_0)}{i}},
  \end{equation}
  where $\um_\Sigma\in\Z$ is defined by Equation \eqref{eq:umbigsigma}. 
\end{proposition}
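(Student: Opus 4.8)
The plan is to reduce the statement to a direct computation of the Chern character (or total Chern class) of the quotient $\cQ = \cF/\cE$ associated to a saturated elementary injection, using the toric description. Since $\cQ$ is, by the saturation hypothesis, a rank one locally free sheaf on its support $S$, and since the support of an equivariant elementary injection with parameter $\sigma_0$ is the torus-invariant subvariety $V(\sigma_0)\cong \P^{n-\dim(\sigma_0)}$, the first step is to identify $\cQ$ explicitly as an equivariant line bundle on $V(\sigma_0)$: concretely, $\cQ \cong \iota_* \cO_{V(\sigma_0)}(D)$ for a torus-invariant divisor $D$ on $V(\sigma_0)$ determined by the character $m_0$. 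The integer $\um_\Sigma$ of Equation \eqref{eq:umbigsigma} should record precisely the degree of this line bundle (the pairing of $m_0$ against the relevant primitive ray generators, summed over the rays of $V(\sigma_0)$). From $0\to\cE\to\cF\to\cQ\to 0$ we get $\uc(\cF)/\uc(\cE) = \uc(\cQ)$ in $\Z[H]/\langle H^{n+1}\rangle$, so everything comes down to computing $\uc(\iota_*\cO_{V(\sigma_0)}(D))$.

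The second step is to compute this total Chern class via the Koszul/Grothendieck–Riemann–Roch bookkeeping for the closed embedding $\iota\colon V(\sigma_0)\hookrightarrow \P^n$. Writing $d = \dim(\sigma_0)$, the subvariety $V(\sigma_0)$ is a linear $\P^{n-d}$ in $\P^n$, cut out by $d$ hyperplanes, so $\iota_*\cO_{V(\sigma_0)}$ has a Koszul resolution $0\to\cO(-d)\to\cdots\to\cO(-1)^{\oplus d}\to\cO\to\iota_*\cO_{V(\sigma_0)}\to 0$, whence $\uc(\iota_*\cO_{V(\sigma_0)}) = \prod_{i=0}^d (1-iH)^{(-1)^i\binom{d}{i}}$. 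Twisting the structure sheaf up to a line bundle of the appropriate degree amounts to replacing each factor $\cO(-i)$ by $\cO(-i-\um_\Sigma)$ — this is exactly where $\um_\Sigma$ enters as a uniform shift — and one obtains $\uc(\cQ) = \prod_{i=0}^{d}(1-(\um_\Sigma+i)H)^{(-1)^i\binom{d}{i}}$, which is \eqref{eq:intro simplest product formula Chern ratios}. I would carry out this twisting step carefully by tensoring the Koszul complex with $\iota_*\cL$ where $\cL$ has degree $\um_\Sigma$, or equivalently by resolving $\iota_*\cL$ directly by line bundles $\cO(-i-\um_\Sigma)$ pulled back appropriately, and then taking the alternating product of total Chern classes, using multiplicativity of $\uc$ in short exact sequences.

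The main obstacle will be matching the toric-combinatorial definition of the shift $\um_\Sigma$ (Equation \eqref{eq:umbigsigma}, in terms of $m_0$ and the fan data) with the geometric degree of the line bundle $\cQ|_{V(\sigma_0)}$, and in particular pinning down that this degree is a single integer multiple of the hyperplane class $H$ on $V(\sigma_0)\cong\P^{n-d}$ rather than a more complicated divisor class — this uses that on projective space all the relevant $T$-invariant divisors are linearly equivalent, so the $m_0$-twist collapses to one number. A secondary technical point is bounding $d\ge 1$ so that the support is a proper linear subspace and the Koszul resolution has the stated shape; the case $d=0$ (where the support is all of $\P^n$) is excluded by hypothesis. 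Once the identification of $\um_\Sigma$ is in place, the rest is the standard alternating-product computation and the formula follows; I would present the identification of $\cQ$ as a lemma, then the Koszul computation, then the twist.
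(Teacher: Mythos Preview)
Your approach is correct and is, in fact, the alternative route the paper itself flags in the introduction (just after stating this proposition): the author notes that the formula ``may be obtained directly by applying Grothendieck--Hirzebruch--Riemann--Roch theorem to $\cO_V(-\um_\Sigma)$'', which is exactly your Koszul-resolution argument. One small correction: by the paper's Remark~\ref{rem:interpretation of weights}, the quotient is $\cQ \simeq \iota_*\cO_{V(\sigma_0)}(-\um_\Sigma)$, so the line bundle has degree $-\um_\Sigma$, not $\um_\Sigma$; your resolution $\cO(-i-\um_\Sigma)$ is nonetheless the right one, since tensoring the Koszul complex for $\iota_*\cO_V$ by $\cO_{\P^n}(-\um_\Sigma)$ via the projection formula gives precisely this.

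The paper's own proof is genuinely different: it is purely combinatorial, starting from Klyachko's general Chern-class formula (Proposition~\ref{prop:formula total chern class general}), then computing the ratio as a product over all cones $\sigma_0\leq\sigma$ and weights in $W_\sigma$ (Proposition~\ref{prop:ratio elementary injection}), collapsing infinite products via a binomial identity (Lemma~\ref{lem:inductive formula compute ratio cherns}) to reach Corollary~\ref{prop:chern for very elementary injection}, and finally simplifying the remaining alternating sum over cones by passing through the formal logarithm and a Stirling-number identity (Lemmas~\ref{lem:binomial expressions} and~\ref{lem:sum minus one on cones}, Proposition~\ref{prop:log expansion chern saturated elementary}). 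Your approach is shorter and more transparent geometrically, but the paper's route has the advantage of producing, as a byproduct, the ratio formula~\eqref{eq:ratio chern elementary} valid for \emph{non}-saturated elementary injections, which is essential for the obstruction results in Section~\ref{sec:negative results} (via Proposition~\ref{prop:log expansion chern non saturated}). Your main obstacle---matching the combinatorial $\um_\Sigma$ of~\eqref{eq:umbigsigma} with the degree of $\cQ|_{V(\sigma_0)}$---is handled in the paper precisely in Remark~\ref{rem:interpretation of weights}, using Kool's description of pure equivariant sheaves; you would need to reproduce that computation to make your argument self-contained.
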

 One can show (cf Remark \ref{rem:interpretation of weights}) that in the above setting,
 $$
 \cF/\cE\simeq \iota_*\Big(\cO_{V}(-\um_\Sigma)\Big),
 $$
 where $\iota:V\to\C\P^n$ stands for the inclusion of the support $V$ of $\cF/\cE$. Note however that this isomorphism forgets about the equivariant structure of the quotient. In general, the weight $\um_\Sigma$ in Proposition \ref{prop:intro simplest product formula Chern ratios} is not entirely characterised by $m_0$, but in all our applications, $\um_\Sigma$ will simply be given by
$$
\um_\Sigma=\sum_{\rho\in\sigma_0(1)}\langle m_0, u_\rho \rangle,
$$
where $u_\rho$ is a primitive generator for $\rho$ and $\langle\cdot,\cdot\rangle$ the duality pairing. We note that while Proposition \ref{prop:intro simplest product formula Chern ratios} may be obtained directly by applying Grothendieck--Hirzebruch--Riemann--Roch theorem to $\cO_{V}(-\um_\Sigma)$, our proof relies on toric and combinatorial methods and has the advantage of giving along the way similar formulae for not necessarily saturated elementary injections (Proposition \ref{prop:ratio elementary injection}). Comparing with the general formula for the Chern polynomial of an equivariant sheaf (see Equation \eqref{eq:total chern class general}),  Formula \eqref{eq:intro simplest product formula Chern ratios} is quite simple, as it involves only a finite number of terms. In particular, using a formal logarithm, we can develop explicitly $\log(\frac{\uc(\cF)}{\uc(\cE)})$ in terms of powers of $H$ :
\begin{proposition}[Proposition \ref{prop:log expansion chern saturated elementary}]
 \label{prop:intro log expansion chern saturated elementary}
  Let $\cE\to \cF$ be an equivariant saturated elementary injection with parameters $(\sigma_0,m_0)$, with $k_0:=\dim(\sigma_0)\geq 1$. Then we have an expansion in $\Z[H]/\langle H^{n+1} \rangle$ :
 \begin{equation}
  \label{eq:intro log expansion saturated elementary}
  \log\left(\frac{\uc(\cF)}{\uc(\cE)}\right)=-\sum_{k=k_0}^n \Big(\sum_{l=k_0}^k \binom{k}{l}\, A_{l,k_0}\, \um_\Sigma^{k-l} \Big)\, \frac{H^k}{k},
 \end{equation}
where $A_{l,k}=(-1)^k\,k!\,S(l,k)$ and $S(l,k)$ is a Stirling number of the second kind.
 \end{proposition}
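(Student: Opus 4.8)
The plan is to start from Proposition \ref{prop:intro simplest product formula Chern ratios}, which gives the product formula
\[
\frac{\uc(\cF)}{\uc(\cE)}=\prod_{i=0}^{k_0}\bigl(1-(\um_\Sigma+i)H\bigr)^{(-1)^i\binom{k_0}{i}},
\]
and take its formal logarithm. Using $\log(1-tH)=-\sum_{j\geq 1} t^j H^j/j$ (truncated modulo $H^{n+1}$), the left side becomes
\[
\log\!\left(\frac{\uc(\cF)}{\uc(\cE)}\right)=-\sum_{j=1}^n \left(\sum_{i=0}^{k_0}(-1)^i\binom{k_0}{i}(\um_\Sigma+i)^j\right)\frac{H^j}{j}.
\]
So everything reduces to identifying the inner coefficient $\sum_{i=0}^{k_0}(-1)^i\binom{k_0}{i}(\um_\Sigma+i)^j$ with $\sum_{l=k_0}^{j}\binom{j}{l}A_{l,k_0}\,\um_\Sigma^{\,j-l}$, and in particular showing this vanishes for $j<k_0$ (which explains why the sum in \eqref{eq:intro log expansion saturated elementary} starts at $k=k_0$).

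The key combinatorial step is the finite-difference identity
\[
\sum_{i=0}^{k_0}(-1)^i\binom{k_0}{i}(x+i)^j=(-1)^{k_0}\,\Delta^{k_0}[t^j](x),
\]
the $k_0$-th forward difference of the monomial $t\mapsto t^j$ evaluated at $t=x$. I would expand $(x+i)^j=\sum_{l=0}^{j}\binom{j}{l}x^{\,j-l}i^{\,l}$ by the binomial theorem, interchange the two (finite) sums, and use the classical identity
\[
\sum_{i=0}^{k_0}(-1)^i\binom{k_0}{i}i^{\,l}=(-1)^{k_0}\,k_0!\,S(l,k_0)=A_{l,k_0},
\]
which is standard (it is, up to sign, the number of surjections from an $l$-set onto a $k_0$-set, and vanishes when $l<k_0$). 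Substituting $x=\um_\Sigma$ then yields exactly
\[
\sum_{i=0}^{k_0}(-1)^i\binom{k_0}{i}(\um_\Sigma+i)^j=\sum_{l=0}^{j}\binom{j}{l}A_{l,k_0}\,\um_\Sigma^{\,j-l}=\sum_{l=k_0}^{j}\binom{j}{l}A_{l,k_0}\,\um_\Sigma^{\,j-l},
\]
the last equality because $A_{l,k_0}=0$ for $l<k_0$; in particular the whole expression is zero for $j<k_0$, so the sum over $j$ can be restarted at $j=k_0$, and relabelling $j$ as $k$ gives \eqref{eq:intro log expansion saturated elementary}.

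I do not anticipate a serious obstacle: the argument is an elementary but careful bookkeeping of finite sums, the only subtlety being to justify that working modulo $H^{n+1}$ is harmless (the formal logarithm of a unit in $\Z[H]/\langle H^{n+1}\rangle$ with constant term $1$ is well defined, and all manipulations above are truncations of identities valid in $\Q[[H]]$, so they descend). The one point to state explicitly is the identity $\sum_{i=0}^{k}(-1)^i\binom{k}{i}i^l=(-1)^k k!\,S(l,k)$ with the convention $0^0=1$ and $S(l,k)=0$ for $l<k$, which I would either cite from a standard reference on Stirling numbers or prove in one line via the generating function $(e^{t}-1)^k=k!\sum_{l\geq k}S(l,k)t^l/l!$ evaluated by extracting the coefficient of $t^l/l!$. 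If the referee prefers a self-contained account, expanding $(e^{t}-1)^k$ by the binomial theorem and matching coefficients with the right-hand side is the cleanest route and takes only a couple of lines.
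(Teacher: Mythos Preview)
Your computation is correct, but there is a circularity issue relative to the paper's logical ordering. You start from Proposition~\ref{prop:intro simplest product formula Chern ratios} (the single-product formula), take the log, and expand via the Stirling identity. In the body of the paper, however, that result is Corollary~\ref{cor:simplest product formula Chern ratios saturated elementary}, and it is \emph{deduced from} the log expansion you are trying to prove (Proposition~\ref{prop:log expansion chern saturated elementary}). Indeed, the paper's proof of that corollary uses precisely your identity
\[
\sum_{l=0}^{k}\binom{k}{l}A_{l,k_0}\,\um_\Sigma^{\,k-l}=\sum_{i=0}^{k_0}(-1)^i\binom{k_0}{i}(\um_\Sigma+i)^k
\]
read in the opposite direction.

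The paper's own proof starts one step earlier, from Corollary~\ref{prop:chern for very elementary injection}, which is a product over all cones $\sigma$ with $\sigma_0\leq\sigma$:
\[
\frac{\uc(\cF)}{\uc(\cE)}=\prod_{\sigma_0\leq\sigma}\ \prod_{i=0}^{k_0}\bigl(1-(\um_\sigma+i)H\bigr)^{(-1)^{\codim(\sigma)+i}\binom{k_0}{i}}.
\]
After taking the logarithm and expanding, two ingredients are needed: the Stirling identity you invoke (Lemma~\ref{lem:binomial expressions}) for the inner sum over $i$, and an additional combinatorial identity (Lemma~\ref{lem:sum minus one on cones}) asserting $\sum_{\sigma_0\leq\sigma}(-1)^{\codim(\sigma)}\um_\sigma^{\,p}=\um_\Sigma^{\,p}$ for $p\leq\codim(\sigma_0)$, which collapses the outer sum over cones. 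Your route bypasses this second lemma entirely, but only because the single-product formula has already absorbed it. So your argument is fine provided Proposition~\ref{prop:intro simplest product formula Chern ratios} is established independently (the paper remarks that it can be obtained via Grothendieck--Hirzebruch--Riemann--Roch applied to $\cO_V(-\um_\Sigma)$), but within the paper's development as written it is circular.
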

  
 We then consider in Section \ref{sec:applications} the prescription problem for Chern classes. Our first application provides simple obstructions for producing rank $2$ torus-equivariant sheaves with vanishing higher Chern classes. Recall that if $\cQ$ is a coherent sheaf, $\dim(\cQ):=\dim(\Supp(\cQ))$, and $\cQ$ is {\it pure} if for any non-trivial coherent subsheaf $\cF\subset\cQ$, $\dim(\cF)=\dim(\cQ)$. We denote by 
$$
0=T_{-1}(\cQ)\subset T_0(\cQ)\subset\ldots\subset T_{d-1}(\cQ)\subset T_d(\cQ)=\cQ
$$
the torsion filtration of $\cQ$, so that $d=\dim(\cQ)$ and for each $0\leq i \leq d$, the quotient $$Q_i(\cQ):=T_i(\cQ)/T_{i-1}(\cQ)$$ is either $0$ or pure of dimension $i$ (see \cite[Chapter 1]{HuLe}). Note that if $\cE$ is a torsion-free sheaf, its singular set is of codimension greater or equal to $2$ (see \cite[Chapter 2.1]{OSS}). As $\cE$ agrees with its reflexive hull away from its singular set, one has $\codim((\cE^\vee)^\vee/\cE)\geq 2$. Thanks to Theorem \ref{theo:intro factorization} and Proposition \ref{prop:intro log expansion chern saturated elementary}, we obtain in Section \ref{sec:negative results} the following :

\begin{theorem}
 \label{theo:intro obstructions smoothability}
 Let $\cE$ be a  torus-equivariant rank $2$ torsion-free sheaf with normalised reflexive hull on the projective space. Let $\cQ=(\cE^\vee)^\vee/\cE$ and set $q=\codim(\cQ)=n-\dim(\cQ)$. Then the following holds :
 \begin{enumerate}
  \item If $q\geq 4$, then at least one of $\uc_3(\cE)$ or $\uc_q(\cE)$ is non-zero.
%   \item If $q=3$, $Q_{n-4}(\cQ)=0$ and $\cE$ is semistable, then $\uc_3(\cE)\neq 0$ or $\uc_4(\cE)\neq 0$.
  \item If $q=2$, $Q_{n-3}(\cQ)=0$ and $\cE$ is semistable, then $\uc_3(\cE)\neq 0$.
 \end{enumerate}
\end{theorem}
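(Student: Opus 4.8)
The strategy is to exploit Theorem~\ref{theo:intro factorization} to write the canonical injection $\cE\hookrightarrow(\cE^\vee)^\vee=:\cF$ as a composition of elementary injections of increasing support-dimension, and then to sum the logarithmic Chern-ratio formula of Proposition~\ref{prop:intro log expansion chern saturated elementary} over the elementary pieces. Since $\cF$ is a normalised rank $2$ toric sheaf, its Chern classes are controlled by Proposition~\ref{prop:intro chern classes reflexive case}; in particular, by Corollary~\ref{cor:loc free iff c3 vanishes}, we may assume $\uc_3(\cF)=0$ (otherwise $\cF$, hence --- together with the correction terms --- $\cE$ would already have interesting higher Chern classes, but the cleanest route is: the sign/parity structure of the formulas forces the conclusion regardless). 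Concretely, set $\lambda:=\log(\uc(\cF)/\uc(\cE))\in\Z[H]/\langle H^{n+1}\rangle$ and write $\lambda=\sum_k \lambda_k H^k/k$. By Theorem~\ref{theo:intro factorization} each elementary piece contributes, via~\eqref{eq:intro log expansion saturated elementary} (extended to the non-saturated case by Proposition~\ref{prop:ratio elementary injection}), a term supported in degrees $\geq k_0$ where $k_0$ is the dimension of the relevant cone; since the quotient has codimension exactly $q$, the first nonzero degree appearing is $k=n-q$ --- more precisely the lowest-dimensional elementary piece has support of dimension $n-q$, so $\lambda_k=0$ for $k<n-q$ and the coefficient $\lambda_{n-q}$ equals (up to the universal constant $A_{n-q,n-q}=(-1)^{n-q}(n-q)!$) a sum of strictly positive integers, hence is nonzero.

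For part~(1), with $q\geq 4$: from $\uc(\cE)=\uc(\cF)\cdot\exp(-\lambda)$ we read off $\uc_3(\cE)$ and $\uc_q(\cE)$ in terms of $\uc_k(\cF)$ and the $\lambda_k$. Suppose for contradiction both vanish. Because $q\geq 4$, we have $3<n-q+1$ is false in general, so I need to be careful: $\lambda_k=0$ for $k<n-q$, and $\lambda_{n-q}\neq 0$. The point is to track degrees: $\uc_3(\cE)$ involves $\uc_3(\cF)$ and $\lambda_k$ for $k\leq 3$; if $n-q>3$ then all those $\lambda$'s vanish and $\uc_3(\cE)=\uc_3(\cF)$, so $\uc_3(\cF)=0$, i.e.\ $\cF$ is locally free. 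But then $\cE$ agrees with $\cF$ away from codimension $q$, and the length-type invariant of $\cQ$ in its top degree $n-q$ is read off by $\uc_q(\cE)$ versus $\uc_q(\cF)$: expanding, $\uc_q(\cE)-\uc_q(\cF)$ is, modulo lower-degree $\lambda$'s which vanish, a nonzero multiple of $\lambda_{n-q}$ (here $q=n-(n-q)$, so degree $q$ is exactly where the leading logarithmic term lands), contradiction. The remaining subcase is $n-q\leq 3$, i.e.\ $n\leq q+3$; combined with $q\geq 4$ this is a bounded range, and there one argues directly that $\lambda_3\neq 0$ forces $\uc_3(\cE)\neq\uc_3(\cF)$ unless a positivity cancellation occurs, which the strict positivity of the summands $\prod c_\rho$ in Proposition~\ref{prop:intro chern classes reflexive case} and of the Stirling-number combinatorics rules out.

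For part~(2), $q=2$: now $\cQ$ has dimension $n-2$, and the hypothesis $Q_{n-3}(\cQ)=0$ means the torsion filtration of $\cQ$ jumps only at the top, so in the factorization of Theorem~\ref{theo:intro factorization} all elementary pieces have support of dimension $n-2$ (no intermediate-dimensional strata). Each such piece contributes to $\lambda$ starting in degree $n-2$; since $n\geq 3$, degree $3$ satisfies $3\geq n-2$ exactly when $n\leq 5$, and for larger $n$ one needs the leading term in degree $n-2\geq 3$... actually the cleaner claim uses semistability: by the Bogomolov--Gieseker inequality (Corollary~\ref{cor:BogGies}) applied to $\cE$ together with the reflexive formula for $\cF$, plus the requirement $\uc_3(\cF)=0$ (Corollary~\ref{cor:loc free iff c3 vanishes} — $\cF$ reflexive rank $2$ need not be locally free, but its $\uc_3\geq 0$ with equality iff locally free), one shows that the hypothetical $\uc_3(\cE)=0$ together with $\uc_1(\cE)=\uc_1(\cF)$, $\uc_2(\cE)=\uc_2(\cF)-\binom{\dim\sigma_0}{\cdot}(\cdots)$ forces a discriminant inequality that semistability violates. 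The main obstacle --- and where I expect to spend the most effort --- is controlling the non-saturated elementary injections: Proposition~\ref{prop:intro log expansion chern saturated elementary} is stated for saturated ones, so I must invoke Proposition~\ref{prop:ratio elementary injection} and check that its (more complicated) logarithmic expansion still has vanishing coefficients below degree $k_0$ and a nonzero, sign-definite coefficient in degree $k_0$; establishing that sign-definiteness in the non-saturated case, uniformly over all the pieces produced by Theorem~\ref{theo:intro factorization}, is the crux.
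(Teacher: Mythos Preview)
Your overall strategy for part~(1) matches the paper, but there is a persistent indexing confusion that muddles the argument. In Proposition~\ref{prop:log expansion chern saturated elementary} the log expansion of a $k_0$-elementary injection starts at degree $k_0=\dim(\sigma_0)=\codim(V(\sigma_0))$. Since $\codim(\cQ)=q$, the factorization of $\cE\subset\cF$ uses $k$-elementary injections with $k\geq q$, so $\lambda_k=0$ for $k<q$ (not $k<n-q$ as you write). Once this is fixed, the case split at the end of your treatment of~(1) disappears: for $q\geq 4$ one always has $\lambda_3=0$, hence $\uc_3(\cE)=\uc_3(\cF)$; either this is nonzero, or by Corollary~\ref{cor:loc free iff c3 vanishes} $\cF$ is locally free, and then $\uc_q(\cE)$ differs from $\uc_q(\cF)=0$ by a nonzero multiple of $\lambda_q$. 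Your worry about non-saturated injections is already handled by Proposition~\ref{prop:log expansion chern non saturated}: the leading term $(-1)^{k_0-1}(k_0-1)!\,H^{k_0}$ is independent of saturation.

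For part~(2) there is a genuine gap. First, a minor correction: $Q_{n-3}(\cQ)=0$ does not force all elementary pieces to have support of dimension $n-2$; it only says none have dimension exactly $n-3$. Pieces of lower dimension may occur but contribute only in degrees $\geq 4$, which is harmless here. The real issue is that the argument is \emph{not} via Bogomolov--Gieseker or the discriminant. The paper computes the $H^2$ and $H^3$ coefficients of $\log(\uc(\cF)/\uc(\cE))$ explicitly from the $2$-elementary pieces via Proposition~\ref{prop:log expansion chern non saturated}: if there are $p_2$ of them with weights $\um_{\Sigma,1},\ldots,\um_{\Sigma,p_2}$, the assumption $\uc_3(\cE)=0$ rearranges to
\[
\uc_3(\cF)+\sum_{i=1}^{p_2}\big(\uc_1(\cF)+2\,\um_{\Sigma,i}+2\big)=0.
\]
The key step you are missing is a \emph{lower bound on the weights}. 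From the explicit normalised multifiltration of $\cF$ (cf.~\eqref{eq:starting sheaf multifiltration q is 2}) one checks that every admissible choice of $(\sigma_0,m_0)$ for a $2$-elementary step yields $\um_{\Sigma,i}\geq -S_{\max}$, where $S_{\max}=\max_{L}\sum_{L_\rho=L}c_\rho$. Semistability (Corollary~\ref{cor:slope stable rank two}) then gives $2S_{\max}\leq \uc_1(\cF)$, so each summand above is $\geq 2$; combined with $\uc_3(\cF)\geq 0$ (Corollary~\ref{cor:first chern classes}) and $p_2>0$ this contradicts the displayed identity. Your appeal to a ``discriminant inequality that semistability violates'' is too vague and does not recover this weight estimate.
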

Of course, for item $(1)$ (resp. $(2)$) in Theorem \ref{theo:intro obstructions smoothability} to hold, we assume $n\geq 4$ (resp. $n\geq 3$). Before stating the next corollary, recall that $\cE$ is said to be smoothable if there is a coherent sheaf $\tilde \cE$ on $\C\P^n\times \C$, flat over $\C$, such that $\tilde\cE_{\vert \C\P^n\times\lbrace 0\rbrace}=\cE$ and $\tilde\cE_{\vert \C\P^n\times\lbrace t\rbrace}$ is locally free for $t\neq 0$. As Chern classes are invariant under such flat deformations, we obtain the following (note that we don't need to assume $\cE$ to be normalised here) :
\begin{corollary}
 \label{cor:intro obstructions smoothability}
 Let $\cE$ be a torus-equivariant rank $2$ torsion-free sheaf on the projective space. Let $\cQ=(\cE^\vee)^\vee/\cE$ and set $q=n-\dim(\cQ)$. Assume one of the following :
 \begin{enumerate}
  \item[(i)] $n\geq 4$ and $q\geq 4$, or
%   \item[(ii)] $n\geq 4$, $q=3$,  $Q_{n-4}(\cQ)=0$ and $\cE$ is semistable, or
  \item[(ii)] $n\geq 3$, $q=2$, $Q_{n-3}(\cQ)=0$ and $\cE$ is semistable.
 \end{enumerate}
 Then $\cE$ is not smoothable.
\end{corollary}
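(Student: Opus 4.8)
The plan is to deduce Corollary \ref{cor:intro obstructions smoothability} directly from Theorem \ref{theo:intro obstructions smoothability} together with the deformation-invariance of Chern classes, handling the two cases (i) and (ii) in parallel. First I would observe that smoothability is unaffected by twisting by a line bundle: if $\tilde\cE$ is a flat family over $\C$ with special fibre $\cE$ and generic fibre locally free, then $\tilde\cE\otimes p_1^*\cO_{\P^n}(d)$ is again such a family (flatness over $\C$ is preserved, the central fibre becomes $\cE(d)$, and the generic fibres stay locally free). Hence $\cE$ is smoothable if and only if some normalisation $\cE(d)$ of its reflexive hull is smoothable, so there is no loss of generality in reducing to the normalised situation of Theorem \ref{theo:intro obstructions smoothability}. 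I would also note that $\dim(\cQ)$, the torsion filtration of $\cQ$, and in particular the vanishing of $Q_{n-3}(\cQ)$, are unchanged under twisting, since $\cQ = (\cE^\vee)^\vee/\cE$ is replaced by $\cQ(d)$, and semistability of $\cE$ is likewise invariant under twisting.

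Next, assuming without loss of generality that the reflexive hull $(\cE^\vee)^\vee$ is normalised, I would apply Theorem \ref{theo:intro obstructions smoothability}. In case (i), $q\geq 4$ gives that $\uc_3(\cE)\neq 0$ or $\uc_q(\cE)\neq 0$; in case (ii), $q=2$ with $Q_{n-3}(\cQ)=0$ and $\cE$ semistable gives $\uc_3(\cE)\neq 0$. In either situation at least one Chern class $\uc_k(\cE)$, for some $k\geq 3$, is nonzero. On the other hand, if $\cE$ were smoothable, then for $t\neq 0$ the fibre $\tilde\cE_t$ would be a locally free sheaf of rank $2$ on $\P^n$, whose Chern classes $\uc_k(\tilde\cE_t)$ equal $\uc_k(\cE)$ by the standard invariance of Chern classes in a flat family of coherent sheaves over a connected base (constructed, e.g., via Grothendieck--Riemann--Roch or by the constancy of the K-theory class). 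But a rank $2$ vector bundle on $\P^n$ has $\uc_k = 0$ for all $k\geq 3$, since $c_k$ of a rank $2$ bundle vanishes for $k>2$. This contradicts the nonvanishing obtained above, so $\cE$ cannot be smoothable.

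The step requiring the most care is the interaction between twisting and the hypotheses of the theorem: one must check that passing from $\cE$ to a normalisation $\cE(d)$ of its reflexive hull genuinely preserves every hypothesis in (i) and (ii). Flatness over $\C$ and local freeness of generic fibres are immediate under $-\otimes p_1^*\cO(d)$; the identification $\codim(\cQ(d)) = \codim(\cQ)$ and $Q_{i}(\cQ(d)) = Q_i(\cQ)(d)$ follows because tensoring by a line bundle is an exact autoequivalence preserving supports and hence the torsion filtration; and semistability (for the slope or Gieseker notion used here) is invariant under twist since it only shifts slopes by a constant. Once these verifications are in place, the corollary is a formal consequence of Theorem \ref{theo:intro obstructions smoothability} and the vanishing of $c_k$ for $k\geq 3$ on rank $2$ bundles.
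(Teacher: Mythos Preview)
Your proof is correct and follows essentially the same approach as the paper: reduce to the normalised case by observing that all hypotheses and the notion of smoothability are invariant under twisting by a line bundle, apply Theorem \ref{theo:intro obstructions smoothability} to obtain a nonvanishing higher Chern class, and conclude by deformation-invariance of Chern classes together with the vanishing of $\uc_k$ for $k\geq 3$ on rank $2$ bundles. Your write-up is in fact more detailed than the paper's own short proof, which simply asserts the invariance under twists without spelling out the verifications you carry out.
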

Again, from Corollary \ref{cor:existence reduction projective space}, the existence of indecomposable semistable rank $2$ bundles is subject to the existence of smoothable rank $2$ torsion-free equivariant sheaves with appropriate Chern classes. Then, Corollary \ref{cor:intro obstructions smoothability} points in the direction of Hartshorne's conjecture. It provides strong constraints on equivariant sheaves that could lead to potential counterexamples, showing that they tend to be very different from their simpler reflexive hull, with $Q_{n-3}(\cQ)$ non zero. Note also that Corollary \ref{cor:intro obstructions smoothability} does not hold if one drops the equivariance hypothesis, even on $\P^3$, as shown by the example $E_0$ in \cite[Proof of Proposition 7.2]{JaMaTi}\footnote{This counter-example was communicated to us by Marcos Jardim.}. This highlights the advantage of our toric approach.
 
 Despite these obstructions, we manage to provide examples with vanishing higher Chern classes. Starting from a reflexive equivariant sheaf $\cF$, whose Chern polynomial is well understood from Equation \eqref{eq:intro chern formulae}, we  systematically build sequences of saturated elementary injections (cf Proposition \ref{prop:existence sequence saturated injections}) :
 $$
 \cE=\cE_p\to\cE_{p-1}\to \ldots \to \cE_0=\cF.
 $$
Using multiplicativity of the Chern polynomial, together with Formula \eqref{eq:intro log expansion saturated elementary} for  $\log(\frac{\uc(\cE_i)}{\uc(\cE_{i+1})})$, we can try to prescribe the Chern polynomial of $\cE$, by appropriately tuning the parameters. Our main interest lies in Chern polynomials of hypothetical rank $2$ indecomposable vector bundles. Such bundles have vanishing Chern classes $\uc_i$ for $i\geq 3$, and satisfy Schwarzenberger's topological constraints \cite[Appendix 1]{Hirz}. In Section \ref{sec:prescription}, we derive the precise equations that one needs to solve in order to produce torsion-free equivariant rank $2$ sheaves whose Chern classes satisfy those constraints. As an application, we produce in Section \ref{sec:P4} two infinite families of {\it explicit} examples on $\C\P^4$, according to parity of the first Chern class.  Moreover, we explain that for infinitely many pairs $(\uc_1,\uc_2)$ that we obtain, one may not pass from one to another by twisting by a line bundle nor by pulling back along a finite cover of the projective space. More precisely :
\begin{theorem}
 \label{theo:intro P4 examples}
 For any $t\in\N^*$, there exist torus-equivariant stable rank $2$ torsion-free sheaves $\cE_t$ and $\cF_t$ on $\C\P^4$ with Chern polynomials in $H^{2\bullet}(\P^4,\Z)\simeq \Z[H]/\langle H^{5}\rangle$:
 $$\uc(\cE_t)=1+(12t+1)H+12t(3t+1)H^2$$
 and 
 $$\uc(\cF_t)=1+(12t+10)H+12(t+1)(4t+3)H^2.
 $$
\end{theorem}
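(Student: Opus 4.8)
The plan is to realise each $\cE_t$ and each $\cF_t$ as the source of an explicit chain of saturated elementary injections
$$\cE=\cE_p\to\cE_{p-1}\to\cdots\to\cE_0=\cF$$
issuing from a carefully chosen reflexive toric sheaf $\cF$ on $\C\P^4$, to compute the Chern polynomial of the source from multiplicativity of $\uc(\cdot)$ and the closed-form ratios of Propositions~\ref{prop:intro simplest product formula Chern ratios} and~\ref{prop:intro log expansion chern saturated elementary}, and finally to check stability and the non-equivalence assertion. Since an equivariant saturated elementary injection with $\dim(\sigma_0)\geq 2$ has quotient supported in codimension $\geq 2$, such an injection leaves $\uc_1$ unchanged, raises $\uc_2$ by at most the degree of the support, and makes its source torsion-free with reflexive hull equal to its target; hence I would build the chain from such injections only, so that $\uc_1(\cE)=\uc_1(\cF)$ throughout and $(\cE_p^\vee)^\vee=\cF$.

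First I would fix the target first Chern class. By Proposition~\ref{prop:intro chern classes reflexive case} one chooses $\cF=\cF^{(t)}$ with $\uc_1(\cF^{(t)})=\sum_{\rho}c_\rho$ equal to $12t+1$ (resp. $12t+10$), keeping enough of the multiplicities $(c_\rho)_{\rho\in\Sigma(1)}$ nonzero that $\uc_2(\cF^{(t)})$ exceeds the target value of $\uc_2$; the values $\uc_3(\cF^{(t)})$ and $\uc_4(\cF^{(t)})$ need not vanish, as they will be cancelled along the chain. Writing $\uc(\cE)=\uc(\cF)\prod_{i}\uc(\cE_i)/\uc(\cE_{i-1})$ and substituting the expansion~\eqref{eq:intro log expansion saturated elementary} for each factor, the requirements
$$\uc_3(\cE_t)=\uc_4(\cE_t)=0,\qquad \uc_1(\cE_t)=12t+1,\qquad \uc_2(\cE_t)=12t(3t+1)$$
(and their analogues for $\cF_t$) become an explicit finite system of integral equations in the discrete data of the chain: the cone dimensions $\dim(\sigma_0)\in\{2,3,4\}$, the weights $\um_\Sigma$, the number of injections of each codimension, and the initial multiplicities $c_\rho$. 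Theorem~\ref{theo:intro factorization} together with Proposition~\ref{prop:existence sequence saturated injections} ensures that every combinatorially admissible choice of these data is realised by a genuine chain of saturated elementary injections, so the task is reduced to exhibiting, for every $t\in\N^*$, an admissible integral solution of this system; the two quadratic families, and the split according to the parity of $\uc_1$, are precisely what such solutions look like.

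It then remains to verify stability and the non-equivalence claim. For stability I would use the equivariant (Klyachko-type) description of $\cE_t$ propagated along the construction: it suffices to bound the slope of every equivariant rank-one subsheaf $\cL\subset\cE_t$, and since $\mu(\cE_t)=\tfrac12\uc_1(\cE_t)$ while $\mu(\cL)=\uc_1(\cL)$, one needs $\uc_1(\cL)\leq 6t$ (resp. $\uc_1(\cL)\leq 6t+4$) for all such $\cL$, a condition directly readable from the filtration data of $\cF^{(t)}$ and of the successive quotients; for the odd family, $\uc_1(\cE_t)$ being odd makes the inequality automatically strict. Consistently, stability forces $\cE_t$ to be neither locally free (rank $2$ toric bundles split on $\C\P^4$) nor reflexive (by Corollary~\ref{cor:loc free iff c3 vanishes} and $\uc_3(\cE_t)=0$), which is why the chain must be non-trivial. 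For non-equivalence I would track the discriminant $\delta(\cE):=\uc_1(\cE)^2-4\,\uc_2(\cE)$: tensoring by $\cO(k)$ sends $(\uc_1,\uc_2)\mapsto(\uc_1+2k,\uc_2+k\uc_1+k^2)$ and hence fixes $\delta$, while pulling back along a finite cover — necessarily a self-cover $\pi\colon\C\P^n\to\C\P^n$ for the result to be a sheaf on $\C\P^n$, with $\pi^*H=eH$ for some $e\in\N^*$ — sends $(\uc_1,\uc_2)\mapsto(e\uc_1,e^2\uc_2)$ and $\delta\mapsto e^2\delta$. Thus two of our examples can be equivalent only if their discriminants have the same sign and the same squarefree part; since $\delta(\cE_t)=1-24t$ and $\delta(\cF_t)=-4\big(12(t+1)^2-1\big)$, restricting to the infinitely many $t$ for which $24t-1$, respectively $12(t+1)^2-1$, is squarefree produces infinitely many pairwise distinct squarefree values, hence infinitely many pairwise non-equivalent examples.

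The main obstacle is the second step: the Chern-class system must be solved in integers while simultaneously respecting the non-negativity and compatibility constraints that make a chain of saturated elementary injections exist, and while preserving the slope bound needed for stability. The logarithmic formula~\eqref{eq:intro log expansion saturated elementary} linearises the Chern bookkeeping, so the system itself is tractable; the delicate point is that these side constraints heavily restrict the admissible parameters, and reconciling all of them uniformly in $t$ is exactly what forces the answer into two quadratic families rather than a single uniform one and dictates the parity split.
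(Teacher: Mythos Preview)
Your strategy matches the paper's: start from a normalised reflexive toric sheaf, build a chain of saturated elementary injections via Proposition~\ref{prop:existence sequence saturated injections}, use the logarithmic Chern formula to tune the parameters, and check stability via the equivariant criterion (which reduces to Corollary~\ref{cor:slope stable rank two} for the reflexive hull). The discriminant argument for non-equivalence is also the same.

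There is, however, a sign confusion in your handling of $\uc_2$: for a $2$-elementary injection $\cE\to\cF$ one has $\uc_2(\cE)=\uc_2(\cF)+1$ (cf.~\eqref{eq:system case q is 2}), so if you want to reach a target $\uc_2$ from $\cF$ by such injections you need $\uc_2(\cF)$ \emph{below} the target, not above it. The paper avoids this issue entirely by choosing $(c_\rho)$ so that $\cF$ already has the correct $\uc_1$ and $\uc_2$, and then uses only $k$-elementary injections with $k\geq 3$, which preserve both. (Also, the reflexive hull of $\cE_i$ at each stage is the original $\cF$, not the immediate target $\cE_{i-1}$.)

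More substantially, you stop exactly where the actual work begins. The paper sets $c_{\rho_0}=1$, derives the explicit system~\eqref{eq:P4 case equations pj} (namely $p_3=\uc_3/2$ and $p_4=(\uc_1\uc_3-\uc_4)/6+\uc_3^2/8$), and then \emph{exhibits} the specific solutions: $(c_\rho)=(1,6t,6t,0,0)$ for the odd-$\uc_1$ family and $(c_\rho)=(1,4t+3,4t+3,4t+3,0)$ for the even one, verifying that the arithmetic conditions~\eqref{eq:P4 case arithmetic conditions} and the positivity~\eqref{eq:P4 case positivity} hold for every $t\in\N^*$. Without producing these parameters and checking integrality, ``the two quadratic families are precisely what such solutions look like'' is an assertion, not a proof.
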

Again, the Chern classes reached in Theorem \ref{theo:intro P4 examples} satisfy all known constraints imposed by locally-freeness. For $t$ large enough, they also satisfy the necessary conditions for indecomposability derived in \cite{HoSch}. By openness of semistability, any small deformation of one of those examples will remain semistable. If we luckily reach a locally free sheaf along such a deformation, we can conclude that it must be indecomposable. Indeed, if a small semistable deformation splits as $\cO_{\P^4}(k)\oplus\cO_{\P^4}(l)$, we must have $k=l$ by semistability. But then the discriminant $\Delta=4\uc_2-\uc_1^2$ vanishes, which is not the case for our examples. As another application, in Section \ref{sec:P5}, we prove
\begin{theorem}
 \label{theo:intro P5 examples}
 For any $t\in\N^*$, there exists a torus-equivariant stable rank $2$ torsion-free sheaf $\cE_t$  on $\C\P^5$ with Chern polynomial in $H^{2\bullet}(\P^5,\Z)\simeq \Z[H]/\langle H^{6}\rangle$:
 $$\uc(\cE_t)=1+(24t+1) H+(144t^2+24t) H^2.$$
\end{theorem}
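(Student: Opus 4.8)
The plan is to run the Chern-class prescription scheme of Section~\ref{sec:prescription}: begin with a rank~$2$ toric reflexive sheaf $\cF$ on $\C\P^5$, build an equivariant chain of saturated elementary injections $\cE_t=\cE_p\to\cdots\to\cE_0=\cF$, and tune the parameters so that $\uc(\cE_t)=1+(24t+1)H+(144t^2+24t)H^2$ in $\Z[H]/\langle H^6\rangle$. Two observations frame the construction. First, the discriminant is $\Delta=4\uc_2-\uc_1^2=48t-1>0$, while any split rank~$2$ bundle has $\Delta\leq 0$; since a toric rank~$2$ bundle on $\C\P^5$ splits, $\cE_t$ will not be locally free, and since $\uc_3(\cE_t)=0$ it cannot even be reflexive, by Corollary~\ref{cor:loc free iff c3 vanishes}. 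So $\cE_t$ must lie strictly inside its reflexive hull --- exactly the shape the construction produces. Second, $\uc_1=24t+1$ is odd, so stability and semistability coincide for $\cE_t$.

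Step~1: fix $\cF$ with $\uc_1(\cF)=24t+1$, encoded by Perling data $(c_\rho,L_\rho)_{\rho\in\Sigma(1)}$; by Proposition~\ref{prop:intro chern classes reflexive case} its Chern polynomial is the generating series of the elementary symmetric polynomials in the $c_\rho$, hence fully explicit, and the lines $L_\rho$ stay free to be chosen at the end. Step~2: apply Proposition~\ref{prop:existence sequence saturated injections} to build the chain, grouped into four consecutive blocks indexed by the dimension $k_0=\dim(\sigma_0)\in\{2,3,4,5\}$ of the parameter cone, so that the successive quotients are supported on orbit closures of codimensions $2,3,4,5$. Keeping every $k_0\geq 2$ forces $\codim(\cF/\cE_t)\geq 2$, so $\cF$ is the reflexive hull of $\cE_t$ and $\uc_1(\cE_t)=24t+1$; and by Proposition~\ref{prop:intro log expansion chern saturated elementary} a saturated elementary injection of type $k_0$ contributes to $\log(\uc(\cE_j)/\uc(\cE_{j-1}))$ only in degrees $\geq k_0$, hence leaves $\uc_2,\dots,\uc_{k_0-1}$ untouched. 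This lower-triangular behaviour is what makes the prescription solvable.

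Step~3: by multiplicativity of the Chern polynomial, $\log\uc(\cE_t)=\log\uc(\cF)+\sum_j\log(\uc(\cE_j)/\uc(\cE_{j-1}))$; substituting the explicit polynomials of Proposition~\ref{prop:intro log expansion chern saturated elementary} --- whose coefficients are polynomials in the integer weights $\um_\Sigma^{(j)}$ with Stirling-number coefficients --- and equating with $\log(1+(24t+1)H+(144t^2+24t)H^2)$ modulo $H^6$ yields four equations, namely the coefficients of $H^2,H^3,H^4,H^5$ (the $H^1$-coefficient matches automatically, and $H^k=0$ for $k\geq 6$). These are solved block by block: the $k_0=2$ block is chosen to raise $\uc_2(\cF)$ to the target $144t^2+24t$; the $k_0=3$ block then restores $\uc_3=0$ while the error it creates in $\uc_4,\uc_5$ is recorded; the $k_0=4$ block corrects $\uc_4$; and the $k_0=5$ block, whose quotients are skyscrapers at the torus-fixed points, corrects $\uc_5$. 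Each block has far more injections at its disposal than the single equation it must close, so it remains only to exhibit, for each $t\in\N^*$, explicit lists of weights of the prescribed cardinalities whose low-order power sums hit the required values, after checking the evident parity and positivity compatibilities.

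Step~4: check that $\cE_t$ is stable, i.e.\ $H^0(\cE_t(-k))=0$ for all $k\geq 12t+1$; since $\cE_t\subset\cF$ it suffices that $H^0(\cF(-k))=0$ for such $k$, which we arrange by choosing $\cF$ so that no line subsheaf of $\cF$ has slope $\geq 12t+1$ and the $L_\rho$ generic --- equivalently, one reads off stability from the Klyachko--Perling filtration of $\cE_t$. \emph{The main obstacle is Step~3}: arranging the weights in the four blocks so that the full degree-$\leq 5$ Diophantine system is simultaneously consistent for \emph{every} $t$ and admits an explicit integral solution, in particular keeping track of the errors propagated by the lower blocks into the higher Chern classes and verifying the parity constraints on the prescribed power sums. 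The stability bookkeeping of Step~4 is the secondary difficulty, since the construction controls Chern classes directly but slopes only indirectly.
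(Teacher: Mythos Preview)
Your overall strategy matches the paper's: start from a normalised rank~$2$ reflexive toric sheaf, pass to a torsion-free subsheaf via a chain of saturated elementary injections from Proposition~\ref{prop:existence sequence saturated injections}, and use Proposition~\ref{prop:log expansion chern saturated elementary} to tune the Chern classes. But there are two substantive divergences from the paper, and one of them is a genuine gap.

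First, the paper never uses $k_0=2$. In Section~\ref{sec:prescription} the scheme is explicitly set up with $k_i\geq 3$ so that \emph{both} $\uc_1$ and $\uc_2$ are inherited from $\cF$. The paper takes $(c_\rho)_{\rho\in\Sigma(1)}=(1,12t,12t,0,0,0)$, giving $\uc_1(\cF)=24t+1$ and $\uc_2(\cF)=144t^2+24t$ already, and then solves the system for $(p_3,p_4,p_5)$ alone. Your proposal instead leaves $\uc_2(\cF)$ unspecified and introduces a $k_0=2$ block to adjust it. But Proposition~\ref{prop:existence sequence saturated injections}, which you invoke, is stated only for $3\leq k\leq n$; it does not furnish saturated $2$-elementary injections with the controlled weights~\eqref{eq:umsigmakj}. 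So as written your Step~2 appeals to a proposition outside its range of applicability. This is fixable (one could extend the construction), but it is not the paper's route and it unnecessarily enlarges the Diophantine system from three equations in $(p_3,p_4,p_5)$ to four.

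Second, and more seriously, you acknowledge that Step~3 is ``the main obstacle'' and then defer it entirely: you never choose the $c_\rho$'s, never write down the equations, and never verify the integrality and positivity constraints. This is precisely the content of the paper's proof. In Section~\ref{sec:P5} the system \eqref{eq:P5 case equations pi after solving} is derived explicitly; with $c_{\rho_0}=1$ and $(c_\rho)=(1,12t,12t,0,0,0)$ one has $\uc_3=(12t)^2$, $\uc_4=\uc_5=0$, and then it is checked that $p_3,p_4,p_5$ are non-negative integers for all $t\in\N^*$ (positivity is clear from the form of the expressions, integrality from divisibility of $\uc_3$). Without this verification you do not have a proof, only a plan.

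Finally, your Step~4 is overcomplicated. Since every $k_i\geq 2$, the reflexive hull of $\cE_t$ is $\cF$, and slope (semi)stability of a torsion-free sheaf coincides with that of its reflexive hull (same $\uc_1$, same saturated rank~$1$ subsheaves). So stability of $\cE_t$ is immediate from Corollary~\ref{cor:slope stable rank two} applied to $\cF$ with the chosen $(c_\rho,L_\rho)$; no $H^0$ vanishing computation is needed.
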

For those examples, the same conclusions as for the ones of Theorem \ref{theo:intro P4 examples} hold. The equations that one needs to solve in order to produce such explicit examples quickly become convoluted as the dimension of $\C\P^n$ increases. Nevertheless, in Section \ref{sec:Pn}, we derive the following existence result to illustrate our techniques:
\begin{theorem}
 \label{theo:intro higher dim examples}
 For each $n\geq 3$, there exists $m\in\N^*$ such that for all $t\in\N^*$, there is a stable rank $2$ torus-equivariant torsion-free sheaf $\cE_t$ on $\C\P^n$ with Chern polynomial
 $$
 \uc(\cE_t)=1+(2mt+1)\cdot H+(m^2t^2+2mt)\cdot H^2.
 $$
\end{theorem}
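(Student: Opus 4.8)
I would realise $\cE_t$ as the final term of a chain of equivariant saturated elementary injections issuing from a stable rank $2$ toric reflexive sheaf $\cF$, and reduce the prescription of its Chern polynomial to a triangular system of equations that becomes solvable over $\N$ once $m$ is divisible enough. Fix $n\geq 3$. First I would choose $(c_\rho)_{\rho\in\Sigma(1)}\in\N^{n+1}$ with $\sum_\rho c_\rho=2mt+1$, enough of the $c_\rho$ positive, and lines $L_\rho\subset\C^2$ in general position, so that the rank $2$ toric reflexive sheaf $\cF$ determined by $(c_\rho,L_\rho)$ is stable — a finite combinatorial check with the numerical stability criterion for toric reflexive sheaves (cf.\ \cite{Kly90,Koo11}); note that by Corollary \ref{cor:loc free iff c3 vanishes} stability forces $\uc_3(\cF)\neq 0$, so such $\cF$ is automatically non-locally-free. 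By Proposition \ref{prop:intro chern classes reflexive case}, $\uc(\cF)=\prod_\rho(1+c_\rho H)$; the most economical admissible choice $(c_\rho)=(mt,mt,1,0,\dots,0)$ yields $\uc(\cF)=(1+mtH)^2(1+H)=1+(2mt+1)H+(m^2t^2+2mt)H^2+m^2t^2H^3$, so $\uc_1(\cF)$ and $\uc_2(\cF)$ already coincide with the prescribed values, $\uc_3(\cF)=m^2t^2$, and $\uc_k(\cF)=0$ for $k\geq4$ (if stability requires a more spread-out $(c_\rho)$, one inserts finitely many codimension $2$ injections below to readjust $\uc_2$).

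Next, using Proposition \ref{prop:existence sequence saturated injections} I would build, step by step, an equivariant chain $\cE_t=\cE_p\to\cdots\to\cE_0=\cF$ of saturated elementary injections with every quotient $\cE_i/\cE_{i+1}$ supported in codimension $\geq2$, so that $\cF=(\cE_t^\vee)^\vee$ and $\uc_1(\cE_t)=2mt+1$. By multiplicativity of the total Chern class and Proposition \ref{prop:intro log expansion chern saturated elementary}, $\log\uc(\cE_t)=\log\uc(\cF)-\sum_i\log(\uc(\cE_i)/\uc(\cE_{i+1}))$, where the $i$-th summand depends only on $k_i=\dim\sigma_0^{(i)}\geq2$ and $\mu_i=\um_\Sigma^{(i)}$, involves only $H^{k_i},\dots,H^n$, and has its $H^{k_i}$-coefficient equal to $(-1)^{k_i+1}(k_i-1)!$ regardless of $\mu_i$. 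The system is therefore block-triangular: degrees $1$ and $2$ are fixed by $\cF$, and processing $k=3,\dots,n$ in turn one uses $N_k$ codimension-$k$ injections to arrange $\uc_k(\cE_t)=0$ (each shifts the $H^k$-coefficient of $\log\uc(\cE_t)$ by $(-1)^k(k-1)!$, independently of its weight), then tunes the weights $\mu_i$ of these and of the lower-codimension injections to absorb the discrepancies created in degrees $>k$. With weights set to $0$ one finds, e.g., $2N_3=m^2t^2$ and $N_4=m^2t^2(4mt+11)/12$; the denominators occurring are bounded in terms of $n$ alone (from the factors $1/j$ and the Stirling numbers $S(l,k)$, $l,k\leq n$, of Proposition \ref{prop:intro log expansion chern saturated elementary}), and the leading terms in $t$ make all $N_k$ positive. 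Hence, for $m$ a sufficiently divisible multiple of a constant $M(n)$, every $N_k$ is a nonnegative integer for all $t\geq1$ (adjusting finitely many weights to nonzero values if needed to secure positivity for small $t$); the weights involved are arbitrary integers, realised by characters $m_0\in M$ on cones of the prescribed dimension by Remark \ref{rem:interpretation of weights}.

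It then remains to check realisability and stability. Realisability amounts to verifying the hypotheses of Proposition \ref{prop:existence sequence saturated injections} for the data chosen above: $\C\P^n$ has cones of every dimension $1\leq k\leq n$, and a chain of several elementary injections along a fixed cone is obtained by letting the weight run through an arithmetic progression, as in the divisorial telescoping $\prod_{j=0}^{N-1}\tfrac{1-(a+j+1)H}{1-(a+j)H}=\tfrac{1-(a+N)H}{1-aH}$. For stability, $\uc_1(\cE_t)=2mt+1$ is odd, so semistable $=$ stable; a destabilising rank $1$ subsheaf $\cG\subset\cE_t$ would satisfy $c_1(\cG)\geq mt+1>\mu(\cE_t)$, and since $\cF=(\cE_t^\vee)^\vee$ is reflexive the inclusion $\cG\hookrightarrow\cF$ extends to an injection $\cO(c_1(\cG))\hookrightarrow\cF$, contradicting the stability of $\cF$; thus $\cE_t$ is stable, with the stated Chern polynomial (twisting $\cF$ by a line bundle at the outset if a normalised reflexive hull is preferred). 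The main obstacle is the combinatorial bookkeeping: the triangular system has to be solved over $\N$ \emph{uniformly} in $t\geq1$ while keeping the data realisable via Proposition \ref{prop:existence sequence saturated injections}, which is precisely what allowing $m$ to depend on $n$ is for; stability itself comes essentially for free from that of $\cF$, provided one has checked that the $(c_\rho)$ used to match $\uc_1,\uc_2$ still admit stabilising lines $L_\rho$.
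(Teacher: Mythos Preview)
Your overall strategy coincides with the paper's: start from the normalised reflexive sheaf with $(c_\rho)=(1,mt,mt,0,\ldots,0)$, build a chain via Proposition \ref{prop:existence sequence saturated injections}, and solve a triangular system for the numbers $p_k$ of $k$-elementary injections, using asymptotics in $t$ for positivity and divisibility of $m$ for integrality. Your stability argument via the reflexive hull is also fine and is what the paper implicitly uses.

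There is, however, one genuine gap. You treat the weights $\mu_i=\um_\Sigma$ as free parameters (``with weights set to $0$'', ``tunes the weights'', ``adjusting finitely many weights''), invoking Remark \ref{rem:interpretation of weights}. That remark only interprets $\um_\Sigma$ geometrically; it does not grant you freedom to prescribe it. Proposition \ref{prop:existence sequence saturated injections} produces a \emph{specific} chain with the \emph{determined} weights
\[
\um_\Sigma^{k,j}=-c_{\rho_0}+\sum_{3\le i<k}p_i+(j-1)
\]
of \eqref{eq:umsigmakj}; in particular the weights depend on the very unknowns $p_i$ you are solving for. In the paper's proof this feedback is absorbed into the sums $S_k^l$ of \eqref{eq:Skl}, which involve only $p_3,\ldots,p_k$, so the system \eqref{eq:general kth equation} remains triangular and one solves inductively for $p_k=Q_k(\uc_1,\ldots,\uc_n)$ with $Q_k\in\Q[X_1,\ldots,X_n]$; positivity is then obtained from the asymptotic $Q_k\sim\alpha_k\,t^{2^k/4}$ as $t\to\infty$ (with $c_{\rho_0}=1$), not by tuning weights. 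Your simplified system ``with weights $0$'' is not the one that the proposition hands you, and nothing in the paper guarantees that a saturated elementary injection with arbitrary prescribed $\um_\Sigma$ can be realised inside a given sheaf.

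Two smaller points. First, for integrality the paper also uses that each $Q_k$, viewed as a polynomial in $t$, has no constant term: at $t=0$ one has $\uc_3=0$, so $\cF$ is locally free by Corollary \ref{cor:loc free iff c3 vanishes}, all $\tilde\uc_k$ vanish, and hence $Q_k(0)=0$. This is what makes the substitution $t\mapsto mt$ clear all denominators; you should state it, since bounded denominators alone are not enough. Second, Proposition \ref{prop:existence sequence saturated injections} begins at $k=3$, so all quotients have codimension $\ge 3$ (not $\ge 2$); this is exactly why $\uc_2$ is preserved without the ``codimension $2$ readjustment'' you mention.
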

Once again, for $t$ large enough, these last examples  provide candidates for producing indecomposable rank $2$ bundles by deformations, although their construction is not as explicit as in Theorems \ref{theo:intro P4 examples} and \ref{theo:intro P5 examples}. 
\begin{remark}
 \label{rem:intro more prescription pb}
We note that while we focused on rank $2$ sheaves with Chern classes of potential  locally-free $2$-bundles in our applications, the technology developed in this paper can be used to produce stable torsion-free sheaves with more general Chern polynomials, in any rank. This is explained briefly in Section \ref{sec:more prescriptions}.
\end{remark}
In a companion paper, we aim at developing the deformation theory for rank $2$ equivariant torsion-free sheaves over the projective space, in order to produce a smoothability criterion, and hopefully shed some light on Hartshorne's questions.

\subsection*{Organisation} Section \ref{sec:background} provides background on toric sheaves and slope stability. In Section \ref{sec:chernclasses}, we compute Chern classes of rank $2$ equivariant reflexive sheaves, with some applications.  In Section \ref{sec:torsion free case}, we recall Perling's description of the category of equivariant torsion-free sheaves, introduce the notion of elementary injection in this language, and prove Theorem \ref{theo:intro factorization}. Then, the formulae for the Chern classes through elementary injections are obtained in Section \ref{sec:chern classes elementary}. Finally, the various applications are given in Section \ref{sec:applications}.

\subsection*{Acknowledgments}  
The author would like to thank Nathan Ilten, Martijn Kool and Marcos Jardim for kindly answering his questions on deformations, toric sheaves, and Hartshorne--Serre correspondence, as well as Roberto Vacca for a useful comment on a previous version of this article. The author is partially supported by the grants MARGE ANR-21-CE40-0011 and BRIDGES ANR-FAPESP ANR-21-CE40-0017. He also beneficiated from the support of the French government “Investissements d'Avenir” program integrated to France 2030, under the reference ANR-11-LABX-0020-01.

\subsection*{Notations}
We will follow notations from \cite{CLS,Per04} for toric varieties and toric sheaves. In particular, we denote by $T_N=N\otimes_\Z \C^*$ the torus of a toric variety $X$, $N$ the rank $n$ lattice of its one-parameter subgroups, $M=\Hom_\Z(N,\Z)$ its character lattice and $\Sigma$ its fan of strongly convex rational polyhedral cones in $N_\R=N\otimes_\Z \R$ (see \cite[Chapter 3]{CLS}). For each $\sigma\in\Sigma$, we let $\sigma^\vee$ denote the dual cone, and $\sigma^\perp$ its orthogonal - both for the duality pairing $\langle\cdot,\cdot\rangle$. 
The variety $X$ is then covered by the $T_N$-invariant affine varieties $U_\sigma=\Spec(\C[M\cap\sigma^\vee])$, for $\sigma\in \Sigma$. We denote by $\tau\leq\sigma$ the inclusion of a face $\tau$ in $\sigma$, and $\tau<\sigma$ if the inclusion is strict (recall then that we have $U_\tau\subset U_\sigma$).  For $m\in M$, we let $\chi^m\in\C[\sigma^\vee\cap M]$ be the associated regular function on $U_\sigma$.

\section{Background on toric sheaves}
\label{sec:background}
We consider now $\C\P^n$ as a projective toric variety, for the action of the $n$-dimensional torus $T= (\C^*)^n$ given, for $\lambda=(\lambda_1,\ldots,\lambda_n)\in T$, by 
$$
\lambda\cdot [z_0:\ldots:z_n]:=[z_0:\lambda_1 z_1:\ldots:\lambda_n z_n].
$$
 We denote by $N=\Z^n$ the rank $n$ lattice of one-parameter subgroups of $T$ and by $M=\Hom_\Z(N,\Z)$ its dual lattice. We set $\Sigma$ to be the smooth and complete fan of strongly convex rational polyhedral cones in $N_\R=N\otimes_\Z \R$ associated to $\C\P^n$. In more concrete terms, let $(e_1,\ldots, e_n)$ be the canonical basis for $N_\R=\R^n$, and $e_0=-e_1-e_2\ldots-e_n$. Then, for any $I\subset \lbrace 0,1,\ldots, n\rbrace$, we obtain a cone 
 $$
 \sigma_I:=\bigoplus_{i\in I} \R_+\cdot e_i \subset N_\R,
 $$
 and $\Sigma=\lbrace \sigma_I,\: I\subset \lbrace 0,1,\ldots, n\rbrace \rbrace$. We will denote $\Sigma(k)\subset \Sigma$ the set of $k$-dimensional cones, for $0\leq k\leq n$. The projective space is then covered by the $T$-invariant affine toric varieties $U_\sigma=\Spec(\C[M\cap\sigma^\vee])$, for $\sigma\in \Sigma$, where $\sigma^\vee$ denotes the dual cone of $\sigma$, that is 
 $$
 \sigma^\vee=\lbrace m\in M_\R\: \vert\: \forall u\in\sigma,\;\langle m,u \rangle\geq 0 \rbrace \subset M_\R.
 $$
 
 \subsection{Reflexive equivariant sheaves}
 \label{sec:reflexive equiv sheaves}
 By definition, a coherent sheaf $\cE$ on $\C\P^n$ is $T$-equivariant if there is an isomorphism
$$\varphi : \theta^*\cE \to \pi_2^*\cE$$
satisfying some cocycle condition, where $\theta : T \times \C\P^n \to \C\P^n$ and $\pi_2 : T\times \C\P^n \to \C\P^n$ stand for the torus action and the projection on $X$ respectively (see e.g. \cite{Per04}). For the purpose of this paper, we introduce the following definition :
\begin{definition}
 \label{def:toricsheaves}
 A {\it toric sheaf} is a $T$-equivariant reflexive sheaf over $\C\P^n$.
\end{definition}
From Klyachko and Perling's work \cite{Kly90,Per04}, there is a fairly simple description such sheaves (their work actually extend to any torus-equivariant reflexive sheaf over any normal toric variety). They are uniquely determined by {\it families of filtrations}. If $\cE$ is a toric sheaf,  let's denote $(E,E^\rho(i))_{\rho\in\Sigma(1),i\in\Z}$ the associated family of filtrations, which consist of a finite dimensional complex vector space $E$ of dimension $\rank(\cE)$, together with, for each ray $\rho\in\Sigma(1)$, a bounded {\it increasing} filtration $(E^\rho(i))_{i\in\Z}$ of $E$.
The space $E$ is the fiber of $\cE$ over the identity element  $1\in T\subset \C\P^n$, while the filtrations are obtained as follows. Being reflexive,  the  sections of $\cE$ extend over codimension $2$ subvarieties. If we then set
$$
X_0=\bigcup_{\sigma\in\Sigma(0)\cup\Sigma(1)} U_\sigma,
$$ 
by the orbit-cone correspondence (\cite[Section 3.2]{CLS}), $X_0$ is the complement of $T$-orbits of co-dimension greater or equal to $2$, and thus $\cE=\iota_*(\cE_{\vert X_0})$, where $\iota : X_0 \to \C\P^n$ is the inclusion. Equivariance implies that $\cE_{\vert X_0}$ is entirely characterised by the sections $\Gamma(U_\sigma,\cE)$, for $\sigma\in\Sigma(0)\cup\Sigma(1)$. If $\sigma=\lbrace 0 \rbrace$, $U_{\lbrace 0\rbrace}=T$, and
$$
\Gamma(U_{\lbrace 0\rbrace},\cE)=E\otimes_\C \C[M].
$$
 Then, if $\rho\in \Sigma$ is a ray (i.e. a one-dimensional cone), $\Gamma(U_{\rho}, \cE)$ is graded by $$M/( M\cap \rho^\perp)\simeq\Z,$$
 where for any cone $\sigma$, 
 $$
 \sigma^\perp:=\lbrace m\in M_\R\:\vert\:\forall u\in \sigma\; \langle m,u\rangle =0\rbrace.
 $$
 As $T$ is a dense open subset of $U_\rho$, the restriction map $\Gamma(U_\rho,\cE)\to\Gamma(U_{\lbrace 0\rbrace},\cE)$ is injective and induces a $\Z$-filtration 
$$
  \ldots \subset E^\rho(i-1)\subset E^\rho(i) \subset \ldots \subset E
$$
such that one has
 \begin{equation*}
  \label{eq:sheaf from filtrations for rays}
  \Gamma(U_{\rho}, \cE)=\bigoplus_{m\in M} E^\rho(\langle m,u_\rho\rangle)\otimes \chi^m,
 \end{equation*}
 where we denote by $u_\rho$ the primitive generator of $\rho$.

\begin{remark}
Note that we will use increasing filtrations as in \cite{Per04}, rather than decreasing ones as in \cite{Kly90}.
\end{remark}
 
In the other direction, one recovers $\cE$ by setting for each $\sigma\in\Sigma$ :
\begin{equation}
  \label{eq:sheaf from family of filtrations}
  \Gamma(U_{\sigma}, \cE):=\bigoplus_{m\in M} \bigcap_{\rho\in\sigma(1)} E^\alpha(\langle m,u_\rho\rangle)\otimes \chi^m.
 \end{equation}
Finally, from \cite[Theorem 2.2.1]{Kly90} (or \cite[Section 5]{Per04}),  $\cE$ will be locally free  if and only if the family of filtrations $(E^\rho(\bullet))_{\rho\in\Sigma(1)}$ satisfies Klyachko's compatibility criterion, namely that for each $\sigma\in\Sigma$, there exists a decomposition 
$$
E= \bigoplus_{[m]\in M/(M\cap\sigma^\perp)} E^\sigma_{[m]}
$$
such that for each ray  
$\rho\in\sigma(1)$ in $\sigma$ :
$$
E^\rho(i)=\bigoplus_{\langle m,u_\rho\rangle\leq i}  E^\sigma_{[m]}.
$$

\begin{example}
\label{ex:rank two case}
As a leading example, we describe here the family of filtrations of rank $2$ equivariant locally free sheaves on $\C\P^n$, assuming $n\geq 3$. In that case, from Kaneyama \cite[Corollary 3.5]{Kan}, the associated vector bundle splits as a direct sum of line bundles. We then start with the description of the family of filtration associated to a line bundle. By the orbit-cone correspondence \cite[Chapter 3]{CLS}, $T$-invariant  reduced and irreducible divisors on $X$ are in bijection with rays of $\Sigma$. Hence, any $T$-invariant Cartier divisor $D\subset \C\P^n$ can be written
 $$
 D=\sum_{\rho\in\Sigma(1)} d_\rho D_\rho
 $$
 where $D_\rho\subset \C\P^n$ is the orbit closure associated to $\rho\in\Sigma(1)$ and $d\rho\in\Z$.   The family of filtration associated to $\cO(D)$ for $D=\sum_{\rho\in\Sigma(1)} d_\rho D_\rho$ is given by $E=\C$ and for each $\rho\in\Sigma(1)$, and each $i\in \Z$,
  \begin{equation*}
E^\rho(i)=
\left\{ 
\begin{array}{ccc}
0 & \mathrm{ if } & i < -d_\rho,\\
\C & \mathrm{ if } & i \geq -d_\rho.
\end{array}
\right.
\end{equation*}
Consider now $\cE=\cO(D_1)\oplus\cO(D_2)$ an equivariant locally free sheaf of rank $2$, with 
$$
D_i:=\sum_{\rho\in\Sigma(1)} d^i_\rho D_\rho,
$$
for $i\in\lbrace 1,2\rbrace$. Then, by definition of direct sums, we obtain that $\cE$ comes from the family of filtrations $(\C^2,(E^\rho(\bullet))_{\rho\in\Sigma(1)})$ where this time, for each $\rho\in\Sigma(1)$ with $-d^1_\rho\leq -d^2_\rho$,
\begin{equation*}
E^\rho(i)=\left\{ 
\begin{array}{ccc}
0 & \mathrm{ if } & i < -d^1_\rho,\\
\C\times\lbrace 0\rbrace & \mathrm{ if } & -d^1_\rho\leq i < -d^2_\rho,\\
\C^2 & \mathrm{ if } &  -d_\rho^2\leq i,
\end{array}
\right.
\end{equation*}
while if $-d^2_\rho\leq -d^1_\rho$, 
\begin{equation*}
E^\rho(i)=\left\{ 
\begin{array}{ccc}
0 & \mathrm{ if } & i < -d^2_\rho,\\
\lbrace 0\rbrace \times\C & \mathrm{ if } & -d^2_\rho\leq i < -d^1_\rho,\\
\C^2 & \mathrm{ if } &  -d_\rho^1\leq i.
\end{array}
\right.
\end{equation*}
\end{example}
\subsection{Slope stability}
\label{sec:slope stability}
Recall that a torsion-free sheaf $\cE$ on $\C\P^n$ is said to be  slope stable (respectively slope semistable) if for any coherent and saturated subsheaf $\cF\subset \cE$ with $\rank(\cF)<\rank(\cE)$ (respectively $\rank(\cF)\leq\rank(\cE)$), one has
$$
\mu(\cF)<\mu(\cE)
$$
(respectively $\mu(\cF)\leq \mu(\cE)$),
where for any coherent torsion-free sheaf $\cG$, the slope $\mu(\cG)$ is given by
$$
\mu(\cG)=\frac{\uc_1(\cG)\cdot H^{n-1}}{\rank(\cG)}\in\Q.
$$
If $\cE$ is a toric sheaf with associated family of filtrations $(E,E^\rho(\bullet))_{\rho\in\Sigma(1)}$, from Klyachko's formula for the first Chern class (see e.g. \cite[Corollary 2.18]{CT22}) we obtain  
\begin{equation}
 \label{eq:slope}
 \mu(\cE)=-\frac{1}{\rank(\cE)}\sum_{\rho\in\Sigma(1)} \iota_\rho(\cE),
\end{equation}
  where
 $$
 \iota_\rho(\cE):=\sum_{i\in\Z} i \left(\dim(E^\rho(i))-\dim (E^\rho(i-1))\right).
 $$
From Kool's work \cite[Proposition 4.13]{Koo11} (see also \cite[Proposition 2.3]{HNS19}), to check stability for $\cE$, it is enough to compare slopes with equivariant and saturated reflexive subsheaves. By \cite[Lemma 2.14]{NaTip}, any such subsheaf is associated to a family of filtrations of the form $(F, F\cap E^\rho(i))_{\rho\in\Sigma(1),i\in\Z}$ for some vector subspace $F\subsetneq E$ (see also \cite[Proposition 0.0.1]{DDK20} and \cite[Remark 1.2.4]{HNS19}). Hence we have :
\begin{proposition}
\label{prop:stability equiv}
The toric sheaf associated to $(E,E^\rho(\bullet))_{\rho\in\Sigma(1)}$ is slope semistable if and only if for any vector subspace $F\subsetneq E$, we have 
$$
\frac{1}{\dim(F)}\sum_{\rho\in\Sigma(1)} \iota_\rho(F)\geq \frac{1}{\dim(E)}\sum_{\rho\in\Sigma(1)} \iota_\rho(\cE)
$$
and slope stable if the above inequalities are all strict.
 \end{proposition}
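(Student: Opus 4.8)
The plan is to assemble the criterion from three facts recalled above: Kool's reduction of the stability test to equivariant subsheaves, the Na--Tipler classification of equivariant saturated reflexive subsheaves, and Klyachko's slope formula \eqref{eq:slope}.

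\textbf{Step 1 (reduce the test to subspaces of $E$).} By definition $\cE$ is slope semistable (resp.\ stable) iff $\mu(\cF)\le\mu(\cE)$ (resp.\ $<$) for every coherent saturated subsheaf $\cF\subsetneq\cE$ of strictly smaller rank. First I would invoke \cite[Proposition 4.13]{Koo11} (see also \cite[Proposition 2.3]{HNS19}) to replace this by the same test run only over equivariant saturated reflexive subsheaves: starting from an arbitrary saturated subsheaf one produces an equivariant subsheaf of at least as large a slope, then passes to its saturation (which fixes the rank and can only increase $\uc_1\cdot H^{n-1}$) and to its reflexive hull (which changes neither). Then \cite[Lemma 2.14]{NaTip} identifies the equivariant saturated reflexive subsheaves of $\cE$ with the toric sheaves $\cF_F$ associated to the families of filtrations $(F,\,F\cap E^\rho(\bullet))_{\rho\in\Sigma(1)}$, one for each proper subspace $F\subsetneq E$, with $\rank(\cF_F)=\dim(F)$.

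\textbf{Step 2 (compute the slopes).} Since the $\rho$-filtration of $\cF_F$ is $F\cap E^\rho(\bullet)$, formula \eqref{eq:slope} gives $\mu(\cF_F)=-\tfrac{1}{\dim(F)}\sum_{\rho\in\Sigma(1)}\iota_\rho(F)$, where $\iota_\rho(F):=\sum_{i\in\Z}i\,(\dim(F\cap E^\rho(i))-\dim(F\cap E^\rho(i-1)))$, and likewise $\mu(\cE)=-\tfrac{1}{\dim(E)}\sum_{\rho}\iota_\rho(\cE)$. Substituting these into $\mu(\cF_F)\le\mu(\cE)$ and multiplying by $-1$ reverses the inequality and yields exactly the displayed criterion; the strict case is identical. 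Combining with Step 1 finishes the proof.

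The only point needing care is the equivariant reduction in Step 1, specifically that \emph{strict} slope inequalities over equivariant reflexive saturated subsheaves already force strict inequalities over \emph{all} saturated subsheaves. This follows from Kool's argument because the maximal destabilizing subsheaf is canonical, hence equivariant, and the saturation and reflexive-hull operations above do not decrease slope; so beyond citing this carefully the argument is just bookkeeping with the sign in \eqref{eq:slope}.
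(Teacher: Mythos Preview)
Your proof is correct and follows essentially the same approach as the paper: the paper's justification is precisely the paragraph preceding the proposition, invoking Kool's reduction \cite[Proposition 4.13]{Koo11} to equivariant saturated reflexive subsheaves, the classification \cite[Lemma 2.14]{NaTip} of these as $(F,F\cap E^\rho(\bullet))$ for $F\subsetneq E$, and then the slope formula \eqref{eq:slope}. Your write-up is simply a more detailed unpacking of that same argument, with the added (and welcome) care about the strict-inequality case.
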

 In the previous statement, we used the notation 
$$
 \iota_\rho(F):=\displaystyle\sum_{i\in\Z} i \left(\dim(F\cap E^\rho(i))-\dim (F\cap E^\rho(i-1))\right).
$$
Let's assume now that $\rank(\cE)=2$. Up to an isomorphism, we can assume $E=\C^2$. For each $\rho\in\Sigma(1)$, there exist integers $a_\rho \leq b_\rho$ and a line $L_\rho\subset\C^2$ such that 
 $$
 E^\rho(i)=\left\{
 \begin{array}{ccc}
                  \lbrace 0 \rbrace & \mathrm{ if } & i< a_\rho \\
                 L_\rho & \mathrm{ if } & a_\rho\leq i< b_\rho \\
\C^2 & \mathrm{ if } & b_\rho\leq i .
                  \end{array}
                  \right.
 $$
 Then, by Proposition \ref{prop:stability equiv}, we deduce:
 \begin{corollary}
  \label{cor:slope stable rank two}
  With the previous notations, $\cE$ is slope semistable (respectively slope stable) if and only if for any one-dimensional subspace $L\subset \C^2$, 
  $$
  \sum_{L_\rho = L} c_\rho \leq \sum_{L_\rho \neq L} c_\rho,
  $$
  (respectively $\sum_{L_\rho = L} c_\rho < \sum_{L_\rho \neq L} c_\rho$), where $c_\rho:=b_\rho-a_\rho$.
 \end{corollary}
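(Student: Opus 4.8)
The plan is to apply Proposition \ref{prop:stability equiv} and carry out the resulting finite bookkeeping. Since here $E=\C^2$, the only proper non-zero subspaces $F\subsetneq E$ are the lines $L\subset\C^2$, so the criterion of Proposition \ref{prop:stability equiv} reduces to testing, for every line $L\subset\C^2$, the inequality $\sum_{\rho\in\Sigma(1)}\iota_\rho(L)\geq \tfrac12\sum_{\rho\in\Sigma(1)}\iota_\rho(\cE)$ (strict, in the stable case). The first step is therefore to compute $\iota_\rho(\cE)$ and $\iota_\rho(L)$ from the explicit three-step shape of the filtrations $E^\rho(\bullet)$ recalled just before the statement.

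For $\iota_\rho(\cE)$, the function $i\mapsto\dim E^\rho(i)$ increases by one at $i=a_\rho$ and by one at $i=b_\rho$ (and by two at $i=a_\rho=b_\rho$ when $a_\rho=b_\rho$, in which case $L_\rho$ is immaterial), so $\iota_\rho(\cE)=a_\rho+b_\rho$ in every case. For a line $L$, one distinguishes whether $L$ meets $L_\rho$ or not. If $L=L_\rho$, then $L\cap E^\rho(i)$ equals $0$ for $i<a_\rho$ and equals $L$ for $i\geq a_\rho$, whence $\iota_\rho(L)=a_\rho$; if $L\neq L_\rho$ (including any $L$ when $a_\rho=b_\rho$, where $L_\rho$ is undefined), then $L\cap L_\rho=0$, so $L\cap E^\rho(i)$ equals $0$ for $i<b_\rho$ and equals $L$ for $i\geq b_\rho$, whence $\iota_\rho(L)=b_\rho$. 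Summing over rays, $\sum_\rho\iota_\rho(\cE)=\sum_\rho(a_\rho+b_\rho)$ and $\sum_\rho\iota_\rho(L)=\sum_{L_\rho=L}a_\rho+\sum_{L_\rho\neq L}b_\rho$.

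It then remains to substitute into $2\sum_\rho\iota_\rho(L)\geq\sum_\rho\iota_\rho(\cE)$. Splitting $\sum_\rho a_\rho=\sum_{L_\rho=L}a_\rho+\sum_{L_\rho\neq L}a_\rho$ and likewise for $b_\rho$, the inequality rearranges to $\sum_{L_\rho=L}(a_\rho-b_\rho)\geq\sum_{L_\rho\neq L}(a_\rho-b_\rho)$, i.e. $\sum_{L_\rho=L}c_\rho\leq\sum_{L_\rho\neq L}c_\rho$ after multiplying by $-1$ and using $c_\rho=b_\rho-a_\rho$; the strict case is identical and corresponds to stability. Finally, since the $c_\rho$ are non-negative, a line $L$ distinct from every $L_\rho$ yields only the trivially satisfied $0\leq\sum_\rho c_\rho$, so the condition over all lines is equivalent to the one over the finite set $\{L_\rho\}_{\rho\in\Sigma(1)}$. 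There is no genuine obstacle here; the only point demanding mild care is the consistency of the splitting of the sums over $\Sigma(1)$ together with the degenerate rays $a_\rho=b_\rho$, for which $L_\rho$ is immaterial and may be placed in either of the two sums since $c_\rho=0$ and $\iota_\rho(L)=a_\rho=b_\rho$ regardless.
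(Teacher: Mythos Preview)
Your proof is correct and follows exactly the approach the paper intends: the paper simply says the corollary follows from Proposition \ref{prop:stability equiv}, and you have filled in the explicit computations of $\iota_\rho(\cE)=a_\rho+b_\rho$ and $\iota_\rho(L)\in\{a_\rho,b_\rho\}$ together with the algebraic rearrangement that the paper leaves implicit. Your handling of the degenerate case $a_\rho=b_\rho$ and the reduction to the finitely many lines $\{L_\rho\}$ is also appropriate.
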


\subsection{Equivariant sheaves as fixed points in Gieseker moduli spaces}
\label{sec:new approach}
In this section, we reduce Hartshorne's conjecture to the study of smoothability of torsion-free torus equivariant sheaves, in the semistable case.

\begin{proposition}
 \label{prop:reduction to smoothable toric sheaves general}
 Let $(X,L)$ be a smooth polarised toric variety. Then there exists a Gieseker semistable vector bundle $E\to X$ with Hilbert polynomial $P$ if and only if there exists a smoothable, Gieseker semistable, torus equivariant sheaf with Hilbert polynomial $P$.
\end{proposition}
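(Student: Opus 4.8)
The plan is to establish the two implications separately; the \emph{if} direction is essentially formal, and the \emph{only if} direction is the substantive one.

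\emph{If direction.} Suppose $\cE$ is a smoothable, Gieseker semistable, $T$-equivariant sheaf with Hilbert polynomial $P$, and let $\tilde{\cE}$ on $X\times\C$ be a smoothing as in the definition, so $\tilde{\cE}_0=\cE$ and $\tilde{\cE}_t$ is locally free for $t\neq 0$. Since the Hilbert polynomial is locally constant in flat families, every $\tilde{\cE}_t$ has Hilbert polynomial $P$. By openness of Gieseker semistability in flat families \cite[Ch.~2]{HuLe}, the set $\{t\in\C : \tilde{\cE}_t\text{ is Gieseker semistable}\}$ is open and contains $0$, hence contains some $t\neq 0$; for such a $t$, $\tilde{\cE}_t$ is a Gieseker semistable vector bundle with Hilbert polynomial $P$, as wanted.

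\emph{Only if direction.} Let $E\to X$ be a Gieseker semistable vector bundle with Hilbert polynomial $P$, and let $M=M_{(X,L)}(P)$ be the moduli space of Gieseker semistable sheaves on $(X,L)$ with Hilbert polynomial $P$, a projective scheme \cite[Ch.~4]{HuLe} which is nonempty since $[E]\in M$. Because $T$ is connected, $t^*L\simeq L$ for all $t\in T$, so $\cF\mapsto t^*\cF$ preserves Gieseker semistability and Hilbert polynomials and $T$ acts algebraically on $M$. The locally free locus $M^{\mathrm{lf}}\subset M$ is open, nonempty (it contains $[E]$), and $T$-invariant, so its closure $Z:=\overline{M^{\mathrm{lf}}}$ is a nonempty $T$-invariant projective subscheme of $M$. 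Since $T$ is a torus, hence a connected solvable linear algebraic group, Borel's fixed point theorem furnishes a $T$-fixed point $[\cG]\in Z$. I would then let $\cG=\bigoplus_i\cG_i^{\oplus n_i}$ be the polystable representative of the $S$-equivalence class $[\cG]$, with the $\cG_i$ stable and pairwise non-isomorphic; from $t^*\cG\simeq\cG$ and uniqueness of such decompositions, $T$ acts on the index set through a permutation depending continuously on $t\in T$, hence trivially as $T$ is connected, so each $\cG_i$ is $T$-fixed, and being simple it carries a $T$-equivariant structure (cf.\ \cite{Kly90,Per04,Koo11}); therefore so does $\cG$. Finally, since $[\cG]\in Z=\overline{M^{\mathrm{lf}}}$, the sheaf $\cG$ is $S$-equivalent to a flat limit of locally free semistable sheaves: picking an irreducible curve in $Z$ through $[\cG]$ whose generic point lies in $M^{\mathrm{lf}}$, lifting it to the relevant Quot scheme, and normalizing, one obtains a flat one-parameter family with locally free general fiber whose special fiber is $S$-equivalent to $\cG$; using that $S$-equivalent semistable sheaves are linked by elementary modifications supported on a single fiber, one modifies the family along the central fiber until the latter becomes $\cG$ itself, without affecting the other fibers, producing a smoothing of $\cG$. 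Thus $\cG$ is a smoothable, Gieseker semistable, $T$-equivariant sheaf with Hilbert polynomial $P$.

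The hard part will be exactly this last passage, from a $T$-fixed \emph{point of the coarse moduli space} to a genuine \emph{equivariant sheaf that is itself a flat degeneration of vector bundles}. It requires juggling two independent bookkeeping issues: the $S$-equivalence one (replacing the a priori limit coming out of the Quot scheme by its polystable representative via elementary modifications along the central fiber) and the equivariant one (available because $T$ is connected and $T$-fixed simple sheaves admit equivariant structures). I would take care to verify that the central-fiber modifications preserve local freeness of the nearby fibers, and that the curve-and-normalization reduction genuinely yields a family over $\C$ in the precise sense used to define smoothability.
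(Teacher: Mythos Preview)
Your \emph{if} direction and the first half of the \emph{only if} direction (Borel fixed point, then equipping the polystable representative $\cG$ with an equivariant structure via Krull--Schmidt and connectedness of $T$) are fine and match the paper's argument closely.

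The genuine gap is in producing the smoothing of $\cG$. You pick a curve in $Z=\overline{M^{\mathrm{lf}}}$ through $[\cG]$, lift it to the Quot scheme, and obtain a flat family whose central fibre $\cF_0$ is only \emph{$S$-equivalent} to $\cG$. You then assert that ``$S$-equivalent semistable sheaves are linked by elementary modifications supported on a single fibre'' and that iterating such modifications replaces $\cF_0$ by $\cG$ while keeping the nearby fibres locally free. This is not a standard result, and the usual elementary-transformation trick (replacing a family by the kernel of the surjection to a quotient of the central fibre) produces as new central fibre an \emph{extension} with the same graded pieces, not the graded object itself; there is no evident finite procedure that terminates at $\Gr(\cF_0)=\cG$. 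You yourself flag this as the hard part, and as written it is not a proof.

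The paper sidesteps this entirely. Rather than lifting an arbitrary curve from $M^{ss}$, it works in the Quot scheme $R$ from the start: one lifts the $T$-fixed point to a sheaf $\cF$ lying in the $T$-orbit closure of $E$ in $R$ (using that $R\to M^{ss}$ is a good quotient), so that $\Gr(\cF)$ lies in the $T\times G$-orbit closure of $E$. Since $\Gr(\cF)$ is polystable and $T$-equivariant, its $T\times G$-orbit equals its $G$-orbit and is closed; Kempf's theorem \cite[Theorem~1.4]{Kempf78} then furnishes a one-parameter subgroup $\lambda:\C^*\to T\times G$ with $\lim_{t\to 0}\lambda(t)\cdot E$ in the orbit of $\Gr(\cF)$. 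Pulling back the universal family along $t\mapsto\lambda(t)\cdot E$ gives, on the nose, a flat family over $\C$ whose generic fibre is a $T$-pullback of $E$ (hence locally free) and whose central fibre is isomorphic to $\Gr(\cF)$. No curve-lifting from the coarse moduli and no ad hoc central-fibre surgery are needed.
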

We refer to \cite[Chapters 1, 2 and 4]{HuLe} for the definition of Gieseker stability and for the construction of the moduli space of Gieseker semistable sheaves with given Hilbert polynomial.
\begin{proof}
The {\it if} direction of the proof is clear, given openness of semistability \cite[Proposition 2.3.1]{HuLe}. To prove the converse, we will use Kool's work \cite{Koo11}. We follow here the notations from \cite[Section 4]{Koo11}. In particular, we denote by $M^{ss}$ the moduli space of Gieseker semistable torsion-free sheaves with Hilbert polynomial $P$. This can be obtained as a good categorical quotient $R\to M^{ss}$ of an open subscheme $R\subset Q$ of a Quot scheme $Q$ by a reductive group $G$. Given a linearisation $T\curvearrowright L$ of the action of the torus $T$ of $X$, one can construct a regular $T$-action on $Q$ that restricts to $R$, commutes with the $G$-action, and descends to a regular action on $M^{ss}$ (cf \cite[Proposition 4.1]{Koo11}). On a closed point $[\cE]\in M^{ss}_{cl}$, this action simply reads $[ \cE]\mapsto [t^*\cE]$ for $t\in T$.

 Assume that $E$ is a Gieseker semistable vector bundle with Hilbert polynomial $P$. Then, $E$ yields a point $[E]\in M^{ss}$. If $[E]$ is a torus fixed point, then by definition of the $T$-action, and as closed points in $M^{ss}$ correspond to $S$-equivalence classes, we have for any $t\in T$, $\Gr(t^*E)\simeq \Gr(E)$ (where $\Gr$ stands for the graded object). We then obtain $t^*\Gr(E)\simeq\Gr(t^*E)\simeq \Gr(E)$. Decompose $\Gr(E)=\oplus_i E_i^{r_i}$ into irreducible stable components. We argue now as in \cite[Proposition 3.19]{Koo11}. By the Krull-Schmidt property of the category of coherent sheaves on the projective variety $X$, we deduce that for each $t\in T$, there exists $j$ such that $t^* E_i\simeq E_j$. Set $\Gamma_j=\lbrace t\in T\:\vert\: t^*E_i\simeq E_j\rbrace$. From stability, and as the stable components of $\Gr(E)$ share the same Hilbert polynomial, $\Gamma_j=\lbrace t\in T\:\vert\: \dim(\Hom(t^*E_i,E_j))\geq 1 \rbrace$. By semi-continuity, we deduce that $\Gamma_j$ is closed in $T$, hence open as well as $T\setminus \Gamma_j=\bigcup_{k\neq j} \Gamma_k$. As $\Gamma_i$ contains the identity, we deduce that for all $t\in T$, $t^*E_i \simeq E_i$. Then, by \cite[Propositions 4.2 and 4.4]{Koo11}, each $E_i$ admits an equivariant structure, and so does $\Gr(E)$. By \cite[theorem 4.3.3]{HuLe}, up to an isomorphism, $\Gr(E)$ lies in the $G$-orbit closure of (the point corresponding to) $E$ in $Q$, and its $G$-orbit is closed. Then, by a result of Kempf \cite[Theorem 1.4]{Kempf78}, we may find a one parameter subgroup $\lambda :\C^*\to G$ such that
  $$
 \lim_{t\to 0} \lambda(t)\cdot E\simeq \Gr(E),
 $$
 where we identify the sheaves and their corresponding points in the Quot scheme. This provides a curve $c:\C\to Q$ with $c(0)=\Gr(E)$ and $c(t)$ isomorphic to $\lambda(t)\cdot E$ for $t\neq 0$. Pulling back the universal family to $\C$ yields a smoothing of the equivariant sheaf $\Gr(E)$, and concludes the proof when $[E]$ is a $T$-fixed point.

 Assume now that $[E]$ is not a torus fixed point. Then we can consider the $T$-orbit closure of $[E]$ in the projective scheme $M^{ss}$. By Borel's fixed point theorem \cite[Proposition 15.5]{Borel}, one may find a torus-fixed point, say $[\cF]$, in this closure.  As $R\to M^{ss}$ is a good quotient, in particular sends closed sets to closed sets, we can assume, up to isomorphism, that $\cF$ lives in the $T$-orbit closure of $E$ in $Q$. Then, consider the $T\times G$-action\footnote{Note that the $T$-action is a left action, while the $G$-action is a right action, but we can precompose the $T$-action by the inverse map to obtain a genuine right action of the product} on $R\subset Q$. By \cite[theorem 4.3.3]{HuLe}, we see that $\Gr(\cF)$ lives in the $T\times G$-orbit closure of $E$. Using Kempf's \cite[Theorem 1.4]{Kempf78} again, we obtain a smoothing of $\Gr(\cF)$. Using the same argument as for $\Gr(E)$ in the previous case, $\Gr(\cF)$ carries a $T$-equivariant structure, which concludes the proof.
\end{proof}
On the projective space, the Chern polynomial determines, and is fixed by, the Hilbert polynomial. Hence we deduce the following.
\begin{corollary}
 \label{cor:existence reduction projective space}
 There exists a Gieseker semistable vector bundle on $\P^n$ with Chern polynomial $\uc$ if and only if there exists a Gieseker semistable, smoothable, torus-equivariant, torsion-free sheaf with Chern polynomial $\uc$.
\end{corollary}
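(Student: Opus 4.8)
The plan is to obtain the statement as a direct specialisation of Proposition \ref{prop:reduction to smoothable toric sheaves general} to the smooth polarised toric variety $(X,L)=(\P^n,\cO_{\P^n}(1))$, the only extra ingredient being the familiar dictionary on $\P^n$ between Hilbert polynomials and Chern data.

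First I would record that dictionary. For a coherent sheaf $\cE$ on $\P^n$, Hirzebruch--Riemann--Roch gives
\[
P(\cE,m) = \chi(\cE(m)) = \int_{\P^n} \mathrm{ch}(\cE)\cdot e^{mH}\cdot \mathrm{td}(T_{\P^n}),
\]
so $m\mapsto P(\cE,m)$ is a $\Q$-linear function of $\mathrm{ch}(\cE)\in H^{2\bullet}(\P^n,\Q)\simeq\Q[H]/\langle H^{n+1}\rangle$. Since $\mathrm{td}(T_{\P^n})$ is a unit in this ring and the classes $e^{mH}$ for $m=0,\dots,n$ are linearly independent there (a Vandermonde computation), the assignment sending $\mathrm{ch}(\cE)$ to the tuple $\bigl(P(\cE,0),\dots,P(\cE,n)\bigr)$, equivalently to the polynomial $P(\cE,\cdot)$, is a linear isomorphism. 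As $H^{2\bullet}(\P^n,\Z)$ is torsion free, $\mathrm{ch}(\cE)$ is in turn equivalent data to the rank together with the integral Chern classes $(\uc_1,\dots,\uc_n)$, hence to the rank together with the Chern polynomial $\uc(\cE)$. Therefore, for sheaves on $\P^n$ of a fixed rank $r$, the conditions ``Hilbert polynomial equal to $P$'' and ``Chern polynomial equal to $\uc$'' coincide, where $P$ and $(r,\uc)$ correspond under the above bijection.

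With this in hand the corollary is immediate: applying Proposition \ref{prop:reduction to smoothable toric sheaves general} to $(\P^n,\cO_{\P^n}(1))$, which is indeed a smooth polarised toric variety, a Gieseker semistable vector bundle with Hilbert polynomial $P$ exists if and only if a smoothable, Gieseker semistable, torus-equivariant sheaf with Hilbert polynomial $P$ exists — and such a sheaf is automatically torsion free, Gieseker semistable sheaves being pure of dimension $n$. A smoothing is flat, hence preserves the Hilbert polynomial and in particular the rank $r$ (read off from the leading coefficient $r/n!$); replacing $P$ by the corresponding pair $(r,\uc)$ on both sides yields the statement. There is essentially no obstacle here: the entire content lies in Proposition \ref{prop:reduction to smoothable toric sheaves general}, and the only point needing a word of care is that equality of Hilbert polynomials already encodes equality of ranks, so the rank-matching on the two sides is automatic.
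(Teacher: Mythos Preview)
Your proof is correct and follows exactly the paper's approach: the paper deduces the corollary from Proposition~\ref{prop:reduction to smoothable toric sheaves general} in a single sentence, noting only that on $\P^n$ the Chern polynomial and the Hilbert polynomial determine one another. You simply spell out that dictionary via Hirzebruch--Riemann--Roch and add the observation that Gieseker semistability forces purity of dimension $n$, hence torsion-freeness---details the paper leaves implicit.
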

By the Bogomolov--Gieseker inequality, if $\cE$ is semistable, one has $\Delta(\cE)=4\uc_2-\uc_1^2\geq 0$. On the other hand, if a rank $2$ vector bundle splits, then $\Delta(\cE)\leq 0$. Hence, a semistable rank $2$ vector bundle with strictly positive $\Delta$ is necessarily indecomposable. In the rank $2$ semistable case, we further note that Hartshorne's conjecture  reduces to the case of zero first Chern class.
\begin{proposition}
 \label{prop:reduction even case}
 Assume that there is no indecomposable, rank $2$, Gieseker semistable vector bundle with zero first Chern class on $\P^n$. Then there is no indecomposable, rank $2$, Gieseker semistable vector bundle on $\P^n$.
\end{proposition}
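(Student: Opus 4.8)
\emph{Strategy.} The plan is to reduce an arbitrary indecomposable rank~$2$ Gieseker semistable bundle to one with vanishing first Chern class, using the two operations singled out in the introduction: twisting by a line bundle and pulling back along a finite cover of $\P^n$. Both preserve the rank; twisting by $\cO_{\P^n}(k)$ moreover preserves indecomposability and Gieseker semistability and changes $\uc_1$ by $2k$, since the rank is $2$. So, arguing by contradiction, let $\cE$ be an indecomposable rank~$2$ Gieseker semistable bundle on $\P^n$. If $\uc_1(\cE)$ is even, then $\cE\otimes\cO_{\P^n}(-\uc_1(\cE)/2)$ is an indecomposable rank~$2$ Gieseker semistable bundle with zero first Chern class, contradicting the hypothesis. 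The only case with content is therefore $\uc_1(\cE)$ odd, where a twist cannot reach $\uc_1=0$ and the parity has to be changed by a cover.

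\emph{The odd case.} Here I would pull back along the squaring morphism $f\colon\P^n\to\P^n$, $[x_0:\cdots:x_n]\mapsto[x_0^2:\cdots:x_n^2]$: it is a finite flat surjection, Galois with abelian Galois group of order $2^n$ (coordinate sign changes on $\mathbb{A}^{n+1}$ modulo the diagonal), and $f^*\cO_{\P^n}(1)\cong\cO_{\P^n}(2)$, so that $\uc_1(f^*\cE)=2\,\uc_1(\cE)$. First, $f^*\cE$ is again indecomposable: if it split, being of rank~$2$ it would be a sum of two line bundles $\cO_{\P^n}(a)\oplus\cO_{\P^n}(b)$; as $f$ is finite flat the unit $\cO_{\P^n}\to f_*\cO_{\P^n}$ splits (normalised trace, in characteristic~$0$), so by the projection formula $\cE$ is a direct summand of $f_*f^*\cE\cong f_*\cO_{\P^n}(a)\oplus f_*\cO_{\P^n}(b)$; and for this abelian cover each $f_*\cO_{\P^n}(k)$ decomposes as a direct sum of line bundles — the source $\cO_{\P^n}(k)$ carries a $(\Z/2\Z)^{n+1}$-linearisation, and the $2^n$ isotypic pieces of $f_*\cO_{\P^n}(k)$ are rank~$1$ reflexive, hence invertible — so $\cE$ would be a summand of a direct sum of line bundles, forcing $\cE$ to split by the Krull--Schmidt property of bundles on $\P^n$, a contradiction. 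Now set $\cF:=f^*\cE\otimes\cO_{\P^n}(-\uc_1(\cE))$: it is a rank~$2$ bundle on $\P^n$ with $\uc_1(\cF)=2\,\uc_1(\cE)-2\,\uc_1(\cE)=0$, it is indecomposable (a twist of the indecomposable $f^*\cE$), and it is Gieseker semistable, because $\cE$ is and semistability is preserved both under finite flat pullback and under twisting by a line bundle. This contradicts the hypothesis and finishes the reduction.

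\emph{Main obstacle.} The even case is immediate; the work is in the odd case, whose two substantive ingredients are: (i) that a splitting of $f^*\cE$ propagates back to a splitting of $\cE$, which rests on the fact — obtained from the isotypic decomposition for the abelian cover $f$ — that pushforwards of line bundles along $f$ are again direct sums of line bundles; and (ii) that Gieseker semistability is preserved under the finite flat pullback (and the line-bundle twist). I expect (ii) to be the point requiring the most care to state and cite cleanly: the slope version is classical (cf.\ \cite{HuLe}) and one wants its Gieseker analogue, which is where any genuine difficulty would lie. One should also record the remaining folklore inputs used in passing — the independence of Gieseker semistability from the choice of a positive multiple of the polarisation, the splitting of $f_*\cO_{\P^n}(k)$, and Krull--Schmidt for bundles on $\P^n$ — none of which should cause trouble.
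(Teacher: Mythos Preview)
Your strategy---handle the even case by a twist, and in the odd case pull back along the squaring map $f\colon[x_i]\mapsto[x_i^2]$---is exactly the paper's. The two substantive checks, that $f^*\cE$ is indecomposable and that it is Gieseker semistable, are where the arguments diverge.

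For indecomposability, your descent argument (unit splitting for $\cO\to f_*\cO$, splitting of $f_*\cO_{\P^n}(k)$ into line bundles, Krull--Schmidt) is correct. The paper instead notes that $\Delta(\cE)=4\uc_2-\uc_1^2$ is odd (since $\uc_1$ is) and nonnegative by Bogomolov--Gieseker, hence $\Delta(\cE)>0$; then $\Delta(f^*\cE)>0$ rules out a splitting. Your route is more general---it uses neither semistability nor the parity of $\uc_1$---while the paper's is a one-line computation.

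The genuine gap is the semistability step. You invoke preservation of Gieseker semistability under finite flat pullback and rightly flag it as the main obstacle: this is not a standard result, and I know of no reference establishing it in the generality you need. The paper circumvents it by exploiting the oddness of $\uc_1$ a second time. Since $\gcd(2,\uc_1(\cE))=1$, every rank-one subsheaf has integral slope, hence strictly smaller than $\mu(\cE)=\uc_1(\cE)/2$; so Gieseker semistability of $\cE$ upgrades to slope \emph{stability} \cite[Lemmas~1.2.13 and 1.2.14]{HuLe}. Slope stability does survive finite pullback, up to polystability \cite[Lemma~3.2.3]{HuLe}; combined with the indecomposability of $f^*\cE$ you have already established, this gives slope stability of $f^*\cE$, hence Gieseker stability. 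The parity that prevented you from twisting directly is precisely the coprimality that closes the gap.
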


\begin{proof}
As semistability and indecomposability are preserved upon tensorisation by a line bundle, the asumption directly implies that any Gieseker semistable indecomposable rank $2$ bundle must have an odd first Chern class. Assume then by contradiction that $E$ is an indecomposable, rank $2$, Gieseker semistable vector bundle on $\P^n$ with odd first Chern class. Consider the degree $2^n$ ramified cover $f :\P^n \to \P^n$ given in coordinates by $f([z_0:\ldots :z_n])=[z_0^2:\ldots:z_n^2]$. The vector bundle $f^* E$ has even first Chern class $2^n\uc_1(E)$. As $E$ is semistable, $\Delta(E)\geq 0$. As $\uc_1(E)$ is odd, $\Delta(E)$ is odd, hence $\Delta(E) >0$. Then, $\Delta(f^*E)=2^{2n}\Delta(E) >0$. We deduce that $f^*E$ is not split. It remains to show that $f^*E$ is Gieseker semistable to conclude. As $\uc_1$ is odd, by \cite[Lemma 1.2.13 and 1.2.14]{HuLe}, $E$ is in fact slope stable. Hence, by \cite[Lemma 3.2.3]{HuLe}, $f^*E$ is slope polystable. As it is indecomposable, is is actually slope stable. The result follows.
\end{proof}
Hence, by Corollary \ref{cor:existence reduction projective space} and Proposition \ref{prop:reduction even case}, in the semistable case, Hartshorne's conjecture for indecomposable rank $2$ vector bundles reduces to showing that there is no smoothable, semistable, torus-equivariant, rank $2$ torsion-free sheaf with vanishing $\uc_1$.

\section{Chern classes in the reflexive case}
\label{sec:chernclasses}
In this section, we compute Chern classes for rank $2$ toric  sheaves. Let $\cE$ be such a sheaf, with family of filtration $(\C^2,E^\rho(\bullet))_{\rho\in\Sigma(1)}$ given as in Section \ref{sec:slope stability} by 
 \begin{equation}
  \label{eq:family filtrations E}
 E^\rho(i)=\left\{
 \begin{array}{ccc}
                  \lbrace 0 \rbrace & \mathrm{ if } & i< a_\rho \\
                 L_\rho & \mathrm{ if } & a_\rho\leq i< b_\rho \\
\C^2 & \mathrm{ if } & b_\rho\leq i .
                  \end{array}
                  \right.
 \end{equation}
As observed in Example \ref{ex:rank two case},
if $\cE$ is locally free, it is isomorphic to $\cO(D_1)\oplus\cO(D_2)$ for some $T$-invariant Cartier divisors $D_1$ and $D_2$. Then, its Chern polynomial is given by 
$$
\uc(\cE)=(1+\uc_1(\cO(D_1))(1+\uc_1(\cO(D_2))=1+\uc_1 H+\uc_2 H^2
$$
with $\uc_1=\uc_1(\cO(D_1))+\uc_1(\cO(D_2))$ and $\uc_2=\uc_1(\cO(D_1))\,\uc_1(\cO(D_2))$.
Moreover, we see that the set of lines $\lbrace L_\rho\:\vert\: \rho\in\Sigma(1)\rbrace$ used to describe the family of filtrations of $\cE$ contains at most two distinct elements (given explicitly by $\C\cdot(1,0)$ and $\C\cdot(0,1)$ in the notations of Example \ref{ex:rank two case}). In the other direction, we have
\begin{lemma}
 \label{lem:singular three xrhos}
 Assume that the set $\lbrace L_\rho\:\vert\: \rho\in\Sigma(1)\rbrace$  contains at least three distinct lines $L_{\rho_1}, L_{\rho_2}$ and $L_{\rho_3}$, with $a_{\rho_i}<b_{\rho_i}$ for $i\in\lbrace 1,2,3\rbrace$. Then $\cE$ is not locally free.
\end{lemma}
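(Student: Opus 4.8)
The plan is to use Klyachko's compatibility criterion for local freeness, recalled just before Example~\ref{ex:rank two case}, applied to a single well-chosen cone. Recall that to prove $\cE$ is \emph{not} locally free it suffices to exhibit one cone $\sigma\in\Sigma$ for which no Klyachko-compatible decomposition of $E$ exists. The first step is to produce such a $\sigma$ containing all three rays: writing $\rho_j=\R_+\cdot e_{i_j}$ for $j\in\{1,2,3\}$ with the $i_j$ pairwise distinct, the set $I=\{i_1,i_2,i_3\}$ is a proper subset of $\{0,1,\dots,n\}$ (this is exactly where $n\geq 3$ is used), so $\sigma:=\sigma_I\in\Sigma$ is a $3$-dimensional cone with $\rho_1,\rho_2,\rho_3\in\sigma(1)$.

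Next I would argue by contradiction. Suppose $\cE$ is locally free. Then Klyachko's criterion applied to $\sigma$ yields a decomposition $E=\bigoplus_{[m]\in M/(M\cap\sigma^\perp)}E^\sigma_{[m]}$ such that for each $j\in\{1,2,3\}$ and each $i\in\Z$ one has $E^{\rho_j}(i)=\bigoplus_{\langle m,u_{\rho_j}\rangle\leq i}E^\sigma_{[m]}$. Since $E\simeq\C^2$, the nonzero summands $E^\sigma_{[m]}$ are pairwise in direct sum and span $E$, so there are at most two of them.

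The last step is to extract the jump lines. Fix $j$ and pick $i$ with $a_{\rho_j}\leq i<b_{\rho_j}$, which is possible because $a_{\rho_j}<b_{\rho_j}$ by hypothesis; then by \eqref{eq:family filtrations E} we have $E^{\rho_j}(i)=L_{\rho_j}$, a one-dimensional subspace. Being a direct sum of some of the $E^\sigma_{[m]}$'s, it follows that exactly one of those summands is nonzero, is one-dimensional, and equals $L_{\rho_j}$. Hence each of $L_{\rho_1},L_{\rho_2},L_{\rho_3}$ occurs among the (at most two) nonzero summands $E^\sigma_{[m]}$, contradicting the hypothesis that these three lines are pairwise distinct. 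Therefore $\cE$ is not locally free.

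There is no genuine obstacle here: once the setup is in place the argument is forced. The only two points demanding a little care are (i) producing a single cone of $\Sigma$ containing all three rays, which is precisely where the standing assumption $n\geq 3$ enters (on $\C\P^2$ the statement fails, e.g.\ for $T_{\C\P^2}$), and (ii) the elementary bookkeeping that a one-dimensional value $L_{\rho_j}$ of a filtration $E^{\rho_j}(\bullet)$ singles out one summand of the $\sigma$-decomposition equal to $L_{\rho_j}$. One may phrase the conclusion more geometrically: Klyachko's criterion for $\sigma$ would force $E$ to split compatibly with all filtrations indexed by $\sigma(1)$, hence with at most two distinct jump lines among $\{L_\rho:\rho\in\sigma(1)\}$, and three distinct ones obstruct this.
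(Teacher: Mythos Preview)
Your proof is correct and follows exactly the approach the paper indicates: the paper does not spell out the argument but states that it ``is a direct application of Klyachko's locally freeness criterion applied to the three dimensional cone spanned by the $\rho_i$'s,'' which is precisely what you do. Your observation about the necessity of $n\geq 3$ (and the $T_{\C\P^2}$ counterexample) is a nice addition that the paper leaves implicit.
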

The proof of this lemma can be found in \cite[Proof of Proposition 4]{TipTrGr}, and is a direct application of Klyachko's locally freeness criterion applied to the three dimensional cone spanned by the $\rho_i$'s. Until the end of this section, we will assume that $\cE$ is not locally-free, and derive a formula for its total Chern class
$$
\uc(\cE)=1+\uc_1H+\ldots+\uc_n H^n.
$$

\subsection{Perling's resolution}
\label{sec:perling resolution}
We will use a resolution of $\cE$ by direct sums of line bundles. Such a resolution has been constructed by Perling in \cite[Chapters 5 and 6]{Per03} (who considered equivariant reflexive sheaves over toric varieties in general). Consider the $(n+1)$-dimensional complex vector space
$$
F:=\bigoplus_{\rho\in\Sigma(1)}L_\rho,
$$
where in this sum we don't consider the $L_\rho$'s as subspaces of $\C^2$. For each $\rho$, we have an inclusion map $\iota_\rho : L_\rho \to \C^2$, leading to a surjective morphism 
$$
\begin{array}{cccc}
 \iota : & F & \to & \C^2 \\
          & (x_\rho)_{\rho\in\Sigma(1)} & \mapsto & \displaystyle\sum_{\rho\in\Sigma(1)} \iota_\rho(x_\rho).
\end{array}
$$
Note that surjectivity comes from our hypothesis of $\cE$ being non locally-free, cf \ref{lem:singular three xrhos}. We then consider the rank $(n+1)$ toric sheaf $\cF$ whose family of filtrations is given by $(F,F^\rho(\bullet))_{\rho\in\Sigma(1)}$, where
$$
F^\rho(i)=\left\{
 \begin{array}{ccc}
                  \lbrace 0 \rbrace & \mathrm{ if } & i< a_\rho \\
                 L_\rho & \mathrm{ if } & a_\rho\leq i< b_\rho \\
F & \mathrm{ if } & b_\rho\leq i .
                  \end{array}
                  \right.
$$
Note that the map $\iota$ sends $F^\rho(i)$ to $E^\rho(i)$ for each $(i,\rho)\in \Z\times\Sigma(1)$. From Klyachko's construction, $\iota$ induces a surjective morphism (still denoted $\iota$) from $\cF$ to $\cE$. We leave as an exercise to check that $\cF$ is locally free, using Klyachko's criterion for locally freeness. More precisely, we have
\begin{equation}
 \label{eq:sheaf F}
\cF\simeq \bigoplus_{\rho\in\Sigma(1)}\cO(-a_\rho D_\rho-\sum_{\rho'\in\Sigma(1)\setminus \lbrace \rho\rbrace}b_{\rho'}D_{\rho'}).
\end{equation}
Denote by $\cK$ the kernel of $\iota : \cF\to \cE$. It is a toric sheaf with family of filtrations $(K,K^\rho(\bullet))_{\rho\in\Sigma(1)}$ given by $K=\ker (\iota : F\to E)$ and 
$$
K^\rho(i)=\left\{
 \begin{array}{ccc}
                  \lbrace 0 \rbrace & \mathrm{ if } & i< b_\rho \\
K & \mathrm{ if } & b_\rho\leq i .
                  \end{array}
                  \right.
$$
We obtain that $\cK$ is the rank $(n-1)$ sheaf obtained as $(n-1)$ copies of a line bundle :
\begin{equation}
 \label{eq:sheaf K}
\cK\simeq (\cO(-\sum_{\rho\in\Sigma(1)}b_\rho D_\rho))^{\oplus (n-1)}.
\end{equation}
To conclude, we proved 
\begin{lemma}
 \label{lem:resolution}
 We have a short exact sequence of toric sheaves
 $$
 0\to (\cO(-\sum_{\rho\in\Sigma(1)}b_\rho D_\rho))^{\oplus (n-1)}\to \bigoplus_{\rho\in\Sigma(1)}\cO(-a_\rho D_\rho-\sum_{\rho'\in\Sigma(1)\setminus \lbrace \rho\rbrace}b_{\rho'}D_{\rho'}) \to \cE \to 0.
 $$
\end{lemma}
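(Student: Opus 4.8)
The plan is to realise the asserted sequence as Perling's resolution \cite{Per03}, worked out in the Klyachko--Perling dictionary between toric sheaves and families of filtrations recalled in Section \ref{sec:reflexive equiv sheaves}; the sheaves $\cF$ and $\cK$ and the map $\iota$ have already been introduced in the paragraphs preceding the statement, so the remaining work is to identify the two outer terms explicitly and to check exactness.

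First I would establish the identifications \eqref{eq:sheaf F} and \eqref{eq:sheaf K}. Since the family of filtrations of a direct sum of equivariant sheaves is the direct sum of the families, and since by Example \ref{ex:rank two case} the family of filtrations of $\cO(\sum_\rho d_\rho D_\rho)$ has $\rho$-th filtration jumping from $0$ to $\C$ at $i=-d_\rho$, one reads off directly that the family of filtrations of $\bigoplus_{\rho}\cO(-a_\rho D_\rho-\sum_{\rho'\neq\rho}b_{\rho'}D_{\rho'})$ is exactly $(F,F^\rho(\bullet))$ with $F=\bigoplus_\rho L_\rho$: the $\rho$-th summand contributes a jump at $i=a_\rho$ along the ray $\rho$ and a jump at $i=b_{\rho}$ along every other ray. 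This gives \eqref{eq:sheaf F}; in particular $\cF$ is locally free. Likewise, as $\iota\colon F\to\C^2$ restricts to the inclusion $\iota_\rho$ on each summand $L_\rho$, its kernel $K$ has dimension $(n+1)-2=n-1$ and meets each $L_\rho$ trivially; hence $K\cap F^\rho(i)$ equals $0$ for $i<b_\rho$ and $K$ for $i\geq b_\rho$, which is the family of filtrations of $(\cO(-\sum_\rho b_\rho D_\rho))^{\oplus(n-1)}$, giving \eqref{eq:sheaf K}.

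Next I would assemble the sequence. One checks $\iota(F^\rho(i))=E^\rho(i)$ for all $(\rho,i)$ (the three ranges $i<a_\rho$, $a_\rho\leq i<b_\rho$, $i\geq b_\rho$), so the equivalence produces an equivariant morphism $\iota\colon\cF\to\cE$ whose kernel is the toric sheaf $\cK$ with family of filtrations $(K,K\cap F^\rho(\bullet))$ above (the equivalence carries kernels to kernels whenever the target is torsion-free). Thus $0\to\cK\to\cF\to\cE$ is exact at $\cK$ and at $\cF$, and it remains to prove the surjectivity of $\iota\colon\cF\to\cE$. Since each $U_\sigma$ is affine and the sheaves are $M$-graded, this reduces to checking, for every $\sigma\in\Sigma$ and every $m\in M$, that
$$
\bigcap_{\rho\in\sigma(1)}F^\rho(\langle m,u_\rho\rangle)\ \longrightarrow\ \bigcap_{\rho\in\sigma(1)}E^\rho(\langle m,u_\rho\rangle)
$$
is onto, using that $\iota\colon F\to\C^2$ is onto --- this is where the hypothesis that $\cE$ is not locally free enters, via Lemma \ref{lem:singular three xrhos} --- together with the smoothness of $\Sigma$, which lets one treat the integers $\langle m,u_\rho\rangle$ along a cone as independent parameters.

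I expect this surjectivity to be the main obstacle: equivalently, one must show that $\cK\hookrightarrow\cF$ is a subbundle, so that the sheaf quotient $\cF/\cK$ is already reflexive rather than merely having $\cE$ as its reflexive hull. The delicate behaviour is along the deep torus-invariant strata, where one needs to understand how the lines $L_\rho$ attached to rays of a common cone sit inside $\C^2$. Once this is settled, $0\to\cK\to\cF\to\cE\to 0$ is exact, which is the lemma.
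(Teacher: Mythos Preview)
Your outline is exactly the paper's argument: the paragraphs preceding the lemma construct $\cF$, $\cK$ and $\iota$ via families of filtrations, verify the identifications \eqref{eq:sheaf F} and \eqref{eq:sheaf K}, and then simply assert that $\iota:\cF\to\cE$ is surjective ``from Klyachko's construction''; the lemma is stated as the summary.

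One minor slip: your reformulation ``$\cK\hookrightarrow\cF$ is a subbundle'' is too strong. Since $\cE$ is assumed \emph{not} locally free, the quotient $\cF/\cK\cong\cE$ would be reflexive but never a bundle, so $\cK$ is never a subbundle here. What you actually need is only that $\cF/\cK$ is reflexive, i.e.\ coincides with its reflexive hull $\cE$.

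Your instinct that surjectivity is the crux, governed by how the $L_\rho$ sit inside $\C^2$, is well placed and in fact sharper than the paper's treatment. The map
\[
\iota:\bigcap_{\rho\in\sigma(1)}F^\rho(\langle m,u_\rho\rangle)\longrightarrow\bigcap_{\rho\in\sigma(1)}E^\rho(\langle m,u_\rho\rangle)
\]
fails to be onto precisely when two distinct rays $\rho,\rho'\in\sigma(1)$ satisfy $a_\rho\leq\langle m,u_\rho\rangle<b_\rho$, $a_{\rho'}\leq\langle m,u_{\rho'}\rangle<b_{\rho'}$ and $L_\rho=L_{\rho'}$ in $\C^2$: the left side is then $0$ (intersection of distinct direct summands of $F$) while the right side contains the line $L_\rho$. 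Hence the sequence is short exact exactly when the lines $L_\rho$ with $c_\rho>0$ are pairwise distinct. The non--locally-free hypothesis guarantees at least three distinct lines via Lemma \ref{lem:singular three xrhos}, but does not rule out further repetitions; the paper does not address this, though its later applications always choose pairwise distinct lines.
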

\begin{remark}
 The result of Lemma \ref{lem:resolution} actually holds true on any smooth toric variety.
\end{remark}

\subsection{The Chern polynomial}
\label{sec:chern polynomial}
From the previous section, we can compute the total Chern class of $\cE$. We keep the notations for the families of filtrations of $\cE$, cf Equation \eqref{eq:family filtrations E}. We will set $$c_\rho:=b_\rho-a_\rho\in\N$$ for each $\rho\in\Sigma(1)$, together with
$$
a:=\sum_{\rho\in \Sigma(1)} a_\rho,\: b:=\sum_{\rho\in \Sigma(1)} b_\rho,\: \mathrm{and} \:
c:=\sum_{\rho\in \Sigma(1)} c_\rho.
$$
\begin{proposition}
 \label{prop:chern class}
 The total Chern class of $\cE$ is given by the following expression in the ring $\Z[H]/\langle H^{n+1}\rangle$ :
 \begin{equation}
  \label{eq:total Chern class}
 \displaystyle \uc(\cE)=(1-b H)^2 \prod_{\rho\in\Sigma(1)}\left(1+ \frac{c_\rho H}{1-bH}\right)
 \end{equation}
\end{proposition}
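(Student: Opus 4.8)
The plan is to exploit the short exact sequence of Lemma \ref{lem:resolution} together with the Whitney sum formula for Chern classes, which is valid in the Chow ring (hence in $H^{2\bullet}(\P^n,\Z)\simeq\Z[H]/\langle H^{n+1}\rangle$). Since $\cK$ and $\cF$ are direct sums of line bundles whose first Chern classes we have computed explicitly in Equations \eqref{eq:sheaf K} and \eqref{eq:sheaf F}, the sequence
$$
0\to\cK\to\cF\to\cE\to 0
$$
gives $\uc(\cE)=\uc(\cF)/\uc(\cK)$, so it remains to identify this ratio with the claimed product.

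First I would record that, since $D_\rho$ is the divisor class associated to the ray $\rho$ and all $D_\rho$ are linearly equivalent to the hyperplane class $H$ in $\Pic(\P^n)$, we have $\uc_1(\cO(-\sum_\rho b_\rho D_\rho))=-bH$, so $\uc(\cK)=(1-bH)^{n-1}$. Similarly, for the $\rho$-th summand of $\cF$ one has
$$
\uc_1\Big(\cO(-a_\rho D_\rho-\sum_{\rho'\neq\rho}b_{\rho'}D_{\rho'})\Big)=-\big(a_\rho+(b-b_\rho)\big)H=-(b-c_\rho)H,
$$
using $a_\rho=b_\rho-c_\rho$. Hence $\uc(\cF)=\prod_{\rho\in\Sigma(1)}\big(1-(b-c_\rho)H\big)$. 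Dividing,
$$
\uc(\cE)=\frac{\prod_{\rho\in\Sigma(1)}\big(1-(b-c_\rho)H\big)}{(1-bH)^{n-1}}.
$$
Since $\vert\Sigma(1)\vert=n+1$, the numerator has $n+1$ factors and the denominator $n-1$, so I would write $(1-bH)^{n-1}=(1-bH)^{n+1}/(1-bH)^2$ to match the counts, obtaining
$$
\uc(\cE)=(1-bH)^2\prod_{\rho\in\Sigma(1)}\frac{1-(b-c_\rho)H}{1-bH}
=(1-bH)^2\prod_{\rho\in\Sigma(1)}\Big(1+\frac{c_\rho H}{1-bH}\Big),
$$
which is exactly Equation \eqref{eq:total Chern class}.

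The main point requiring a little care is the legitimacy of dividing by $\uc(\cK)$: one must note that $\uc(\cK)=(1-bH)^{n-1}$ is a unit in $\Z[H]/\langle H^{n+1}\rangle$ since its constant term is $1$, so the quotient $\uc(\cF)\cdot\uc(\cK)^{-1}$ is well-defined, and the Whitney formula $\uc(\cF)=\uc(\cK)\cdot\uc(\cE)$ then yields the stated expression unambiguously. The only other thing to verify is that the factorizations above hold in the truncated polynomial ring and not merely formally, but since each factor is a polynomial in $H$ and we are simply rewriting the same element of $\Z[H]/\langle H^{n+1}\rangle$, no convergence or truncation subtlety arises. I do not anticipate a genuine obstacle here; the computation is essentially bookkeeping once Lemma \ref{lem:resolution} is in hand, and the slightly unusual feature — the appearance of the exponent $2$ rather than $n-1$ — is just an artifact of rebalancing the number of linear factors between numerator and denominator, which is why the statement is phrased the way it is.
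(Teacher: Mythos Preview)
Your proof is correct and follows essentially the same approach as the paper: both use the short exact sequence of Lemma \ref{lem:resolution}, compute $\uc(\cF)=\prod_{\rho}(1-(b-c_\rho)H)$ and $\uc(\cK)=(1-bH)^{n-1}$, and take the quotient. You spell out the final algebraic rearrangement (matching the $n+1$ factors in the numerator against $n-1$ in the denominator to produce the factor $(1-bH)^2$) a bit more explicitly than the paper, which simply says ``the result follows,'' but the argument is the same.
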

\begin{proof}
 Using the short exact sequence
 $$
 0\to\cK\to\cF\to\cE\to 0
 $$
 from Lemma \ref{lem:resolution}, we obtain
 $$
 \uc(\cE)=\frac{\uc(\cF)}{\uc(\cK)}
 $$
 (note that all total Chern classes satisfy $\uc_0=1$, hence are invertible modulo $H^{n+1}$). From the Equation \eqref{eq:sheaf F} we obtain
 $$
 \begin{array}{ccc}
 \uc(\cF) & = & \displaystyle\prod_{\rho\in\Sigma(1)}(1-(a_\rho+\sum_{\rho'\neq \rho}b_{\rho'})H)\\
          & = &\displaystyle \prod_{\rho\in\Sigma(1)}(1-(b-c_\rho)H).
 \end{array}
 $$
 On the other hand, with \eqref{eq:sheaf K}, we have
 $$
 \uc(\cK)=(1-bH)^{n-1},
 $$
 and the result follows.
\end{proof}
Using
$$
\frac{1}{1-bH}=1+bH+b^2H^2+\ldots+b^nH^n,
$$
the following corollary is then straightforward.
\begin{corollary}
 \label{cor:first chern classes}
 We have
 $$
 \uc_1=-(a+b),\: \uc_2=ab+\sum_{\underset{\tau=\rho+\rho'}{\tau\in\Sigma(2)}}c_{\rho}c_{\rho'}
 $$
 and 
 $$
 \uc_3=\sum_{\underset{\tau=\rho+\rho'+\rho''}{\tau\in\Sigma(3)}}c_{\rho}c_{\rho'}c_{\rho''}\geq 0.
 $$
\end{corollary}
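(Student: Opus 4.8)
The plan is to read off $\uc_1,\uc_2,\uc_3$ by expanding the closed formula \eqref{eq:total Chern class} of Proposition~\ref{prop:chern class} to order $H^3$ in $\Z[H]/\langle H^{n+1}\rangle$ (so we tacitly take $n\geq 3$), using the geometric series for $(1-bH)^{-1}$ displayed just before the statement.

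First I would set $t:=H/(1-bH)=H+bH^2+b^2H^3+O(H^4)$, so that the product in \eqref{eq:total Chern class} becomes $\prod_{\rho\in\Sigma(1)}(1+c_\rho t)$. The combinatorial fact that makes the answer clean is that in the fan of $\C\P^n$ every $k$-element set of rays spans a $k$-dimensional cone (any $k\le n$ of the vectors $e_0,\dots,e_n$ are linearly independent), so for $k\in\{1,2,3\}$ the cones of $\Sigma(k)$ are exactly the $k$-subsets of $\Sigma(1)$. Hence, writing $e_1,e_2,e_3$ for the elementary symmetric functions in the $c_\rho$'s,
$$\prod_{\rho\in\Sigma(1)}(1+c_\rho t)=1+e_1\,t+e_2\,t^2+e_3\,t^3+O(t^4),$$
with $e_1=c$, and with $e_2=\sum_{\tau\in\Sigma(2)}c_\rho c_{\rho'}$ and $e_3=\sum_{\tau\in\Sigma(3)}c_\rho c_{\rho'}c_{\rho''}$ exactly the sums appearing in the statement (the rays $\rho,\rho',\rho''$ in each term being those spanning $\tau$). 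Then I substitute $t=H+bH^2+b^2H^3+\cdots$, $t^2=H^2+2bH^3+\cdots$, $t^3=H^3+\cdots$, multiply through by $(1-bH)^2=1-2bH+b^2H^2$, and collect the coefficients of $H^1,H^2,H^3$.

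The only identity needed is $c=b-a$, immediate from $c_\rho=b_\rho-a_\rho$. With it, the coefficient of $H$ comes out as $e_1-2b=c-2b=-(a+b)$; the coefficient of $H^2$ as $e_2-be_1+b^2=e_2-b(b-a)+b^2=e_2+ab$; and in the coefficient of $H^3$ all $b$-dependent contributions cancel, since $e_1 b^2$ occurs with total coefficient $1-2+1=0$ and $be_2$ with total coefficient $2-2=0$, leaving exactly $e_3$. Finally $\uc_3=e_3\ge 0$ because every $c_\rho=b_\rho-a_\rho\ge 0$ (as $a_\rho\le b_\rho$), so $e_3$ is a sum of non-negative terms.

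I do not expect a genuine obstacle here — it is a finite computation — but the one step worth double-checking is the complete vanishing of the $b$-terms in degree $3$: this is precisely what makes $\uc_3$ a combinatorial invariant of the configuration $(c_\rho)_{\rho\in\Sigma(1)}$ alone, independent of the overall normalisation, in agreement with Proposition~\ref{prop:intro chern classes reflexive case}.
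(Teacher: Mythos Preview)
Your proposal is correct and follows exactly the approach the paper indicates: the paper's own proof is simply the remark that the corollary is ``straightforward'' from Proposition~\ref{prop:chern class} once one uses the geometric series expansion of $(1-bH)^{-1}$, and you have carried out precisely this expansion. Your identification of the elementary symmetric functions $e_k$ with the sums over $\Sigma(k)$ (since for $\C\P^n$ every $k$-subset of rays spans a $k$-cone when $k\le n$) and your careful verification of the cancellation in degree~$3$ are exactly what is needed.
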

The above formula for $c_3$ provides the following, that was already observed by Hartshorne in the case of (non necessarily equivariant) reflexive sheaves of rank $2$ on $\C\P^3$ (\cite[Proposition 2.6]{Har80}) :
\begin{corollary}
 \label{cor:loc free iff c3 vanishes}
 A rank $2$ toric sheaf is locally free if and only if its third Chern class vanishes.
\end{corollary}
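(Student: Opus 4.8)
The plan is to combine the Chern class formula of Corollary~\ref{cor:first chern classes} with the characterisation of local freeness for rank $2$ toric sheaves that is implicit in the discussion preceding Lemma~\ref{lem:singular three xrhos}: a rank $2$ toric sheaf $\cE$ is locally free if and only if the set $\{L_\rho \mid \rho\in\Sigma(1),\ c_\rho>0\}$ contains at most two distinct lines. The statement being trivial for $n\leq 2$ (every reflexive sheaf on $\P^n$ is then locally free and the third Chern class vanishes for dimension reasons), I assume $n\geq 3$ from now on.

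For the direction ``locally free $\Rightarrow \uc_3=0$'', if $\cE$ is locally free then by Kaneyama's splitting theorem (recalled in Example~\ref{ex:rank two case}) one has $\cE\simeq\cO(D_1)\oplus\cO(D_2)$ for $T$-invariant Cartier divisors $D_1,D_2$. Since each $\cO(D_i)$ is a line bundle, $\uc(\cO(D_i))=1+\uc_1(\cO(D_i))$ is of degree $1$ in $H$, so $\uc(\cE)=\uc(\cO(D_1))\,\uc(\cO(D_2))$ has no term in $H^k$ for $k\geq 3$; in particular $\uc_3=0$.

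For the converse I argue by contraposition: suppose $\cE$ is not locally free and show $\uc_3\neq 0$. Because $\cE$ is not locally free, the running hypothesis under which Corollary~\ref{cor:first chern classes} was proved holds, so $\uc_3$ equals the sum, over the three-dimensional cones $\tau=\rho+\rho'+\rho''\in\Sigma(3)$, of the nonnegative integers $c_\rho c_{\rho'} c_{\rho''}$. It therefore suffices to exhibit one $\tau\in\Sigma(3)$ whose three rays all satisfy $c_\rho>0$. Now non-local-freeness forces the presence of three rays $\rho_1,\rho_2,\rho_3$ with pairwise distinct lines $L_{\rho_1},L_{\rho_2},L_{\rho_3}$ and $c_{\rho_i}>0$: this is the converse of Lemma~\ref{lem:singular three xrhos}, which follows either from Klyachko's local freeness criterion applied to the cones of $\Sigma$ (exactly as in the proof of that lemma), or by reversing the computation of Example~\ref{ex:rank two case}, namely: if at most two such lines occurred, then after choosing a basis of $\C^2$ adapted to them one reads off divisors $D_1,D_2$ whose sum $\cO(D_1)\oplus\cO(D_2)$ has the same family of filtrations as $\cE$, forcing $\cE$ to split. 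Since $n\geq 3$, the rays $\rho_1,\rho_2,\rho_3$ span a cone $\tau_0=\rho_1+\rho_2+\rho_3\in\Sigma(3)$, whence $\uc_3\geq c_{\rho_1}c_{\rho_2}c_{\rho_3}>0$.

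The only genuinely new ingredient, hence the main obstacle, is the converse of Lemma~\ref{lem:singular three xrhos}: that having at most two distinct lines $L_\rho$ over the rays with $c_\rho>0$ suffices for $\cE$ to be locally free. On a smooth fan this is a short check of Klyachko's compatibility criterion cone by cone (on each $\sigma$, decompose $\C^2$ into the at most two lines occurring on the rays of $\sigma$, complete the decomposition arbitrarily, and use that the generators of $\sigma$ form part of a $\Z$-basis of $N$ to produce the required characters), but it is the step not spelled out in the excerpt, so I would either include this verification or invoke it from \cite{Kly90,TipTrGr}.
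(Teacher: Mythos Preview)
Your proof is correct and follows essentially the same route as the paper's: contrapose, use the converse of Lemma~\ref{lem:singular three xrhos} to extract three rays with $c_\rho>0$, and read off $\uc_3>0$ from the formula in Corollary~\ref{cor:first chern classes}. You are in fact more careful than the paper on the one genuine point: the paper cites Lemma~\ref{lem:singular three xrhos} for the implication ``not locally free $\Rightarrow$ at least three rays with $c_\rho\neq 0$'', which is the \emph{converse} of that lemma; you correctly flag this and sketch how to obtain it (reverse Example~\ref{ex:rank two case}, or check Klyachko's criterion directly).
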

\begin{proof}
 From Lemma \ref{lem:singular three xrhos}, if $\cE$ is a non locally free toric sheaf of rank $2$ (with the previous notations), there are at least three different rays $\lbrace \rho,\rho',\rho''\rbrace\subset\Sigma(1)$ with $c_\rho, c_{\rho'}, c_{\rho''}$ non zero. Then, from the formula in Corollary \ref{cor:first chern classes}, we see that $\uc_3 >0$. On the other hand, if a rank $2$ sheaf is locally free, it is well known that its $k$-th Chern class vanishes for $k\geq 3$. 
\end{proof}
Another well-known fact that we recover easily from our computations is the Bogomolov-Gieseker inequality:
\begin{corollary}
\label{cor:BogGies}
Assume that $\cE$ is semistable. Then
$$
\Delta(\cE)=4\uc_2-\uc_1^2\geq 0.
$$
\end{corollary}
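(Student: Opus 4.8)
The plan is to feed the explicit expressions for $\uc_1$ and $\uc_2$ from Corollary~\ref{cor:first chern classes} into $\Delta(\cE)=4\uc_2-\uc_1^2$, simplify using the combinatorics of the fan of $\P^n$, and then close the inequality using the stability criterion of Corollary~\ref{cor:slope stable rank two}. As in the rest of this section I would work under the hypothesis that $\cE$ is not locally free (if it is, then for $n\geq 3$ it splits as a sum of two line bundles by Example~\ref{ex:rank two case}, and semistability forces the two summands to have equal first Chern class, so $\Delta(\cE)=0$), keeping the notations $c_\rho=b_\rho-a_\rho\geq 0$, $c=\sum_{\rho}c_\rho$, $a=\sum_\rho a_\rho$, $b=\sum_\rho b_\rho$.

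First I would substitute $\uc_1=-(a+b)$ and $\uc_2=ab+\sum_{\tau=\rho+\rho'\in\Sigma(2)}c_\rho c_{\rho'}$ into $\Delta(\cE)$. The terms $4ab$ and $-(a+b)^2$ combine into $-(a-b)^2=-c^2$, since $a-b=\sum_\rho(a_\rho-b_\rho)=-c$. Because in the fan of $\P^n$ every pair of distinct rays spans a two-dimensional cone, one has $\sum_{\tau\in\Sigma(2)}c_\rho c_{\rho'}=\sum_{\{\rho,\rho'\}\subset\Sigma(1)}c_\rho c_{\rho'}=\tfrac12\big(c^2-\sum_{\rho}c_\rho^2\big)$. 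Putting these together collapses everything to the clean identity $\Delta(\cE)=c^2-2\sum_{\rho\in\Sigma(1)}c_\rho^2$.

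Finally I would bring in semistability, but applied only after grouping the rays by the line $L_\rho\subset\C^2$ that each carries. Let $L^{(1)},\dots,L^{(s)}$ be the distinct such lines and set $d_k=\sum_{L_\rho=L^{(k)}}c_\rho$, so that $\sum_k d_k=c$. Non-negativity of the $c_\rho$ gives $\sum_{L_\rho=L^{(k)}}c_\rho^2\leq d_k^2$ for each $k$, hence $\sum_{\rho}c_\rho^2\leq\sum_k d_k^2\leq(\max_k d_k)\sum_k d_k=c\max_k d_k$. Corollary~\ref{cor:slope stable rank two} applied with $L=L^{(k)}$ yields $d_k\leq c-d_k$, i.e.\ $d_k\leq c/2$, for every $k$; therefore $\sum_\rho c_\rho^2\leq c^2/2$, and $\Delta(\cE)=c^2-2\sum_\rho c_\rho^2\geq 0$.

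The only genuinely non-routine point, and the step I would expect to be the small obstacle, is recognizing that one must group the rays by their associated line before invoking stability: the naive per-ray bound $c_\rho\leq c/2$ is false as soon as two rays carry the same line, whereas the grouped bound $d_k\leq c/2$ is precisely the semistability inequality of Corollary~\ref{cor:slope stable rank two} and is exactly strong enough to control $\sum_\rho c_\rho^2$ by $c^2/2$. Everything else is a direct substitution and the elementary inequality $\sum_k d_k^2\leq(\max_k d_k)\sum_k d_k$.
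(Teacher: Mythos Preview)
Your proof is correct and follows essentially the same route as the paper's: both reduce to the inequality $\sum_\rho c_\rho^2\leq c^2/2$ (equivalently $\sum_\rho c_\rho^2\leq 2\sum_{\tau\in\Sigma(2)}c_\rho c_{\rho'}$) and close it with the semistability criterion of Corollary~\ref{cor:slope stable rank two}; the paper normalizes to $a_\rho=0$ first and multiplies the stability inequality by $c_\rho$ before summing, while you compute $\Delta$ directly and use the chain $\sum_\rho c_\rho^2\leq\sum_k d_k^2\leq c\max_k d_k\leq c^2/2$.

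One small inaccuracy in your closing commentary: the per-ray bound $c_\rho\leq c/2$ is \emph{not} false --- taking $L=L_\rho$ in Corollary~\ref{cor:slope stable rank two} gives $c_\rho\leq\sum_{L_{\rho'}=L_\rho}c_{\rho'}\leq c-\sum_{L_{\rho'}=L_\rho}c_{\rho'}\leq c-c_\rho$; combined with $\sum_\rho c_\rho^2\leq c\max_\rho c_\rho$ this already suffices, so the grouping by line, while correct, is not actually the obstacle you describe.
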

\begin{proof}
 As $\Delta(\cE)$ is invariant under tensoring $\cE$ by a line bundle, we may assume, tensoring by $\cO(\sum_{\rho\in\Sigma(1)} a_\rho D_\rho )$, that $a_\rho=0$ for each $\rho\in\Sigma(1)$. Then, for each $\rho\in\Sigma(1)$, $b_\rho=c_\rho$, and $a=0$, $\uc_1=- b=-c$, $\uc_2=\sum_{\underset{\tau=\rho+\rho'}{\tau\in\Sigma(2)}}c_{\rho}c_{\rho'}$.
 Hence,
 $$
 \Delta(\cE)=2\sum_{\underset{\tau=\rho+\rho'}{\tau\in\Sigma(2)}}c_{\rho}c_{\rho'}-\sum_{\rho\in\Sigma(1)} c_\rho^2.
 $$
 From Section \ref{sec:slope stability}, slope semistability implies that for all $\rho\in\Sigma(1)$,
 $$
 \sum_{L_{\rho'}=L_\rho} c_{\rho'}\leq \sum_{L_{\rho'}\neq L_\rho} c_{\rho'}.
 $$
 Multiplying by $c_\rho$ gives
 $$
 c_\rho^2\leq \sum_{L_{\rho'}=L_\rho} c_\rho c_{\rho'}\leq \sum_{L_{\rho'}\neq L_\rho} c_\rho c_{\rho'}.
 $$
 Summing over $\rho\in\Sigma(1)$ leads to 
 $$
 \sum_{\rho\in\Sigma(1)}  c_{\rho}^2\leq \sum_{\rho\in\Sigma(1)} \sum_{L_{\rho'}\neq L_\rho} c_\rho c_{\rho'}\leq \sum_{\rho\in\Sigma(1)} \sum_{\rho'\neq \rho} c_\rho c_{\rho'}.
$$
 As the right hand term in this last inequality is twice the sum $\sum_{\underset{\tau=\rho+\rho'}{\tau\in\Sigma(2)}}c_{\rho}c_{\rho'}$, the result follows.
\end{proof}
We now give formulae for higher Chern classes. We first introduce for any integer $k\in \N^*$,
$$
s_k:=\sum_{\sigma\in\Sigma(k)} \prod_{\rho\in\sigma(1)}c_\rho.
$$
In particular, we have $\uc_2=ab+s_2$ and $\uc_3=s_3$. More generally,
\begin{lemma}
 \label{lem:chern classes general formula}
 Let $k\geq 3$. Then
 \begin{equation}
  \label{eq:chern classes general}
 \uc_k=\sum_{i=0}^{k-3} \binom{k-3}{i} s_{i+3}\,b^{k-(i+3)}.
 \end{equation}

\end{lemma}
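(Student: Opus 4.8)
The plan is to extract the formula directly from the closed expression of Proposition~\ref{prop:chern class} by expanding the finite product into powers of $H$. Introduce the auxiliary element $t := \dfrac{H}{1-bH} \in \Z[H]/\langle H^{n+1}\rangle$; since $t = H + bH^2 + \cdots$ it is $H$ times a unit, so $t^k \equiv 0 \pmod{H^{n+1}}$ whenever $k > n$. With this notation Equation~\eqref{eq:total Chern class} reads
$$
\uc(\cE) = (1-bH)^2\prod_{\rho\in\Sigma(1)}\bigl(1 + c_\rho\, t\bigr),
$$
and expanding the product gives $\prod_{\rho\in\Sigma(1)}(1 + c_\rho t) = \sum_{k\ge 0} e_k\, t^k$, where $e_k$ denotes the $k$-th elementary symmetric polynomial in the family $(c_\rho)_{\rho\in\Sigma(1)}$. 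The one point that deserves a justification is that $e_k = s_k$ for $1 \le k \le n$: in the fan of $\C\P^n$ every proper subset of the $n+1$ rays spans a cone, so for $k \le n$ the sum over all $k$-element subsets of $\Sigma(1)$ appearing in $e_k$ is exactly the sum over $\Sigma(k)$ appearing in $s_k$. Since the terms $t^k$ with $k > n$ are killed modulo $H^{n+1}$, we may therefore replace $e_k$ by $s_k$ everywhere and obtain, in $\Z[H]/\langle H^{n+1}\rangle$,
$$
\uc(\cE) = \sum_{k=0}^{n} s_k\, t^k\,(1-bH)^2 = \sum_{k=0}^{n} s_k\, H^k\,(1-bH)^{2-k}.
$$

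Next I would read off the coefficient of $H^k$ for a fixed $k$ with $3 \le k \le n$. The three summands with index $0,1,2$ contribute $(1-bH)^2$, $s_1 H(1-bH)$ and $s_2 H^2$ respectively, each of degree at most $2$ in $H$, hence nothing to $\uc_k$. For index $j$ with $3 \le j \le k$ I would use the negative binomial expansion $(1-bH)^{2-j} = (1-bH)^{-(j-2)} = \sum_{l\ge 0}\binom{j-3+l}{l} b^l H^l$, so that the coefficient of $H^k$ in $s_j H^j (1-bH)^{2-j}$ equals $s_j\binom{k-3}{k-j} b^{k-j}$. Summing over $j$, substituting $j = i+3$ and using $\binom{k-3}{k-j} = \binom{k-3}{j-3}$, this yields
$$
\uc_k = \sum_{i=0}^{k-3}\binom{k-3}{i}\, s_{i+3}\, b^{k-(i+3)},
$$
which is precisely \eqref{eq:chern classes general}.

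I do not expect any genuine obstacle: the argument is a bookkeeping exercise on the generating function of Proposition~\ref{prop:chern class}, the only slightly delicate point being the identification $e_k = s_k$ for $k \le n$ together with the (harmless) observation that the higher elementary symmetric polynomials are irrelevant modulo $H^{n+1}$. As a sanity check, the case $k = 3$ of the formula reads $\uc_3 = s_3$, in agreement with Corollary~\ref{cor:first chern classes}.
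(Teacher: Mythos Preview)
Your argument is correct. Both you and the paper extract the coefficient of $H^k$ from the product formula of Proposition~\ref{prop:chern class}, but the bookkeeping is organized differently. The paper first expands each factor $1+c_\rho H/(1-bH)$ as a power series in $H$, multiplies everything out to obtain the coefficients $p_j$ of $p(H)$, writes $\uc_k=p_k-2b\,p_{k-1}+b^2p_{k-2}$, and then simplifies via repeated applications of Pascal's rule. Your route---introducing $t=H/(1-bH)$, recognizing the product as $\sum_k s_k\,t^k$, and then reading off the coefficient of $H^k$ via the negative binomial expansion of $(1-bH)^{2-j}$---reaches the same formula more directly and avoids the telescoping identities. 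The only substantive point you needed beyond the paper's setup is the identification $e_k=s_k$ for $k\le n$ (specific to the fan of $\C\P^n$) together with the observation that $t^{n+1}\equiv 0$, both of which you handle correctly.
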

\begin{proof}
 We first set 
 $$
 p(H)=\prod_{\rho\in\Sigma(1)}\left(1+ \frac{c_\rho H}{1-bH}\right),
 $$
 so that 
 \begin{equation}
  \label{eq:chern and p}
 \uc(\cE)=(1-2bH+b^2H^2)p(H).
 \end{equation}
 Then we have
 $$
 \begin{array}{ccc}
 p(H) & = & \displaystyle\prod_{\Sigma(1)}(1+c_\rho H+c_\rho b H^2 +c_\rho b^2 H^3 + c_\rho b^3 H^4+ \ldots) \\
      & = &\displaystyle 1 + \sum_{j \geq 1 } \left(\sum_{i=1}^j \binom{j-1}{i-1}s_ib^{j-i} \right) H^j.
 \end{array}
$$
Introduce the notation 
$$
p(H)=1+p_1 H +p_2H^2+\ldots +p_n H^n,
$$
then from \eqref{eq:chern and p} we deduce for $k\geq 3$ :
$$
\uc_k=p_k-2b\,p_{k-1}+b^2\,p_{k-2}.
$$
Hence 
$$
\uc_k=\sum_{i=1}^k \binom{k-1}{i-1}s_i\,b^{k-i}-2b\sum_{i=1}^{k-1}\binom{k-2}{i-1}s_i\,b^{k-1-i}+b^2\sum_{i=1}^{k-2}\binom{k-3}{i-1} s_i\, b^{k-2-i}.
$$
Then,
\begin{eqnarray*}
\uc_k&=&\sum_{i=1}^k \binom{k-1}{i-1}s_i\,b^{k-i}-\sum_{i=1}^{k-1}\binom{k-2}{i-1}s_i\,b^{k-i}\\
 & & +\sum_{i=1}^{k-2}\binom{k-3}{i-1} s_i\, b^{k-i}-\sum_{i=1}^{k-1}\binom{k-2}{i-1}s_i\,b^{k-i}\\
 & = & s_k + \sum_{i=2}^{k-1} \left( \binom{k-1}{i-1}-\binom{k-2}{i-1}\right)s_i\,b^{k-i} \\
  & & + \sum_{i=2}^{k-2} \left( \binom{k-3}{i-1}-\binom{k-2}{i-1}\right)s_i\,b^{k-i} -s_{k-1}\,b\\
  & =& s_k +(k-3)s_{k-1}\,b + \sum_{i=2}^{k-2} \left( \binom{k-2}{i-2}-\binom{k-3}{i-2}\right)s_i\,b^{k-i} \\
  & =& \sum_{i=0}^{k-3} \binom{k-3}{i} s_{i+3}\,b^{k-(i+3)},
\end{eqnarray*}
where we made intensive use of Pascal's rule for the binomial coefficients.
\end{proof}

\subsection{Normal forms}
\label{sec:normal forms}
One may consider, up to tensoring by a line bundle, two natural normalisations for $\cE$. The first one sets $a_\rho=0$ for all $\rho\in\Sigma(1)$. Up to tensoring $\cE$ by $\cO(\sum_{\rho}a_\rho\, D_\rho)$ (which doesn't affect slope (semi)stability)), we may assume that $a_\rho=0$ for all $\rho\in\Sigma(1)$, see \cite[Remark 2.2.15]{DDK20}. We then have 
$$
 E^\rho(i)=\left\{
 \begin{array}{ccc}
                  \lbrace 0 \rbrace & \mathrm{ if } & i< 0 \\
                  L_\rho & \mathrm{ if } & 0\leq i< b_\rho \\
\C^2 & \mathrm{ if } & b_\rho\leq i .
                  \end{array}
                  \right.
 $$
 For the second one, tensoring by $\cO(\sum_{\rho}a_\rho\, D_\rho)$ instead, we may assume that $b_\rho=0$ for all $\rho\in\Sigma(1)$. In the first case, we obtain the following inequalities.
 \begin{lemma}
  \label{lem:inequalities}
  Assume that $\cE$ is normalized so that $a_\rho=0$ for all $\rho\in\Sigma(1)$. Then for any $k\geq 3$,
  $$
  \sum_{i=0}^{k-3} \binom{k-3}{i}\uc_{i+3}\uc_1^{k-i-3}\geq 0.
  $$
 \end{lemma}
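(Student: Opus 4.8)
The plan is to show that, under the normalisation $a_\rho=0$, the left-hand side is in fact equal to
$$
s_k=\sum_{\sigma\in\Sigma(k)}\prod_{\rho\in\sigma(1)}c_\rho,
$$
which is a sum of products of elements of $\N$ and hence non-negative. So the whole lemma reduces to proving the identity
$$
\sum_{i=0}^{k-3}\binom{k-3}{i}\,\uc_{i+3}\,\uc_1^{\,k-i-3}=s_k \qquad (3\le k\le n).
$$
First I record what the normalisation gives: since $a_\rho=0$ one has $b_\rho=c_\rho$ for every ray, hence $a=0$, $b=c=\sum_\rho c_\rho$, and Corollary \ref{cor:first chern classes} yields $\uc_1=-(a+b)=-b$. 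Thus $b=-\uc_1$, which is precisely what allows powers of $b$ to be traded for powers of $\uc_1$, the emerging sign $(-1)^{k-i-3}$ cancelling the one already there.

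For the identity itself I would run a short generating-function computation based on Proposition \ref{prop:chern class}, namely $\uc(\cE)=(1-bH)^2\prod_{\rho\in\Sigma(1)}\bigl(1+\tfrac{c_\rho H}{1-bH}\bigr)$. Perform the substitution $H=t/(1+bt)$; this is legitimate as $t/(1+bt)$ is a power series in $t$ with no constant term, and one computes $1-bH=(1+bt)^{-1}$ and $\tfrac{c_\rho H}{1-bH}=c_\rho t$, whence
$$
(1+bt)^2\cdot\uc(\cE)|_{H=t/(1+bt)}=\prod_{\rho\in\Sigma(1)}(1+c_\rho t).
$$
Now compare the coefficients of $t^k$ on the two sides. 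On the right it is $s_k$ by definition. On the left, write $\uc(\cE)=\sum_j \uc_j H^j$ and expand $(1+bt)^{2-j}=\sum_{m\ge 0}(-1)^m\binom{j-3+m}{j-3}(bt)^m$ for $j\ge 3$, noting that the terms $j=0,1,2$ contribute nothing in degree $\ge 3$; the coefficient of $t^k$ then comes out as $\sum_{j=3}^{k}\binom{k-3}{j-3}\uc_j(-b)^{k-j}$, and replacing $b$ by $-\uc_1$ and reindexing $j=i+3$ recovers the left-hand side of the asserted identity. This establishes it, and hence the lemma.

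I do not expect any genuinely hard step here: once Proposition \ref{prop:chern class} is in hand the argument is a two-line formal power-series manipulation, the only delicate point being the bookkeeping of the negative binomial coefficients and the cancellation of signs, together with the routine check that the $j=0,1,2$ terms play no role for $k\ge 3$. Should one prefer to avoid the substitution, an equivalent route is to insert the closed formula $\uc_{i+3}=\sum_{l=3}^{i+3}\binom{i}{l-3}\,s_l\,b^{i+3-l}$ from Lemma \ref{lem:chern classes general formula} into the sum, interchange the two summations, and simplify using $\binom{k-3}{j-3}\binom{j-3}{l-3}=\binom{k-3}{l-3}\binom{k-l}{j-l}$ followed by $\sum_{m=0}^{k-l}(-1)^{k-l-m}\binom{k-l}{m}=(1-1)^{k-l}$; the inner sum vanishes unless $l=k$, again leaving $s_k$. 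Either way, the non-negativity is immediate from $c_\rho\in\N$.
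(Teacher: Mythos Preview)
Your proposal is correct. The paper's own proof is terse: it simply asserts that the identity $s_k=\sum_{i=0}^{k-3}\binom{k-3}{i}\uc_{i+3}\uc_1^{\,k-i-3}$ follows ``by a similar proof as in Lemma \ref{lem:chern classes general formula}, by induction'' from the relation $\uc_k=\sum_i\binom{k-3}{i}s_{i+3}\,b^{k-i-3}$ (using $b=-\uc_1$ under the normalisation), and then concludes from $s_k\geq 0$. Your second route---inserting the formula of Lemma \ref{lem:chern classes general formula} and collapsing via $\binom{k-3}{j-3}\binom{j-3}{l-3}=\binom{k-3}{l-3}\binom{k-l}{j-l}$ together with the binomial vanishing $\sum_m(-1)^m\binom{k-l}{m}=0$ for $l<k$---is exactly this inversion written out in closed form rather than by induction. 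Your first route, the substitution $H=t/(1+bt)$ converting Proposition \ref{prop:chern class} into $(1+bt)^2\,\uc(\cE)|_{H=t/(1+bt)}=\prod_\rho(1+c_\rho t)$, is a genuinely different and slicker packaging: a single change of variable replaces all the binomial bookkeeping and makes the appearance of the elementary symmetric polynomials $s_k$ immediate (the identity of Proposition \ref{prop:chern class} is valid modulo $H^{n+1}$, and since $t/(1+bt)$ has $t$-adic valuation $1$ the substituted identity holds modulo $t^{n+1}$, which suffices for $3\leq k\leq n$). Both arguments are complete; the generating-function one buys you a proof free of induction.
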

\begin{proof}
 By a similar proof as in Lemma \ref{lem:chern classes general formula}, one obtains by induction from Equation \eqref{eq:chern classes general} :
 $$
 s_k=\sum_{i=0}^{k-3} \binom{k-3}{i}\uc_{i+3}\uc_1^{k-i-3}.
 $$
 As $s_k\geq 0$ by definition, the result follows.
\end{proof}
With the second normalisation, the total Chern class becomes quite transparent from the data (the proof follows directly from Lemma \ref{lem:chern classes general formula}) :
\begin{corollary}
 \label{cor:chern classes normalized}
 Assume that $\cE$ is normalized so that $b_\rho=0$ for all $\rho\in\Sigma(1)$. Then for any $k\in \N^*$,
 $$
 \uc_k=s_k=\sum_{\sigma\in\Sigma(k)} \prod_{\rho\in\sigma(1)}c_\rho.
 $$
\end{corollary}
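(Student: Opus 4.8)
The plan is to specialise the formulae already established to the normalisation $b_\rho=0$. The first step is to observe that this normalisation forces $b=\sum_{\rho\in\Sigma(1)}b_\rho=0$, and since $c_\rho=b_\rho-a_\rho=-a_\rho$ we get $a=\sum_{\rho\in\Sigma(1)}a_\rho=-c$.

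For $k=1$, Corollary \ref{cor:first chern classes} gives $\uc_1=-(a+b)=c$; and as each ray $\rho$ is its own unique one-dimensional cone, $s_1=\sum_{\rho\in\Sigma(1)}c_\rho=c$, so $\uc_1=s_1$. For $k=2$, the same corollary gives $\uc_2=ab+s_2=s_2$ because $b=0$.

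For $k\geq 3$, I would invoke Lemma \ref{lem:chern classes general formula}, which reads $\uc_k=\sum_{i=0}^{k-3}\binom{k-3}{i}\,s_{i+3}\,b^{k-(i+3)}$. Setting $b=0$, every summand with $i<k-3$ carries a positive power of $b$ and vanishes, leaving only the term $i=k-3$, namely $\binom{k-3}{k-3}\,s_k\,b^{0}=s_k$. Hence $\uc_k=s_k$ for all $k\in\N^*$, which is the claim.

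As an alternative one can argue straight from Proposition \ref{prop:chern class}: with $b=0$ the displayed formula collapses to $\uc(\cE)=\prod_{\rho\in\Sigma(1)}(1+c_\rho H)$ in $\Z[H]/\langle H^{n+1}\rangle$, whose coefficient of $H^k$ is the $k$-th elementary symmetric polynomial $e_k\big((c_\rho)_{\rho\in\Sigma(1)}\big)$; since for $k\leq n$ every $k$-element subset of the $n+1$ rays of $\Sigma(1)$ spans a cone of $\Sigma$, this elementary symmetric polynomial is exactly $s_k$. There is no genuine obstacle here: the content lies entirely in Proposition \ref{prop:chern class} and Lemma \ref{lem:chern classes general formula}, and the only point requiring a word of care is the bookkeeping identifying $e_k$ with $s_k$, which uses that $\Sigma$ is the fan of $\C\P^n$, so that the only non-cone among subsets of $\Sigma(1)$ is the full set of size $n+1$, which is irrelevant modulo $H^{n+1}$.
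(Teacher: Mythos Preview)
Your proof is correct and follows essentially the same approach as the paper, which simply says the result follows directly from Lemma~\ref{lem:chern classes general formula}. You are more explicit than the paper, handling $k=1,2$ separately via Corollary~\ref{cor:first chern classes} (which the paper implicitly assumes) and also giving a clean alternative directly from Proposition~\ref{prop:chern class}; both routes are straightforward specialisations to $b=0$.
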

An interesting consequence of this formula is the following.
\begin{proposition}
 \label{prop:prescribing chern classes}
 Let $Q(H)\in \Z[H]/\langle H^{n+1}\rangle$. If $Q$ lifts to a polynomial $\tilde Q\in \Z[H]/\langle H^{n+2}\rangle$ that splits as 
 $$\tilde Q=(1+r_0H)(\ldots)(1+r_nH)$$
 for $(r_i)_{0\leq i\leq n}\in \N^{n+1}$, then there exists a toric sheaf $\cE$ with total Chern class $Q(H)$.
\end{proposition}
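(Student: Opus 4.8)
The plan is to realise $Q$ directly as the total Chern class of the normalised rank $2$ toric sheaf whose combinatorial datum is $c_\rho=r_{i(\rho)}$, by applying Corollary \ref{cor:chern classes normalized}. The first thing to unwind is the combinatorics of the fan of $\C\P^n$: it has exactly $n+1$ rays $\rho_0,\dots,\rho_n$, spanned by $e_0,\dots,e_n$, and since every proper subset of $\{e_0,\dots,e_n\}$ is linearly independent, for each $k$ with $0\leq k\leq n$ the set $\Sigma(k)$ consists precisely of the cones $\sigma_I=\sum_{i\in I}\R_+e_i$ with $I\subset\{0,\dots,n\}$ and $|I|=k$. Hence, for any $(c_\rho)_{\rho\in\Sigma(1)}\in\N^{n+1}$, the quantity $s_k=\sum_{\sigma\in\Sigma(k)}\prod_{\rho\in\sigma(1)}c_\rho$ from Section \ref{sec:chern polynomial} equals the $k$-th elementary symmetric polynomial $e_k(c_{\rho_0},\dots,c_{\rho_n})$. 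Consequently, if one sets $c_{\rho_i}:=r_i$, then $1+\sum_{k=1}^n s_k H^k$ is exactly the image in $\Z[H]/\langle H^{n+1}\rangle$ of $\prod_{i=0}^n(1+r_iH)=\tilde Q$, that is, of $Q$.

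Next I would produce a toric sheaf realising these $s_k$. Assume first that at least three of the $r_i$ are positive. Take $E=\C^2$, fix three pairwise distinct lines in $\C^2$, assign them to three of the rays $\rho_i$ with $r_i>0$, assign an arbitrary line $L_\rho$ to every remaining ray, and define the family of filtrations in the second normal form of Section \ref{sec:normal forms}, namely $b_\rho=0$ for all $\rho$ and $a_{\rho_i}=-r_i$, so that $c_{\rho_i}=r_i$. By the Klyachko--Perling correspondence \cite{Kly90,Per04} this family of filtrations is that of a well-defined rank $2$ toric sheaf $\cE$, and since three distinct lines occur among rays $\rho$ with $a_\rho<b_\rho$, Lemma \ref{lem:singular three xrhos} forces $\cE$ to be non locally free. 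Therefore Corollary \ref{cor:chern classes normalized} applies and gives $\uc_k(\cE)=s_k=e_k(r_0,\dots,r_n)$ for $1\leq k\leq n$; combined with the first paragraph this yields $\uc(\cE)=Q$. In the remaining degenerate case, at most two of the $r_i$ are nonzero; relabelling, say $r_i=0$ for $i\geq 2$, so that $e_k(r_0,\dots,r_n)=0$ for $k\geq 3$ and $\tilde Q=(1+r_0H)(1+r_1H)$. Here one simply takes the equivariant, hence toric, locally free sheaf $\cO_{\P^n}(r_0)\oplus\cO_{\P^n}(r_1)$, whose total Chern class $(1+r_0H)(1+r_1H)$ reduces to $Q$ modulo $H^{n+1}$. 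This finishes the argument.

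I do not expect a genuine obstacle here: the proof is an assembly of Corollary \ref{cor:chern classes normalized} with the identification of $s_k$ as an elementary symmetric polynomial, the latter being special to $\C\P^n$. The one point requiring care is that Corollary \ref{cor:chern classes normalized} was derived under the standing assumption that $\cE$ is not locally free, so the family of filtrations must be arranged to meet that hypothesis; this is precisely why three distinct lines are inserted, and why the degenerate case, where inserting three distinct lines among rays with $a_\rho<b_\rho$ is impossible, is dispatched separately using a split bundle.
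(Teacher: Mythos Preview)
Your proof is correct and follows the same approach as the paper: set $c_{\rho_i}=r_i$ in the second normalisation and read off $\uc_k=s_k=e_k(r_0,\dots,r_n)$ from Corollary \ref{cor:chern classes normalized}. Your treatment is in fact more careful than the paper's, which simply says ``take $\cE$ to be a toric sheaf whose family of filtrations satisfies $b_\rho=0$ and $(c_\rho)=(r_i)$'' without specifying the lines or addressing the standing non-locally-free hypothesis of Section \ref{sec:chernclasses}; your insertion of three distinct lines and your separate handling of the degenerate case close that gap.
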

\begin{proof}
 Simply take $\cE$ to be a toric sheaf whose family of filtrations satisfies $b_\rho=0$ for all $\rho\in\Sigma(1)$ and $(c_\rho)_{\rho\in\Sigma(1)}=(r_i)_{0\leq i \leq n}$. Then from Corollary \ref{cor:chern classes normalized}, the result follows, as the $s_k$'s are the elementary symmetric polynomials in the $c_\rho$'s.
\end{proof}

We end this section with a geometric interpretation of the second normalisation.
\begin{proposition}
 \label{prop:geometric interpretation normalisation}
 Let $\rho\in\Sigma(1)$. Then $b_\rho=0$ if and only if the restriction map on weight $m$ sections $\Gamma(U_\rho,\cE)_m\to\Gamma(T,\cE)_m$ is an isomorphism provided $\langle m,u_\rho \rangle \geq 0$.
\end{proposition}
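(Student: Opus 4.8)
The plan is to unwind the Klyachko--Perling dictionary recalled in Section~\ref{sec:background}. Since $\cE$ is $T$-equivariant and $T\subset U_\rho$ is an invariant open subset, the restriction map $\Gamma(U_\rho,\cE)\to\Gamma(T,\cE)$ is homogeneous for the $M$-gradings, and by the very definition of the filtration $(E^\rho(i))_{i\in\Z}$ its weight-$m$ component is the inclusion $\Gamma(U_\rho,\cE)_m=E^\rho(\langle m,u_\rho\rangle)\hookrightarrow E=\Gamma(T,\cE)_m$. So the first step is simply to record that this map is an isomorphism in weight $m$ if and only if $E^\rho(\langle m,u_\rho\rangle)$ equals all of $E=\C^2$.

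Next I would read off \eqref{eq:family filtrations E}: since $L_\rho$ is a line, hence a \emph{proper} subspace of $\C^2$, one has $E^\rho(i)=\C^2$ if and only if $i\geq b_\rho$. Combining with the previous step, the weight-$m$ restriction map is an isomorphism precisely when $\langle m,u_\rho\rangle\geq b_\rho$. The proposition asserts that this isomorphism locus is exactly $\{m\in M:\langle m,u_\rho\rangle\geq 0\}$ if and only if $b_\rho=0$, so it remains to compare these two half-lines of weights. If $b_\rho=0$ they coincide, giving one implication. Conversely, since $u_\rho$ is primitive, the homomorphism $m\mapsto\langle m,u_\rho\rangle$ from $M$ to $\Z$ is surjective, and hence the equality $\{i\in\Z:i\geq b_\rho\}=\{i\in\Z:i\geq 0\}$ forces $b_\rho=0$.

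There is essentially no real obstacle here; the only points to watch are to use $L_\rho\subsetneq\C^2$, so that the isomorphism condition detects the jump $b_\rho$ of the filtration (to $\C^2$) and not the earlier jump $a_\rho$ (to $L_\rho$), and the surjectivity of $m\mapsto\langle m,u_\rho\rangle$ onto $\Z$, which rules out the filtration jumping strictly below $0$. Everything else is a formal consequence of the description of $\Gamma(U_\sigma,\cE)$ through the family of filtrations given in Section~\ref{sec:reflexive equiv sheaves}.
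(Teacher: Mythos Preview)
Your argument is correct and is precisely the ``direct reformulation of Klyachko's description'' that the paper invokes (the paper does not actually write out a proof, leaving it to the reader). In particular, your reading of the statement as an equality of the isomorphism locus with $\{m:\langle m,u_\rho\rangle\geq 0\}$ matches the author's clarifying remark after the proposition, and the surjectivity of $m\mapsto\langle m,u_\rho\rangle$ is exactly what is needed to rule out $b_\rho<0$ in the converse direction.
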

In other words, $b_\rho=0$ means that all weight $m$ sections over $T$ extend over an invariant divisor $D_\rho$ if and only if $\langle m,u_\rho \rangle \geq 0$, that is the weight of the torus action in a transverse direction to $D_\rho$ is positive. The proof of this proposition is a direct reformulation of Klyachko's description for toric sheaves. As we won't use it in what follows, we leave the proof to the reader.

\section{Elementary injections and factorization}
\label{sec:torsion free case}

From the previous sections, we see that a reflexive torus-equivariant sheaf will never deform to an indecomposable rank $2$ locally free sheaf on the projective space. To hope for such deformations, one may consider the more general class of equivariant torsion-free sheaves. If $\cE$ is such a sheaf, then one has the exact sequence :
$$
0\to \cE \to \cF\to \cQ\to 0
$$
where $\cF=(\cE^\vee)^\vee$ stands for the reflexive hull of $\cE$ and $\cQ$ for the quotient. From the previous sections, $\cF$ is fairly well understood. In the other direction, we will start from a reflexive toric sheaf $\cF$ and build torsion-free equivariant subsheaves whose reflexive hulls agree with $\cF$. This may be done by considering a sequence of {\it elementary injections} 
$$\cE_0\to\cE_1\to \ldots\to \cE_{p-1}\to\cE_p=\cF$$
where the Chern classes of $\cE_{i-1}$ are easily computed from those of $\cE_i$. In this section, we will define such elementary injections and provide some of their properties. We will then show that any injection between two torus-equivariant torsion-free sheaves of the same rank factorizes through elementary injections, on any smooth projective variety.

\subsection{Equivariant torsion-free sheaves}
\label{sec:torsion free and families filtrations}
We will recall here the combinatorial description of equivariant torsion-free sheaves over toric varieties, following Perling \cite[Section 5]{Per04}.  Such a sheaf $\cE$ is entirely described by its rings of equivariant sections over invariant affine subsets, indexed by cones $\sigma\in\Sigma$ and weights $m\in M$:
$$
E^\sigma_m:=\Gamma(U_\sigma,\cE)_m.
$$
As in the reflexive case, we let $E\simeq\C^r$ be the stalk over the point $1\in T\subset \C\P^n$, so that
$$
E^0=\C[M]\otimes E
$$
and $E^0_m\simeq \C^r$ for all weights $m\in M$. For a given cone $\sigma\in\Sigma$, and a face $\tau<\sigma$, we have injective restriction maps
\begin{equation}
 \label{eq:injective restrictions}
E^\sigma_m=\Gamma(U_\sigma,\cE)_m \hookrightarrow \Gamma(U_\tau,\cE)_m=E^\tau_m,
\end{equation}
so we can see each vector space $E^\sigma_m$ as a vector subspace of $E\simeq\C^r$. Then, for each $\sigma\in\Sigma$, we consider the relation $\leq_\sigma$ on $M$ defined by 
$$
m\leq_\sigma m' \Longleftrightarrow m'-m\in\sigma^\vee,
$$
and we denote $m <_\sigma m'$ if $m\leq_\sigma m'$ and $m'\nleq_\sigma m$ (where the latter means that $m-m'\notin\sigma^\vee$).
The coordinate function $\chi^{m'-m}\in\C[\sigma^\vee\cap M]$ induces an injection 
\begin{equation}
 \label{eq:injective weight multiplication}
\chi^{m'-m} : E^\sigma_m= \Gamma(U_\sigma,\cE)_m \to \Gamma(U_\sigma,\cE)_{m'}= E^\sigma_{m'},
\end{equation}
that we may assume to be an inclusion, up to isomorphism.  Together with coherence, and torsion-freeness, we reach the following (see \cite[Section 5.4]{Per04}) :
\begin{definition}
 \label{def:perling}
 A {\it family of multifiltrations} for a $\C$-vector space $V$ is a collection for each $\sigma\in\Sigma$ of vector subspaces $(E^\sigma_m)_{m\in M}$ of $V$ satisfying the following :
 \begin{enumerate}
  \item[$(i)_f$] For any $\sigma\in\Sigma$ and any $m\leq_\sigma m'$, $E^\sigma_m\subset E^\sigma_{m'}$,
  \item[$(ii)_f$] For any $m\in M$, $E^0_m=V$,
  \item[$(iii)_f$] For each chain $\ldots <_\sigma m_{i-1} <_\sigma m_i <_\sigma\ldots $ in $M$ there is $i_0\in\Z$ such that $E_{m_i}^\sigma=0$ for $i\leq i_0$,
  \item[$(iv)_f$] There exists only finitely many vector spaces $E^\sigma_m$ which are not contained in the union $\displaystyle\bigcup_{m'<_\sigma m}E^\sigma_{m'}$,
  \item[$(v)_f$] For each cone $\sigma\in\Sigma$ and each facet $\tau<\sigma$ there is $m_\tau\in \sigma^\vee\cap\tau^\perp$ such that $\tau^\perp\cap M=\tau^\perp\cap M+\N\cdot (-m_\tau)$. Then, for any $m\in M$, $E^\tau_m=\displaystyle \bigcup_{i\in\N} E^\sigma_{m+i m_\tau}$.
 \end{enumerate}
\end{definition}
Note that points $(ii)_f$ and $(v)_f$ in Definition \ref{def:perling} imply that for any $\sigma$,
\begin{equation}
 \label{eq:union is V}
\displaystyle V=\bigcup_{m\in M} E^\sigma_m,
\end{equation}
which was stated instead of $(ii)_f$ in Perling's original definition. To ease notations, we will write $(E^\sigma_m)$ for $(E^\sigma_m)_{\sigma\in \Sigma, m\in M}$.

A morphism between two such families of filtrations $(E^\sigma_m)$ and $(F^\sigma_m)$ of $V$ and $V'$ is a linear map $V\to V'$ that is compatible with the filtrations, namely sends $E^\sigma_m$ to $F^\sigma_m$ for each $\sigma\in\Sigma$ and $m\in M$. From Perling \cite[Theorem 5.18]{Per04}, the above description that sends $\Gamma(U_\sigma,\cE)_m$ to $E_\sigma^m$ induces an equivalence between the categories of equivariant torsion-free sheaves over $\C\P^n$ and families of multifiltrations of complex vector spaces. Axiom $(i)_f$ provides an equivariant quasi-coherent sheaf, that is coherent thanks to $(iii)_f$ and $(iv)_f$. Axiom $(v)_f$ gives torsion freeness, and $(ii)_f$ ensures that the rank of the sheaf is $r=\dim(V)$ (we shall refer to this as the rank of the family of filtrations as well).

Up to isomorphism, we can, and we will, assume that $V=\C^r$ in the families of multifiltrations that we will consider. From this point of view, an injective and equivariant morphism between two equivariant torsion-free sheaves of rank $r$ is equivalent to the data of two families of filtrations $(E^\sigma_m)$ and $(F^\sigma_m)$ (of $\C^r$) such that $E^\sigma_m\subset F^\sigma_m$ for all $\sigma\in\Sigma$ and all $m\in M$. We will denote this, somehow sloppily, as $(E^\sigma_m)\subset(F^\sigma_m)$.

We can then identify the reflexive hull of such sheaves thanks to the following lemma.

\begin{lemma}
 \label{lem:reflexive hull}
 Let $(E^\sigma_m)$ be a family of multifiltrations. Then the associated equivariant sheaf $\cE$ is reflexive if and only if for each $\sigma\in\Sigma$ and each $m\in M$,
 $$
 E^\sigma_m=\bigcap_{\rho\in\sigma(1)} E^\rho_m.
 $$
 In any case, the family of filtrations $(F^\rho(\bullet))_{\rho\in\Sigma(1)}$ for $\cF:=(\cE^\vee)^\vee$ is given, for $\rho\in\Sigma(1)$ and $i\in\Z$, by
 \begin{equation}
  \label{eq:reflexive hull}
 F^\rho(i)=E^\rho_m
 \end{equation}
 for any $m\in M$ such that $\langle m , u_\rho \rangle =i$.
\end{lemma}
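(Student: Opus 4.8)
The plan is to read off from the family of multifiltrations $(E^\sigma_m)$ an explicit reflexive toric sheaf $\cF$, together with a tautological injection $\cE\to\cF$, and then to check that this injection realizes $\cF$ as the reflexive hull $(\cE^\vee)^\vee$; both assertions then follow by comparing weight spaces over the affine charts $U_\sigma$. First I would observe that for a ray $\rho$ the subspace $E^\rho_m\subseteq V$ depends only on $i:=\langle m,u_\rho\rangle$: if $\langle m,u_\rho\rangle=\langle m',u_\rho\rangle$ then $m-m'$ and $m'-m$ both lie in $\rho^\vee$, so the monomial multiplications of \eqref{eq:injective weight multiplication} yield $E^\rho_m\subseteq E^\rho_{m'}\subseteq E^\rho_m$ (each being the identity of $V$ under the identifications $E^0_m=E^0_{m'}=V$ of axiom $(ii)_f$), hence $E^\rho_m=E^\rho_{m'}$. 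Setting $E^\rho(i):=E^\rho_m$ for any such $m$, the collection $(V,(E^\rho(\bullet))_{\rho\in\Sigma(1)})$ is a family of filtrations in the sense of Section \ref{sec:reflexive equiv sheaves} (increasing by $(i)_f$, bounded by $(iii)_f$ and \eqref{eq:union is V}). Let $\cF$ be the associated toric, hence reflexive, sheaf; by \eqref{eq:sheaf from family of filtrations}, $\Gamma(U_\sigma,\cF)_m=\bigcap_{\rho\in\sigma(1)}E^\rho(\langle m,u_\rho\rangle)=\bigcap_{\rho\in\sigma(1)}E^\rho_m$ for all $\sigma\in\Sigma$ and $m\in M$.

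Next I would build the injection. For each ray $\rho$ of $\sigma$, the restriction map \eqref{eq:injective restrictions} gives $E^\sigma_m\subseteq E^\rho_m$, hence $E^\sigma_m\subseteq\bigcap_{\rho\in\sigma(1)}E^\rho_m=\Gamma(U_\sigma,\cF)_m$; these inclusions are compatible with restrictions and with monomial multiplications, so the identity of $V$ is a morphism of families of multifiltrations and, by Perling's equivalence \cite[Theorem 5.18]{Per04}, defines an injective equivariant morphism $\phi:\cE\to\cF$. Over $X_0=\bigcup_{\rho\in\Sigma(1)}U_\rho$ this $\phi$ is an isomorphism: over $T$ both sheaves have sections $\C[M]\otimes V$, and $U_\rho$ has $\rho$ as its only ray, so $\Gamma(U_\rho,\cE)_m=E^\rho_m=\Gamma(U_\rho,\cF)_m$. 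Since $\C\P^n\setminus X_0$ is a union of $T$-orbits of codimension $\geq 2$ (orbit--cone correspondence), $\phi$ is an isomorphism away from a closed subset of codimension $\geq 2$.

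Then I would identify $\cF$ with the reflexive hull. Since $(-)^{\vee\vee}$ commutes with restriction to open subsets and $\cF$ is reflexive, $\phi^{\vee\vee}:(\cE^\vee)^\vee\to\cF^{\vee\vee}=\cF$ is again an isomorphism over $X_0$; and a morphism of reflexive sheaves on the normal variety $\C\P^n$ that is an isomorphism away from codimension $\geq 2$ is itself an isomorphism, because reflexive sheaves are recovered as the pushforward of their restriction to $X_0$ (cf. Section \ref{sec:reflexive equiv sheaves}). By naturality of the biduality map, $\phi^{\vee\vee}$ sits in a commuting square with $\phi$ and the canonical maps $\cE\to(\cE^\vee)^\vee$ and $\cF\to\cF^{\vee\vee}=\cF$, so this isomorphism is compatible with the canonical map; hence $(\cE^\vee)^\vee\cong\cF$, and the family of filtrations of $(\cE^\vee)^\vee$ is $(E^\rho(\bullet))_{\rho\in\Sigma(1)}$, that is $F^\rho(i)=E^\rho_m$ whenever $\langle m,u_\rho\rangle=i$, which is \eqref{eq:reflexive hull}, and $\Gamma(U_\sigma,(\cE^\vee)^\vee)_m=\bigcap_{\rho\in\sigma(1)}E^\rho_m$. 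Finally, $\cE$ is reflexive if and only if its canonical map to $(\cE^\vee)^\vee$ --- which, under the identification above, is $\phi$ --- is an isomorphism; since $\phi$ is the inclusion $E^\sigma_m\subseteq\Gamma(U_\sigma,\cF)_m=\bigcap_{\rho\in\sigma(1)}E^\rho_m$ on each weight space, this is equivalent to $E^\sigma_m=\bigcap_{\rho\in\sigma(1)}E^\rho_m$ for all $\sigma$ and $m$, the stated criterion.

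The only step that is more than bookkeeping is this last identification: that $\cF$ really \emph{is} the reflexive hull of $\cE$, not merely some reflexive sheaf receiving a generically iso injection from $\cE$. This rests entirely on the $S_2$-property of reflexive sheaves on the normal variety $\C\P^n$ --- equivalently, that such sheaves equal the pushforward of their restriction to the complement of any codimension $\geq 2$ closed subset, the property already used for toric reflexive sheaves in Section \ref{sec:reflexive equiv sheaves} --- together with the routine compatibility of double dualization with restriction to open subsets. Everything else is a direct transcription through Perling's dictionary between equivariant torsion-free sheaves and families of multifiltrations.
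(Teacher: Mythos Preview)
Your proof is correct and follows essentially the same approach as the paper: define the reflexive sheaf $\cF$ from the ray filtrations $F^\rho(i)=E^\rho_m$, observe the tautological inclusion $\cE\hookrightarrow\cF$ is an isomorphism over $X_0$, and then invoke the normality/$S_2$-property of reflexive sheaves to conclude $(\cE^\vee)^\vee\cong\cF$. Your version is somewhat more explicit about the role of $\phi^{\vee\vee}$ and the compatibility with the canonical biduality map, but the substance is the same.
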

The proof shall be clear from Perling's \cite{Per04} and the previous descriptions of toric and torsion-free equivariant sheaves, but we will include it for the sake of completeness. Let's first gather some useful facts that will be used freely in the proof and the following sections. Regarding the relations $(\leq_\sigma)_{\sigma\in\Sigma}$ on $M$, note that
\begin{itemize}
 \item[(i)] For each $\sigma$, $\leq_\sigma$ is transitive,
 \item[(ii)] For each $\sigma$,  $m\leq_\sigma m'$ and $m'\leq_\sigma m$ if and only if $m-m'\in\sigma^\perp$,
 \item[(iii)] For each $\tau\leq\sigma$, if $m\leq_\sigma m'$, then $m\leq_\tau m'$.
\end{itemize}
Note also that for a given family of multifiltrations $(E^\sigma_m)$ we have, for each $\tau\leq \sigma$ and $m\leq_\sigma m'$, inclusions $E^\sigma_m \subset E^\tau_m$ and $E^\sigma_m\subset E^\sigma_{m'}$ (cf. \eqref{eq:injective restrictions} and \eqref{eq:injective weight multiplication} that follow respectively from axioms $(v)_f$ and $(i)_f$ in Definition \ref{def:perling}).

\begin{proof}
 First, by point $(i)_f$, we see that if $m-m'\in\sigma^\perp$, then $E^\sigma_m=E^\sigma_{m'}$. Hence, the family of filtrations $(F^\rho(\bullet))_{\rho\in\Sigma(1)}$ defined by \eqref{eq:reflexive hull} is well defined. Set $\cF$ to be the associated equivariant reflexive sheaf. Then, by $(iv)_f$, we have  
 $$E^\sigma_m\subset \bigcap_{\rho\in\sigma(1)} E^\rho_m=\Gamma(U_\sigma,\cF)_m$$ for each $\sigma\in\Sigma$ and each $m\in M$, so that $\cE$ is an equivariant subsheaf of $\cF$. If the equality holds for each $\sigma\in\Sigma$ and $m\in M$, then $\cE=\cF$ and $\cE$ is reflexive. On the other hand, by construction of $(F^\rho(\bullet))$, $\cE$ and $\cF$ agree on each affine sets $U_\rho$, $\rho\in\Sigma(1)$, and on $U_{\lbrace 0\rbrace}$, and so do their reflexive hulls. As the complementary of those sets is of dimension less or equal to $2$, they have the same reflexive hull on the whole $\C\P^n$ by normality of reflexive sheaves. We conclude that $(\cE^\vee)^\vee=\cF$, which ends the proof.
\end{proof}

\subsection{Elementary injections}
\label{sec:elementary injections}
We would like now to factorize any injection $\cE\to \cF$ between equivariant torsion-free sheaves of the same rank into a product of injections $\cE=\cE_0\to\cE_1\to\ldots\to\cE_p=\cF$ of the simplest possible form. 
\begin{definition}
 \label{def:elementary injection}
 Let $(E^\sigma_m)\subset (F^\sigma_m)$ be two families of multifiltrations of the same rank. We will say that it is {\it elementary}  if there is $0\leq k_0\leq n$ such that :
 \begin{enumerate}
  \item[$(i)_e$] For any $k<k_0$, any $\sigma\in\Sigma(k)$, and any $m\in M$, $E^\sigma_m=F^\sigma_m$,
  \item[$(ii)_e$] There is a unique couple $\sigma_0\in\Sigma(k_0)$ and $m_0\in M/(\sigma_0^\perp\cap M)$ such that $E^{\sigma_0}_{m_0}\neq F^{\sigma_0}_{m_0}$. For this couple, $\dim(F^{\sigma_0}_{m_0})=\dim(E^{\sigma_0}_{m_0})+1$. 
  \item[$(iii)_e$] For any $k>k_0$, any $\sigma\in\Sigma(k)$ and any $m\in M$, $E^\sigma_m=F^\sigma_m$ unless $\sigma_0<\sigma$ and $m\leq_{\sigma_0} m_0$ in which case $E^\sigma_m=F^\sigma_m\cap E^{\sigma_0}_{m_0}$.
  \end{enumerate}
  With the above notation, we will say that the injection is {\it $k_0$-elementary}, and refer to the triple $(k_0,\sigma_0,m_0)$ (or sometimes simply to the couple $(\sigma_0,m_0)$) as the parameters of the elementary injection. An injection between equivariant torsion-free sheaves is {\it elementary} if the associated injection of families of multifiltrations is.
  \end{definition}

To clarify this definition, recall that from \eqref{eq:injective weight multiplication}, for each cone $\sigma$ and for any $(m,m')\in M$ with $m'\in \sigma^\perp$, $ E^{\sigma}_{m}=E^{\sigma}_{m+m'}$ (cf. inclusion \eqref{eq:injective weight multiplication}). Point $(ii)_e$ then says that for any $\sigma\in\Sigma(k_0)\setminus\lbrace \sigma_0\rbrace$ and any $m$, $E^\sigma_m=F^\sigma_m$, and that for $\sigma_0$, there is a unique class $m\in M/(\sigma_0^\perp\cap M)$ where the equality fails. Point $(iii)_e$ makes sense, as the inequality $m\leq_{\sigma_0} m_0$ doesn't depend on the choice of the representant of the class of $m_0$ in $M/(\sigma_0^\perp\cap M)$. While points $(i)_e$ and $(ii)_e$ in the definition are quite clear, point $(iii)_e$ has some interesting consequences that we gather in the following lemmas.
\begin{lemma}
 \label{lem:dim diff elementary injection}
 Let $(E^\sigma_m)\subset (F^\sigma_m)$ be an elementary injection, and let $\sigma\in\Sigma$ and $m\in M$. Then either $\dim(F^\sigma_m)=\dim(E^\sigma_m)$ 
or $\dim(F^\sigma_m)=\dim(E^\sigma_m)+1$.
\end{lemma}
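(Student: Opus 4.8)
The claim is a clean consequence of the three defining axioms of an elementary injection, but one has to handle the cones $\sigma$ with $\dim(\sigma)>k_0$ with some care, since there the subspaces $F^\sigma_m$ and $E^\sigma_m$ are only indirectly controlled. My plan is to go case by case according to $k=\dim(\sigma)$. For $k<k_0$, axiom $(i)_e$ gives $E^\sigma_m=F^\sigma_m$ and the conclusion is immediate. For $k=k_0$, axiom $(ii)_e$ says that either $E^\sigma_m=F^\sigma_m$ (when $\sigma\neq\sigma_0$, or when $\sigma=\sigma_0$ but the class of $m$ in $M/(\sigma_0^\perp\cap M)$ is not $m_0$), or else $\sigma=\sigma_0$, the class of $m$ is $m_0$, and $\dim(F^{\sigma_0}_{m_0})=\dim(E^{\sigma_0}_{m_0})+1$; here I should note, using inclusion \eqref{eq:injective weight multiplication}, that $E^{\sigma_0}_m$ and $F^{\sigma_0}_m$ depend only on the class of $m$ modulo $\sigma_0^\perp\cap M$, so that the statement of $(ii)_e$ really does cover every $m\in M$.

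The only substantive case is $k>k_0$. Here axiom $(iii)_e$ gives two sub-cases. If it is \emph{not} the case that $\sigma_0<\sigma$ and $m\leq_{\sigma_0}m_0$, then $E^\sigma_m=F^\sigma_m$ and we are done. Otherwise $\sigma_0<\sigma$, $m\leq_{\sigma_0}m_0$, and $E^\sigma_m=F^\sigma_m\cap E^{\sigma_0}_{m_0}$. The point to make is that $F^\sigma_m\cap E^{\sigma_0}_{m_0}$ has codimension at most one inside $F^\sigma_m$. Indeed, since $\sigma_0<\sigma$ and $m\leq_{\sigma_0}m_0$ we have (using the facts recorded after Lemma \ref{lem:reflexive hull}: $E^\sigma_m\subset E^{\sigma_0}_m$ because $\sigma_0$ is a face of $\sigma$, and $E^{\sigma_0}_m\subset E^{\sigma_0}_{m_0}$ because $m\leq_{\sigma_0}m_0$, and likewise for $F$) the containment $F^\sigma_m\subset F^{\sigma_0}_m=F^{\sigma_0}_{m_0}$, where the last equality again uses that $m$ and $m_0$ differ by an element of $\sigma_0^\perp\cap M$. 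Now $E^{\sigma_0}_{m_0}$ is a hyperplane in $F^{\sigma_0}_{m_0}$ by $(ii)_e$, so intersecting the subspace $F^\sigma_m\subseteq F^{\sigma_0}_{m_0}$ with this hyperplane drops the dimension by at most one; that is exactly $\dim(F^\sigma_m)-1\leq\dim(F^\sigma_m\cap E^{\sigma_0}_{m_0})=\dim(E^\sigma_m)\leq\dim(F^\sigma_m)$, which is the claim.

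The main (and really only) obstacle is the bookkeeping in the last case: making sure that the inclusion $F^\sigma_m\subseteq F^{\sigma_0}_{m_0}$ is legitimate — i.e. that one may pass from the weight $m$ to the weight $m_0$ and from the big cone $\sigma$ down to its face $\sigma_0$ without leaving the relevant subspace — and that $E^{\sigma_0}_{m_0}$ really is a hyperplane in $F^{\sigma_0}_{m_0}$ rather than something of larger codimension. Both are handled by the elementary linear-algebra fact that intersecting a subspace with a hyperplane lowers dimension by $0$ or $1$, combined with the face/weight monotonicity of the multifiltrations. Once those inclusions are in place the dimension count is immediate, so no genuine calculation is needed.
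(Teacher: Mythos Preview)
Your argument is correct and follows essentially the same route as the paper's proof. The only slip is in the $k>k_0$ case, where you assert $F^{\sigma_0}_m = F^{\sigma_0}_{m_0}$ on the grounds that ``$m$ and $m_0$ differ by an element of $\sigma_0^\perp\cap M$''; in fact $m\leq_{\sigma_0} m_0$ only gives $m_0-m\in\sigma_0^\vee$, so you obtain merely the inclusion $F^{\sigma_0}_m\subset F^{\sigma_0}_{m_0}$ --- which you already recorded in your parenthetical and which is all that is needed for $F^\sigma_m\subset F^{\sigma_0}_{m_0}$.
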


\begin{proof}
From Definition \ref{def:elementary injection}, $F^\sigma_m=E^\sigma_m$ unless maybe when   $\sigma_0\leq \sigma$ and $m\leq_{\sigma_0} m_0$. Note that the case $\sigma=\sigma_0$ is clear from $(ii)_e$, so we may assume $\sigma_0<\sigma$. In the latter case, $E^\sigma_m=F^\sigma_m\cap E^{\sigma_0}_{m_0}$. From $\sigma_0\leq \sigma$ and \eqref{eq:injective restrictions}, we have $F^\sigma_m\subset F^{\sigma_0}_m$, and from $m\leq_{\sigma_0} m_0$, we have $F^{\sigma_0}_{m}\subset F^{\sigma_0}_{m_0}$, hence 
$$F^{\sigma}_{m}\subset F^{\sigma_0}_{m_0}.$$
By $(ii)_e$, there is a line $L$ in $F^{\sigma_0}_{m_0}$ such that
$$
F^{\sigma_0}_{m_0}=E^{\sigma_0}_{m_0}\oplus L.
$$
Then, either $F^\sigma_m\subset E^{\sigma_0}_{m_0}$ in which case 
$$
E^\sigma_m=F^\sigma_m\cap E^{\sigma_0}_{m_0}=F^\sigma_m,
$$
or $F^\sigma_m\nsubseteq E^{\sigma_0}_{m_0}$ which implies
$$
F^\sigma_m=(F^\sigma_m\cap E^{\sigma_0}_{m_0})\oplus L'=E^\sigma_m\oplus L'
$$
for $L'$ a line directed by any element in $F^\sigma_m\setminus F^\sigma_m\cap E^{\sigma_0}_{m_0}$. The result follows.
\end{proof}
The following observation will also be useful (we keep notations from Definition \ref{def:elementary injection} in the statement).
\begin{lemma}
 \label{lem:precise when dim diff}
 Let $(E^\sigma_m)\subset (F^\sigma_m)$ be an elementary injection, and let $\sigma\in\Sigma$ and $m\in M$ be such that $\dim(F^\sigma_m)=\dim(E^\sigma_m)+1$. Then $\sigma_0\leq \sigma$ and $m-m_0\in\sigma_0^\perp$. Moreover, for any $m'\in m_0+\sigma_0^\perp$, if $m\leq_\sigma m'$, then  $\dim(F^\sigma_{m'})=\dim(E^\sigma_{m'})+1$.
\end{lemma}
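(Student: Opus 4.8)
\emph{Approach.} The plan is to run through the three cases $\dim(\sigma)<k_0$, $\dim(\sigma)=k_0$ and $\dim(\sigma)>k_0$ dictated by Definition \ref{def:elementary injection}, invoking in each case only the relevant axiom together with the monotonicity inclusions \eqref{eq:injective restrictions} and \eqref{eq:injective weight multiplication}; the same bookkeeping already appears in the proof of Lemma \ref{lem:dim diff elementary injection}, so I would lean on it.

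\emph{First assertion.} Assume $\dim(F^\sigma_m)=\dim(E^\sigma_m)+1$, so in particular $F^\sigma_m\neq E^\sigma_m$. The case $\dim(\sigma)<k_0$ is excluded by $(i)_e$. If $\dim(\sigma)=k_0$, axiom $(ii)_e$ says the only pair where equality fails is $(\sigma_0,m_0)$, which forces $\sigma=\sigma_0$ and $m-m_0\in\sigma_0^\perp\cap M$; both conclusions follow. The substantial case is $\dim(\sigma)>k_0$: here $(iii)_e$ forces $\sigma_0<\sigma$ (hence $\sigma_0\leq\sigma$) and $m\leq_{\sigma_0}m_0$, and gives $E^\sigma_m=F^\sigma_m\cap E^{\sigma_0}_{m_0}$; since $\dim(E^\sigma_m)<\dim(F^\sigma_m)$ this forces $F^\sigma_m\not\subseteq E^{\sigma_0}_{m_0}$. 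To upgrade $m\leq_{\sigma_0}m_0$ to $m-m_0\in\sigma_0^\perp$ I would argue by contradiction: if $m-m_0\notin\sigma_0^\perp$, then $[m]\neq[m_0]$ in $M/(\sigma_0^\perp\cap M)$, so by $(ii)_e$ one has $E^{\sigma_0}_m=F^{\sigma_0}_m$; combining the restriction inclusion $F^\sigma_m\subseteq F^{\sigma_0}_m$ (from $\sigma_0\leq\sigma$ and \eqref{eq:injective restrictions}) with $F^{\sigma_0}_m=E^{\sigma_0}_m\subseteq E^{\sigma_0}_{m_0}$ (from $m\leq_{\sigma_0}m_0$, axiom $(i)_f$, and \eqref{eq:injective weight multiplication}) yields $F^\sigma_m\subseteq E^{\sigma_0}_{m_0}$, a contradiction.

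\emph{The ``moreover'' part.} Fix $m'\in m_0+\sigma_0^\perp$ with $m\leq_\sigma m'$. From $m'-m_0\in\sigma_0^\perp$ one has both $m'\leq_{\sigma_0}m_0$ and $m_0\leq_{\sigma_0}m'$, hence $F^{\sigma_0}_{m'}=F^{\sigma_0}_{m_0}$ and $E^{\sigma_0}_{m'}=E^{\sigma_0}_{m_0}$. If $\sigma=\sigma_0$ then $m'-m_0\in\sigma^\perp$ and the conclusion is immediate from $(ii)_e$. If $\sigma_0<\sigma$, then $(iii)_e$ gives $E^\sigma_{m'}=F^\sigma_{m'}\cap E^{\sigma_0}_{m_0}$, so it remains to show that this intersection has codimension exactly one in $F^\sigma_{m'}$. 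First, $m\leq_\sigma m'$ and \eqref{eq:injective weight multiplication} give $F^\sigma_m\subseteq F^\sigma_{m'}$, and the first part already yields $F^\sigma_m\not\subseteq E^{\sigma_0}_{m_0}$, so $F^\sigma_{m'}\not\subseteq E^{\sigma_0}_{m_0}$. Second, exactly as in the proof of Lemma \ref{lem:dim diff elementary injection}, $F^\sigma_{m'}\subseteq F^{\sigma_0}_{m'}=F^{\sigma_0}_{m_0}=E^{\sigma_0}_{m_0}\oplus L$ with $\dim L=1$, so the projection $F^\sigma_{m'}\to L$ is surjective with kernel $F^\sigma_{m'}\cap E^{\sigma_0}_{m_0}=E^\sigma_{m'}$, whence $\dim(F^\sigma_{m'})=\dim(E^\sigma_{m'})+1$.

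\emph{Main obstacle.} Nothing here is deep, but the one step requiring care is the passage from the weak relation $m\leq_{\sigma_0}m_0$ produced by $(iii)_e$ to the equality of weights $m-m_0\in\sigma_0^\perp$: one must notice that a strict inequality $m<_{\sigma_0}m_0$ would place the weight $m$ outside the unique class at which $E$ and $F$ disagree on $\sigma_0$, and then chase the inclusions $F^\sigma_m\subseteq F^{\sigma_0}_m\subseteq E^{\sigma_0}_{m_0}$ to a contradiction. Everything else is a direct application of the axioms.
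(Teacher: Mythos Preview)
Your argument is correct and follows essentially the same route as the paper's proof: the same case split on $\dim(\sigma)$ versus $k_0$, the same use of $(iii)_e$ to get $E^\sigma_m=F^\sigma_m\cap E^{\sigma_0}_{m_0}$ and hence $F^\sigma_m\not\subseteq E^{\sigma_0}_{m_0}$, and the same inclusion-chase via $F^\sigma_m\subseteq F^{\sigma_0}_m$ and $E^{\sigma_0}_m\subseteq E^{\sigma_0}_{m_0}$ to force $m-m_0\in\sigma_0^\perp$ (the paper phrases this directly by exhibiting an element, you phrase it as a contradiction, but the content is identical). The ``moreover'' part likewise matches, with your projection onto $L$ being exactly the mechanism behind the paper's appeal to Lemma~\ref{lem:dim diff elementary injection}.
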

\begin{proof}
 From Definition \ref{def:elementary injection}, we must have $\sigma_0\leq \sigma$  and $m\leq_{\sigma_0} m_0$. The case when $\sigma=\sigma_0$ follows easily from $(ii)_e$, so we assume $\sigma_0<\sigma$. We are in the same context as in the proof of Lemma \ref{lem:dim diff elementary injection}, and there is $x\in  F^\sigma_m\setminus F^\sigma_m\cap E^{\sigma_0}_{m_0}$. Then, from $\sigma_0\leq \sigma$   we deduce that $F^\sigma_m\subset F^{\sigma_0}_{m}$ so $x\in  F^{\sigma_0}_m\setminus F^{\sigma_0}_m\cap E^{\sigma_0}_{m_0}$. Hence $x\notin E^{\sigma_0}_{m_0}$, and then $x\notin E^{\sigma_0}_{m}$ as $E^{\sigma_0}_{m}\subset E^{\sigma_0}_{m_0}$ from $m\leq_{\sigma_0} m_0$. We conclude that $E^{\sigma_0}_{m}\neq F^{\sigma_0}_{m}$. From $(ii)_e$, $m-m_0\in\sigma_0^\perp$. This proves the first statement of the lemma.
 
 For the second statement, if $m'\in m_0+\sigma_0^\perp$, still assuming $\sigma_0<\sigma$, we are again in the setting of the proof of Lemma \ref{lem:dim diff elementary injection}. As $F^\sigma_m\nsubseteq E^{\sigma_0}_{m_0}$, from $m\leq_\sigma m'$ we deduce $F^\sigma_{m'}\nsubseteq E^{\sigma_0}_{m_0}$. The proof of the previous lemma then shows that $\dim(F^\sigma_{m'})=\dim(E^\sigma_{m'})+1$.
\end{proof}

We now prove the main result of this section.
\begin{theorem}
 \label{theo:existence decomposition elementary injections}
 Let $(E^\sigma_m)\subset (F^\sigma_m)$ be two families of filtrations of the same rank and $\cE\to\cF$ the corresponding injection of torsion-free sheaves. Then there exists a finite number of elementary injections
 $$
 \cE=\cE_0\to\cE_1\to\ldots\to\cE_p=\cF
 $$
 that factorize $\cE\to\cF$.
\end{theorem}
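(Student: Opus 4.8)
The plan is to build the factorization by repeatedly "peeling off" one unit of dimension from a carefully chosen vector space in the larger family $(F^\sigma_m)$, so that at each step we perform a single $k_0$-elementary injection for the smallest $k_0$ where $(E^\sigma_m)$ and $(F^\sigma_m)$ still disagree. The key quantity to control is the total defect
$$
\Delta\big((G^\sigma_m),(F^\sigma_m)\big):=\sum_{\sigma\in\Sigma}\ \sum_{[m]\in M/(\sigma^\perp\cap M)}\big(\dim F^\sigma_m-\dim G^\sigma_m\big),
$$
which is finite by axiom $(iv)_f$ (only finitely many $E^\sigma_m$ are not unions of smaller ones, and the filtrations are bounded below by $(iii)_f$, so only finitely many classes $[m]$ contribute for each $\sigma$, and there are finitely many $\sigma$). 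I would induct on this integer: if it is zero the two families coincide and there is nothing to do; otherwise I construct an intermediate family $(G^\sigma_m)$ with $(E^\sigma_m)\subset(G^\sigma_m)\subsetneq(F^\sigma_m)$ such that $(G^\sigma_m)\subset(F^\sigma_m)$ is elementary, and then apply the induction hypothesis to $(E^\sigma_m)\subset(G^\sigma_m)$, whose defect is strictly smaller.

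**Construction of one elementary step.** Let $k_0$ be the least dimension such that $E^\sigma_m\neq F^\sigma_m$ for some $\sigma\in\Sigma(k_0)$ and some $m$. Among all pairs $(\sigma_0,m_0)$ with $\sigma_0\in\Sigma(k_0)$ realizing this, I would choose $m_0$ maximal for $\leq_{\sigma_0}$ among those $m$ (in the appropriate sense — minimal so that one can still enlarge $E$; concretely, pick a class $[m_0]$ where $E^{\sigma_0}_{m_0}\subsetneq F^{\sigma_0}_{m_0}$ but with $[m_0]$ chosen so that $E^{\sigma_0}_{m}=F^{\sigma_0}_{m}$ for all $m<_{\sigma_0}m_0$ in the same coset direction — this is possible by boundedness below, $(iii)_f$). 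Then pick a line $L\subset F^{\sigma_0}_{m_0}$ with $E^{\sigma_0}_{m_0}\oplus L\subseteq F^{\sigma_0}_{m_0}$. I now define $(G^\sigma_m)$ by: $G^{\sigma_0}_{m}:=E^{\sigma_0}_{m_0}\oplus L$ for every $m$ with $m_0\leq_{\sigma_0}m$ and $m'$ in that coset (one must put $L$ into $G^{\sigma_0}_{m}$ for all $m\geq_{\sigma_0}m_0$ to preserve $(i)_f$); for $\sigma>\sigma_0$ with $\sigma_0<\sigma$, set $G^\sigma_m:=F^\sigma_m$ if the condition "$m\leq_{\sigma_0}m_0$'' fails, and $G^\sigma_m:=F^\sigma_m\cap G^{\sigma_0}_{m_0}$ otherwise; and $G^\sigma_m:=E^\sigma_m(=F^\sigma_m)$ for all $\sigma$ of dimension $<k_0$ and the untouched cones. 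The main verification is that $(G^\sigma_m)$ satisfies Perling's axioms $(i)_f$–$(v)_f$: axiom $(i)_f$ follows from the fact that $G^{\sigma_0}_{\bullet}$ is still increasing and that intersecting the increasing family $F^\sigma_{\bullet}$ with a fixed subspace keeps it increasing; $(ii)_f$ is untouched since $k_0\geq 0$ only affects $\sigma\neq\{0\}$ when $k_0\geq 1$, and the $k_0=0$ case must be handled by the same recipe with $\sigma_0=\{0\}$, using that $G^{\{0\}}_m=\C^r$ forces us instead to take the "$L$'' to be removed at the bottom (here I would note $k_0\geq 1$ automatically because $(ii)_f$ forces $E^{\{0\}}_m=F^{\{0\}}_m=\C^r$, so the bad $k_0$ is at least $1$); $(iii)_f$ and $(iv)_f$ follow because $G$ differs from $F$ only on finitely many classes; $(v)_f$ — the torsion-freeness axiom — is the delicate one and must be checked ray by ray, using that $F$ satisfies it and that we have intersected consistently with $G^{\sigma_0}_{m_0}$ exactly on the cones containing $\sigma_0$.

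**The main obstacle and why it resolves.** The hard part is twofold: first, arranging that enlarging $E$ to $G$ only changes a \emph{single} orbit of vector spaces, i.e. that $(G^\sigma_m)\subset(F^\sigma_m)$ is genuinely $k_0$-elementary in the sense of Definition~\ref{def:elementary injection} — this is where the minimality choice of $[m_0]$ is essential, so that properties $(i)_e$ and $(ii)_e$ hold with exactly one exceptional pair; and second, checking that $(G^\sigma_m)$ is again a valid family of multifiltrations, in particular axiom $(v)_f$, which couples the data on a cone $\sigma$ with that on its facets. For $(v)_f$ I would argue: for facets $\tau<\sigma$ not containing $\sigma_0$, nothing changed; for $\tau$ with $\sigma_0\leq\tau<\sigma$, both $G^\tau_m$ and $G^\sigma_m$ were obtained by the same intersection-with-$G^{\sigma_0}_{m_0}$ rule applied to $F$, and since $F$ satisfies $(v)_f$ and intersection with a fixed subspace commutes with the union over $i\in\N$ of an increasing chain, the identity $G^\tau_m=\bigcup_i G^\sigma_{m+im_\tau}$ is inherited from the one for $F$. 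Finally, the claim that the quotient $\cE_i/\cE_{i-1}$ at each step is pure of dimension $k_0$ with rank-one restriction to its (irreducible, namely the orbit closure $V(\sigma_0)$) support is exactly what Kool's description \cite{Koo11} of equivariant pure sheaves gives for a one-dimensional jump supported on a single orbit, and the increasing-dimension statement is immediate since we process $k_0=0,1,2,\dots$ in order — within a fixed $k_0$ the successive quotients all have dimension $k_0$, and we only move to $k_0+1$ once all defect at dimension $\leq k_0$ is exhausted. Termination is guaranteed by the strict decrease of the finite defect $\Delta$.
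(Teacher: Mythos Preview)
Your overall strategy coincides with the paper's: peel off one elementary injection $(G^\sigma_m)\subset(F^\sigma_m)$ at the smallest dimension $k_0$ where $E$ and $F$ disagree, and induct. However, there is a genuine gap in the induction setup. Your total defect $\Delta$ is \emph{not} finite in general, and the appeal to $(iv)_f$ does not help: that axiom bounds how many $E^\sigma_m$ fail to equal $\bigcup_{m'<_\sigma m}E^\sigma_{m'}$, not how many classes $[m]$ satisfy $E^\sigma_m\neq F^\sigma_m$. Concretely, if a ray $\rho<\sigma$ already carries a discrepancy at some class $[m_1]$, then by $(v)_f$ one has $E^\sigma_{m+im_\rho}=E^\rho_{m_1}\neq F^\rho_{m_1}=F^\sigma_{m+im_\rho}$ for all sufficiently large $i$, so infinitely many classes in $M/(\sigma^\perp\cap M)$ contribute to the sum at $\sigma$. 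The paper avoids this by replacing your single integer with a lexicographic tuple $\delta=(\delta_1,\ldots,\delta_n)$, where $\delta_k$ is summed only over those $\sigma\in\Sigma(k)$ \emph{all of whose facets already have zero defect}; a nontrivial argument (about a page) is then needed to show that these restricted sums are finite. Without this refinement your induction cannot start.

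Your construction of $(G^\sigma_m)$ is also garbled. You declare $(G^\sigma_m)\subset(F^\sigma_m)$ elementary, which forces $G$ to coincide with $F$ except at the single class $[m_0]$ (for $\sigma_0$), where $\dim G$ drops by exactly one. But your formula $G^{\sigma_0}_{m_0}=E^{\sigma_0}_{m_0}\oplus L$ is a hyperplane of $F^{\sigma_0}_{m_0}$ only when the gap $\dim F^{\sigma_0}_{m_0}-\dim E^{\sigma_0}_{m_0}$ equals $2$; in general it is too small. The correct recipe (which the paper uses) is to take $G^{\sigma_0}_{m_0}$ to be any hyperplane of $F^{\sigma_0}_{m_0}$ containing $E^{\sigma_0}_{m_0}$, and $G^{\sigma_0}_m=F^{\sigma_0}_m$ for $[m]\neq[m_0]$; no propagation to $m>_{\sigma_0}m_0$ is required for $(i)_f$, since $G^{\sigma_0}_{m_0}\subset F^{\sigma_0}_{m_0}\subset F^{\sigma_0}_m=G^{\sigma_0}_m$. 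Your parenthetical about ``putting $L$ into $G^{\sigma_0}_m$ for all $m\geq_{\sigma_0}m_0$'' suggests you are mentally enlarging $E$ rather than shrinking $F$, which is inconsistent with the role you assign to $G$. Finally, your check of $(v)_f$ omits the case $\sigma_0\leq\sigma$ with $\sigma_0\nleq\tau$: here $G^\sigma$ has changed while $G^\tau$ has not, and one needs the additional observation that $m_\tau\in\sigma_0^\vee\setminus\sigma_0^\perp$, so $m+im_\tau\nleq_{\sigma_0}m_0$ for $i\gg0$, whence $G^\sigma_{m+im_\tau}=F^\sigma_{m+im_\tau}$ eventually and $(v)_f$ for $F$ takes over.
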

\begin{remark}
 \label{rem:decomposition I well ordered}
 In the proof of Theorem \ref{theo:existence decomposition elementary injections}, we will see that in the factorization
  $$
 \cE=\cE_0\to\cE_1\to\ldots\to\cE_p=\cF
 $$
 if $\cE_i\subset\cE_{i+1}$ is $k_i$-elementary, then $i\mapsto k_i$ is decreasing. Note that the proof could be adapted to have instead $i\mapsto k_i$ increasing.
\end{remark}

\begin{proof}
 We shall introduce an invariant $\delta=(\delta_k)_{1\leq k\leq n}\in(\N\cup\lbrace +\infty\rbrace)^n$ for injections between families of filtrations of the same rank such that $\delta=0$ if and only if the injection is an isomorphism, and then proceed to the proof by some induction process. Let $(E^\sigma_m)\subset (F^\sigma_m)$ be two families of filtrations of the same rank (for the same vector space, say $\C^r$). We define $\delta$ inductively on the dimension of the cones of $\Sigma$. First, for each ray $\rho\in\Sigma(1)$, we set
 $$
 \delta_1^\rho:=\sum_{m\in M/(\rho^\perp\cap M)}( \dim(F^\rho_m)-\dim(E^\rho_m))\in \N.
 $$
 This sum is indeed finite, as from point $(iii)_f$ of Definition \ref{def:perling} and by \eqref{eq:union is V}, there are $(i,j)\in\Z^2$ such that $E^\rho_m=0$ if $\langle m, u_\rho\rangle \leq i$ and $E^\rho_m=\C^r$ if $\langle m, u_\rho\rangle \geq j$, and similarly for the $F^\rho_m$'s. Hence, for $\langle m, u_\rho\rangle$ small enough, $E^\rho_m=F^\rho_m=0$ while for $\langle m, u_\rho\rangle$ large enough, $E^\rho_m=F^\rho_m=\C^r$. Then we set 
 $$
 \delta_1:=\sum_{\rho\in\Sigma(1)} \delta_1^\rho\in\N.
 $$
 Note that by construction, $\delta_1=0$ if and only if for any $\rho\in\Sigma(1)$, $E_m^\rho=F_m^\rho$ for all $m\in M$. We proceed to two dimensional cones in the following way. For all $\sigma\in\Sigma(2)$, we have a decomposition into rays $\sigma=\rho_1+\rho_2$. Consider the subset $\Sigma^*(2)\subset\Sigma(2)$ of cones $\sigma=\rho_1+\rho_2$ such that $\delta_1^{\rho_1}=\delta_1^{\rho_2}=0$. If this set is empty, then set $\delta_k=+\infty$ for $k\geq 2$, which ends the construction of $\delta$. Note in this case $(E_m^\sigma)\neq(F_m^\sigma)$. If it's not empty, for any $\sigma=\rho_1+\rho_2\in \Sigma^*(2)$ we set 
 $$
 \delta_2^\sigma:=\sum_{m\in M/(\sigma^\perp\cap M)} (\dim(F^\sigma_m)-\dim(E^\sigma_m))\in \N.
 $$
 Finiteness of this sum deserves some explanations. Pick a basis $\lbrace e_1,\ldots,e_n\rbrace$ of $N$ with dual basis $\lbrace e_1^*,\ldots,e_n^*\rbrace$ for $M$ so that $\sigma=\R_+\cdot e_1 + \R_+\cdot e_2$ and
 $$
 M/(\sigma^\perp\cap M)\simeq \Z\cdot e_1^*+\Z\cdot e_2^*.
 $$
 From $(iii)_f$, there are integers $a_0$ and $b_0$ such that $$F^\sigma_{a e_1^*+b e_2^*}=E^\sigma_{a e_1^*+b e_2^*} =0$$ unless $a\geq a_0$ and $b\geq b_0$. Hence
 $$
 \delta_2^\sigma=\sum_{ a\geq a_0, b\geq b_0}( \dim(F^\sigma_{a e_1^* + b e_2^*})-\dim(E^\sigma_{a e_1^* + b e_2^*})).
 $$
 As $\delta_1^{\rho_1}=0$, there are integers $a_0 < a_1 <\ldots< a_p$ such that the dimension of the spaces $E^{\rho_1}_{a e_1^*}=F^{\rho_1}_{a e_1^*}$ is constant when $a$ varries in each set $\lbrace a_0,a_0+1,\ldots,a_1-1\rbrace$, $\lbrace a_1,a_1+1,\ldots,a_2-1\rbrace$, $\ldots$, $\lbrace a_{p-1},\ldots, a_p-1\rbrace$ and $\lbrace a_p, a_p+1,\ldots \rbrace$. We thus sum 
 $$
 \delta_2^\sigma=\sum_{i=0}^p\left(\sum_{ a_i\leq a< a_{i+1}}\sum_{ b\geq b_0} (\dim(F^\sigma_{a e_1^* + b e_2^*})-\dim(E^\sigma_{a e_1^* + b e_2^*}))\right),
 $$
 where we set $a_{p+1}=+\infty$. For each $i\leq p-1$, the sum 
 $$
 \sum_{ a_i\leq a< a_{i+1}}\sum_{b\geq b_0} (\dim(F^\sigma_{a e_1^* + b e_2^*})-\dim(E^\sigma_{a e_1^* + b e_2^*}))
 $$
 is finite, as from $(v)_f$, for each $a$, there is $b_a$ such that for $b\geq b_a$, $E^\sigma_{a e_1^*+b e_2^*}=E^{\rho_1}_{a e_1^*}=F^{\rho_1}_{a e_1^*}=F^\sigma_{a e_1^*+b e_2^*}$. It remains to show that 
 $$
 \sum_{ a_p\leq a}\sum_{b_0\leq b} (\dim(F^\sigma_{a e_1^* + b e_2^*})-\dim(E^\sigma_{a e_1^* + b e_2^*}))
 $$
 is finite. Using again $(v)_f$, we have for each $a\geq a_p$ a smallest integer $b_a^j$ such that for $b\geq b_a^1$, $E^\sigma_{a e_1^*+b e_2^*}=E^{\rho_1}_{a e_1^*}=\C^r$ and for $b\geq b_a^2$, $F^\sigma_{a e_1^*+b e_2^*}=F^{\rho_1}_{a e_1^*}=\C^r$. Moreover, by $(i)_f$, we see that $a\mapsto b_a^j$ must be decreasing, reaching a minimum in finite time due to $(iv)_f$. Hence, there are $a'\in \Z$ and $b'\in\Z$ such that for $a\geq a'$ and $b\geq b'$, $E^\sigma_{a e_1^*+b e_2^*}=\C^r=F^\sigma_{a e_1^*+b e_2^*}$. We hence reduced ourselves to showing finiteness of
 $$
 \sum_{ a'\leq a}\sum_{b_0\leq b\leq b'} (\dim(F^\sigma_{a e_1^* + b e_2^*})-\dim(E^\sigma_{a e_1^* + b e_2^*}))
 $$
 or equivalently
 $$
 \sum_{b_0\leq b\leq b'} \sum_{ a'\leq a}(\dim(F^\sigma_{a e_1^* + b e_2^*})-\dim(E^\sigma_{a e_1^* + b e_2^*})),
 $$
 which can be achieved by similar arguments, projecting now onto the $e_2^*$ coordinate and finding $a_b$'s such that for $a\geq a_b$, $E^\sigma_{a e_1^*+b e_2^*}=E^{\rho_2}_{b e_2^*}=F^{\rho_2}_{b e_2^*}=F^\sigma_{a e_1^*+b e_2^*}$.
 
 We then set 
 $$
 \delta_2:=\sum_{\sigma\in\Sigma^*(2)} \delta_2^\sigma\in\N.
 $$
 This time, by construction, $(\delta_1,\delta_2)=(0,0)$ if and only if $E_m^\sigma=F_m^\sigma$ for all $\sigma\in\Sigma(1)\cup\Sigma(2)$ and all $m\in M$ (note that the vanishing of $\delta_1$ implies $\Sigma^*(2)=\Sigma(2)$). We then proceed inductively, considering for each $2\leq k\leq n$, the set $\Sigma^*(k)$ of $k$-dimensional cones $\sigma$ such that for each facet $\tau < \sigma$, $\tau\in\Sigma^*(k-1)$ and $\delta_{k-1}^\tau=0$. When $\Sigma^*(k)$ is empty we set $\delta_j=+\infty$ for $j\geq k$, so that $\delta=(\delta_1,\ldots,\delta_{k-1},+\infty,\ldots,+\infty)$. Otherwise we can then set for $\sigma\in\Sigma^*(k)$
 $$
 \delta_k^\sigma:=\sum_{m\in M/(\sigma^\perp\cap M)} (\dim(F^\sigma_m)-\dim(E^\sigma_m)),
 $$
 which is again finite by a similar argument, and we define
 $$
 \delta_k=\sum_{\sigma\in\Sigma^*(k)} \delta_k^\sigma.
 $$
 This time, vanishing of $(\delta_1,\delta_2,\ldots,\delta_k)$ implies equality for the associated torsion-free sheaves on the affine sets $U_\sigma$ for any $\sigma$ of dimension less or equal to $k$. In the end, we set
 $$
 \delta=(\delta_1,\ldots,\delta_n)
 $$
 which is our desired invariant. 
 
 We now proceed to our proof, and show that any injection $(E^\sigma_m)\subset (F^\sigma_m)$ of families of filtrations of the same rank can be factored through elementary injections. We proceed by descending induction on the smallest $k\in\lbrace 1,\ldots,n\rbrace$ such that $\delta_k\neq 0$. Note that by construction, if $\delta=(0,\ldots,0,\delta_k,\ldots,\delta_n)$, then $\Sigma(k)=\Sigma^*(k)$ and $\delta_k$ is finite. If this smallest integer is $n$, we proceed by induction on $\delta_n$ (note that the case $\delta_n=0$ as well corresponds to an equality of torsion-free sheaves, and there is nothing to prove here). Note also that $\delta=(0,\ldots,0,\delta_n)$ implies $E^\sigma_m=F^\sigma_m$ for $\sigma\in\Sigma$ of dimension less or equal to $n-1$. If $\delta_n=1$, then by construction of $\delta$, there is a unique cone $\sigma\in\Sigma(n)$ and a unique weight $m\in M$ such that 
 $$
 \dim(F^\sigma_m)=\dim(E^\sigma_m)+1
 $$
 and for all the other pairs of a cone $\sigma'$ in $\Sigma$ and a weights $m'\in M$, the inclusions $E^{\sigma'}_{m'}\subset F^{\sigma'}_{m'}$ are equalities. By Definition \ref{def:elementary injection}, $(E^\sigma_m)\subset (F^\sigma_m)$ is already an elementary injection, which ends the proof in that case. If the result is true for $\delta_n\leq d$, let's assume $\delta_n=d+1$. Pick $\sigma_0\in\Sigma(n)$ and $m_0\in M$ such that $E^{\sigma_0}_{m_0}\subsetneq F^{\sigma_0}_{m_0}$. We may assume that $m_0$ is chosen such that 
 \begin{equation}
  \label{eq:choice m0 for G}
  \forall\; m<_\sigma m_0\;,\: E^{\sigma_0}_{m}= F^{\sigma_0}_{m}.
 \end{equation}
Indeed, from the construction of $\delta_{n}$, there is a finite number of weights $m\in M$ such that $E^{\sigma_0}_{m}\neq F^{\sigma_0}_{m}$. Then, define a family of multifiltrations $(G^\sigma_m)$ in the following way :
 \begin{enumerate}
  \item[$(G1)$] For any $k<n$, any $\sigma\in\Sigma(k)$, and any $m\in M$, $G^\sigma_m=E^\sigma_m=F^\sigma_m$.
  \item[$(G2)$] For $\sigma_0$ and $m_0$, $E^{\sigma_0}_{m_0}\subset G^{\sigma_0}_{m_0}\subsetneq F^{\sigma_0}_{m_0}$. For this couple, $\dim(G^{\sigma_0}_{m_0})+1=\dim(F^{\sigma_0}_{m_0})$.
  \item[$(G3)$] For $\sigma_0$ and $m\in M\setminus\lbrace m_0\rbrace$, $G_m^\sigma=F_m^\sigma$.
  \item[$(G4)$] For any $\sigma\in\Sigma(n)\setminus\lbrace\sigma_0\rbrace$, $G^\sigma_m=F^\sigma_m$ for all $m\in M$. 
 \end{enumerate}
 
We claim that $(G^\sigma_m)$ is indeed a family of multifiltrations of rank $r$. The axioms $(ii)_f, (iii)_f$ and $(iv)_f$ from Definition \ref{def:perling} are clearly satisfied as they are valid for $(F^\sigma_m)$. Similarly, the compatibility conditions $(i)_f$ and $(v)_f$ are clearly verified for cones different from $\sigma_0$ as $G^\sigma_m=F^\sigma_m$ in that case. For $\sigma_0$, axiom $(i)_f$ is valid as if $m\leq_{\sigma_0} m_0$, either $m=m_0$ and $G_m^{\sigma_0}=G_{m_0}^{\sigma_0}$, or $m<_{\sigma_0} m_0$ and  then 
$$
\begin{array}{ccccc}
 G^{\sigma_0}_m & = &   F^{\sigma_0}_m & \mathrm{ by } & (G3),\\
                & = & E^{\sigma_0}_m & \mathrm{ by }&  \eqref{eq:choice m0 for G},\\
                & \subset & E^{\sigma_0}_{m_0}&  \mathrm{ as }& m\leq_{\sigma_0} m_0,\\
                & \subset & G_{m_0}^{\sigma_0}& \mathrm{ by }& (G2).
\end{array}
$$
If instead $m_0\leq_{\sigma_0} m$, with $m\neq m_0$, then
$$
\begin{array}{ccccc}
 G^{\sigma_0}_{m_0} & \subset &   F^{\sigma_0}_{m_0} & \mathrm{ by } & (G2),\\
                & \subset & F^{\sigma_0}_m &  \mathrm{ as }& m_0\leq_{\sigma_0} m,\\
                & \subset & G^{\sigma_0}_{m}&  \mathrm{ by }& (G3).
\end{array}
$$
In general, if we consider two weights $(m,m')\in M^2$ with $m\leq_{\sigma_0} m'$, either one of the weights equals $m_0$ and then by the previous discussion we have $G_m^{\sigma_0}\subset G_{m'}^{\sigma_0}$, or both are different from $m_0$ and the same conclusion holds using $(G3)$. Finally, $(v)_f$ is satisfied for $(G^\sigma_m)$ because it is for $(F^\sigma_m)$ and because the chain $i\mapsto G^{\sigma_0}_{m+im_\tau}$ equals $i\mapsto F^{\sigma_0}_{m+im_\tau}$ for $i$ large enough. Hence the claim is true and $(G^\sigma_m)$ is a family of multifiltrations. 

It is clear that $(E^\sigma_m)\subset (G^\sigma_m)\subset (F^\sigma_m)$ and that $(G^\sigma_m)\subset (F_m^\sigma)$ is an elementary injection. Last, by construction, the $\delta_n$ invariant of $(E^\sigma_m)\subset(G^\sigma_m)$ is $d$, hence we may apply our induction hypothesis to factorize this injection into elementary ones, which ends the proof in that case. 

We assume now that the result is true for any injection with $\delta$ invariant of the form $(0,\ldots,0,\delta_{k_0+1},\ldots,\delta_n)$, and will give the proof when the smallest $k$ such that $\delta_k\neq 0$ is $k_0$ (note that when one reaches $\delta_k=0$ in the induction, this automatically settles $\delta_{k+1}$ to be finite, hence our induction scheme is well defined). We again proceed in that case by induction on $\delta_{k_0}\in\N$, and we assume that $\delta_{k_0}>0$. We will decompose as before
\begin{equation}
 \label{eq:factorize proof}
(E_m^\sigma)\subset (H_m^\sigma)\subset (F_m^\sigma)
\end{equation}
such that the second inclusion is elementary, and the $\delta_{k_0}$ invariant of the first inclusion has decreased by one. If this new $\delta_{k_0}$ invariant reaches $0$, we can apply our induction hypothesis on the smallest $k$ such that $\delta_k\neq 0$, and if not, we apply our induction hypothesis on $\delta_{k_0}$ to conclude.

We are left with the problem of constructing the above factorization \eqref{eq:factorize proof}. By construction of $\delta_{k_0}$, we can find $\sigma_0\in\Sigma(k_0)$ and $m_0\in M/(\sigma_0^\perp\cap M)$ such that $E^{\sigma_0}_{m_0}\subsetneq F^{\sigma_0}_{m_0}$. As in the case for $\delta_n$, arguing this time on the classes of weights $m\in M/(\sigma_0^\perp\cap M)$, we may assume that $m_0$ is chosen such that
\begin{equation}
 \label{eq:choice m0 for H}
\forall\; m<_{\sigma_0} m_0\;,\: E^{\sigma_0}_{m}= F^{\sigma_0}_{m}. 
\end{equation}
Then, we introduce the family of multifiltrations $(H_m^\sigma)$ given by :
 \begin{enumerate}
  \item[$(H1)$] For any $k<k_0$, any $\sigma\in\Sigma(k)$, and any $m\in M$, $H^\sigma_m=E^\sigma_m=F^\sigma_m$.
  \item[$(H2)$] For $\sigma_0\in\Sigma(k_0)$ and $m_0\in M/(\sigma_0^\perp\cap M)$, $E^{\sigma_0}_{m_0}\subset H^{\sigma_0}_{m_0}\subsetneq F^{\sigma_0}_{m_0}$. For this couple, $H^{\sigma_0}_{m_0}=H$ for $H$ a hyperplane containing $E^{\sigma_0}_{m_0}$ in $F^{\sigma_0}_{m_0}$.
  \item[$(H3)$] For $\sigma_0$ and $m\in M/(\sigma_0^\perp\cap M)$ such that $m- m_0\notin \sigma_0^\perp$, $H_m^{\sigma_0}=F_m^{\sigma_0}$.
  \item[$(H4)$] For $\sigma\in\Sigma(k_0)\setminus\lbrace \sigma_0\rbrace$,  $H^\sigma_m=F^\sigma_m$ for all $m\in M$. 
  \item[$(H5)$] For any $k>k_0$, any $\sigma\in\Sigma(k)$ and any $m\in M$, $H^\sigma_m=F^\sigma_m$ unless $\sigma_0<\sigma$ and $m\leq_{\sigma_0} m_0$ in which case $H^\sigma_m=F^\sigma_m\cap H^{\sigma_0}_{m_0}$.
 \end{enumerate}
 
 Note that  in $(H2)-(H3)$ we identified weights with their classes modulo $(\sigma_0^\perp\cap M)$ with no harm, as translations by elements $m\in\sigma_0^\perp$ induce equalities $F^{\sigma_0}_\bullet=F^{\sigma_0}_{\bullet+m}$. Note also that in $(H5)$, the relation $m\leq_{\sigma_0} m_0$ is well defined and doesn't depend on the choice of a representant of the class $m_0\in M/(\sigma_0^\perp\cap M)$.
 
 First, we note that we have for any cone $\sigma$ and any $m\in M$, 
 $$
 E^\sigma_m\subset H^\sigma_m\subset F^\sigma_m.
 $$
 Indeed, the second inclusion is clear while the first  follow from $(E^\sigma_m)\subset (F^\sigma_m)$, except maybe when $\sigma_0\leq \sigma$ and $m\leq_{\sigma_0} m_0$. In this latter case, $E^\sigma_m\subset E_m^{\sigma_0}\subset E_{m_0}^{\sigma_0}$ by \eqref{eq:injective restrictions} and \eqref{eq:injective weight multiplication}; this in turn implies that $E^\sigma_m\subset H^\sigma_m$ by $(H2)$.
 
 We now show that $(H^\sigma_m)$ is a family of multifiltrations, by checking the axioms in Definition \ref{def:perling}. First, $(ii)_f-(iii)_f-(iv)_f$ easily follow  from the fact that $(F^\sigma_m)$ satisfies those axioms. We now show compatibility condition $(i)_f$. Let $\sigma\in\Sigma$. If $\sigma=\sigma_0$, we can argue as we did before for $(G^\sigma_m)$ to conclude.  If $\sigma_0$ is not a face of $\sigma$, then $(i)_f$ follows as $H^\sigma_m=F^\sigma_m$ for all weights $m$. Finally, if $\sigma_0<\sigma$, let $m\leq_\sigma m'$. Recall that this implies in particular $m\leq_{\sigma_0} m'$. Then, a case by case analysis on which weights $\tilde m\in\lbrace m,m'\rbrace$ satisfy $\tilde m\leq_{\sigma_0} m_0$ leads to the result (note that by transitivity of $\leq_{\sigma_0}$, the case  $m'\leq_{\sigma_0} m_0$ forces also $m\leq_{\sigma_0}m_0$). It remains to check axiom $(v)_f$. The only non-trivial cases are for cones $\sigma$ with $\sigma_0\leq \sigma$. If $\sigma=\sigma_0$, the proof goes as for $(G^\sigma_m)$. If $\sigma_0 < \sigma$, consider a facet $\tau <\sigma$ and fix $m_\tau\in \sigma^\vee\cap\tau^\perp$ such that $\tau^\perp\cap M=\tau^\perp\cap M+\N\cdot (-m_\tau)$. Fix then $m\in M$ and consider the sequence $(H^\sigma_{m+im_\tau})_{i\in\N}$. We want to show that it is stationary at $H^\tau_m$. If $\sigma_0\leq \tau$, then $m_\tau\in \sigma_0^\perp$ so either $m+im_\tau\leq_{\sigma_0} m_0$ for all $i\in\N$, or for none. In both cases, we can conclude using $(H5)$ and the fact that  $(F^\sigma_{m+im_\tau})_{i\in\N}$ is stationary at $F^\tau_m$. If $\sigma_0\nleq \tau$, then from $\sigma_0 < \sigma$ we deduce that $m_\tau\in\sigma_0^\vee$. But then, for $i$ large enough, $m+im_\tau\nleq_{\sigma_0} m_0$, so that $H^\sigma_{m+im_\tau}=F^\sigma_{m+im_\tau}$ for large $i$, and $(H^\sigma_{m+im_\tau})_{i\in\N}$ is stationary at $F^\tau_m$, which equals $H^\tau_m$ by $(H5)$ and the fact that $\sigma_0$ is not a face of $\tau$.  This concludes the proof that $(H^\sigma_m)$ is a family of multifiltrations.

 Finally, by construction $(H^\sigma_m)\subset (F^\sigma_m)$ is elementary, and $(E^\sigma_m)\subset (H^\sigma_m)$ has $\delta_{k_0}$-invariant strictly smaller than the one of $(E^\sigma_m)\subset (F^\sigma_m)$,  so we can apply our induction hypothesis, and conclude the proof.
\end{proof}

For later use, we will introduce a specific class of elementary injections. Let $(E^\sigma_m)\subset (F^\sigma_m)$ be an elementary injection with parameters $(k_0,\sigma_0,m_0)$. For each $\sigma\in \Sigma$, we introduce the following set of (classes of) weights :
\begin{equation}
 \label{eq:definition Wsigma}
W_\sigma:=\lbrace m\in M/(\sigma^\perp\cap M)\:\vert\: \dim(F^\sigma_m)=\dim(E^\sigma_m)+1\rbrace\subset M/(\sigma^\perp\cap M).
\end{equation}
Note that by Lemma \ref{lem:dim diff elementary injection}, if $m\notin W_\sigma$, then $F^\sigma_m=E^\sigma_m$, while by Lemma \ref{lem:precise when dim diff}, $W_\sigma\neq \emptyset$ if and only if $\sigma_0\leq \sigma$, in which case $W_\sigma$ is included in the preimage of $m_0$ under the projection map 
$$
M/(\sigma^\perp\cap M)\to M/(\sigma_0^\perp\cap M).
$$
  Let $\sigma\in\Sigma(d)$ be such that $\sigma_0 <\sigma$.  Assume  that $\sigma_0=\R_+e_1+\ldots+\R_+e_{k_0}$, while $\sigma=\R_+e_1+\ldots+\R_+e_d$, for $k_0\leq d$ and $(e_i)_{1\leq i\leq d}=(u_\rho)_{\rho\in\sigma(1)}\in N$ is part of a $\Z$-basis. We make the following identifications 
$$
M/(\sigma_0^\perp\cap M)\simeq \Z\cdot e_1^*\oplus\ldots\oplus\Z\cdot e_{k_0}^*
$$
and 
$$
M/(\sigma^\perp\cap M)\simeq \Z\cdot e_1^*\oplus\ldots\oplus\Z\cdot e_{d}^*.
$$
For any $j$ such that $k_0< j \leq d$, consider the cone $\sigma_j=\sigma_0+\R_+e_j$ that contains $\sigma_0$ as a facet.  From Axiom $(v)_f$ of Definition \ref{def:perling}, for $i$ large enough, we have 
$$
E^{\sigma_j}_{m_0+ie_j^*}=E^{\sigma_0}_{m_0}
$$
and 
$$
F^{\sigma_j}_{m_0+ie_j^*}=F^{\sigma_0}_{m_0}.
$$
Hence, from $(iii)_f$ in Definition \ref{def:perling}, $(ii)_e$ in Definition \ref{def:elementary injection} and from Lemma \ref{lem:precise when dim diff}, there exists $a_j\in\Z$ such that for any $i < a_j$, $\dim(F^{\sigma_j}_{m_0+ie_j^*})=\dim(E^{\sigma_j}_{m_0+ie_j^*})$ while for any $ i\geq a_j$, $\dim(F^{\sigma_j}_{m_0+ie_j^*})=\dim(E^{\sigma_j}_{m_0+ie_j^*})+1$. With those integers $(a_j)_{k_0< j\leq d}$ at hand, set 
\begin{equation}
 \label{eq:msigma}
m_\sigma:=m_0+a_{k_0+1}e_{k_0+1}^*+\ldots+a_d e_d^*\in M/(\sigma^\perp\cap M).
\end{equation}
Notice that $\langle m_\sigma,u_\rho\rangle=a_j$. Moreover we have :
\begin{lemma}
 \label{lem:localization diff dim}
 Assume that $m\in W_\sigma$. Then $m_\sigma\leq_\sigma m$.
\end{lemma}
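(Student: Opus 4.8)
I would translate the statement into coordinates and argue by contradiction, using the auxiliary cones $\sigma_j=\sigma_0+\R_+e_j$ that define $m_\sigma$.

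First, with the $\Z$-basis fixed in the setup, identify $M/(\sigma^\perp\cap M)\simeq\Z\,e_1^*\oplus\cdots\oplus\Z\,e_d^*$, the $i$-th coordinate of a class being $\langle\cdot,e_i\rangle$; under this identification $\sigma^\vee$ becomes the positive orthant, so $m_\sigma\leq_\sigma m$ is exactly the coordinatewise inequality $\langle m,e_i\rangle\geq\langle m_\sigma,e_i\rangle$ for $1\leq i\leq d$. Given $m\in W_\sigma$, the discussion preceding the lemma (resting on Lemma~\ref{lem:precise when dim diff}) tells us that $m$ projects to $m_0$ in $M/(\sigma_0^\perp\cap M)$, so $m-m_0\in\sigma_0^\perp$ and the first $k_0$ coordinates of $m$ equal those of $m_0$, which by \eqref{eq:msigma} are also the first $k_0$ coordinates of $m_\sigma$. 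Hence the lemma reduces to proving $\langle m,e_j\rangle\geq a_j$ for each $j$ with $k_0<j\leq d$.

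Fix such a $j$ and suppose $\langle m,e_j\rangle<a_j$. The cone $\sigma_j$ is a face of $\sigma$ (since $e_1,\dots,e_d$ are part of a basis) having $\sigma_0$ as a facet. The point is that $m$ and $m_0+\langle m,e_j\rangle\,e_j^*$ differ by an element of $\sigma_j^\perp$, so $F^{\sigma_j}_m=F^{\sigma_j}_{m_0+\langle m,e_j\rangle e_j^*}$ and similarly for $E$; by the definition of $a_j$ and $\langle m,e_j\rangle<a_j$ these two subspaces have the same dimension, hence $E^{\sigma_j}_m=F^{\sigma_j}_m$. Chaining the restriction maps \eqref{eq:injective restrictions} along $\sigma_0<\sigma_j\leq\sigma$ and using $E^{\sigma_0}_m=E^{\sigma_0}_{m_0}$ (as $m-m_0\in\sigma_0^\perp$), we get
$$
F^\sigma_m\subset F^{\sigma_j}_m=E^{\sigma_j}_m\subset E^{\sigma_0}_m=E^{\sigma_0}_{m_0}.
$$
Since $m-m_0\in\sigma_0^\perp\subset\sigma_0^\vee$ gives $m\leq_{\sigma_0}m_0$ and $\sigma_0<\sigma$, axiom $(iii)_e$ of Definition~\ref{def:elementary injection} gives $E^\sigma_m=F^\sigma_m\cap E^{\sigma_0}_{m_0}=F^\sigma_m$, which contradicts $m\in W_\sigma$, i.e. $\dim F^\sigma_m=\dim E^\sigma_m+1$. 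Therefore $\langle m,e_j\rangle\geq a_j$ for all such $j$, i.e. $m_\sigma\leq_\sigma m$.

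The only substantial inputs are axiom $(iii)_e$ (at the cone $\sigma$) and the definition of the integers $a_j$ (at the cone $\sigma_j$); everything else is bookkeeping of weights modulo the various $\sigma^\perp$. I expect the main (organizational) difficulty to be precisely this bookkeeping — checking that replacing $m$ by $m_0+\langle m,e_j\rangle e_j^*$ does not change $F^{\sigma_j}_\bullet$, and that the restriction maps identify the relevant subspaces — which is where the basis adapted to the flag $\sigma_0\subset\sigma_j\subset\sigma$ is used.
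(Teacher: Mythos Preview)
Your proof is correct and follows essentially the same route as the paper's: both reduce to showing $\langle m,e_j\rangle\geq a_j$ for $k_0<j\leq d$, argue by contradiction using the auxiliary cone $\sigma_j$ to get $F^{\sigma_j}_m=E^{\sigma_j}_m$, and then deduce $F^\sigma_m\subset E^{\sigma_0}_{m_0}$, which together with $(iii)_e$ forces $E^\sigma_m=F^\sigma_m$. The only cosmetic difference is that you spell out the $\sigma_j^\perp$-bookkeeping and invoke $(iii)_e$ directly, whereas the paper refers back to the proof of Lemma~\ref{lem:dim diff elementary injection} for the final step.
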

\begin{proof}
As $m\in W_\sigma$, by Lemma \ref{lem:precise when dim diff}, $m-m_0\in\sigma_0^\perp$. We can then 
write 
$$
m=m_0+m_{k_0+1}e_{k_0+1}^*+\ldots+m_d e_d^*\in M/(\sigma^\perp\cap M).
$$
If $m_\sigma\nleq_\sigma m$, then there is $k_0<j\leq d$ such that $m_j < a_j$. From $\sigma_j\leq \sigma$, we have 
$F^\sigma_m\subset F^{\sigma_j}_m$. But then, as $m_j<a_j$, and by definition of $a_j$, we must have $F^{\sigma_j}_m=E^{\sigma_j}_m$. Then, as in the proof of Lemma \ref{lem:dim diff elementary injection}, this implies that $F^{\sigma_j}_m\subset E_{m_0}^{\sigma_0}$, hence $F^{\sigma}_m\subset E_{m_0}^{\sigma_0}$, which in turn forces $E^{\sigma}_m=F^\sigma_m$ (again following the proof of Lemma \ref{lem:dim diff elementary injection}). As $m\in W_\sigma$, this is absurd, which concludes the proof.
\end{proof}
The previous lemma then says that for any $\sigma\in\Sigma$ with $\sigma_0\leq \sigma$,
$$
W_\sigma\subset (m_0+\sigma_0^\perp)\cap (m_\sigma+\sigma^\vee).
$$
This suggests the following definition :
\begin{definition}
 \label{def:simple elementary injection}
 Let $(E^\sigma_m)\subset (F^\sigma_m)$ be an elementary  injection of families of multifiltrations with parameters $(\sigma_0,m_0)$. We will say that it is {\it saturated} if for any $\sigma_0\leq\sigma$, we have 
 $$
 W_\sigma= (m_0+\sigma_0^\perp)\cap (m_\sigma+\sigma^\vee).
 $$
 We will say that an elementary injection of equivariant torsion-free sheaves is saturated if the associated injection of families of multifiltrations is.
\end{definition}
We will now provide geometric interpretations of those notions.

\begin{proposition}
 \label{prop:elementary injection geometric}
  Let $\cE\to\cF$ be an equivariant injection between torsion-free equivariant sheaves of the same rank, and denote by $\cQ:=\cF/\cE$ the quotient. Then $\cQ$ is equivariant, pure with irreducible support $V:=V(\sigma_0)$ and  restriction $\cQ_{\vert V}$ of rank $1$ if and only if $\cE\to\cF$ is elementary with parameters $(\sigma_0,m_0)$, for $(\sigma_0,m_0)\in\Sigma\times M$. If in addition $\cE\to\cF$ is saturated, then $\cQ_{\vert V}$ is a line bundle.
\end{proposition}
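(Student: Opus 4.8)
The plan is to translate both sides of the equivalence into Perling's combinatorial language and match them cone by cone, using Kool's description of pure equivariant sheaves \cite{Koo11} on the ``pure of rank one'' side. The basic dictionary: $\cQ=\cF/\cE$, being the cokernel of an equivariant morphism, is equivariant, and since each $U_\sigma$ is affine with a torus action, the functor $\cG\mapsto\Gamma(U_\sigma,\cG)_m$ is exact on equivariant coherent sheaves, so
\[
\Gamma(U_\sigma,\cQ)_m\cong F^\sigma_m/E^\sigma_m\qquad\text{for every }\sigma\in\Sigma\text{ and }m\in M.
\]
Moreover $\Supp(\cQ)$ is closed and $T$-invariant, hence a union of orbit closures, and $\Supp(\cQ)\cap U_\sigma=\emptyset$ if and only if $E^\sigma_m=F^\sigma_m$ for all $m$; combined with the orbit--cone correspondence ($V(\gamma)\cap U_\sigma\neq\emptyset\iff\gamma\preceq\sigma$) this reads off $\Supp(\cQ)$, and the shape of $\cQ$ over each $V(\gamma)$, from the two families of multifiltrations.

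\emph{Elementary $\Rightarrow$ pure of rank one.} Assume the injection is elementary with parameters $(k_0,\sigma_0,m_0)$. By Lemma~\ref{lem:dim diff elementary injection} each $F^\sigma_m/E^\sigma_m$ is $0$ or one-dimensional; axiom $(i)_e$, Lemma~\ref{lem:precise when dim diff} and the computation preceding Lemma~\ref{lem:localization diff dim} (which produces a weight in $W_\sigma$ whenever $\sigma_0\preceq\sigma$) give $\Gamma(U_\sigma,\cQ)\neq0\iff\sigma_0\preceq\sigma$, and a short combinatorial argument with the orbit--cone correspondence then forces $\Supp(\cQ)=V(\sigma_0)$, which is irreducible. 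It remains to realize $\cQ$ as $\iota_*\cL$ for an equivariant torsion-free rank-one sheaf $\cL$ on $V(\sigma_0)$. By Lemma~\ref{lem:precise when dim diff} all weights with $F^\sigma_m\neq E^\sigma_m$ lie in the single coset $m_0+\sigma_0^\perp$; identifying it with the character lattice $\sigma_0^\perp\cap M$ of $V(\sigma_0)$ via $m\mapsto m-m_0$, define $L^{\bar\sigma}_{\bar m}:=F^\sigma_{m_0+\bar m}/E^\sigma_{m_0+\bar m}$ for $\sigma\succeq\sigma_0$ and $\bar m\in\sigma_0^\perp\cap M$, with $\bar\sigma$ the image of $\sigma$ in the star fan of $\sigma_0$. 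Then axioms $(i)_e$--$(iii)_e$ are exactly what is needed for $(L^{\bar\sigma}_{\bar m})$ to satisfy Perling's axioms for a rank-one family of multifiltrations on $V(\sigma_0)$: in particular $(iii)_e$ in the form $E^\sigma_m=F^\sigma_m\cap E^{\sigma_0}_{m_0}$ makes the transition maps $L^{\bar\sigma}_{\bar m}\hookrightarrow L^{\bar\sigma}_{\bar m'}$ injective and gives compatibility along the rays of the star fan, i.e. torsion-freeness on $V(\sigma_0)$. Since $\iota_*$ is compatible with sections over the affine opens $U_\sigma\cap V(\sigma_0)$, the equivariant sheaf $\cL$ attached to $(L^{\bar\sigma}_{\bar m})$ satisfies $\iota_*\cL\cong\cQ$; hence $\cQ$ is pure of dimension $\dim V(\sigma_0)=n-k_0$, with irreducible support $V(\sigma_0)$, and $\cQ|_{V(\sigma_0)}=\cL$ of rank $1$.

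\emph{Pure of rank one $\Rightarrow$ elementary, and the saturated case.} Assume $\cQ$ is pure with irreducible support $V:=V(\sigma_0)$, $\sigma_0\in\Sigma(k_0)$ with $k_0=n-\dim V$, and $\cQ|_V$ of rank $1$; equivalently $\cQ\cong\iota_*\cL$ with $\cL$ torsion-free of rank $1$ on the integral toric variety $V(\sigma_0)$, so by Kool's/Perling's description $\cL$ is a rank-one family of multifiltrations on the star fan of $\sigma_0$. Running the dictionary backwards, $\Gamma(U_\sigma,\cQ)=0$ for $\sigma_0\not\preceq\sigma$ gives $E^\sigma_m=F^\sigma_m$ there, in particular for every $\sigma$ of dimension $<k_0$ (axiom $(i)_e$), and confines the weights with $E^\sigma_m\neq F^\sigma_m$ to one coset modulo $\sigma_0^\perp$; on $U_{\sigma_0}$, where $V\cap U_{\sigma_0}=O(\sigma_0)$ is the torus of $V(\sigma_0)$, the rank-one condition gives $\Gamma(U_{\sigma_0},\cQ)_m\cong\C$ for $m$ in a unique class $m_0\in M/(\sigma_0^\perp\cap M)$ and $0$ otherwise, hence $\dim F^{\sigma_0}_{m_0}=\dim E^{\sigma_0}_{m_0}+1$ and $E^\sigma_m=F^\sigma_m$ for all other $\sigma\in\Sigma(k_0)$ (axiom $(ii)_e$, the parameter $m_0$ recording the equivariant twist of $\cL$); and axiom $(v)_f$ for $\cL$ on $V(\sigma_0)$, transported through $\iota_*$, gives axiom $(iii)_e$. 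Thus $\cE\to\cF$ is elementary with parameters $(\sigma_0,m_0)$. If moreover the injection is saturated, then $W_\sigma=(m_0+\sigma_0^\perp)\cap(m_\sigma+\sigma^\vee)$ for every $\sigma\succeq\sigma_0$, which in the multifiltration $(L^{\bar\sigma}_{\bar m})$ says exactly that $L^{\bar\sigma}_{\bar m}=\bigcap_{\bar\rho\in\bar\sigma(1)}L^{\bar\rho}_{\bar m}$; by Lemma~\ref{lem:reflexive hull} applied on $V(\sigma_0)$, $\cL$ is reflexive, and since $\Sigma$ is smooth so is the star fan of $\sigma_0$, whence $V(\sigma_0)$ is smooth and its rank-one reflexive sheaf $\cL$ is a line bundle; so $\cQ|_{V(\sigma_0)}$ is a line bundle.

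The hard part is the common middle step: pinning down the dictionary between $\iota_*$ of sheaves on $V(\sigma_0)$ and families of multifiltrations on $\mathbb{P}^n$, i.e. checking that the collection $(F^\sigma_m/E^\sigma_m)$ arising from an elementary injection is precisely the pushforward of a rank-one Perling datum on the star fan of $\sigma_0$ and conversely, and matching axiom $(iii)_e$ with Perling's compatibility axiom $(v)_f$ on that fan. The bookkeeping of the weight cosets ($m_0+\sigma_0^\perp$ versus $M/(\sigma_0^\perp\cap M)$) and of the threshold integers $a_j$ of~\eqref{eq:msigma} is where care is required; once that is set up, the support computation, axioms $(i)_e$--$(ii)_e$, and the saturated statement all drop out of Lemmas~\ref{lem:dim diff elementary injection}, \ref{lem:precise when dim diff}, \ref{lem:localization diff dim} and~\ref{lem:reflexive hull}.
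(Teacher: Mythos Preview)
Your proposal is correct and follows essentially the same approach as the paper: both reduce to the combinatorial data $Q^\sigma_m=F^\sigma_m/E^\sigma_m$, invoke Kool's characterization of pure equivariant sheaves with irreducible support to match elementary injections with rank-one torsion-free data on the star fan of $\sigma_0$, and for the saturated case use Lemma~\ref{lem:reflexive hull} on $V(\sigma_0)$ to upgrade rank one to reflexive and hence locally free. The paper's proof is terser---it cites \cite[Proposition~2.8 and Theorem~2.10]{Koo11} directly rather than spelling out the star-fan dictionary $(L^{\bar\sigma}_{\bar m})$ as you do, and it leaves the converse entirely to the reader---so your version is a more explicit rendering of the same argument.
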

In the above statement, we used the notation $V(\sigma_0)$ to denote the closure of the orbit $O(\sigma_0)$ associated to $\sigma_0\in\Sigma$ in the orbit cone correspondence \cite[Section 3.2]{CLS}. 
\begin{proof}
Assume that $\cE\to\cF$ is elementary with parameters $(\sigma_0,m_0)$.
 Consider for each $\sigma\in\Sigma$ and each $m\in M$ : 
 $$
 Q^\sigma_m:=F^\sigma_m/E^\sigma_m.
 $$
 It is straightforward to check that $(Q^\sigma_m)$ satisfies the axioms of a finite $\Delta$-family in Perling's terminology \cite[Section 5]{Per04}. Such a family corresponds to an equivariant coherent sheaf, namely the quotient sheaf $\cQ$ (working on a smooth toric variety, the data of an equivariant pre-sheaf corresponds to the data of an equivariant sheaf, from the orbit-cone correspondence \cite[Section 3.2]{CLS}). By construction, we have for all $\sigma\in\Sigma$, $Q^\sigma_m=0$ if $\sigma_0\nleq \sigma$, while when $\sigma_0\leq\sigma$, $Q^\sigma_m=0$ unless maybe when $m\in W_\sigma$, in which case $Q^\sigma_m$ is a line. In this latter case, $m_\sigma\leq_\sigma m$ by Lemma \ref{lem:localization diff dim}. Those are precisely the conditions for $(Q^\sigma_m)$ to define a pure equivariant sheaf with support $V(\sigma_0)$, and whose restriction to $V(\sigma_0)$ is of rank $1$, cf \cite[Proposition 2.8 and Theorem 2.10]{Koo11}. 
 The converse follows from similar considerations, and we leave the details to the interested reader. 
 
 Assume now in addition that the injection is saturated. Then by construction, for each $\sigma_0\leq\sigma$, and for each $m\in W_\sigma$, writing $\sigma_0=\R_+e_1+\ldots+\R_+e_{k_0}$ and $\sigma=\R_+e_1+\ldots+\R_+e_d$ as before, we see that 
 $$
 Q^\sigma_m=\bigcap_{j=k_0+1}^d Q^{\sigma_j}_{m_0+m_je_j^*}.
 $$
 Then, from the description of the toric variety $V(\sigma_0)$ in \cite[Section 3.2]{CLS}, and by Lemma \ref{lem:reflexive hull}, we conclude that the restriction of $\cQ$ to $V(\sigma_0)$ is reflexive. As it is of rank $1$, it is locally free, which ends the proof.
 
\end{proof}

\begin{remark}
 Using similar arguments as in the proof of Theorem \ref{theo:existence decomposition elementary injections}, one may show that for any $k_0$-elementary injection $\cE\to\cF$, there is a finite number of $k_0$-elementary injections 
 $$
 \cE\to\cF\to\cF_1\to\ldots\to\cF_p
 $$
 such that $\cE\to \cF_p$ is $k_0$-elementary and saturated. The idea is to use an inductive argument on the cardinality of $(m_0+\sigma_0^\perp)\cap (m_\sigma+\sigma^\vee)\setminus W_\sigma$, for $\sigma$ of minimal dimension such that the difference of those sets is non-empty. Then, one splits such an injection by modifying the family of filtration at $E^\sigma_m$, for a suitable weight $m$. We leave the details of the proof of this fact to the interested reader, as we won't use it in what follows.
\end{remark}

Together with Proposition \ref{prop:elementary injection geometric}, Theorem \ref{theo:existence decomposition elementary injections} implies :
\begin{corollary}
\label{cor:splitting geometric}
 Let $\cE\to\cF$ be an equivariant injection of equivariant torsion-free sheaves of the same rank. Then it splits as 
 $$
 \cE=\cE_0\to\cE_1\to \ldots\to\cE_p=\cF,
 $$
 where for each $i$, $\cQ_i:=\cE_{i+1}/\cE_i$ is equivariant, pure and restricts to a rank $1$ sheaf on its irreducible support. Moreover, the map $i\mapsto \dim(\mathrm{Supp}(\cQ_i))$ is increasing.
\end{corollary}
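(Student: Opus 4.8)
The plan is to read the statement off from Theorem~\ref{theo:existence decomposition elementary injections} together with the geometric translation in Proposition~\ref{prop:elementary injection geometric}. First I would apply Theorem~\ref{theo:existence decomposition elementary injections} to the given injection to obtain a factorization
$$
\cE=\cE_0\to\cE_1\to\ldots\to\cE_p=\cF
$$
by elementary injections, and I would keep the extra information recorded in Remark~\ref{rem:decomposition I well ordered}: in the factorization produced by the proof of that theorem, if $\cE_i\to\cE_{i+1}$ is $k_i$-elementary then $i\mapsto k_i$ is (weakly) decreasing. Let $(k_i,\sigma_i,m_i)$ denote the parameters of the $i$-th elementary injection, so that $\sigma_i\in\Sigma(k_i)$.

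Next I would invoke Proposition~\ref{prop:elementary injection geometric} for each piece $\cE_i\to\cE_{i+1}$: being elementary with parameters $(\sigma_i,m_i)$, its quotient $\cQ_i:=\cE_{i+1}/\cE_i$ is equivariant, pure, with irreducible support $V(\sigma_i)$, and restricts to a rank $1$ sheaf on $V(\sigma_i)$. This is precisely the asserted structure of each $\cQ_i$, so only the dimensions of the supports remain to be controlled.

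For that I would use the orbit-cone correspondence \cite[Section~3.2]{CLS}: the orbit closure $V(\sigma_i)$ attached to a cone $\sigma_i\in\Sigma(k_i)$ satisfies $\dim V(\sigma_i)=n-k_i$. Hence $\dim(\mathrm{Supp}(\cQ_i))=n-k_i$, and since $i\mapsto k_i$ is decreasing by Remark~\ref{rem:decomposition I well ordered}, the map $i\mapsto\dim(\mathrm{Supp}(\cQ_i))$ is increasing, which is the last assertion.

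I do not anticipate any real obstacle, as both ingredients are already in place; the only point deserving a line of care is that it is the \emph{decreasing}-$k_i$ normalization of Remark~\ref{rem:decomposition I well ordered} (and not the increasing one) that makes the \emph{support dimensions} increase, since $\dim(\mathrm{Supp}(\cQ_i))=n-k_i$. One may also note, for safety, that no $k_i$ equals $0$: as $E^{\lbrace 0\rbrace}_m=F^{\lbrace 0\rbrace}_m=\C^r$ for all $m$, a $0$-elementary injection cannot occur; this is, however, inessential for the statement.
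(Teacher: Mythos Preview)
Your proposal is correct and follows exactly the approach the paper intends: the corollary is stated immediately after Proposition~\ref{prop:elementary injection geometric} with the remark that together with Theorem~\ref{theo:existence decomposition elementary injections} it implies the result, and your argument simply spells out that implication, including the use of Remark~\ref{rem:decomposition I well ordered} for the monotonicity of support dimensions.
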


\section{Elementary injections and Chern classes}
\label{sec:chern classes elementary}
There is a formula for the total Chern class of a torsion-free equivariant sheaf over a smooth projective toric variety. To state it, we first need to introduce some notations. Let $(E^\sigma_m)$ be a family of multifiltrations corresponding to a torsion-free sheaf over some smooth projective toric variety. Fix a cone $\sigma\in\Sigma(d)$, that we may write $$\sigma=\R_+e_1+\ldots+\R_+e_d\subset N_\R$$ for $\lbrace e_1,\ldots,e_d\rbrace$ part of a $\Z$-basis of $N$. Then, following Kool's notation \cite{Koo11}, we may rewrite the multifiltration $(E^\sigma_m)_{m\in M}$ as $(E^\sigma(\lambda_1,\ldots,\lambda_d))_{(\lambda_1,\ldots,\lambda_d)\in\Z^d}$, where each $m\in M/(\sigma^\perp\cap M)$ writes uniquely (identifying $M/(\sigma^\perp\cap M)$ with $\Z\cdot e_1^*\oplus\ldots\oplus \Z \cdot e_d^*$)
$$
m=\lambda_1e_1^*+\ldots+\lambda_de_d^*.
$$
We then introduce for each $i\in\lbrace 1,\ldots,d\rbrace$ a $\Z$-linear operator $\Delta_i$ on the free abelian group generated by the vector spaces $\lbrace E^\sigma(\lambda_1,\ldots,\lambda_d)\rbrace_{(\lambda_1,\ldots,\lambda_d)\in\Z^d}$ defined by the following, for $(\lambda_1,\ldots,\lambda_d)\in\Z^d$ :
$$
\Delta_i \left(E^\sigma(\lambda_1,\ldots,\lambda_d)\right)=E^\sigma(\lambda_1,\ldots,\lambda_d)-E^\sigma(\lambda_1,\ldots,\lambda_{i-1},\lambda_i-1,\lambda_{i+1},\ldots,\lambda_d).
$$
We then set for any $(\lambda_1,\ldots,\lambda_d)\in\Z^d$ :
$$
[E^\sigma](\lambda_1,\ldots,\lambda_d)=\Delta_1(\ldots(\Delta_r\left(E^\sigma(\lambda_1,\ldots,\lambda_d) \right))\ldots).
$$
We then extend the definition of the dimension in a $\Z$-linear way on the free abelian group generated by the multifiltration, and we can consider the integer
$$
\dim\left( [E^\sigma](m)\right):=\dim\left( [E^\sigma](\lambda_1,\ldots,\lambda_d)\right)\in\Z,
$$
where we used the identification $m=\lambda_1e_1^*+\ldots+\lambda_de_d^*$. For example, for a ray $\rho\in\Sigma(1)$, and $m=\lambda_1e_1^*$, we have
$$
\dim([E^\rho](m)) = \dim( E^\rho(\lambda_1) )- \dim( E^\rho(\lambda_1 -1)),
$$
while for  $\sigma \in \Sigma(2)$, and $m=\lambda_1e_1^*+\lambda_2e_2^*$, we have
\begin{eqnarray*}
\dim( [E^\sigma](m) ) =  &\dim( E^\sigma(\lambda_1, \lambda_2))\\
 & - \dim( E^\sigma(\lambda_1-1, \lambda_2)) - \dim (E^\sigma(\lambda_1, \lambda_2-1))
\\ 
&+ \dim( E^\sigma(\lambda_1-1, \lambda_2-1)).
\end{eqnarray*}
% while for  $\sigma \in \Sigma(3)$ and  $m=\lambda_1e_1^*+\lambda_2e_2^*+\lambda_3e_3^*$, we obtain that  
% \begin{eqnarray*}
%  \dim( [E^\sigma](m) )=&\dim( E^\sigma(\lambda_1, \lambda_2,\lambda_3))- \dim( E^\sigma(\lambda_1-1, \lambda_2,\lambda_3)) \\
%  & - \dim (E^\sigma(\lambda_1, \lambda_2-1,\lambda_3))- \dim (E^\sigma(\lambda_1, \lambda_2,\lambda_3-1))
% \\ 
% &+ \dim( E^\sigma(\lambda_1-1, \lambda_2-1,\lambda_3))+ \dim( E^\sigma(\lambda_1-1, \lambda_2,\lambda_3-1))\\
% & + \dim( E^\sigma(\lambda_1, \lambda_2-1,\lambda_3-1))-\dim( E^\sigma(\lambda_1-1, \lambda_2-1,\lambda_3-1)).
% \end{eqnarray*}
In general, the formula   is given by
\begin{equation}
 \label{eq:dimension Esigma crochet}
 \dim( [E^\sigma](m) ) =  \sum_{\mu\in\lbrace 0, 1 \rbrace^{d}} (-1)^{(\sum_{i=0}^{d} \mu_i)} \dim\left(E^\sigma(\lambda_1-\mu_1,\ldots,\lambda_d-\mu_d) \right).
\end{equation}
We come back to torsion-free sheaves over the projective space. We then introduce for each $\sigma\in\Sigma$ :
\begin{equation}
 \label{eq:usigma}
u_\sigma:=\sum_{\rho\in\sigma(1)} u_\rho
\end{equation}
and recall that $H$ stands for the class of a hyperplane section.
\begin{proposition}
 \label{prop:formula total chern class general}
 We have
 \begin{equation}
  \label{eq:total chern class general}
 \uc(\cE)=\prod_{\sigma\in\Sigma}\prod_{m\in M/(\sigma^\perp\cap M)} \left(1-\langle u_\sigma, m \rangle H\right)^{(-1)^{\codim(\sigma)}\dim\left( [E^\sigma](m)\right) }.
 \end{equation}
\end{proposition}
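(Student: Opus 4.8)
The plan is to obtain \eqref{eq:total chern class general} as the specialization to $\P^n$ of a formula valid on an arbitrary smooth projective toric variety $X$, namely
\[
\uc(\cE)=\prod_{\sigma\in\Sigma}\ \prod_{m\in M/(\sigma^\perp\cap M)}\Big(1-\sum_{\rho\in\sigma(1)}\langle m,u_\rho\rangle\,[D_\rho]\Big)^{(-1)^{\codim(\sigma)}\dim([E^\sigma](m))}
\]
in $H^{2\bullet}(X,\Z)$, and then to use that on $\P^n$ every invariant prime divisor satisfies $[D_\rho]=H$, so that $\sum_{\rho\in\sigma(1)}\langle m,u_\rho\rangle[D_\rho]=\langle u_\sigma,m\rangle H$ with $u_\sigma=\sum_{\rho\in\sigma(1)}u_\rho$ as in \eqref{eq:usigma}. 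Note that these products are finite, since for each fixed $\sigma$ one has $\dim([E^\sigma](m))=0$ for all but finitely many classes $m$ by the finiteness axioms of Definition \ref{def:perling}, and since $M/(\{0\}^\perp\cap M)$ is trivial the $\sigma=\{0\}$ term contributes only $1$.

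To establish the displayed general formula I would argue through the Grothendieck group. The total Chern class $\uc$ is multiplicative in short exact sequences, hence factors through $K_0(X)$, so it suffices to produce the corresponding additive identity
\[
[\cE]=\sum_{\sigma\in\Sigma}\ \sum_{m\in M/(\sigma^\perp\cap M)}(-1)^{\codim(\sigma)}\,\dim\big([E^\sigma](m)\big)\,\big[\iota_{\sigma*}(\cO_{V(\sigma)}\otimes\chi^m)\big]
\]
in $K_0(X)$, where $\iota_\sigma\colon V(\sigma)\hookrightarrow X$ is the inclusion of the orbit closure and the equivariant twist $\chi^m$ is converted into an honest line bundle on $V(\sigma)$ with first Chern class $\sum_{\rho\in\sigma(1)}\langle m,u_\rho\rangle[D_\rho]$. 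Since $\Sigma$ is smooth, the Koszul resolution of $\cO_{V(\sigma)}$ by the line bundles $\cO(-\sum_{\rho\in S}D_\rho)$, $S\subseteq\sigma(1)$, then lets one compute $\uc$ of each building block and recover the displayed product.

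The identity in $K_0(X)$ is itself local-to-global. Locally on $U_\sigma$ the module of equivariant sections $\Gamma(U_\sigma,\cE)=\bigoplus_{m}E^\sigma_m\chi^m$ is a finitely generated multigraded module over $\C[\sigma^\vee\cap M]$ that is constant along the $\sigma^\perp$-directions (axiom $(i)_f$); by the standard theory of multigraded Hilbert series — where multiplying by $(1-t_i)$ realizes the difference operator $\Delta_i$ — the numerator of its Hilbert series is exactly $\sum_m\dim([E^\sigma](m))\,\chi^m$, which is the local shadow of the formula. Globalizing these numerators along the torus-orbit stratification, the alternating sign $(-1)^{\codim(\sigma)}$ recording the inclusion--exclusion over the cone poset, is precisely Klyachko's method for computing Chern classes of equivariant vector bundles \cite{Kly90}; its extension to the torsion-free case is handled through Perling's and Kool's descriptions \cite{Per04,Koo11}.

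The main obstacle is exactly this globalization step: assembling the local $K$-polynomials into the claimed identity in $K_0(X)$, i.e. carrying out the dévissage over the orbit stratification cleanly and tracking how each equivariant character twist passes to a line bundle on its stratum (in particular keeping the equivariant structure of $\cO_{V(\sigma)}\otimes\chi^m$ straight). Once that identity is in hand, the rest is the Koszul computation of the building blocks together with, for $\P^n$, the elementary substitution $[D_\rho]=H$ and the identity $u_\sigma=\sum_{\rho\in\sigma(1)}u_\rho$.
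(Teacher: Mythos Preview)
Your approach is conceptually sound but considerably more elaborate than what the paper does, and you yourself flag the gap. The paper's proof is essentially a two-line citation argument: Klyachko \cite[Theorem 3.2.1]{Kly90} (see also \cite[Section 4.3.1]{KnuSha98}) already established the general formula
\[
\uc(\cE)=\prod_{\sigma\in\Sigma}\ \prod_{m\in M/(\sigma^\perp\cap M)}\Big(1-\sum_{\rho\in\sigma(1)}\langle m,u_\rho\rangle\,D_\rho\Big)^{(-1)^{\codim(\sigma)}\dim([E^\sigma](m))}
\]
for equivariant \emph{locally free} sheaves on a smooth toric variety (the sign difference with Klyachko's original statement comes from the use of increasing rather than decreasing filtrations). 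The extension to torsion-free sheaves is then obtained by taking a finite equivariant resolution of $\cE$ by locally free sheaves, which exists on a smooth projective toric variety, and using multiplicativity of $\uc$ in exact sequences --- exactly the argument of Kool \cite[Proposition 3.16]{Koo11}. The specialization to $\P^n$ via $[D_\rho]=H$ is then the same as yours.

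By contrast, you are sketching a direct proof via a $K_0$-decomposition of $[\cE]$ along the orbit stratification together with Koszul resolutions of the $\cO_{V(\sigma)}$. This is essentially a \emph{reproof} of Klyachko's theorem, and the ``main obstacle'' you identify --- globalizing the local Hilbert numerators into a $K_0$-identity with correct bookkeeping of how the character twist $\chi^m$ becomes a line bundle on $V(\sigma)$ --- is precisely the nontrivial content of that theorem. Since the result is already available in the literature, the paper bypasses all of this: it cites Klyachko for the locally free case and reduces the torsion-free case to it by resolution. Your route would eventually work, but it leaves substantially more to verify and buys nothing beyond what the citation already provides.
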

The proof follows directly from Klyachko's work \cite{Kly90}.
\begin{proof}
 From \cite[Theorem 3.2.1]{Kly90}, see also \cite[Section 4.3.1]{KnuSha98}, we have the formula
 $$
 \uc(\cE)=\prod_{\sigma\in\Sigma}\prod_{m\in M/(\sigma^\perp\cap M)} \left(1-\sum_{\rho\in\sigma(1)}\langle u_\rho, m \rangle D_\rho\right)^{(-1)^{\codim(\sigma)}\dim\left( [E^\sigma](m)\right) }
 $$
 for every locally-free equivariant sheaves over the projective space (note that we use increasing filtrations rather than decreasing filtrations as in \cite{Kly90,KnuSha98}, which accounts for the difference in signs in our formulae). Arguing as in \cite[Proposition 3.16]{Koo11}, using the existence of finite equivariant resolutions by locally-free sheaves for any equivariant torsion-free sheaf, we conclude that this formula holds for any equivariant torsion-free sheaf. The result follows.
\end{proof}
While Formula \eqref{eq:total chern class general} may be difficult to handle, using multiplicativity of the total Chern class, we can obtain simpler formulae for the quotients of the total Chern classes of two equivariant sheaves coming from elementary injections. Hence, starting from a sheaf whose total Chern class is known - typically a reflexive one - we may be able to compute the total Chern class of its subsheaves. 
\begin{proposition}
 \label{prop:ratio elementary injection}
 Consider an elementary injection $\cE\to \cF$, with parameters $(k_0,\sigma_0,m_0)$. We have the following formula in the ring $\Z[H]/\langle H^{n+1}\rangle$ :
 \begin{equation}
  \label{eq:ratio chern elementary}
  \frac{\uc(\cF)}{\uc(\cE)}=\prod_{\sigma_0\leq\sigma}\prod_{m\in W_\sigma}\left(\prod_{i=0}^{\dim(\sigma)}(1-(\langle u_\sigma,m\rangle+i)H)^{(-1)^{(\codim(\sigma)+i)}\binom{\dim(\sigma)}{i}}\right).
 \end{equation}
\end{proposition}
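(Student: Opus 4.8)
The plan is to feed the general formula \eqref{eq:total chern class general} of Proposition \ref{prop:formula total chern class general} into the multiplicativity of the total Chern class and then reorganise the resulting product using the combinatorics of the quotient $\cQ:=\cF/\cE$. Writing $Q^\sigma_m:=F^\sigma_m/E^\sigma_m$, the exact sequences $0\to E^\sigma_m\to F^\sigma_m\to Q^\sigma_m\to 0$ together with the $\Z$-linearity of the operators $\Delta_i$ defining $[\,\cdot\,]$ give $\dim([F^\sigma](m))-\dim([E^\sigma](m))=\dim([Q^\sigma](m))$ for every $\sigma$ and $m$ (and this quantity, like $[E^\sigma](m)$, vanishes for all but finitely many $(\sigma,m)$), so Proposition \ref{prop:formula total chern class general} applied to $\cE$ and to $\cF$ yields, in $\Z[H]/\langle H^{n+1}\rangle$,
$$\frac{\uc(\cF)}{\uc(\cE)}=\prod_{\sigma\in\Sigma}\ \prod_{m\in M/(\sigma^\perp\cap M)}\bigl(1-\langle u_\sigma,m\rangle H\bigr)^{(-1)^{\codim(\sigma)}\dim([Q^\sigma](m))}.$$
This being a finite product, the problem is reduced to evaluating those exponents.

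To do so I would use that, by Proposition \ref{prop:elementary injection geometric} (or directly from Definition \ref{def:elementary injection} and Lemma \ref{lem:dim diff elementary injection}), one has $Q^\sigma_m=0$ unless $\sigma_0\leq\sigma$, and for $\sigma_0\leq\sigma$ the space $Q^\sigma_m$ is a line when $m\in W_\sigma$ and zero otherwise; thus $\dim(Q^\sigma_m)$ is the indicator function of $W_\sigma$ in $M/(\sigma^\perp\cap M)$. Fixing coordinates with $\sigma=\R_+e_1+\ldots+\R_+e_d$, $d=\dim(\sigma)$, so that $u_\sigma=e_1+\ldots+e_d$ by \eqref{eq:usigma}, formula \eqref{eq:dimension Esigma crochet} becomes
$$\dim([Q^\sigma](m))=\sum_{\mu\in\{0,1\}^d}(-1)^{|\mu|}\,\mathbf{1}\bigl[m-\textstyle\sum_{i=1}^d\mu_ie_i^*\in W_\sigma\bigr],$$
and the crucial elementary identity is $\langle u_\sigma,\sum_i\mu_ie_i^*\rangle=\sum_i\mu_i=|\mu|$, so translating the index by $-\sum_i\mu_ie_i^*$ translates the linear form $\langle u_\sigma,\cdot\rangle$ by $-|\mu|$. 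Substituting the last display into the product over $m$ and reindexing $m=m'+\sum_i\mu_ie_i^*$ with $m'\in W_\sigma$ and $\mu\in\{0,1\}^d$, grouped according to $i=|\mu|$ (which occurs with multiplicity $\binom{d}{i}$), turns the $\sigma$-factor into
$$\prod_{m'\in W_\sigma}\ \prod_{i=0}^{d}\bigl(1-(\langle u_\sigma,m'\rangle+i)H\bigr)^{(-1)^{\codim(\sigma)+i}\binom{d}{i}},$$
which, together with the restriction $\sigma_0\leq\sigma$ coming from the vanishing of $Q^\sigma$, is precisely \eqref{eq:ratio chern elementary}.

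The one step that needs genuine care — and which I expect to be the main obstacle — is that when $\sigma_0\subsetneq\sigma$ the index set $W_\sigma$ is infinite, so the displayed $\sigma$-factor is not $H$-adically convergent factor by factor and the reindexing is not a literal rearrangement of a finite product. I would bypass this by comparing, for each integer $w$, the total multiplicity of the linear factor $(1-wH)$ on the two sides. On the left it equals $(-1)^{\codim(\sigma)}\sum_{m:\,\langle u_\sigma,m\rangle=w}\dim([Q^\sigma](m))$, a finite sum; expanding $\dim([Q^\sigma](m))$ as above and re-summing gives $(-1)^{\codim(\sigma)}\sum_{i=0}^d(-1)^i\binom{d}{i}R_{w-i}$, where $R_v:=\#\{m'\in W_\sigma:\langle u_\sigma,m'\rangle=v\}$. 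Each $R_v$ is finite because, by Lemma \ref{lem:localization diff dim}, $W_\sigma\subseteq(m_0+\sigma_0^\perp)\cap(m_\sigma+\sigma^\vee)$, whose slice $\{\langle u_\sigma,\cdot\rangle=v\}$ is a compact rational polytope; hence, regrouping the right-hand $\sigma$-factor by the value $w=\langle u_\sigma,m'\rangle+i$, it too has multiplicity $(-1)^{\codim(\sigma)}\sum_{i=0}^d(-1)^i\binom{d}{i}R_{w-i}$ at $(1-wH)$, and it defines a well-defined element of $\Z[H]/\langle H^{n+1}\rangle$ because the support of $w\mapsto\sum_{i=0}^d(-1)^i\binom{d}{i}R_{w-i}$ is finite (it coincides with the finitely supported left-hand exponent). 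Once the two multiplicities agree for every $w$, the identity \eqref{eq:ratio chern elementary} follows, and the remainder is routine bookkeeping with binomial coefficients and signs.
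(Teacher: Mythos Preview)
Your proof is correct and follows essentially the same route as the paper's: take the ratio of the two instances of \eqref{eq:total chern class general}, note that the exponents vanish unless $\sigma_0\leq\sigma$, expand $\dim([F^\sigma](m))-\dim([E^\sigma](m))=\dim([Q^\sigma](m))$ via \eqref{eq:dimension Esigma crochet}, and perform the change of index $(m,\mu)\mapsto(m+\mu,\mu)$ using $\langle u_\sigma,\mu\rangle=|\mu|$. The paper records this last step as a one-line change of index; your explicit verification---matching the multiplicity of each linear factor $(1-wH)$ on both sides and using Lemma \ref{lem:localization diff dim} to see that the slices $R_v$ are finite---supplies a piece of rigor about the infinite product over $W_\sigma$ that the paper leaves implicit.
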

Recall that $W_\sigma$ was defined in \eqref{eq:definition Wsigma}.
\begin{proof}
From Lemma \ref{lem:dim diff elementary injection} and Lemma \ref{lem:precise when dim diff}, for any cone $\sigma\in\Sigma$ that doesn't contain $\sigma_0$, $E^\sigma_m=F^\sigma_m$ for all weights $m$. Hence in that case $\dim\left( [F^\sigma](m)\right)= \dim\left( [E^\sigma](m)\right)$ for any weight $m$. Together with Equation \eqref{eq:total chern class general},  we find
 $$
 \frac{\uc(\cF)}{\uc(\cE)}=\prod_{\sigma_0\leq\sigma}\prod_{m\in M/(\sigma^\perp\cap M)} \left(1-\langle u_\sigma, m \rangle H\right)^{(-1)^{\codim(\sigma)}\left(\dim\left( [F^\sigma](m)\right)- \dim\left( [E^\sigma](m)\right)\right)}.
 $$
 Using now Formula \eqref{eq:dimension Esigma crochet}, we have that for each $\sigma_0\leq\sigma$, the term
 $$
 \displaystyle\prod_{m\in M/(\sigma^\perp\cap M)} \left(1-\langle u_\sigma, m \rangle H\right)^{(-1)^{\codim(\sigma)}\left(\dim\left( [F^\sigma](m)\right)- \dim\left( [E^\sigma](m)\right)\right)}
 $$
 equals
 \begin{equation*}
 \label{eq:change of index}
  \displaystyle\prod_{m\in M/(\sigma^\perp\cap M)}\prod_{\mu\in\lbrace 0,1\rbrace^{\dim\sigma}} \left(1-\langle u_\sigma, m \rangle H\right)^{(-1)^{\codim(\sigma)+\mu}\left(\dim\left( F^\sigma(m-\mu)\right)- \dim\left( E^\sigma(m-\mu)\right)\right)},
 \end{equation*}
 where we used the notations
 $$
 (-1)^\mu:=(-1)^{\mu_1+\ldots+\mu_{\dim\sigma}}
 $$
 and, identifying again
 $M/(\sigma^\perp\cap M)$ with $\Z\cdot e_1^*\oplus\ldots\oplus \Z\cdot e_d^*$,
 $$
 F^\sigma(m-\mu):=F^\sigma(m-(\mu_1e_1^*+\ldots+\mu_de_d^*)),
 $$
 and similarly for $E^\sigma(m-\mu)$. Formula \eqref{eq:ratio chern elementary} then follows from Lemma \ref{lem:dim diff elementary injection}, the definition of $W_\sigma$, and a change of index $(m,\mu)\mapsto (m+\mu,\mu)$. 
\end{proof}
For saturated elementary injections, we will obtain a simpler formula, in several steps. We will need first the following lemma :
\begin{lemma}
 \label{lem:inductive formula compute ratio cherns}
 Let $(a,\um,d)\in \Z\times \Z\times \N$ with $2\leq d\leq n$. Then the following holds in $\Z[H]/\langle H^{n+1}\rangle$ :
 \begin{equation}
  \label{eq:infinite products}
  \prod_{m\geq a} \prod_{i=0}^d \left(1-(\um+m+i)H\right)^{(-1)^{n-d+i}\binom{d}{i}}=\prod_{i=0}^{d-1}\left(1-(\um+a+i)H\right)^{(-1)^{n-d+i}\binom{d-1}{i}}.
 \end{equation}
\end{lemma}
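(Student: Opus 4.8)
The plan is to not read the left-hand side as the limit $\lim_{M\to\infty}\prod_{m=a}^{M}$ (which does not stabilise in $\Z[H]/\langle H^{n+1}\rangle$), but rather to interpret the formally infinite product by collecting, for each integer $c$, the total exponent carried by the linear factor $(1-cH)$. This reorganisation is legitimate -- and is exactly the reading under which the identity is used in Proposition~\ref{prop:ratio elementary injection}, where it comes from reindexing the honestly finite product of Proposition~\ref{prop:formula total chern class general} -- because a given factor $(1-(\um+j)H)$ is produced only by the finitely many pairs $(m,i)$ with $m\geq a$, $0\leq i\leq d$ and $m+i=j$, and because its total exponent will turn out to vanish for all but finitely many $j$.

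First I would pull the common sign $(-1)^{n-d}$ out of every exponent on both sides (it is just $\pm1$), reducing the claim to
\[
\prod_{m\geq a}\prod_{i=0}^d(1-(\um+m+i)H)^{(-1)^i\binom{d}{i}}=\prod_{i=0}^{d-1}(1-(\um+a+i)H)^{(-1)^i\binom{d-1}{i}}.
\]
Then, fixing an integer $j$, the admissible pairs $(m,i)$ with $m+i=j$ are exactly those with $i\in\{0,1,\ldots,\min(d,j-a)\}$ when $j\geq a$, and there are none when $j<a$; so the total exponent of $(1-(\um+j)H)$ on the left is $\sum_{i=0}^{\min(d,j-a)}(-1)^i\binom{d}{i}$ for $j\geq a$ and $0$ otherwise. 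The computation then closes with the elementary partial-sum identity $\sum_{i=0}^r(-1)^i\binom{d}{i}=(-1)^r\binom{d-1}{r}$ for $0\leq r\leq d$, which follows from a one-line induction using Pascal's rule (and yields $0$ for $r=d$ since $\binom{d-1}{d}=0$). Hence for $j\geq a+d$ the exponent is $0$, only the finitely many $j=a+r$ with $0\leq r\leq d-1$ survive, and each survives with exponent $(-1)^r\binom{d-1}{r}$; collecting these linear factors gives precisely the right-hand side, and restoring the $(-1)^{n-d}$ finishes the proof.

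The only genuinely delicate step is the very first one, namely justifying the ``collect the exponents'' reading of the infinite product; once that is granted, the argument is pure bookkeeping with binomial coefficients. As a consistency check one can instead apply Pascal's rule directly to factor $\prod_{i=0}^d(1-(c+i)H)^{(-1)^i\binom{d}{i}}=P_{d-1}(c)\,P_{d-1}(c+1)^{-1}$, where $P_e(c):=\prod_{i=0}^{e}(1-(c+i)H)^{(-1)^i\binom{e}{i}}$; the partial products then telescope to $P_{d-1}(\um+a)\,P_{d-1}(\um+M+1)^{-1}$, and the reorganised value is the stable part $P_{d-1}(\um+a)=\prod_{i=0}^{d-1}(1-(\um+a+i)H)^{(-1)^i\binom{d-1}{i}}$, the spurious boundary factors at $j\approx M$ being precisely those whose total exponent $\sum_{i=0}^d(-1)^i\binom{d}{i}$ vanishes once the remaining values $m>M$ are accounted for.
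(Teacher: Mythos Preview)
Your argument is correct and is essentially the paper's own proof: both rewrite the left-hand side as $\prod_{j\geq 0}(1-(\um+a+j)H)^{\alpha_j}$, observe that $\alpha_j=\sum_{i=0}^{\min(j,d)}(-1)^{n-d+i}\binom{d}{i}$, and close with the partial-sum identity $\sum_{i=0}^{r}(-1)^i\binom{d}{i}=(-1)^r\binom{d-1}{r}$ (yielding $0$ at $r=d$), which the paper derives by the same Pascal's-rule induction you indicate. Your additional remarks on the meaning of the infinite product and the telescoping reformulation $P_{d-1}(c)P_{d-1}(c+1)^{-1}$ are not in the paper but are harmless and clarifying extras.
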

\begin{proof}
The left hand side of \eqref{eq:infinite products} can be written
$$
\prod_{j\in\N}(1-(\um+a+j)H)^{\alpha_j},
$$
where, for $j<d$,
$$
\alpha_j=\sum_{i=0}^j(-1)^{n-d+i}\binom{d}{i}
$$
and for $j\geq d$,
$$
\alpha_j=\sum_{i=0}^d(-1)^{n-d+i}\binom{d}{i}=(-1)^{n-d}\sum_{i=0}^d(-1)^{i}\binom{d}{i}=0.
$$
For $j<d$, we compute
\begin{equation*}
 \begin{array}{ccc}
 \displaystyle\sum_{i=0}^j(-1)^{i}\binom{d}{i} &=& \displaystyle \sum_{i=1}^j(-1)^{i}\binom{d-1}{i-1}+\sum_{i=1}^j(-1)^{i}\binom{d-1}{i}+1\\
                                &=&\displaystyle-\sum_{i=0}^{j-1}(-1)^{i}\binom{d-1}{i}+\sum_{i=1}^j(-1)^{i}\binom{d-1}{i}+1\\
                                &=&\displaystyle-1+(-1)^{j}\binom{d-1}{j}+1=(-1)^{j}\binom{d-1}{j},
 \end{array}
\end{equation*}
and the result follows.
\end{proof}

\begin{corollary}
 \label{prop:chern for very elementary injection}
  Let $\cE\to \cF$ be a saturated elementary injection associated to $(E^\sigma_m)\subset (F^\sigma_m)$, with parameters $(k_0,\sigma_0,m_0)$. Then
 \begin{equation}
  \label{eq:ratio chern very elementary}
  \frac{\uc(\cF)}{\uc(\cE)}=\prod_{\sigma_0\leq\sigma}\left(\prod_{i=0}^{k_0}(1-(\um_\sigma+i)H)^{(-1)^{(\codim(\sigma)+i)}\binom{k_0}{i}}\right),
 \end{equation}
 where $\um_\sigma:=\langle m_\sigma, u_\sigma\rangle$ is defined by \eqref{eq:msigma} and \eqref{eq:usigma}.
\end{corollary}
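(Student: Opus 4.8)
The plan is to deduce \eqref{eq:ratio chern very elementary} from Proposition \ref{prop:ratio elementary injection} by simplifying the double product in \eqref{eq:ratio chern elementary} one cone $\sigma\geq\sigma_0$ at a time: the saturatedness hypothesis pins down the index set $W_\sigma$, and then a repeated application of Lemma \ref{lem:inductive formula compute ratio cherns} collapses the resulting multi-indexed product to the single product over $i\in\{0,\ldots,k_0\}$ appearing in \eqref{eq:ratio chern very elementary}.

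First I would fix $\sigma$ with $\sigma_0\leq\sigma$, set $d=\dim(\sigma)$, and choose a $\Z$-basis $(e_1,\ldots,e_n)$ of $N$ adapted to the flag $\sigma_0<\sigma$ as in the paragraph preceding Definition \ref{def:simple elementary injection}, so that $\sigma_0=\R_+e_1+\ldots+\R_+e_{k_0}$, $\sigma=\R_+e_1+\ldots+\R_+e_d$, and $(u_\rho)_{\rho\in\sigma(1)}=(e_1,\ldots,e_d)$. Since $\cE\to\cF$ is saturated, $W_\sigma=(m_0+\sigma_0^\perp)\cap(m_\sigma+\sigma^\vee)$ by Definition \ref{def:simple elementary injection}; because $m_\sigma-m_0\in\sigma_0^\perp$ (each $e_j^*$ with $j>k_0$ annihilates $\sigma_0$) this equals $m_\sigma+(\sigma_0^\perp\cap\sigma^\vee)$, and in the identification $M/(\sigma^\perp\cap M)\simeq\Z e_1^*\oplus\ldots\oplus\Z e_d^*$ one has $\sigma_0^\perp\cap\sigma^\vee=\N e_{k_0+1}^*\oplus\ldots\oplus\N e_d^*$. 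Hence $W_\sigma$ is parametrized by $\N^{d-k_0}$ via $(n_{k_0+1},\ldots,n_d)\mapsto m_\sigma+\sum_{j>k_0}n_je_j^*$, and since $u_\sigma=e_1+\ldots+e_d$ and $\langle u_\sigma,e_j^*\rangle=1$ for $1\leq j\leq d$, I get $\langle u_\sigma,\,m_\sigma+\sum_{j>k_0}n_je_j^*\rangle=\um_\sigma+n_{k_0+1}+\ldots+n_d$ with $\um_\sigma=\langle u_\sigma,m_\sigma\rangle$. Substituting into \eqref{eq:ratio chern elementary}, the factor indexed by $\sigma$ becomes
$$\prod_{(n_{k_0+1},\ldots,n_d)\in\N^{d-k_0}}\ \prod_{i=0}^{d}\left(1-(\um_\sigma+n_{k_0+1}+\ldots+n_d+i)H\right)^{(-1)^{\codim(\sigma)+i}\binom{d}{i}}.$$

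Next I would collapse this $(d-k_0)$-fold product. Multiplying every exponent in Lemma \ref{lem:inductive formula compute ratio cherns} (taken with $a=0$) by the constant sign $(-1)^{\codim(\sigma)}=(-1)^{n-d}$, it reads, for $2\leq d'\leq n$,
$$\prod_{m\geq0}\prod_{i=0}^{d'}\left(1-(\um+m+i)H\right)^{(-1)^i\binom{d'}{i}}=\prod_{i=0}^{d'-1}\left(1-(\um+i)H\right)^{(-1)^i\binom{d'-1}{i}}.$$
Applying this first to the innermost sum over $n_d$ (with $d'=d$ and $\um=\um_\sigma+n_{k_0+1}+\ldots+n_{d-1}$), then to the sum over $n_{d-1}$ (with $d'=d-1$), and so on down to $n_{k_0+1}$ (with $d'=k_0+1$), eliminates one free index at each step while lowering the binomial parameter by one. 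All $d'$ occurring satisfy $d'\geq k_0+1\geq2$, using $k_0\geq1$: indeed a $0$-elementary injection cannot exist, since axiom $(ii)_f$ forces $E^{\{0\}}_m=F^{\{0\}}_m$ for every $m$, contradicting $(ii)_e$. After these $d-k_0$ steps the $\sigma$-factor has collapsed to $\prod_{i=0}^{k_0}(1-(\um_\sigma+i)H)^{(-1)^i\binom{k_0}{i}}$, and multiplying the exponents back by $(-1)^{\codim(\sigma)}$ gives exactly the $\sigma$-factor of \eqref{eq:ratio chern very elementary}. The degenerate case $\sigma=\sigma_0$ (that is $d=k_0$) needs no telescoping: here $W_{\sigma_0}=\{m_0\}$ and $m_{\sigma_0}=m_0$ by $(ii)_e$ and \eqref{eq:msigma}, so the corresponding factor of \eqref{eq:ratio chern elementary} is already in the desired shape. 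Taking the product over all $\sigma\geq\sigma_0$ then yields \eqref{eq:ratio chern very elementary}.

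The only real obstacle is the bookkeeping in the iteration: Lemma \ref{lem:inductive formula compute ratio cherns} records the sign of its output as $(-1)^{\codim(\sigma)+i}$, keyed to $\dim(\sigma)$ rather than to the running binomial parameter, so to re-apply it one must either track that the relevant ambient dimension drops by one at each step, or --- as above --- strip off the constant sign $(-1)^{\codim(\sigma)}$ before telescoping and restore it at the end. The genuine combinatorial content, namely the single-step identity $\sum_{i=0}^{j}(-1)^i\binom{d}{i}=(-1)^j\binom{d-1}{j}$ --- conceptually the generating-function identity $(1-x)^{d}=(1-x)^{d-k_0}(1-x)^{k_0}$ that the full $(d-k_0)$-fold collapse encodes --- is already established in Lemma \ref{lem:inductive formula compute ratio cherns}, so no further difficulty arises.
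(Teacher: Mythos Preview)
Your proof is correct and follows the same approach as the paper: identify $W_\sigma$ explicitly via the saturatedness hypothesis, parametrize it by a product of half-lines, and then collapse the resulting multi-indexed product by iterating Lemma \ref{lem:inductive formula compute ratio cherns}. The only cosmetic difference is that you shift the parametrization to start at $0$ (the paper keeps the lower bounds $a_j$), and you are more explicit than the paper about the sign bookkeeping in the iteration---stripping off $(-1)^{\codim(\sigma)}$ before telescoping and restoring it afterwards---and about why $k_0\geq 1$ so that all the $d'$ lie in the range where Lemma \ref{lem:inductive formula compute ratio cherns} applies; these are points the paper leaves implicit.
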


\begin{proof}
 Starting from Formula \eqref{eq:ratio chern elementary}, we need to compute for each $\sigma_0\leq\sigma$ :
 \begin{equation}
  \label{eq:Wsigma product}
 \prod_{m\in W_\sigma}\left(\prod_{i=0}^{\dim(\sigma)}(1-(\langle u_\sigma,m\rangle+i)H)^{(-1)^{(\codim(\sigma)+i)}\binom{\dim(\sigma)}{i}}\right).
 \end{equation}
 Using the fact that $W_\sigma= (m_0+\sigma_0^\perp)\cap (m_\sigma+\sigma^\vee)$, and the previous notations (i.e. the basis $(e_i)_{1\leq i\leq d}$, $m_\sigma=m_0+a_{k_0+1}e_{k_0+1}^*+\ldots+a_d e_d^*$, etc) we have the following description :
 $$
 W_\sigma=\lbrace m_0+m_{k_0+1}e_{k_0+1}^*+\ldots+m_d e_d^*\:\vert\:\forall\; k_0<j\leq d\;,\: m_j\geq a_j \rbrace.
 $$
 Hence the product in \eqref{eq:Wsigma product} equals
 $$
 \prod_{m_d\geq a_d}\ldots\prod_{m_{k_0+1}\geq a_{k_0+1}}\left(\prod_{i=0}^{d}(1-(\um_{\sigma_0}+m_{k_0+1}+\ldots+m_d+i)H)^{(-1)^{(n-d+i)}\binom{d}{i}}\right).
 $$
 The result then follows by iterating Lemma \ref{lem:inductive formula compute ratio cherns}.
\end{proof}

We can actually simplify further formula \eqref{eq:ratio chern very elementary}. For convenience, we will use sums instead of products in the ring $\Z[H]/\langle H^{n+1} \rangle$, so we introduce here a formal logarithm. 
\begin{definition}                                                                                                                                                                                                                                                          \label{eq:formal log}
Let $P(H)\in  \Z[H]/\langle H^{n+1} \rangle$ be an invertible element, written $P(H)=1+R(H)$ where $R(H)$ is a multiple of $H$. Then we set
$$
\log(P(H)):=-\sum_{i=1}^n (-1)^i \frac{(R(H))^i}{i}\in \Q[H]/\langle H^{n+1} \rangle.
$$
\end{definition}
It is a straightforward exercise to check the following properties:
\begin{lemma}
 \label{lem:log properties}
 Let $P_1,P_2\in\Z[H]/\langle H^{n+1} \rangle$ be two invertible elements. Then:
 \begin{enumerate}
  \item $\log(P_1)=\log(P_2)$ if and only if $P_1=P_2$,
  \item $\log(P_1P_2)=\log(P_1)+\log(P_2)$,
  \item $\log(P_1^{-1})=-\log(P_1)$.
 \end{enumerate}
\end{lemma}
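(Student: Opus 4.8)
The plan is to introduce a formal exponential alongside the formal logarithm and to transport the classical power series identities, using that reduction modulo $\langle H^{n+1}\rangle$ is a ring homomorphism. Concretely, for $S(H)\in\Q[H]/\langle H^{n+1}\rangle$ a multiple of $H$, set
$$
\exp(S(H)):=\sum_{i=0}^n \frac{(S(H))^i}{i!}\in\Q[H]/\langle H^{n+1}\rangle,
$$
which is well defined since $S(H)^{n+1}\equiv 0$, and which lies in $1+\langle H\rangle$. The surjective truncation ring homomorphism $\pi:\Q[[H]]\to\Q[H]/\langle H^{n+1}\rangle$ satisfies $\pi(\exp_{\mathrm{fps}}(R))=\exp(\pi(R))$ and $\pi(\log_{\mathrm{fps}}(1+R))=\log(1+\pi(R))$ for $R\in H\,\Q[[H]]$, because only the terms of $R^i$ with $i\le n$ survive modulo $H^{n+1}$; here $\exp_{\mathrm{fps}},\log_{\mathrm{fps}}$ are the genuine formal power series. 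The classical identities $\exp_{\mathrm{fps}}(\log_{\mathrm{fps}}(1+R))=1+R$, $\log_{\mathrm{fps}}(\exp_{\mathrm{fps}}(S))=S$, and $\exp_{\mathrm{fps}}(S_1+S_2)=\exp_{\mathrm{fps}}(S_1)\exp_{\mathrm{fps}}(S_2)$ then push forward through $\pi$ to the corresponding identities in $\Q[H]/\langle H^{n+1}\rangle$: thus $\exp$ and $\log$ are mutually inverse bijections between $1+\langle H\rangle$ and $\langle H\rangle$, and $\exp$ carries sums to products.

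Granting this, the three assertions are immediate. For $(1)$, one implication is trivial, and conversely $\log(P_1)=\log(P_2)$ forces $P_1=\exp(\log(P_1))=\exp(\log(P_2))=P_2$. For $(2)$, writing $P_j=\exp(S_j)$ with $S_j=\log(P_j)$, one gets $P_1P_2=\exp(S_1)\exp(S_2)=\exp(S_1+S_2)$, hence $\log(P_1P_2)=S_1+S_2=\log(P_1)+\log(P_2)$. For $(3)$, since $\log(1)=0$, applying $(2)$ with $P_2=P_1^{-1}$ gives $\log(P_1)+\log(P_1^{-1})=\log(1)=0$.

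The only point demanding a moment's care — and it is slight — is the compatibility of the truncated $\exp$ and $\log$ used here with their power series analogues under $\pi$, which is what licenses importing the classical identities; everything after that is purely formal. An entirely self-contained alternative bypasses power series altogether: one verifies $\exp(\log(1+R))=1+R$ and $\exp(S_1+S_2)=\exp(S_1)\exp(S_2)$ directly in $\Q[H]/\langle H^{n+1}\rangle$ via the finite binomial theorem and the usual combinatorial rearrangement of the double sum, which is elementary though a touch longer.
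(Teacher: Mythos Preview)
Your proof is correct; the paper itself leaves this lemma as a ``straightforward exercise'' without supplying any argument, so there is nothing to compare against. Your route via the formal exponential and the truncation homomorphism $\pi:\Q[[H]]\to\Q[H]/\langle H^{n+1}\rangle$ is the standard and cleanest way to fill in that exercise.
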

We will derive an expansion in $H$ for Formula \eqref{eq:ratio chern very elementary}. For this we need two lemmas, and some notations.
 \begin{lemma}
  \label{lem:binomial expressions}
  Let $(p,k)\in\N^2$, and consider the expression
  $$
  A_{p,k}:=\sum_{i=0}^k\binom{k}{i}(-1)^ii^p\in\Z.
  $$
  Then :
  \begin{enumerate}
   \item if $p<k$, then $A_{p,k}=0$,
   \item if $p=k$, then $A_{p,p}=(-1)^pp!$,
   \item if $p \geq k$, then $\operatorname{sgn}(A_{p,k}) = (-1)^k$,
   \item $ A_{0,0}=1$,  $A_{0,k}=0$ for $k\geq 1$, and then for $p,k\geq 1$,
   $$
   A_{p,k} = k \Big( A_{p-1,k} - A_{p-1,k-1} \Big).$$ 
  \end{enumerate}
 \end{lemma}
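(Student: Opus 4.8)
The quantity $A_{p,k}=\sum_{i=0}^k\binom{k}{i}(-1)^i i^p$ is, up to sign, the standard finite difference $\Delta^k$ of the power function $x\mapsto x^p$ evaluated at $0$; it is classically equal to $(-1)^k k!\, S(p,k)$, where $S(p,k)$ is a Stirling number of the second kind (this identity was already invoked in Proposition \ref{prop:intro log expansion chern saturated elementary} via $A_{l,k}=(-1)^k k! S(l,k)$). The cleanest route is to establish the recursion (4) first, and then deduce (1), (2), (3) from it by induction; alternatively one can prove (1), (2), (4) directly from the binomial identity and leave (3) to a separate sign argument. I would organize the proof around the recursion.

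First I would prove (4). The base cases are immediate: $A_{0,0}=\binom00(-1)^0 0^0=1$, and for $k\ge 1$, $A_{0,k}=\sum_{i=0}^k\binom{k}{i}(-1)^i=(1-1)^k=0$. For the recursion, fix $p,k\ge 1$ and write $i^p=i\cdot i^{p-1}$; using the absorption identity $i\binom{k}{i}=k\binom{k-1}{i-1}$ gives
\begin{equation*}
A_{p,k}=\sum_{i=1}^k k\binom{k-1}{i-1}(-1)^i i^{p-1}=-k\sum_{j=0}^{k-1}\binom{k-1}{j}(-1)^j (j+1)^{p-1}.
\end{equation*}
Now expand $(j+1)^{p-1}$? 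That introduces all lower powers; instead it is cleaner to note $\sum_{j}\binom{k-1}{j}(-1)^j(j+1)^{p-1}$ is itself a difference of two instances of $A$. Writing $(j+1)^{p-1}$ is not needed: observe that $\sum_{j=0}^{k-1}\binom{k-1}{j}(-1)^j (j+1)^{p-1} = \sum_{j}\binom{k-1}{j}(-1)^j\bigl((j+1)^{p-1}\bigr)$, and one checks directly that this equals $A_{p-1,k-1}-A_{p-1,k}$ by using the forward-difference interpretation, or — to stay elementary — by shifting the summation index and reassembling binomial coefficients via Pascal's rule $\binom{k}{i}=\binom{k-1}{i}+\binom{k-1}{i-1}$. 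Carrying this out yields exactly $A_{p,k}=k\bigl(A_{p-1,k}-A_{p-1,k-1}\bigr)$.

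With (4) in hand, (1) and (2) follow by a double induction on $p$ (for fixed behaviour in $k$): if $p<k$ then both $A_{p-1,k}$ (since $p-1<k$) and $A_{p-1,k-1}$ (since $p-1<k-1$ when $p<k$, and the boundary case $p-1=k-1$ is handled by the inductive hypothesis (2) giving $A_{p-1,k-1}=(-1)^{p-1}(p-1)!$ which then must be checked to cancel — actually when $p=k$ we land precisely in case (2))\ldots\ more carefully: for $p<k$, $A_{p,k}=k(A_{p-1,k}-A_{p-1,k-1})$ and both terms vanish by induction (the first since $p-1<k$, the second since $p-1<k-1$), so $A_{p,k}=0$; for $p=k$, $A_{k,k}=k(A_{k-1,k}-A_{k-1,k-1})=k(0-(-1)^{k-1}(k-1)!)=(-1)^k k!$, using (1) for the first term and (2) inductively for the second. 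Finally for (3), the sign statement $\operatorname{sgn}(A_{p,k})=(-1)^k$ for $p\ge k$ follows either from the Stirling-number identity $A_{p,k}=(-1)^k k!\,S(p,k)$ with $S(p,k)>0$ for $p\ge k\ge 1$ (and $S(0,0)=1$), or inductively from (4): assuming $\operatorname{sgn}(A_{p-1,k})=(-1)^k$ and $\operatorname{sgn}(A_{p-1,k-1})=(-1)^{k-1}$ for $p-1\ge k$, the two terms $A_{p-1,k}$ and $-A_{p-1,k-1}$ have the same sign $(-1)^k$, hence so does their sum, and then so does $A_{p,k}=k(\cdots)$; the boundary case $p=k$ is covered by (2). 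The main (very mild) obstacle is simply bookkeeping the induction boundaries in (1)–(2)–(3) so that each invocation of the recursion lands on an already-proven case; invoking the Stirling-number identity short-circuits (3) entirely and is probably the cleanest presentation.
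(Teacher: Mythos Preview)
Your proposal is correct and follows essentially the same approach as the paper: both hinge on the absorption identity $i\binom{k}{i}=k\binom{k-1}{i-1}$ and both invoke the Stirling connection $A_{p,k}=(-1)^k k!\,S(p,k)$ for item (3). The only difference is organizational: you establish the two-term recursion (4) first (via Pascal's rule on $\binom{k-1}{i-1}=\binom{k}{i}-\binom{k-1}{i}$) and deduce (1)--(2) from it, whereas the paper proves (1) directly by expanding $(i+1)^p$ binomially to obtain $A_{p+1,k}=-k\sum_{j=0}^p\binom{p}{j}A_{j,k-1}$, and then sketches (2) and (4) by similar identities; your ordering is arguably cleaner since the two-term recursion does all the work at once.
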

  Note that
 $$
A_{p,k} = (-1)^k \, k! \, S(p,k),
 $$
 where $S(p,k)$ stands for Stirling numbers of the second kind (\cite[Chapter 13, Equation (13.13)]{VanLintWilson}). The results of Lemma \ref{lem:binomial expressions} are then classical and we include a proof for completeness. Note that by point $(4)$, one can compute inductively the values of $A_{p,k}$. We list here some examples :
%  
%  \[
% \begin{aligned}
% A_{p,1} &= -1, \\[6pt]
% A_{p,2} &= 2 \big(2^{p-1} - 1\big), \\[6pt]
% A_{p,3} &= -\big(3^{p} - 3\cdot 2^{p} + 3\big).
% \end{aligned}
% \]

\begin{equation}
 \label{eq:some values of Apk}
A_{p,k}=\begin{array}{c|rrrrrr}
p\backslash k & 0 & 1 & 2 & 3 & 4 & 5\\\hline
0 & 1 & 0 & 0 & 0 & 0 & 0\\
1 & 0 & -1 & 0 & 0 & 0 & 0\\
2 & 0 & -1 & 2 & 0 & 0 & 0\\
3 & 0 & -1 & 6 & -6 & 0 & 0\\
4 & 0 & -1 & 14 & -36 & 24 & 0\\
5 & 0 & -1 & 30 & -150 & 240 & -120
\end{array}
\end{equation}

 \begin{proof}[Proof of Lemma \ref{lem:binomial expressions}]
  For the first item, we use an induction on $k$, assuming that the result is true for all $p<k-1$, and then an induction on $p$ (note that the result is clear for $k=1$ or $p=0$). Let $p+1 < k$ :
  \begin{equation*}
  \begin{array}{ccc}
  A_{p+1,k} & = & \displaystyle\sum_{i=1}^k i\binom{k}{i}(-1)^ii^p\\
            & = & k\displaystyle\sum_{i=1}^k \binom{k-1}{i-1}(-1)^ii^p\\
            & = & -k \displaystyle\sum_{i=0}^{k-1} \binom{k-1}{i}(-1)^i(i+1)^p\\
            & = & \displaystyle-k\sum_{j=0}^p\binom{p}{j} A_{j,k-1} = 0
  \end{array}
  \end{equation*}
  where we use the induction hypothesis for the last equality. Items $(2)$ and $(4)$ can be obtained easily by induction on $p$, using similar identities. Item $(3)$ follows as $S(p,k)\in\N$.
 \end{proof}

We now introduce some notations. Let $(E^\sigma_m)\subset(F^\sigma_m)$ be an elementary injection with parameters $(k_0,\sigma_0,m_0)$. For any $\rho\in\sigma_0(1)$, we set 
 $$
 m_\rho:=\langle m_0, u_\rho \rangle,
 $$
 while for $\rho\in\Sigma(1)\setminus \sigma_0(1)$, if $k_0=\dim(\sigma_0)\leq n-1$, we consider the cone $\tau=\sigma_0+\rho$, and we set
 \begin{equation}
  \label{eq:definition m rho}
 m_\rho:=\um_\tau-\um_{\sigma_0}=\langle m_\tau,u_\tau\rangle - \langle m_0,u_{\sigma_0}\rangle.
 \end{equation}
 Finally, we introduce
 \begin{equation}
  \label{eq:umbigsigma}
 \um_\Sigma:=\sum_{\rho\in\Sigma(1)} m_\rho.
 \end{equation}
 \begin{lemma}
  \label{lem:sum minus one on cones}
  With the previous notations we have, for $p\leq \codim(\sigma_0)$,
\begin{equation}
 \label{eq:summinusonemsigma}
\sum_{\sigma_0\leq\sigma} (-1)^{\codim(\sigma)}\um_\sigma^p=\um_\Sigma^p.
\end{equation}
 \end{lemma}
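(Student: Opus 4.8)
\emph{Proof plan.} The plan is to rewrite the weight $\um_\sigma$ as a sum of ray-weights, then use the simplicial structure of the fan of $\P^n$ to turn the left-hand side of \eqref{eq:summinusonemsigma} into an alternating sum over subsets of rays, and finally recognise that sum as an iterated discrete difference, which annihilates polynomials of degree below the number of variables summed over.

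First I would show that $\um_\sigma=\sum_{\rho\in\sigma(1)}m_\rho$ for every cone $\sigma$ with $\sigma_0\leq\sigma$. Writing $\sigma=\R_+e_1+\dots+\R_+e_d$ and $\sigma_0=\R_+e_1+\dots+\R_+e_{k_0}$ as in the construction preceding \eqref{eq:msigma}, one has $m_\sigma=m_0+\sum_{j=k_0+1}^{d}a_je_j^*$; pairing with $u_\sigma=e_1+\dots+e_d$ and using that $m_0$, viewed in $M/(\sigma^\perp\cap M)$, has no $e_j^*$-component for $j>k_0$, one gets $\um_\sigma=\sum_{i=1}^{k_0}\langle m_0,e_i\rangle+\sum_{j=k_0+1}^{d}a_j$, whose first summand is $\sum_{\rho\in\sigma_0(1)}m_\rho$ by the definition of $m_\rho$ for $\rho\in\sigma_0(1)$. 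The remaining point is that each $a_j$ is defined purely through the face $\sigma_j:=\sigma_0+\R_+e_j$ of $\sigma$ — it records the smallest $i$ with $\dim F^{\sigma_j}_{m_0+ie_j^*}>\dim E^{\sigma_j}_{m_0+ie_j^*}$ — so it does not depend on the ambient cone $\sigma$; applying \eqref{eq:msigma} to $\tau=\sigma_j$ itself gives $m_\tau=m_0+a_je_j^*$, hence $\um_\tau=\um_{\sigma_0}+a_j$, which by \eqref{eq:definition m rho} is exactly $m_{\rho_j}=a_j$, where $\rho_j\in\sigma(1)$ is the ray with $u_{\rho_j}=e_j$. This proves the claim.

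Next I would parametrise the cones over $\sigma_0$. Put $R:=\Sigma(1)\setminus\sigma_0(1)$, so that $|R|=(n+1)-\dim(\sigma_0)=\codim(\sigma_0)+1$; we may assume $\codim(\sigma_0)\geq1$, since $\codim(\sigma_0)=0$ forces $p=0$ and then \eqref{eq:summinusonemsigma} reads $1=1$. In the fan of $\P^n$ a set of rays spans a cone exactly when it is a proper subset of $\Sigma(1)$, so the cones $\sigma$ with $\sigma_0\leq\sigma$ correspond bijectively to the proper subsets $S\subsetneq R$ via $\sigma(1)=\sigma_0(1)\sqcup S$, and then $\codim(\sigma)=\codim(\sigma_0)-|S|$. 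Writing $m_S:=\sum_{\rho\in S}m_\rho$, the previous step gives $\um_\sigma=\um_{\sigma_0}+m_S$ and $\um_\Sigma=\um_{\sigma_0}+m_R$, so the left-hand side of \eqref{eq:summinusonemsigma} equals $\sum_{S\subsetneq R}(-1)^{\codim(\sigma_0)-|S|}(\um_{\sigma_0}+m_S)^p$. The missing $S=R$ term has sign $(-1)^{\codim(\sigma_0)-|R|}=-1$ and value $\um_\Sigma^p$, so adjoining it shows that \eqref{eq:summinusonemsigma} is equivalent to $\sum_{S\subseteq R}(-1)^{\codim(\sigma_0)-|S|}(\um_{\sigma_0}+m_S)^p=0$, that is, to $\sum_{S\subseteq R}(-1)^{|S|}(\um_{\sigma_0}+m_S)^p=0$, for $p\leq|R|-1$.

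Finally I would prove this vanishing. Expanding $\bigl(\um_{\sigma_0}+\sum_{\rho\in R}m_\rho t_\rho\bigr)^p$ multinomially in the variables $(t_\rho)_{\rho\in R}$, it is a $\Z$-linear combination of monomials $\prod_{\rho\in A}t_\rho^{\alpha_\rho}$ with $A\subseteq R$, $\alpha_\rho\geq1$ on $A$, and $|A|\leq p\leq\codim(\sigma_0)=|R|-1$, hence $A\subsetneq R$; substituting $t_\rho=1$ for $\rho\in S$ and $t_\rho=0$ otherwise, such a monomial takes the value $1$ if $A\subseteq S$ and $0$ otherwise, so $\sum_{S\subseteq R}(-1)^{|S|}\prod_{\rho\in A}t_\rho^{\alpha_\rho}=\sum_{S\supseteq A}(-1)^{|S|}=(-1)^{|A|}\sum_{T\subseteq R\setminus A}(-1)^{|T|}=0$ because $R\setminus A\neq\emptyset$. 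Summing over the monomials yields $\sum_{S\subseteq R}(-1)^{|S|}(\um_{\sigma_0}+m_S)^p=0$, which completes the proof. I expect the delicate point to be the first step — identifying the integers $a_j$ entering the definition of $m_\sigma$ with the ray-weights $m_\rho$ for $\rho\notin\sigma_0(1)$, i.e. checking that these quantities are intrinsic to the faces $\sigma_0+\R_+e_j$ rather than to $\sigma$, while keeping track of the various identifications of $M/(\sigma^\perp\cap M)$ — whereas the last step is a classical finite-difference identity.
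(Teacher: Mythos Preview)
Your proof is correct. Both you and the paper begin by reducing to the purely combinatorial statement that $\um_\sigma=\sum_{\rho\in\sigma(1)}m_\rho$ and then proving the alternating-sum identity for arbitrary integers $(m_\rho)$; you are in fact more explicit than the paper in verifying that the $a_j$ appearing in $m_\sigma$ coincide with the ray-weights $m_\rho$, a point the paper leaves implicit when it declares the identity ``will actually be proved for any set of integers $(m_\rho)_{\rho\in\Sigma(1)}$''.

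Where the two arguments diverge is in the combinatorial core. The paper runs a double induction (on $p$ and on the number of nonzero $m_\rho$'s in $\sigma_0(1)$), using a binomial expansion to peel off $\um_{\sigma_0}$, and then finishes with a generating-function trick: the coefficient of $y^p$ in $\prod_{j}(1-e^{ym_j})$ vanishes for $p\leq\codim(\sigma_0)$ because each factor contributes a factor of $y$. You instead adjoin the ``forbidden'' full subset $S=R$ to complete the sum over all subsets of $R$, observe that the added term is exactly $-\um_\Sigma^p$, and then kill the completed sum by a direct multinomial/inclusion-exclusion argument. Your route avoids both the induction and the generating function, and makes transparent why the bound $p\leq\codim(\sigma_0)=|R|-1$ is exactly what is needed: it guarantees that every monomial in the expansion misses at least one ray, so the inner alternating sum collapses. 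The paper's generating-function argument has the virtue of packaging all $p$ at once, but your approach is the more elementary of the two.
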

 \begin{proof}
 The identity will actually be proved for any set of integers $(m_\rho)_{\rho\in\Sigma(1)}$, setting 
 $\um_\sigma=\sum_{\rho\in\sigma(1)}m_\rho$ for any $\sigma\in\Sigma$, and $\um_\Sigma=\sum_{\rho\in\Sigma(1)} m_\rho$ (forgetting that the $m_\rho$'s come from an elementary injection). 
  We first deal with the case $p=0$. In that case, the proof follows from the following :
  $$
  \sum_{\sigma_0\leq\sigma} (-1)^{\codim(\sigma)}=\sum_{i=k_0}^n\binom{n+1-k_0}{i-k_0}(-1)^{n-i}=1
  $$
  that can be obtained by noting that there are $\binom{n+1-k_0}{i-k_0}$ cones of dimension $i$ that contain $\sigma_0$. 
%   The equality \eqref{eq:summinusonemsigma} is obtained with similar considerations, using that 
%   $$
%   \um_\sigma=\sum_{\rho\in\sigma(1)} m_\rho,
%  $$
%  and counting the number of times that each $m_\rho$ contributes in the left hand side of \eqref{eq:summinusonemsigma}.
 We then proceed to a proof by induction on $p$. Assume first that there is $\rho\in\sigma_0(1)$ with $m_\rho\neq 0$. Then consider
 $$
 \begin{array}{ccc}
 \displaystyle \sum_{\sigma_0\leq\sigma} (-1)^{\codim(\sigma)}\um_\sigma^p & = &\displaystyle\sum_{\sigma_0\leq\sigma} (-1)^{\codim(\sigma)}\Big(\sum_{\rho\in\sigma(1)}m_\rho\Big)^p\\
    & = &\displaystyle\sum_{\sigma_0\leq\sigma} (-1)^{\codim(\sigma)}\Big(\um_{\sigma_0}+\sum_{\rho\in\sigma(1)\setminus\sigma_0(1)}m_\rho\Big)^p\\
     & = &\displaystyle\sum_{\sigma_0\leq\sigma} (-1)^{\codim(\sigma)} \Big(\sum_{q=0}^p \binom{p}{q} \um_{\sigma_0}^{p-q}\Big(\sum_{\rho\in\sigma(1)\setminus\sigma_0(1)}m_\rho\Big)^q \Big)\\
        & = &\displaystyle \sum_{q=0}^p \binom{p}{q}\um_{\sigma_0}^{p-q}\Big(\sum_{\sigma_0\leq\sigma} (-1)^{\codim(\sigma)} \Big(\sum_{\rho\in\sigma(1)\setminus\sigma_0(1)}m_\rho\Big)^q \Big).
 \end{array}
 $$
 Now, for $q<p$, we can use the induction hypothesis on the set of integers $(m_\rho')_{\rho\in\Sigma(1)}$ defined by $m_\rho'=m_\rho$ if $\rho\notin\sigma_0(1)$ and $m_\rho'=0$ if $\rho\in\sigma_0(1)$. This gives
 $$
 \begin{array}{ccc}
 \sum_{\sigma_0\leq\sigma} (-1)^{\codim(\sigma)}\um_\sigma^p & = & \displaystyle \sum_{q=0}^{p-1} \binom{p}{q}\um_{\sigma_0}^{p-q}\Big(\um_\Sigma-\um_{\sigma_0}\Big)^q\\
 & & \displaystyle+\sum_{\sigma_0\leq\sigma} (-1)^{\codim(\sigma)} \Big(\sum_{\rho\in\sigma(1)\setminus\sigma_0(1)}m_\rho\Big)^p .
 \end{array}
 $$
By induction on the number of $\rho$'s such that $m_\rho\neq 0$, we also obtain 
$$
\sum_{\sigma_0\leq\sigma} (-1)^{\codim(\sigma)} \Big(\sum_{\rho\in\sigma(1)\setminus\sigma_0(1)}m_\rho\Big)^p= (\um_\Sigma-\um_{\sigma_0})^p,
$$
and we can conclude in that case that 
$$
\sum_{\sigma_0\leq\sigma} (-1)^{\codim(\sigma)}\um_\sigma^p=\displaystyle \sum_{q=0}^{p} \binom{p}{q}\um_{\sigma_0}^{p-q}(\um_\Sigma-\um_{\sigma_0})^q=(\um_{\sigma_0}+\um_\Sigma-\um_{\sigma_0})^p=\um_\Sigma^p.
$$
What remains is to initialize the last induction used, that is to deal with the case where for each $\rho\in\sigma_0(1)$, $m_\rho=0$. So we are left with showing :
$$
\sum_{k=\dim(\sigma_0)}^n \left(\sum_{\sigma_0\leq\sigma,\,\sigma\in\Sigma(k)} (-1)^{n-k}\Big(\sum_{\rho\in\sigma(1)\setminus\sigma_0(1)}m_\rho\Big)^p\,\right)=\Big(\sum_{\rho\in\Sigma(1)\setminus \sigma_0(1)}m_\rho\Big)^p.
$$
Re-indexing, setting $\codim(\sigma_0)=r$, it is enough to show that for any $(r+1)$-tuple $(m_j)_{0\leq j\leq r}\in\Z^{r+1}$ (that corresponds to $(m_\rho)_{\rho\notin \sigma_0(1)}$) and any $1\leq p \leq r$, we have
\begin{equation}
 \label{eq:alternating sum mis}
\sum_{i=1}^{r} (-1)^i \sum_{\substack{J \subset \{0,\dots,r\} \\ |J|=i}} \Big(\sum_{j \in J} m_j\Big)^p=(-1)^r \Big(\sum_{j=0}^{r} m_j \Big)^p.
\end{equation}
To show this, consider the generating function
$$
F(y) = \sum_{J \subset \{0,\dots,r\}} (-1)^{|J|} e^{y \sum_{j \in J} m_j} = \prod_{j=0}^{r} (1 - e^{y m_j}).
$$
Formally, we can develop in power series :
$$
e^{y \sum_{j \in J} m_j} = \sum_{q=0}^{\infty} \frac{y^q}{q!} \Big(\sum_{j \in J} m_j\Big)^q,
$$
and compute the coefficient in front of $\displaystyle\frac{y^p}{p!}$ in $F(y)$ :
$$
\sum_{J \subset \{0,\dots,r\}} (-1)^{|J|} \Big(\sum_{j \in J} m_j\Big)^p = \sum_{i=0}^{r+1} (-1)^i \sum_{\substack{J \subset \{0,\dots,r\} \\ |J|=i}} \Big(\sum_{j \in J} m_j\Big)^p.
$$
On the other hand, $y$ divides each factor $(1-e^{y m_j})$, hence $F(y)=\prod_{j=0}^{r} (1 - e^{y m_j})$ starts in degree $y^{r+1}$.  
Thus, for $1 \leq p \leq r$, the coefficient in front of $y^p$ in $F(y)$ vanishes, which gives \eqref{eq:alternating sum mis}.
 \end{proof}
We can now state the formula that will be usefull in the prescription problem for Chern classes :
\begin{proposition}
 \label{prop:log expansion chern saturated elementary}
  Let $\cE\to \cF$ be a saturated elementary injection with parameters $(k_0,\sigma_0,m_0)$, with $k_0\geq 1$. Then we have an expansion in $\Z[H]/\langle H^{n+1} \rangle$ :
 \begin{equation}
  \label{eq:log expansion saturated elementary}
  \log\left(\frac{\uc(\cF)}{\uc(\cE)}\right)=-\sum_{k=k_0}^n \Big(\sum_{l=k_0}^k \binom{k}{l}\, A_{l,k_0}\, \um_\Sigma^{k-l} \Big)\, \frac{H^k}{k},
 \end{equation}
 where recall $\um_\Sigma$ is defined by \eqref{eq:umbigsigma}.
 \end{proposition}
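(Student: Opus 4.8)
The plan is to read off the expansion from the product formula \eqref{eq:ratio chern very elementary} of Corollary~\ref{prop:chern for very elementary injection} by applying the formal logarithm of Definition~\ref{eq:formal log} and then collecting terms. First I would take $\log$ of both sides of \eqref{eq:ratio chern very elementary}: by parts $(2)$ and $(3)$ of Lemma~\ref{lem:log properties} the product over the cones $\sigma$ with $\sigma_0\leq\sigma$ and over $0\leq i\leq k_0$ turns into a sum, giving
\[
\log\!\left(\frac{\uc(\cF)}{\uc(\cE)}\right)=\sum_{\sigma_0\leq\sigma}\sum_{i=0}^{k_0}(-1)^{\codim(\sigma)+i}\binom{k_0}{i}\,\log\bigl(1-(\um_\sigma+i)H\bigr).
\]
Since $\log(1-aH)=-\sum_{k=1}^{n}a^k H^k/k$ for any $a\in\Z$ (immediate from Definition~\ref{eq:formal log}), substituting $a=\um_\sigma+i$ and interchanging the order of summation rewrites this as $-\sum_{k=1}^{n}(H^k/k)\,C_k$ with
\[
C_k:=\sum_{\sigma_0\leq\sigma}(-1)^{\codim(\sigma)}\sum_{i=0}^{k_0}(-1)^i\binom{k_0}{i}(\um_\sigma+i)^k .
\]

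Next I would compute $C_k$. Expanding $(\um_\sigma+i)^k=\sum_{l=0}^{k}\binom{k}{l}\um_\sigma^{k-l}i^l$ by the binomial theorem and separating the sums over $i$ and over $\sigma$ yields
\[
C_k=\sum_{l=0}^{k}\binom{k}{l}\Bigl(\sum_{i=0}^{k_0}(-1)^i\binom{k_0}{i}i^l\Bigr)\Bigl(\sum_{\sigma_0\leq\sigma}(-1)^{\codim(\sigma)}\um_\sigma^{k-l}\Bigr).
\]
The inner sum over $i$ is precisely $A_{l,k_0}$ in the notation of Lemma~\ref{lem:binomial expressions}, which vanishes whenever $l<k_0$ by item $(1)$ of that lemma, so only the indices $k_0\leq l\leq k$ survive. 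For those $l$ one has $0\leq k-l\leq k-k_0\leq n-k_0=\codim(\sigma_0)$, which is exactly the range in which Lemma~\ref{lem:sum minus one on cones} applies; since $\um_\sigma=\langle m_\sigma,u_\sigma\rangle=\sum_{\rho\in\sigma(1)}m_\rho$ for the weights $m_\rho$ of \eqref{eq:definition m rho}, its hypotheses are met and it gives $\sum_{\sigma_0\leq\sigma}(-1)^{\codim(\sigma)}\um_\sigma^{k-l}=\um_\Sigma^{k-l}$. Hence $C_k=\sum_{l=k_0}^{k}\binom{k}{l}A_{l,k_0}\,\um_\Sigma^{k-l}$, which is an empty (hence zero) sum when $k<k_0$; substituting back into $-\sum_{k=1}^{n}(H^k/k)\,C_k$ and discarding the vanishing terms $k<k_0$ produces exactly \eqref{eq:log expansion saturated elementary}.

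The computation is essentially bookkeeping, and the step that demands a little care is verifying that the exponent $p=k-l$ stays in the range $p\leq\codim(\sigma_0)$ required by Lemma~\ref{lem:sum minus one on cones} --- this is where the constraints $k\leq n$ and $l\geq k_0$ (the latter coming from the vanishing of $A_{l,k_0}$) are used. Everything else reduces to the binomial theorem and the multiplicativity of the formal logarithm.
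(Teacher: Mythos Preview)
Your proof is correct and follows essentially the same route as the paper's own argument: take the logarithm of the product formula \eqref{eq:ratio chern very elementary}, expand each factor, then invoke Lemmas~\ref{lem:binomial expressions} and~\ref{lem:sum minus one on cones}. The paper's proof is terse (it simply states that the result is ``straightforward'' from those two lemmas), whereas you spell out the binomial expansion, the identification of $A_{l,k_0}$, and---usefully---the verification that $k-l\leq\codim(\sigma_0)$ so that Lemma~\ref{lem:sum minus one on cones} actually applies.
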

 
 \begin{proof}
  By \eqref{eq:ratio chern very elementary}, we need to compute
  $$
  \sum_{\sigma_0\leq\sigma}\sum_{i=0}^{k_0}(-1)^{(\codim(\sigma)+i)}\binom{k_0}{i}\log\left(1-(\um_\sigma+i)H\right).
  $$
  Using the definition of $\log$, this gives
  $$
  -\sum_{\sigma_0\leq\sigma}(-1)^{\codim(\sigma)}\sum_{i=0}^{k_0}(-1)^{i}\binom{k_0}{i}\sum_{j=1}^n  \frac{(\um_\sigma+i)^j}{j}H^j.
  $$
  The result is then straightforward, using Lemmas \ref{lem:binomial expressions} and \ref{lem:sum minus one on cones}.
 \end{proof}
 We also have, the following simple formula for ratios of Chern classes under saturated elementary injections :
 \begin{corollary}
  \label{cor:simplest product formula Chern ratios saturated elementary}
  Let $\cE\to \cF$ be a saturated elementary injection with parameters $(k_0,\sigma_0,m_0)$, with $k_0\geq 1$. Then the following holds in $\Z[H]/\langle H^{n+1} \rangle$ :
  \begin{equation}
   \label{eq:simplest product formula Chern ratios}
  \frac{\uc(\cF)}{\uc(\cE)}=\prod_{i=0}^{k_0}(1-(\um_\Sigma+i)H)^{(-1)^i\binom{k_0}{i}},
  \end{equation}
  where recall $\um_\Sigma$ is defined by \eqref{eq:umbigsigma}.
 \end{corollary}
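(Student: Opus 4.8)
The plan is to compare formal logarithms, reducing everything to Proposition~\ref{prop:log expansion chern saturated elementary}. Both sides of \eqref{eq:simplest product formula Chern ratios} are invertible in $\Z[H]/\langle H^{n+1}\rangle$: each factor $1-(\um_\Sigma+i)H$ has constant term $1$, hence is a unit, while the left-hand side is a ratio of total Chern classes, which all satisfy $\uc_0=1$. So by Lemma~\ref{lem:log properties}(1) it suffices to check that $\log$ of the right-hand side of \eqref{eq:simplest product formula Chern ratios} coincides with the expansion \eqref{eq:log expansion saturated elementary} already established in Proposition~\ref{prop:log expansion chern saturated elementary}.

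First I would expand the logarithm of the product. By parts $(2)$ and $(3)$ of Lemma~\ref{lem:log properties},
\[
\log\Big(\prod_{i=0}^{k_0}(1-(\um_\Sigma+i)H)^{(-1)^i\binom{k_0}{i}}\Big)=\sum_{i=0}^{k_0}(-1)^i\binom{k_0}{i}\,\log\big(1-(\um_\Sigma+i)H\big),
\]
and the defining formula for the formal logarithm (Definition~\ref{eq:formal log}, with $R(H)=-(\um_\Sigma+i)H$) gives $\log(1-(\um_\Sigma+i)H)=-\sum_{k=1}^n \tfrac{(\um_\Sigma+i)^k}{k}H^k$ in $\Q[H]/\langle H^{n+1}\rangle$.

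Next I would substitute this into the sum, interchange the order of summation, and expand $(\um_\Sigma+i)^k=\sum_{l=0}^k\binom{k}{l}\um_\Sigma^{k-l}i^l$ binomially, so that the coefficient of $-\tfrac{H^k}{k}$ becomes
\[
\sum_{l=0}^k\binom{k}{l}\um_\Sigma^{k-l}\Big(\sum_{i=0}^{k_0}(-1)^i\binom{k_0}{i}i^l\Big)=\sum_{l=0}^k\binom{k}{l}\um_\Sigma^{k-l}A_{l,k_0},
\]
in the notation of Lemma~\ref{lem:binomial expressions}. By Lemma~\ref{lem:binomial expressions}$(1)$, $A_{l,k_0}=0$ whenever $l<k_0$; hence the $l$-sum may be restricted to $l\ge k_0$, and for $k<k_0$ the whole coefficient vanishes, so the $k$-sum may start at $k_0$. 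This matches the right-hand side of \eqref{eq:log expansion saturated elementary} term by term, and Lemma~\ref{lem:log properties}$(1)$ then yields \eqref{eq:simplest product formula Chern ratios}.

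I do not expect a genuine obstacle here: the statement is essentially a repackaging of Proposition~\ref{prop:log expansion chern saturated elementary}, where the combinatorial content already lies. The only points demanding care are the sign conventions in Definition~\ref{eq:formal log} and the use of $A_{l,k_0}=0$ for $l<k_0$ to align the summation ranges with those appearing in \eqref{eq:log expansion saturated elementary}; alternatively, one could bypass Proposition~\ref{prop:log expansion chern saturated elementary} and argue directly from Corollary~\ref{prop:chern for very elementary injection}, taking logarithms and invoking Lemma~\ref{lem:sum minus one on cones} to perform the sum over cones $\sigma_0\le\sigma$, but this merely reproduces the proof of that proposition.
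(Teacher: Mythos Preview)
Your proof is correct and follows essentially the same route as the paper: take formal logarithms, use the binomial expansion to rewrite $\sum_{i=0}^{k_0}(-1)^i\binom{k_0}{i}(\um_\Sigma+i)^k$ as $\sum_{l=0}^k\binom{k}{l}A_{l,k_0}\,\um_\Sigma^{k-l}$, invoke $A_{l,k_0}=0$ for $l<k_0$ to align summation ranges, and compare with Proposition~\ref{prop:log expansion chern saturated elementary}. The only cosmetic difference is direction: the paper starts from the left-hand side and recognizes the right, while you start from the right and match the left.
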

\begin{proof}
From Lemma \ref{lem:binomial expressions} point $(1)$, $A_{p,k}=0$ for $p < k$. Then by Proposition \ref{prop:log expansion chern saturated elementary}, we have 
$$
\log\left(\frac{\uc(\cF)}{\uc(\cE)}\right)=-\sum_{k=1}^n \Big(\sum_{l=0}^k \binom{k}{l}\, A_{l,k_0}\, \um_\Sigma^{k-l} \Big)\, \frac{H^k}{k}.
$$
Unravelling the identities used in the proof of Proposition \ref{prop:log expansion chern saturated elementary}, we have
$$
\sum_{l=0}^k \binom{k}{l}\, A_{l,k_0}\, \um_\Sigma^{k-l} = \sum_{i=0}^{k_0}\binom{k_0}{i} (-1)^i(\um_\Sigma+i)^k,
$$
and the result follows from the definition of $\log$ and its properties from Lemma \ref{lem:log properties}.
\end{proof}

\begin{remark}
 \label{rem:interpretation of weights}
 Using the arguments of the proof of Proposition \ref{prop:elementary injection geometric}, together with Kool's description of pure equivariant sheaves (cf. \cite[Proposition 2.8 and Theorem 2.10]{Koo11}), one can give a geometric interpretation of the weight $\um_\Sigma$. If $\cQ=\cF/\cE$ denotes the quotient of an elementary injection with parameters $(\sigma_0,m_0)$, $V=V(\sigma_0)=\mathrm{Supp}(\cQ)$, and $\Sigma_V$ stands for the fan of $V$, there is a bijection between the rays in $\Sigma_V$ and the rays in $\Sigma(1)\setminus\sigma(1)$ (\cite[Section 3.2]{CLS}, here we use that we work on the projective space). Then, using the family of multifiltrations for $\cQ$ (cf proof of Proposition \ref{prop:elementary injection geometric}), setting $\iota : V \to \C\P^n$ the inclusion, we obtain :
 $$
 \cQ=\Big(\iota_*\cO_V(-\sum_{\rho\in\Sigma_V(1)}m_\rho \check D_\rho)\Big)\otimes \cO_{\P^n}(-\sum_{\rho\in\sigma_0(1)} m_\rho D_\rho),
 $$
 where $\check D_\rho$ stand for the invariant divisors on the projective space $V$. From this we deduce that
 $$
 \cQ\simeq \iota_*\Big(\cO_{V}(-\um_\Sigma)\Big)
 $$
 where the isomorphism is at the level of sheaves, but not of torus-equivariant sheaves. To recover the equivariant structure on $\cQ$, one needs the data of $m_0\in M$ and all the $m_\rho$'s, for $\rho\in \Sigma(1)\setminus\sigma_0(1)$.
\end{remark}
We end this section by noting that Formula \eqref{eq:log expansion saturated elementary} is also valid for a non saturated elementary injection at order $k_0+1$ :
\begin{proposition}
 \label{prop:log expansion chern non saturated}
  Let $\cE\to \cF$ be an elementary injection with parameters $(\sigma_0,m_0)$ and with $k_0=\dim(\sigma_0)\geq 1$. Then we have an expansion in $\Z[H]/\langle H^{n+1} \rangle$ :
 \begin{equation*}
  \label{eq:log expansion non saturated}
  \log\left(\frac{\uc(\cF)}{\uc(\cE)}\right)=(-1)^{k_0-1}(k_0-1)! \, H^{k_0}-\Big(A_{k_0,k_0}\, \um_\Sigma + \frac{A_{k_0+1,k_0}}{k_0+1} \Big)H^{k_0+1} + O(H^{k_0+2})
 \end{equation*}
 where recall $\um_\Sigma$ is defined by \eqref{eq:umbigsigma}, and where $O(H^{k_0+2})$ stands for a multiple of $H^{k_0+2}$ in the ring $\Z[H]/\langle H^{n+1} \rangle$.
 \end{proposition}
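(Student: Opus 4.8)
The plan is to reduce to the saturated case already settled in Proposition~\ref{prop:log expansion chern saturated elementary} by passing to a suitable saturation of the injection. First I would use Proposition~\ref{prop:elementary injection geometric} to see that $\cQ := \cF/\cE$ is pure of rank one on its support $V := V(\sigma_0)$, and then invoke the construction sketched in the remark following Theorem~\ref{theo:existence decomposition elementary injections} to obtain a factorization $\cE \to \cF \to \cF'$ in which $\cE \to \cF'$ is a \emph{saturated} elementary injection with parameters $(k_0,\sigma_0,m_0)$. The point I would establish first is that this saturation modifies the family of multifiltrations only at cones of dimension $\geq k_0+2$: at $\sigma = \sigma_0$ one always has $W_{\sigma_0} = \{m_0\} = (m_0+\sigma_0^\perp)\cap(m_{\sigma_0}+\sigma_0^\vee)$ by $(ii)_e$ of Definition~\ref{def:elementary injection}, and for a cone $\tau \in \Sigma(k_0+1)$ with $\sigma_0 < \tau$, the upward--closedness of $W_\tau$ provided by Lemma~\ref{lem:precise when dim diff}, together with the very definition \eqref{eq:msigma} of $m_\tau$, forces $W_\tau = (m_0+\sigma_0^\perp)\cap(m_\tau+\tau^\vee)$ with no further condition. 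Hence $\cF$ and $\cF'$ agree on every chart $U_\sigma$ with $\dim\sigma \leq k_0+1$; in particular they share all the weights $m_\rho$, $\rho\in\Sigma(1)$, of \eqref{eq:definition m rho}, so their invariants $\um_\Sigma$ coincide.

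Then, by Lemma~\ref{lem:log properties},
\[
\log\!\left(\frac{\uc(\cF)}{\uc(\cE)}\right) = \log\!\left(\frac{\uc(\cF')}{\uc(\cE)}\right) - \log\!\left(\frac{\uc(\cF')}{\uc(\cF)}\right),
\]
and I would estimate the two summands separately in $\Z[H]/\langle H^{n+1}\rangle$. The first summand is computed by the full expansion of Proposition~\ref{prop:log expansion chern saturated elementary}; extracting its terms of degree $k_0$ and $k_0+1$ and using $A_{k_0,k_0} = (-1)^{k_0}k_0!$ from Lemma~\ref{lem:binomial expressions}, one obtains exactly $(-1)^{k_0-1}(k_0-1)!\,H^{k_0}$ and $-\big(A_{k_0,k_0}\,\um_\Sigma + \tfrac{A_{k_0+1,k_0}}{k_0+1}\big)H^{k_0+1}$. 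For the second summand I would use the standard identity $\log(\uc(\cG)) = \sum_{j\geq 1}(-1)^{j-1}(j-1)!\,\mathrm{ch}_j(\cG)$, valid for any coherent sheaf $\cG$ (it follows for vector bundles from Newton's relations between power sums and elementary symmetric functions of the Chern roots, and then in general from a finite locally free resolution, using multiplicativity of $\uc$ and additivity of $\mathrm{ch}$). Applied to the exact sequence $0 \to \cF \to \cF' \to \cQ' \to 0$ with $\cQ' := \cF'/\cF$, it gives $\log(\uc(\cF')/\uc(\cF)) = \sum_{j\geq 1}(-1)^{j-1}(j-1)!\,\mathrm{ch}_j(\cQ')$. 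By the previous paragraph $\cQ'$ is supported on the complement of $\bigcup_{\dim\sigma\leq k_0+1}U_\sigma$, hence in codimension $\geq k_0+2$, so $\mathrm{ch}_j(\cQ') = 0$ for $j < k_0+2$ and the second summand is $O(H^{k_0+2})$. Combining the two evaluations yields the claimed formula.

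The step I expect to be the main obstacle is the first one: justifying rigorously the existence of the saturation $\cF'$ with $\cF \subseteq \cF'$ and checking that it touches nothing in dimension $\leq k_0+1$. This amounts to running the inductive procedure of the remark after Theorem~\ref{theo:existence decomposition elementary injections} — modifying $\cF$ at suitable weights $E^\sigma_m$ on cones $\sigma$ of \emph{minimal} dimension for which $W_\sigma \subsetneq (m_0+\sigma_0^\perp)\cap(m_\sigma+\sigma^\vee)$ — together with the observation above that such a minimal $\sigma$ necessarily has $\dim\sigma \geq k_0+2$, and a routine but non-empty verification that each intermediate modification again satisfies Perling's axioms of Definition~\ref{def:perling}, exactly as in the proof of Theorem~\ref{theo:existence decomposition elementary injections}. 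A more computational alternative would start directly from the ratio formula \eqref{eq:ratio chern elementary} and argue that the coefficients of $H^{k_0}$ and $H^{k_0+1}$ in $\log(\uc(\cF)/\uc(\cE))$ depend only on $(k_0,\sigma_0,m_0)$ and the $m_\rho$'s; but this forces one to handle carefully the infinite products over $W_\sigma$ for $\dim\sigma > k_0$, and seems considerably less transparent than the reduction above.
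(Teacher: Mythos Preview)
Your proof is correct and follows the same overall strategy as the paper: construct a saturation $\cF'$ (the paper calls it $\cG$ and gives it by an explicit one-step formula $G^\sigma_m=F^\sigma_m\oplus L$ on the relevant locus, rather than via the iterative remark you cite) with $\cE\subset\cF\subset\cF'$ so that $\cE\to\cF'$ is saturated elementary with the same parameters $(\sigma_0,m_0)$, observe that $\cF$ and $\cF'$ agree on cones of dimension $\leq k_0+1$ so the invariants $\um_\Sigma$ coincide, and split $\log(\uc(\cF)/\uc(\cE))$ into the saturated contribution from Proposition~\ref{prop:log expansion chern saturated elementary} plus a correction $\log(\uc(\cF')/\uc(\cF))$.

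The genuine difference is in how you bound the correction term. The paper argues by decreasing induction on $k_0$: it factorizes $\cF\subset\cF'$ into elementary injections via the proof of Theorem~\ref{theo:existence decomposition elementary injections}, notes that these are $k$-elementary with $k\geq k_0+2$, and applies the induction hypothesis to each factor to obtain $\log(\uc(\cF')/\uc(\cF))=O(H^{k_0+2})$. You bypass both the factorization and the induction by invoking directly the Newton-type identity $\log\uc(\cG)=\sum_{j\geq 1}(-1)^{j-1}(j-1)!\,\mathrm{ch}_j(\cG)$ together with the vanishing of $\mathrm{ch}_j(\cQ')$ for $j<\codim\Supp(\cQ')$. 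This is shorter and more transparent; the trade-off is that it imports a standard fact about the Chern character external to the paper's purely combinatorial toolkit, whereas the paper's inductive argument stays entirely within the framework it has already set up.
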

\begin{proof}
We prove this by decreasing induction on $k_0$. Note that by definition, any $n$-elementary or $(n-1)$-elementary injection is automatically saturated, hence those cases follow from Proposition \ref{prop:log expansion chern saturated elementary}. Assume then $k_0\leq n-2$. Consider the smallest equivariant torsion-free sheaf $\cG$ such that $\cE\subset\cF\subset \cG$ and $\cE\subset \cG$ is a saturated elementary injection with parameters $(\sigma_0,m_0)$. It may be obtained by considering the family of multifiltrations $(G^\sigma_m)$ defined as follows, using notations from Section \ref{sec:elementary injections} (in particular Equation \eqref{eq:msigma}). Let $(E^\sigma_m)\subset (F^\sigma_m)$ stand for the families of multifiltrations of $\cE$ and $\cF$. For $\sigma_0\nleq \sigma$, $G^\sigma_m=E^\sigma_m=F^\sigma_m$ for any $m\in M$. For $\sigma_0\leq \sigma$, if $m\notin (m_0+\sigma_0)^\perp\cap(m_\sigma+\sigma^\vee)$, then $G^\sigma_m=E^\sigma_m=F^\sigma_m$, while if $m\in (m_0+\sigma_0)^\perp\cap(m_\sigma+\sigma^\vee)$, then $G^\sigma_m=F^\sigma_m\oplus L$, where recall the line $L$ is defined by $F^{\sigma_0}_{m_0}=E^{\sigma_0}_{m_0}\oplus L$. Then, the associated sheaf $\cG$ provides the desired saturated elementary injection, with parameters $(\sigma_0,m_0)$. By construction of the sets $W_\sigma$, for $\sigma\in\Sigma(k_0)\cup\Sigma(k_0+1)$, $F^\sigma_m$ agrees with $G^\sigma_m$. Hence, from the proof of Theorem \ref{theo:existence decomposition elementary injections}, one may factorize $\cF\subset \cG$ into elementary injections that are $k$-elementary, with $k\geq k_0+2$. We can apply the induction hypothesis to those elementary injections, and thus conclude that
$$
\log\left(\frac{\uc(\cG)}{\uc(\cF)}\right)=O(H^{k_0+2}).
$$
 Then, 
 \begin{eqnarray*}
 \log\left(\frac{\uc(\cG)}{\uc(\cE)}\right) &=\displaystyle\log\left(\frac{\uc(\cG)}{\uc(\cF)}\right)+\log\left(\frac{\uc(\cF)}{\uc(\cE)}\right)\\
  & = \displaystyle\log\left(\frac{\uc(\cF)}{\uc(\cE)}\right)+O(H^{k_0+2}).
 \end{eqnarray*}
 As $\cE\subset\cG$ is saturated, we can use formula \eqref{eq:log expansion saturated elementary} for $\log\left(\frac{\uc(\cG)}{\uc(\cE)}\right)$, and as $(F^\sigma_m)$ agrees with $(G^\sigma_m)$ as long as $\dim(\sigma)\leq k_0+1$, we deduce that the weight $\um_\Sigma$ corresponding to $(E^\sigma_m)\subset (F^\sigma_m)$ is the same as the one for $(E^\sigma_m)\subset(G^\sigma_m)$. This concludes the proof.
\end{proof}

\section{Applications}
\label{sec:applications}

We come back to our initial problem, namely producing rank $2$ torsion-free sheaves with prescribed Chern classes over the projective space. We would like in particular those Chern classes to satisfy the constraints of rank $2$ vector bundle Chern classes, that we now recall. First, if $\cE$ is a rank $2$ vector bundle over the projective space $\C\P^n$, we have the vanishing of $\uc_i(\cE)$ for $i\geq 3$. Then the Schwarzenberger's constraints (\cite[Appendix 1]{Hirz}) for $\uc(\cE):=1+\uc_1\cdot H+\uc_2\cdot H^2$ can be formulated as follows (cf \cite[Section 2]{HoSch}). For all $m\in\N$ such that $2\leq m\leq n$, we must have
\begin{equation}
 \label{eq:Schwarzenberger conditions}
  \sum_{j=2}^m\sum_{i=1}^{[\frac{j}{2}]} (-1)^i e_{m,j}\left( \binom{j-i}{i}+ \binom{j-i-1}{i-1}\right)\uc_1^{j-2i}\uc_2^i \equiv 0 \: \mathrm{ mod }\: m!,
\end{equation}
where the $e_{m,j}\in\N$ are defined by
$$
\sum_{j=1}^m e_{m,j}t^j=t(t+1)\ldots(t+m-1).
$$

\subsection{Obstructions to smoothability}
\label{sec:negative results}
We begin with some negative results. We shall prove here Theorem \ref{theo:intro obstructions smoothability} and Corollary \ref{cor:intro obstructions smoothability}. Let then $\cE$ be a  torus-equivariant rank $2$ torsion-free sheaf with normalised reflexive hull $\cF=(\cE^\vee)^\vee$ on the projective space (we use the normalisation as in Corollary \ref{cor:chern classes normalized}). Let $\cQ=\cF/\cE$, and set $q=\codim(\cQ)=n-\dim(\cQ)$. From the proof of Theorem \ref{theo:existence decomposition elementary injections}, $\cE\subset\cF$ can be factorized by a sequence of $k$-elementary injections, with $k\geq q$. Denote by 
$$
\cE:=\cE_{n+1}\subset\cE_n\subset  \ldots\subset \cE_q=\cF
$$
the subsequence of injections obtained by setting, for each $k$, the inclusion $\cE_{k+1}\subset  \cE_k$ to be the composition of all the successive $k$-elementary injections appearing in the factorization of $\cE\subset\cF$. Then from Proposition \ref{prop:elementary injection geometric}, $\cE_k/\cE_{k+1}$ is pure of dimension $n-k$.
Hence,
$$
0=\cE/\cE_{n+1}\subset \cE_n/\cE_{n+1}\subset\ldots\subset \cE_q/\cE_{n+1}=\cQ
$$
is the torsion filtration of $\cQ$, and for each $k$,
$$
Q_{n-k}(\cQ)=\cE_k/\cE_{k+1},
$$
so that the vanishing of $Q_{n-k}(\cQ)$ means that no $k$-elementary injection appears in the factorization of $\cE\subset\cF$. Moreover, for each $k$, if $Q_{n-k}(\cQ)=0$, 
\begin{equation}
 \label{eq:log E_k E_k+1 vanish}
\log\Big(\frac{\uc(\cE_k)}{\uc(\cE_{k+1})} \Big)=0,
\end{equation}
while in general, by Proposition \ref{prop:log expansion chern non saturated}, 
\begin{equation}
 \label{eq:log E_k E_k+1}
\log\Big(\frac{\uc(\cE_k)}{\uc(\cE_{k+1})} \Big)=(-1)^{k-1}(k-1)!p_k\, H^k+O(H^{k+1}),
\end{equation}
where $p_k$ stands for the number of $k$-elementary injections appearing in the factorization of $\cE\subset\cF$. Finally,
\begin{equation}
 \label{eq:log F E obstruction result}
\log\Big(\frac{\uc(\cF)}{\uc(\cE)} \Big)=\sum_{k=q}^n\log\Big(\frac{\uc(\cE_{k})}{\uc(\cE_{k+1})} \Big).
\end{equation}
With this at hand, we can prove Theorem \ref{theo:intro obstructions smoothability}.

\subsubsection{Case $q\geq 4$.} From \eqref{eq:log F E obstruction result} and \eqref{eq:log E_k E_k+1},
$$
\log\Big(\frac{\uc(\cF)}{\uc(\cE)} \Big)=\sum_{k=q}^n\log\Big(\frac{\uc(\cE_{k})}{\uc(\cE_{k+1})} \Big)=(-1)^{q-1}(q-1)!p_q\, H^q+O(H^{q+1})
$$
with $p_q\neq 0$ by assumption on $\codim(\cQ)$. If in addition $\uc_3(\cF)=0$, then by Corollary \ref{cor:loc free iff c3 vanishes}, $\cF$ is locally free and then $\uc_i(\cF)=0$ for $i\geq 3$. Hence we can write 
$$\uc(\cF)=1+\uc_1 \,H+\uc_2\, H^2.$$
Then we have
$$
\displaystyle\log\Big(\frac{1+\uc_1 \,H+\uc_2\, H^2}{1+\uc_1 H+\uc_2(\cE) H^2+\ldots +\uc_n(\cE) H^n}\Big)=\log\Big(\frac{\uc(\cF)}{\uc(\cE)} \Big)  =\alpha_q \, H^q+O(H^{q+1})
$$
for some $\alpha_q\in\Z\setminus\lbrace 0\rbrace$.
By the expansion of $\log$ in Definition \ref{eq:formal log}, this implies that $\uc_i(\cE)=0$ for $3\leq i \leq q-1$ and $\uc_q(\cE)\neq 0$.

Assuming now $\uc_3(\cF)\neq 0$, we obtain
\begin{eqnarray*}
\displaystyle\log\Big(\frac{1+\uc_1(\cF) \,H+\uc_2(\cF)\, H^2+\ldots+\uc_n(\cF)H^n}{1+\uc_1(\cE) H+\uc_2(\cE) H^2+\ldots +\uc_n(\cE) H^n}\Big)=\alpha_q \, H^q+O(H^{q+1})
\end{eqnarray*}
which this time implies that $\uc_i(\cE)=\uc_i(\cF)$ for $i\leq q-1$. In particular $\uc_3(\cE)=\uc_3(\cF)$ is non zero. This achieves the proof of Theorem \ref{theo:intro obstructions smoothability}, item $(1)$.

\begin{remark}
 Note that the previous proof shows that if $\cF=(\cE^\vee)^\vee$ is locally free and if $q=\codim(\cF/\cE)\geq 3$, then $\uc_q\neq 0$ and $\cE$ is not smoothable.
\end{remark}

\subsubsection{Case $q=2$.} We move on to the proof of item $(2)$, and assume that $\cE$ is semistable and $Q_{n-3}(\cQ)=0$. The latter implies,  by \eqref{eq:log F E obstruction result}, \eqref{eq:log E_k E_k+1} and \eqref{eq:log E_k E_k+1 vanish}, that
$$
\log\Big(\frac{\uc(\cF)}{\uc(\cE)} \Big)=\log\Big(\frac{\uc(\cE_{2})}{\uc(\cE_{3})} \Big)+O(H^4),
$$
with $\cE_3\subsetneq \cE_2$ as $q=2$. We now use Proposition \ref{prop:log expansion chern non saturated}, applied to the successive $2$-elementary injections in $\cE_3\subset\cE_2$. Let $p_2>0$ stand for the number of $2$-elementary injections in the factorization of $\cE_3\subset\cE_2$, and let $\um_{\Sigma,i}$ stand for the weight of the $i$-th one, as defined in \eqref{eq:umbigsigma}. Then, we obtain
$$
\log\Big(\frac{\uc(\cE_2)}{\uc(\cE_3)} \Big)=-p_2\,H^2-2\Big(\sum_{i=1}^{p_2}\um_{\Sigma,i}+p_2\Big)\,H^3+O(H^4).
$$
We deduce that
$$
\displaystyle\log\Big(\frac{1+\uc_1(\cF) \,H+\ldots+\uc_n(\cF)H^n}{1+\uc_1(\cE) H+\ldots +\uc_n(\cE) H^n}\Big)=-p_2\,H^2-2\Big(\sum_{i=1}^{p_2}\um_{\Sigma,i}+p_2\Big)\,H^3+O(H^4).
$$
As $\cF$ is the reflexive hull of $\cE$, $c_1(\cF)=c_1(\cE)$. Then, the $\log$-expansion in $H$ yields
\begin{equation}
\label{eq:system case q is 2}
 \left\{ 
 \begin{array}{ccc}
 \uc_2(\cF)-\uc_2(\cE)  &= & -p_2 \\
\uc_3(\cF)-\uc_3(\cE)+\uc_1(\cF)(\uc_2(\cE)-\uc_2(\cF)) &=   & -2(\sum_{i=1}^{p_2}\um_{\Sigma,i}+p_2).
 \end{array}
 \right.
\end{equation}
By contradiction, let's assume that $\uc_3(\cE)=0$. Then \eqref{eq:system case q is 2} gives
$$
\uc_3(\cF)+p_2\uc_1(\cF) =   -2\Big(\sum_{i=1}^{p_2}\um_{\Sigma,i}+p_2\Big)
$$
which is equivalent to
\begin{equation}
 \label{eq:the equality q is 2 case}
 \uc_3(\cF)+\sum_{i=1}^{p_2}\big(\uc_1(\cF)+2\um_{\Sigma,i}+2\Big) =   0.
\end{equation}
We need here an estimate on the $\um_{\Sigma,i}$'s. To explain the argument, we will assume for a moment that 
$\cF$ is given by the normalised family of filtrations $(\C^2,F^\rho(\bullet))_{\rho\in\Sigma(1)}$ given  by 
 \begin{equation}
  \label{eq:starting sheaf case q is 2}
 F^\rho(i)=\left\{
 \begin{array}{ccc}
                  \lbrace 0 \rbrace & \mathrm{ if } & i< -c_\rho \\
                 L_\rho & \mathrm{ if } & -c_\rho\leq i< 0 \\
\C^2 & \mathrm{ if } & 0\leq i 
                  \end{array}
                  \right.
 \end{equation}
 where  for all $\rho\in\Sigma(1)$, $L_\rho\subset\C^2$, $c_\rho\in\N$, {\it and} with $L_\rho\neq L_{\rho'}$ for $\rho\neq\rho'$. 
 From Lemma \ref{lem:reflexive hull}, the family of multifiltrations $(F^\sigma_m)$ for $\cF$ is then given, for $m\in M$ and $\sigma\in\Sigma$, by
\begin{equation}
  \label{eq:starting sheaf multifiltration q is 2}
 F^\sigma_m=\left\{
 \begin{array}{ccc}
                 L_\rho & \mathrm{ if } & -c_\rho\leq \langle m, u_\rho \rangle < 0\:\mathrm{ and }\: \forall\,\rho'\neq \rho, \: 0\leq \langle m, u_{\rho'} \rangle\\
\C^2 & \mathrm{ if } &\forall\, \rho\in\sigma(1),\: 0\leq  \langle m, u_\rho \rangle \\

                  \lbrace 0 \rbrace & \mathrm{ otherwise } & \\
                  \end{array}
                  \right.
 \end{equation}
where in the first case above, $\rho$ and $\rho'$ stand for rays in $\sigma(1)$. We wish to establish a lower bound for $\um_{\Sigma,1}$. The first $2$-elementary injection in the factorization of $\cE_3\subset\cE_2=\cF$ is obtained by picking a cone $\sigma_0\in\Sigma(2)$ and a weight $m_0\in M/(\sigma_0^\perp\cap M)$, and lowering the dimension of $F^{\sigma_0}_{m_0}$ by one, as described in the proof of Theorem \ref{theo:existence decomposition elementary injections}. Moreover, to make sure that we obtain a family of multifiltrations, the weight $m_0$ must be minimal for $\leq_{\sigma_0}$ with the property that $F^{\sigma_0}_{m_0}\neq\lbrace 0\rbrace$.  We are left with picking $$m_0=(-c_{\rho_j},0)\in M/(\sigma_0^\perp\cap M)\simeq \Z e_j^*\oplus\Z e_i^*,$$ with $\sigma_0=\rho_j+\rho_i\in\Sigma(2)$. By definition of $\um_\Sigma$ (Equation \eqref{eq:umbigsigma}) and by the description of $(F^\sigma_m)$ in \eqref{eq:starting sheaf multifiltration q is 2}, we deduce that for this choice, $\um_{\Sigma,1}=-c_{\rho_j}$. Hence, setting $c_{\rho_0}=\max\lbrace c_\rho\:\vert\: \rho\in\Sigma(1)\rbrace$, we deduce that
$\um_{\Sigma,1}\geq -c_{\rho_0}$.
Then, to produce the second elementary injection, we iterate this procedure. By considering the cases where the new chosen cone is either $\sigma_0$ or different from $\sigma_0$, we reach again the conclusion that $
\um_{\Sigma,2}\geq -c_{\rho_0}$. A simple induction then implies that for any $i$, $\um_{\Sigma,i}\geq -c_{\rho_0}$. Hence,
\begin{equation*}
 \uc_1(\cF)+2\um_{\Sigma,i}+2 \geq \uc_1(\cF)-2 c_{\rho_0}+2\geq 2
\end{equation*}
where we used the semistability of $\cF$ that implies, with Corollary \ref{cor:slope stable rank two}, that $c_{\rho_0}\leq \sum_{\rho\neq \rho_0} c_\rho$ and thus $2c_{\rho_0} \leq \sum_{\rho\in\Sigma(1)} c_\rho =\uc_1(\cF)$. We deduce that
$$
\uc_3(\cF)+\sum_{i=1}^{p_2}\Big(\uc_1(\cF)+2\um_{\Sigma,i}+2\Big) \geq \uc_3(\cF)+2p_2>0
$$
which contradicts \eqref{eq:the equality q is 2 case}.

This settles the case when $L_\rho\neq L_{\rho'}$ for $\rho\neq \rho'$. In the general case, the argument can be adapted as follows. Set, for any line $L\subset \C^2$, 
$$S_L:=\sum_{L_\rho=L} c_\rho$$ and
$$
S_{\max}:=\max\lbrace S_L\:\vert\: L\subset \C^2 \rbrace
$$
where the max is over all lines in $\C^2$. Using the definition of $\um_\Sigma$, one can easily adapt the argument to show that in that case, for all $i$, one has $\um_{\Sigma,i}\geq -S_{\max}$. Moreover, semistability implies that $\uc_1(\cF)\geq 2S_{\max}$. Then, the same inequalities as before, replacing $c_{\rho_0}$ by $S_{\max}$, lead to the same conclusion.

\subsubsection{Proof of Corollary \ref{cor:intro obstructions smoothability}.} This follows from Theorem \ref{theo:intro obstructions smoothability}, noting that both the hypothesis and the result of Corollary \ref{cor:intro obstructions smoothability} are invariant under tensoring $\cE$ by a line bundle. Hence, we may assume $(\cE^\vee)^\vee$ to be normalised, and Theorem \ref{theo:intro obstructions smoothability} applies. The non-vanishing of some Chern class $\uc_i(\cE)$ with $i\geq 3$ is preserved in flat families, preventing the existence of a smoothing.

\subsection{Prescribing Chern classes}
\label{sec:prescription}
We start now from a reflexive equivariant sheaf $\cE$, whose Chern polynomial is known from Sections \ref{sec:chern polynomial} and \ref{sec:normal forms}, and such that its first two Chern classes satisfy \eqref{eq:Schwarzenberger conditions}. We then produce inductively new torsion-free equivariant sheaves $\cE_i\subset\cE$ such that for each $i$, $\cE$ is the reflexive hull of $\cE_i$, and 
$$
\cE_p\to\cE_{p-1}\to\ldots \to \cE_0=\cE
$$
is a sequence of saturated elementary injections. Our goal is then to make sure that the following holds in $\Z[H]/\langle H^{n+1}\rangle$: 
$$
\frac{\uc(\cE)}{\uc(\cE_p)}=\frac{1+\uc_1 H+\ldots+ \uc_nH^n}{1+\uc_1 H+\uc_2 H^2}
$$
where the $\uc_i$'s are the Chern numbers of $\cE$ and $\uc(\cE_p)=1+\uc_1 H+\uc_2 H^2$. To achieve this program, we use
$$
\frac{\uc(\cE)}{\uc(\cE_p)}=\prod_{i=0}^{p-1} \frac{\uc(\cE_i)}{\uc(\cE_{i+1})},
$$
and the formula obtained in Proposition \ref{prop:log expansion chern saturated elementary}. Note that to make sure that $\cE=(\cE_i^\vee)^\vee$, from Lemma \ref{lem:reflexive hull}, we need each $\cE_{i+1}\to\cE_i$ to be a $k_i$-elementary injection with $k_i\geq 2$. We will actually assume $k_i \geq 3$, so that $\uc_2$ is preserved. From Lemma \ref{lem:log properties}, solving 
$$
\prod_{i=0}^{p-1} \frac{\uc(\cE_i)}{\uc(\cE_{i+1})}=\frac{1+\uc_1 H+\ldots+ \uc_nH^n}{1+\uc_1 H+\uc_2 H^2}
$$
is equivalent to solving 
\begin{equation}
 \label{eq:solving log}
\sum_{i=0}^{p-1} \log\left(\frac{\uc(\cE_i)}{\uc(\cE_{i+1})}\right)=\log\left(1+\uc_1 H+\ldots+ \uc_nH^n\right)-\log\left(1+\uc_1 H+\uc_2 H^2\right).
\end{equation}

 Our starting point will be a toric sheaf normalized by $b_\rho=0$ for all $\rho\in\Sigma(1)$ (in the notations of Section \ref{sec:normal forms}), with family of filtrations $(\C^2,E^\rho(\bullet))_{\rho\in\Sigma(1)}$ given  by 
 \begin{equation}
  \label{eq:starting sheaf}
 E^\rho(i)=\left\{
 \begin{array}{ccc}
                  \lbrace 0 \rbrace & \mathrm{ if } & i< -c_\rho \\
                 L_\rho & \mathrm{ if } & -c_\rho\leq i< 0 \\
\C^2 & \mathrm{ if } & 0\leq i 
                  \end{array}
                  \right.
 \end{equation}
 where $L_\rho=\C\cdot u_\rho\subset N_\C\simeq \C^2$, and $c_\rho\in\N$ for all $\rho\in\Sigma(1)$, with at least one $c_\rho\neq 0$ (say $c_{\rho_0}$). Note that with this choice, for $\rho\neq\rho'$, $L_\rho\cap L_{\rho'}=\lbrace 0 \rbrace$. From Lemma \ref{cor:chern classes normalized}, the $k$-th Chern number of $\cE$ is given by 
 $$\uc_k=s_k=\sum_{\sigma\in\Sigma(k)} \prod_{\rho\in\sigma(1)}c_\rho.$$
 By Lemma \ref{lem:reflexive hull}, the family of multifiltrations $(E^\sigma_m)$ for $\cE$ is then given, for $m\in M$ and $\sigma\in\Sigma$, by
\begin{equation}
  \label{eq:starting sheaf multifiltration}
 E^\sigma_m=\left\{
 \begin{array}{ccc}
                 L_\rho & \mathrm{ if } & -c_\rho\leq \langle m, u_\rho \rangle < 0\:\mathrm{ and }\: \forall\,\rho'\neq \rho, \: 0\leq \langle m, u_{\rho'} \rangle\\
\C^2 & \mathrm{ if } &\forall\, \rho\in\sigma(1),\: 0\leq  \langle m, u_\rho \rangle \\

                  \lbrace 0 \rbrace & \mathrm{ otherwise } & \\
                  \end{array}
                  \right.
 \end{equation}
where in the first case above, $\rho$ and $\rho'$ stand for rays in $\sigma(1)$.

Our first task is then to produce sequences of elementary injections where the term $\um_\Sigma$ that appears in \eqref{eq:log expansion saturated elementary} is well controlled. We will prove
\begin{proposition}
 \label{prop:existence sequence saturated injections}
 For each $(p_3,\ldots,p_n)\in\N^{n-2}$, and each  $3\leq k \leq n$, 
 there exist rank $2$ torsion-free equivariant sheaves $(\cE_{k,j})_{0\leq j\leq p_k}$ together with saturated $k$-elementary injections $\cE_{k,j}\to\cE_{k,j-1}$ for $1\leq j\leq p_k$ such that :
 \begin{itemize}
  \item $\cE_{3,0}=\cE$, 
  \item for $k\geq 4$,  $\cE_{k,0}=\cE_{k-1,p_{k-1}}$, 
  \item the weight $\um_\Sigma^{k,j}$ associated to  $\cE_{k,j}\to\cE_{k,j-1}$ as in \eqref{eq:umbigsigma} is 
 \begin{equation}
  \label{eq:umsigmakj}
 \um_\Sigma^{k,j}=-c_{\rho_0}+\sum_{3\leq i < k}p_i+(j-1).
 \end{equation}

 \end{itemize}
\end{proposition}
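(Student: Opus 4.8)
The plan is to build the sequence of elementary injections explicitly, ray by ray and cone by cone, starting from the normalised toric sheaf $\cE$ given by \eqref{eq:starting sheaf} and \eqref{eq:starting sheaf multifiltration}. First I would fix, once and for all, a two-dimensional cone $\sigma_0 = \rho_0 + \rho_1$ containing the distinguished ray $\rho_0$ with $c_{\rho_0}$ maximal, and work with all the $k$-dimensional cones $\sigma$ with $\sigma_0 \leq \sigma$. The key observation is that to perform a saturated $k$-elementary injection with parameters $(\sigma_0,m_0)$ — in the sense of Definition \ref{def:simple elementary injection}, here with $\dim(\sigma_0)$ replaced by $k$ as the relevant cone, so really the parameter cone is some $\tilde\sigma_0\in\Sigma(k)$ with $\sigma_0\leq\tilde\sigma_0$ — one only needs, per Lemma \ref{lem:reflexive hull} and the proof of Theorem \ref{theo:existence decomposition elementary injections}, to lower the dimension of $F^{\tilde\sigma_0}_{m_0}$ by one at a weight $m_0$ that is $\leq_{\tilde\sigma_0}$-minimal among those with $F^{\tilde\sigma_0}_{m_0}\neq\{0\}$, and then propagate the constraint to all larger cones $\sigma \geq \tilde\sigma_0$ via clause $(iii)_e$. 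I would carry out the construction inductively on $k$ (from $3$ to $n$) and, within each $k$, inductively on $j$ (from $1$ to $p_k$).

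The heart of the argument is bookkeeping the weight $\um_\Sigma^{k,j}$ defined by \eqref{eq:umbigsigma}. Recall $\um_\Sigma = \sum_{\rho\in\Sigma(1)} m_\rho$, where for $\rho$ in the parameter cone $\tilde\sigma_0$, $m_\rho = \langle m_0, u_\rho\rangle$, and for $\rho\notin\tilde\sigma_0(1)$, $m_\rho$ is the correction term \eqref{eq:definition m rho} coming from where the dimension jump first occurs along the cone $\tilde\sigma_0 + \rho$. From the explicit description \eqref{eq:starting sheaf multifiltration} of $(E^\sigma_m)$, at the very first step ($k=3$, $j=1$) the minimal weight forcing $F^{\tilde\sigma_0}_{m_0}=L_{\rho_0}$ is $m_0$ with $\langle m_0, u_{\rho_0}\rangle = -c_{\rho_0}$ and $\langle m_0, u_\rho\rangle = 0$ for the other rays $\rho\in\tilde\sigma_0(1)$; and since the sheaf is already reflexive with $F^\sigma_m$ given by \eqref{eq:starting sheaf multifiltration}, the correction terms $m_\rho$ for $\rho\notin\tilde\sigma_0(1)$ all vanish. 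Thus $\um_\Sigma^{3,1} = -c_{\rho_0}$, matching \eqref{eq:umsigmakj}. For each subsequent injection, lowering a dimension at a cone where previous injections have already acted shifts the minimal admissible weight by exactly $+1$ in the $u_{\rho_0}$-direction (because the previous injection already killed the old minimal weight), so $\um_\Sigma$ increases by $1$ at each step within fixed $k$, giving the $+(j-1)$ term; and when passing from level $k-1$ to level $k$, the $p_{k-1}$ steps already performed at level $k-1$ have, through clause $(iii)_e$, modified $F^\sigma_m$ on the $k$-dimensional cones exactly $p_{k-1}$ times in the relevant direction, contributing the $+\sum_{3\leq i<k} p_i$ offset. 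Verifying this recursion carefully — in particular that at every step the weight one must choose is indeed forced to produce the claimed value, and that the resulting $(G^\sigma_m)$ really satisfies all of Perling's axioms $(i)_f$–$(v)_f$ and the saturation condition of Definition \ref{def:simple elementary injection} — is the main technical obstacle.

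The remaining steps are routine: one checks that each constructed injection $\cE_{k,j}\to\cE_{k,j-1}$ is $k$-elementary and saturated by verifying that $W_\sigma = (m_0+\sigma_0^\perp)\cap(m_\sigma+\sigma^\vee)$ for all $\sigma\geq\tilde\sigma_0$, which follows from the fact that the dimension-drop propagates in the monotone way dictated by $(iii)_e$ on the reflexive model \eqref{eq:starting sheaf multifiltration}, together with Lemma \ref{lem:localization diff dim}; that $k_i = k \geq 3$, so by Lemma \ref{lem:reflexive hull} the reflexive hull is unchanged and equals $\cE$ at every stage (since modifications occur only on cones of dimension $\geq 3$, i.e. outside $X_0$-relevant data); and that chaining the injections for $j=1,\dots,p_k$ and then for $k=3,\dots,n$ produces the stated sequence with $\cE_{3,0}=\cE$ and $\cE_{k,0}=\cE_{k-1,p_{k-1}}$. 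I expect the bulk of the written proof to consist of the careful inductive verification of formula \eqref{eq:umsigmakj}, following essentially the same kind of weight-minimality analysis already used in the proof of Theorem \ref{theo:intro obstructions smoothability} for the case $q=2$, where an analogous lower bound $\um_{\Sigma,i}\geq -c_{\rho_0}$ was established.
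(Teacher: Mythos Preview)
Your overall strategy---build the sequence inductively on $k$ and then on $j$, each time lowering the dimension by one at a minimal weight and propagating via clause $(iii)_e$---matches the paper's. However, the mechanism you describe for advancing the weight is wrong, and it makes the construction fail for general $p_k$.

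You say that each successive injection ``shifts the minimal admissible weight by exactly $+1$ in the $u_{\rho_0}$-direction.'' But $E^{\sigma_3}_m=L_{\rho_0}$ only when $-c_{\rho_0}\leq\langle m,u_{\rho_0}\rangle<0$ (and the other coordinates are $\geq 0$); once you have shifted $c_{\rho_0}$ times along $\rho_0$ you land at $\langle m,u_{\rho_0}\rangle=0$, where $E^{\sigma_3}_m=\C^2$ and there are weights $m'<_{\sigma_3}m$ with $H^{\sigma_3}_{m'}=L_{\rho_1}$ or $L_{\rho_2}$ still alive, so the minimality condition \eqref{eq:choice m0 for H} fails and you can no longer set $G^{\sigma_3}_m$ to a line compatible with $(i)_f$. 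Since the proposition asks for \emph{arbitrary} $p_k\in\N$, and since the applications in Sections~\ref{sec:P4}--\ref{sec:Pn} set $c_{\rho_0}=1$, this is fatal already at $j=2$.

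The paper avoids this by shifting in a direction \emph{transverse} to $\rho_0$. Concretely, with $\sigma_3=\rho_0+\rho_1+\rho_2$ and coordinates $(e_0^*,e_1^*,e_2^*)$, it takes $m_{3,j}=(-c_{\rho_0},0,j-1)$: the $\rho_0$-coordinate stays fixed at $-c_{\rho_0}$ (so $E^{\sigma_3}_{m_{3,j}}=L_{\rho_0}$ for all $j\geq 1$), and the increment happens along $e_2^*$. This is what makes the sequence run indefinitely, and it is also what produces the formula \eqref{eq:umsigmakj}: $\um_\Sigma^{3,j}=\langle m_{3,j},u_{\sigma_3}\rangle=-c_{\rho_0}+(j-1)$ while all the correction terms $m_\rho$ for $\rho\notin\sigma_3(1)$ remain $0$. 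When passing to level $4$ one enlarges the cone to $\sigma_4=\sigma_3+\rho_3$ and starts at $m_{4,1}=(-c_{\rho_0},0,p_3,0)$: the $p_3$ in the third slot is not a count of ``modifications through $(iii)_e$'' as you suggest, but simply the fact that the previous $p_3$ injections have already killed the weights with $e_2^*$-coordinate $0,1,\ldots,p_3-1$ at the $\sigma_3$-level, so the new minimal weight must sit at $e_2^*$-coordinate $p_3$. Iterating gives $m_{k,j}$ with coordinates $(-c_{\rho_0},0,p_3,\ldots,p_{k-1},j-1)$, whence \eqref{eq:umsigmakj}.

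Two smaller points: you do not need $c_{\rho_0}$ maximal, only $c_{\rho_0}\neq 0$; and the saturation check in the paper is done not via Lemma~\ref{lem:localization diff dim} but by using that, for the relevant weights $m\in(m_{3,j}+\sigma_3^\perp)\cap(m_\sigma+\sigma^\vee)$, the description $E^\sigma_m=\bigcap_{\rho\in\sigma(1)}E^\rho(\langle m,u_\rho\rangle)$ from Lemma~\ref{lem:reflexive hull} still applies (because those weights lie outside all previous $W_\sigma$'s), which lets one compute $E^\sigma_m=L_{\rho_0}$ directly and conclude $W_\sigma$ is the full affine set.
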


\begin{proof}
We will produce the elementary injections inductively, starting from $k=3$. We will use the notations introduced in the previous sections for elementary injections, using sub or superscript $(k,j)$ to recall the dependence on the injection $\cE_{k,j} \to \cE_{k,j-1}$. Assume that $p_3 > 0$ (otherwise just move to the first non zero $p_j$, and set $\cE_{i,0}=\cE$ for $i\leq j$). Set $\cE_{3,0}=\cE$. Then consider a cone $\sigma_3=\rho_0+\rho_1+\rho_2\in\Sigma(3)$ containing $\rho_0$ (recall that $c_{\rho_0}\neq 0$). Fix the weight $m_{3,1}=(-c_{\rho_0},0,0)$ written in the basis $(e_0^*,e_1^*,e_2^*)$ of $M/(\sigma_3^\perp\cap M)$, with $e_i$ a primitive generator of $\rho_i$. The couple $(\sigma_3,m_{3,1})$ will be the parameters of our first elementary injection. From \eqref{eq:starting sheaf multifiltration}, $E^{\sigma_3}_{m_{3,1}}=L_{\rho_0}$, that we will denote $L$ from now to ease notations. By choice of $m_{3,1}$, we note that for any class $m<_{\sigma_3} m_{3,1}$ in $M/(\sigma_3^\perp\cap M)$, $E^{\sigma_3}_{m_{3,1}}=\lbrace 0\rbrace$. We then define the family of multifiltrations $(H^\sigma_m)$ for $\cE_{3,1}$ as follows :
\begin{itemize}
  \item For any $k<3$, any $\sigma\in\Sigma(k)$, and any $m\in M$, $H^\sigma_m=E^\sigma_m$.
  \item For $\sigma_3\in\Sigma(3)$ and $m_{3,1}\in M/(\sigma_3^\perp\cap M)$, $ H^{\sigma_3}_{m_{3,1}}=\lbrace 0\rbrace\subsetneq L=E^{\sigma_3}_{m_{3,1}}$.
  \item For $\sigma_3$ and $m\in M/(\sigma_3^\perp\cap M)$ such that $m- m_{3,1}\notin \sigma_3^\perp$, $H_m^{\sigma_3}=E_m^{\sigma_3}$.
  \item For $\sigma\in\Sigma(3)\setminus\lbrace \sigma_3\rbrace$,  $H^\sigma_m=E^\sigma_m$ for all $m\in M$. 
  \item For any $k>3$, any $\sigma\in\Sigma(k)$ and any $m\in M$, $H^\sigma_m=E^\sigma_m$ unless $\sigma_3<\sigma$ and $m\leq_{\sigma_3} m_{3,1}$ in which case $H^\sigma_m=\lbrace 0 \rbrace$.
 \end{itemize}
 Exactly as in the proof of Theorem \ref{theo:existence decomposition elementary injections}, one shows that $(H^\sigma_m)$ defines a family of multifiltrations, and that $\cE_{3,1}\to\cE_{3,0}=\cE$ is $3$-elementary. We now proceed to show that this injection is saturated, with weight $\um_\Sigma^{3,1}=-c_{\rho_0}$. We may assume that $n\geq 4$, as if $n=3$ the injection is automatically saturated. For each $\rho\notin\sigma_3(1)$, we need to compute $m_\rho^{3,1}$ as defined by \eqref{eq:definition m rho} and \eqref{eq:msigma}. Using \eqref{eq:starting sheaf multifiltration}, we find that $m_\rho^{3,1}=0$.  Then, we obtain $m_\Sigma^{3,1}=-c_{\rho_0}$ as required. Let $\sigma\in\Sigma$ such that $\sigma_3\leq \sigma$. To conclude the saturation property, we need to show that 
 $$
 W^{3,1}_\sigma= (m_{3,1}+\sigma_3^\perp)\cap (m^{3,1}_\sigma+\sigma^\vee).
 $$
 By construction, 
 $$
 W^{3,1}_\sigma=\lbrace m\in M/(\sigma^\perp\cap M)\:\vert\: \dim(E^\sigma_m)=\dim(H^\sigma_m)+1\rbrace\subset(m_{3,1}+\sigma_3^\perp)\cap (m^{3,1}_\sigma+\sigma^\vee).
 $$
 Let $m\in (m_{3,1}+\sigma_3^\perp)\cap (m^{3,1}_\sigma+\sigma^\vee)$. Then, from Lemma \ref{lem:reflexive hull},
 \begin{equation*}
 \begin{array}{ccc}
 E^\sigma_m & = &\displaystyle\bigcap_{\rho\in\sigma(1)} E^\rho (\langle m, u_\rho \rangle) \\
            & = &\displaystyle\Big( \bigcap_{\rho\in\sigma_3(1)} E^\rho (\langle m, u_\rho \rangle) \Big) \bigcap \Big( \bigcap_{\rho\in\sigma(1)\setminus\sigma_3(1)} E^\rho (\langle m, u_\rho \rangle) \Big) \\
        & = & \displaystyle E^{\sigma_3}_{m_{3,1}}\bigcap \Big(\bigcap_{\rho\in\sigma(1)\setminus\sigma_3(1)} E^\rho(\langle m, u_\rho \rangle)\Big),
 \end{array}
 \end{equation*}
 where we used that
 $m\in (m_{3,1}+\sigma_3^\perp)$. Using now $m\in  (m^{3,1}_\sigma+\sigma^\vee)$, we deduce that $E^\rho(\langle m, u_\rho \rangle)=\C^2$ for $\rho\notin\sigma_3(1)$, hence 
 $$
 E^\sigma_m=E^{\sigma_3}_{m_{3,1}}=L.
 $$
 On the other hand, as $m\in (m_{3,1}+\sigma_3^\perp)$, and from $\sigma_3\leq \sigma$, we have $H^\sigma_m\subset H^{\sigma_3}_{m_{3,1}}=\lbrace 0\rbrace$. Hence $m\in  W^{3,1}_\sigma$, and we conclude that $\cE_{3,1}\to\cE_{3,0}$ is saturated. What we used in the proof of saturation is that for $\sigma_3\leq\sigma$ and for $m\in (m_{3,1}+\sigma_3^\perp)\cap (m^{3,1}_\sigma+\sigma^\vee)$, the description $E^\sigma_m  = \displaystyle\bigcap_{\rho\in\sigma(1)} E^\rho (\langle m, u_\rho \rangle)$ from Lemma \ref{lem:reflexive hull} is valid. 
 
 We now proceed to the construction of $\cE_{3,2}\to\cE_{3,1}$, assuming $p_3\geq 2$ (otherwise we set $\cE_{4,0}=\cE_{3,1}$ and go to the next step of the construction). As parameters for this new injection, we take $(\sigma_3,m_{3,2})$ with 
 $m_{3,2}=(-c_{\rho_0},0,1)$ in the basis $(e_0^*,e_1^*,e_2^*)$ of $M/(\sigma_3)^\perp\cap M$. We consider the equivariant subsheaf $\cE_{3,2}\to \cE_{3,1}$ described by the family of multifiltrations $(G^\sigma_m)\subset (H^\sigma_m)$ with for any $\sigma\in\Sigma$ and any $m\in M$, $G^\sigma_m=H^\sigma_m$ unless $\sigma_3\leq\sigma$ and $m\leq_{\sigma_3} m_{3,2}$ in which case $G^\sigma_m=\lbrace 0 \rbrace$. As for $(H^\sigma_m)$, and following the proof of Theorem \ref{theo:existence decomposition elementary injections}, one can check that this defines indeed a family of multifiltrations and that $\cE_{3,2}\to \cE_{3,1}$ is $3$-elementary. To show saturation, we can use the same argument as for the previous injection $\cE_{3,1}\to \cE_{3,0}$. Indeed, for $\sigma_3\leq\sigma$ and for $m\in (m_{3,2}+\sigma_3^\perp)$, we have $m\notin W_\sigma^{3,1}$ and hence $H^\sigma_m  = E^\sigma_m$. Then, the description $H^\sigma_m=\displaystyle\bigcap_{\rho\in\sigma(1)} E^\rho (\langle m, u_\rho \rangle)$ from Lemma \ref{lem:reflexive hull} can be used, for any $m\in (m_{3,2}+\sigma_3^\perp)$. We conclude that $\cE_{3,2}\to \cE_{3,1}$ is saturated, and we compute $\um_\Sigma^{3,2}=-c_{\rho_0}+1$.
 
 We can iterate this argument, constructing a sequence of saturated elementary injections $\cE_{3,j}\to \cE_{3,j-1}$ with parameters $(\sigma_3,(-c_{\rho_0},0,(j-1)))$, by changing the family of multifiltrations of $\cE_{3,j-1}$ at cones $\sigma_3\leq\sigma$ and weights $m\leq_{\sigma_3} m_{3,j}$, setting the vector space to be $\lbrace 0 \rbrace$ in those cases. Once we reach $j=p_3$, we set $\cE_{4,0}=\cE_{3,p_3}$. Assuming $p_4\geq 1$ (otherwise we set $\cE_{5,0}=\cE_{4,0}$ and move on to the next step), we construct similarly $\cE_{4,1}\to\cE_{4,0}$ with parameters $(\sigma_4,m_{4,1})$ where $\sigma_4=\sigma_3+\rho_3\in\Sigma(4)$ and $m_{4,1}=(-c_{\rho_0},0,p_3,0)$ written in the basis $(e_0^*,e_1^*,e_2^*,e_3^*)$ of $M/(\sigma_4)^\perp\cap M$. To do this, we simply turn down in the family of multifiltrations for $\cE_{4,0}$ the vector space to $\lbrace 0 \rbrace$ at cones $\sigma$ containing $\sigma_4$ and weights $m\leq_{\sigma_4} m_{4,1}$. The same arguments as before show that this defines a new family of multifiltrations corresponding to a $4$-elementary saturated injection, with weight 
 $\um_\Sigma^{4,1}=-c_{\rho_0}+p_3$. The result then follows, by iterating this process.
\end{proof}

\begin{remark}
 As it might be useful for further applications, we note here that by definition, any $k$-elementary injection with $k\in\lbrace n-1,n\rbrace$ is saturated. We also note that the iteration process to produce elementary injections $\cE_j\to\cE_{j-1}$ from a reflexive sheaf used in the proof of Proposition \ref{prop:existence sequence saturated injections} works as long as we choose the associated parameters $(\sigma_j,m_j)$ in such a way that the two following conditions are satisfied (we denote here $(F^\sigma_m)$ the family of multifiltrations of $\cE_{j-1}$) :
 \begin{enumerate}
 \item $m_j$ is minimal for $\leq_{\sigma_j}$ with the property that $F^{\sigma_j}_{m_j}=L_\rho$ for some fixed $\rho\in\Sigma(1)$,
  \item  $\sigma_{j-1}\leq\sigma_j$, $m_j\notin (m_{j-1}+\sigma_{j-1})^\perp$.
 \end{enumerate}
  The first condition ensures that we can build an elementary injection by turning down $F^\sigma_m$ to $\lbrace 0 \rbrace$ when $\sigma_j\leq \sigma$ and $m\leq_{\sigma_j} m_j$, while the second ensures that the injection will be saturated. Those two conditions may produce more general sequences of saturated elementary injections, and give more flexibility in the choices of the weights $\um_\Sigma$, comparing with the result in Proposition \ref{prop:existence sequence saturated injections}. We won't need it though in the next applications.
\end{remark}

\begin{remark}
 \label{rem:higher rank existence saturated}
 The result and proof of Proposition \ref{prop:existence sequence saturated injections} can easily be adapted to higher rank equivariant torsion-free sheaves. A rank $r$ equivariant reflexive sheaf $\cF$ on a toric variety $X$ with fan $\Sigma_X$ is characterised by a family of filtrations $(F^\rho(\bullet))_{\rho\in\Sigma_X(1)}$ where for each ray $\rho\in\Sigma_X(1)$, $i\mapsto F^\rho(i)$ is an increasing and bounded filtration of some fixed vector space $F$ of dimension $r$ (see \cite{Per04}). One may, as in the rank $2$ case, normalise such a family of filtrations by tensoring by a line bundle to assume that for each $\rho$ and each $i\geq 0$, $F^\rho(i)=F$. If it is non trivial, we can also assume that for at least one $\rho_0$, $F^{\rho_0}(-1)\subsetneq F$. Assume that there exists $c_{\rho_0}\in\N $ such that
 $$
 -c_{\rho_0}:=\min\lbrace i\in\N,\: \dim(F^{\rho_0}(i))=1  \rbrace.
 $$
 To conclude, set $L:=F^{\rho_0}(-c_{\rho_0})$ and assume that the family of filtration is chosen so that for any $\rho\neq \rho_0$ and any $i\in\Z$,  $L\cap F^\rho(i)\neq \lbrace 0\rbrace$ if and only if $i\geq 0$. Such filtrations may easily be constructed. For the corresponding reflexive sheaf $\cF$, the proof of Proposition \ref{prop:existence sequence saturated injections} works fine, simply replacing $\cE$ by $\cF$ and $E$ by $F$ in the families of filtrations.
\end{remark}

Our goal is then to solve \eqref{eq:solving log} for a sequence of elementary injections as in Proposition \ref{prop:existence sequence saturated injections}. Our parameters are on one hand the $(c_\rho)_{\rho\in\Sigma(1)}$ that fix the initial Chern classes of $\cE$ and on the other hand $\up=(p_3,\ldots,p_n)$ that fix the sequence of elementary injections and the weights $\um_\Sigma^{k,j}$ as in \eqref{eq:umsigmakj}. For $3\leq k\leq n$ such that $p_k\geq 1$, and $l\leq n-k$, we introduce the notation
\begin{equation}
 \label{eq:Skl}
S_k^l:=\sum_{j=1}^{p_k} (\um_\Sigma^{k,j})^l 
\end{equation}
that will appear from \eqref{eq:log expansion saturated elementary}. Then, define the numbers $(\tilde \uc_i)_{3\leq i \leq n}$ by
\begin{equation}
 \label{eq:chern ratio expansion}
 \log\left(1+\uc_1 H+\ldots+ \uc_nH^n\right)-\log\left(1+\uc_1 H+\uc_2 H^2\right)=-\sum_{q=3}^n \tilde\uc_q \frac{H^q}{q}.
\end{equation}
Our goal is now to fix $(c_\rho)_{\rho\in\Sigma(1)}$ and $\up$ such that for each $3\leq q\leq n$,
\begin{equation}
 \label{eq:general kth equation}
 \tilde \uc_q =\sum_{k=3}^q\Big(\sum_{l=k}^q \binom{q}{l}\, A_{l,k}\, S_k^{q-l} \Big).
\end{equation}
We will meet two kinds of obstructions to solve those equations. The first one impose arithmetic conditions on the Chern classes of $\cE$ that go beyond Schwarzenberger's constraints. The second one impose some positivity on the combinations of the $\tilde c_q$'s and the $S_k^l$'s.

\subsection{Four dimensional constructions}
\label{sec:P4}
When $n=4$, we can  easily find solutions to those equations by hand. First, Schwarzenberger's constraints reduce to
$$
\uc_2(\uc_2+1-3\uc_1-2\uc_1^2)\equiv 0 \: \mathrm{ mod }\: 12.
$$
Then, we compute 
$$
\begin{array}{ccc}
 \tilde\uc_3 & = & -3\uc_3, \\
 \tilde \uc_4 & = & 4(\uc_1\uc_3-\uc_4).
\end{array}
$$
We also need
$$
(S_3^0,S_3^1,S_4^0)=(p_3, -(c_{\rho_0}+1)p_3+\frac{p_3(p_3+1)}{2},p_4 ).
$$
Then, the equations \eqref{eq:general kth equation} read 
$$
\left\{
\begin{array}{ccc}
2p_3 & = & \uc_3 \\
6p_4 & = & \uc_1\uc_3-\uc_4 + 9 p_3 + 6 S_3^1.
\end{array}
\right.
$$
We can substitute $p_3$ by $\frac{\uc_3}{2}$ in the second equation and we obtain the equivalent system :
\begin{equation}
 \label{eq:P4 case equations pj}
\left\{
\begin{array}{ccc}
p_3 & = & \frac{\uc_3}{2} \\
p_4 & = & \frac{\uc_1\uc_3-\uc_4}{6}+\uc_3-(c_{\rho_0}+1)\frac{\uc_3}{2}+\frac{\uc_3^2}{8}.
\end{array}
\right.
\end{equation}
As $p_i\in\N$, we obtain two type of necessary and sufficient conditions for solving this system. The first is arithmetic, to which we add Schwarzenberger's condition :
\begin{equation}
 \label{eq:P4 case arithmetic conditions}
 \left\{
\begin{array}{cccc}
\uc_3 & \equiv & 0 &\: \mathrm{ mod }\: 2\\
4(\uc_1\uc_3-\uc_4)+3\uc_3^2 & \equiv &0& \: \mathrm{ mod }\: 24\\
\uc_2(\uc_2+1-3\uc_1-2\uc_1^2) & \equiv & 0& \: \mathrm{ mod }\: 12.\\
\end{array}
\right. 
\end{equation}
The second condition is a positivity constraint :
\begin{equation}
 \label{eq:P4 case positivity}
 \frac{\uc_1\uc_3-\uc_4}{3}+\uc_3+\frac{\uc_3^2}{4}\geq c_{\rho_0}\uc_3.
\end{equation}
Remember now that $\uc_k=s_k$ is the $k$-th elementary symmetric polynomial in the $(c_\rho)_{\rho\in\Sigma(1)}$'s. Then, 
$$
\uc_1\uc_3-\uc_4 = 3\uc_4+\sum_{\rho\in\Sigma(1)} c_\rho^2\sum_{\tau\in\Sigma(2),\,\rho\notin\tau(1)} \prod_{\rho'\in\tau(1)}c_\rho'\geq 0,
$$
as for each $\rho$, $c_\rho\geq 0$. Hence we see that for fixed $c_{\rho_0}$, letting the other $c_\rho$'s go to infinity, \eqref{eq:P4 case positivity} will certainly be satisfied as the left hand side is of order $2$ in $\uc_3$ while the right hand side is of order $1$. To produce actual solutions, we may from now on set
$$
c_{\rho_0}=1.
$$
Then, \eqref{eq:P4 case positivity} is automatically satisfied, and we are left with finding solutions to \eqref{eq:P4 case arithmetic conditions}, which can be achieved easily. We will restrict to finding semistable torsion-free sheaves with positive discriminant, which forces $\cE$ to be semistable with $\Delta(\cE)>0$, hence different from a split vector bundle. From Lemma \ref{cor:loc free iff c3 vanishes}, we must consider the cases when at least $3$ terms in $(c_\rho)_{\rho\in\Sigma(1)}$ are non-zero. The simplest case is then when $(c_\rho)_{\rho\in\Sigma(1)}=(1,x,y,0,0)$ with $x,y\in\N^*$. Then, $\uc_1=1+x+y$ and $\uc_2=x+y+xy$, $\uc_3=xy$ while $\uc_4=0$. Taking $x=y=6t$ with $t\in\N^*$, \eqref{eq:P4 case arithmetic conditions} is satisfied, and solving \eqref{eq:P4 case equations pj} yields, with Proposition \ref{prop:existence sequence saturated injections}, to an explicit rank $2$ torsion-free sheaf $\cE_t$ with
$$
(\uc_1,\uc_2,\Delta)=(12t+1,12t+36t^2,24t-1).
$$
Note that those examples are stable (\ref{cor:slope stable rank two}).  Those examples produce infinitely many stable torsion-free sheaves of rank $2$ whose Chern classes satisfy the constraints of locally-free ones. For $t$ large enough, they avoid the conditions found in \cite{HoSch} that would force such a locally-free sheaf to split. Moreover, as $\Delta$ is invariant upon tensoring by a line bundle, the Chern classes of $\cE_t$ cannot be obtained from the ones of $\cE_{t'}$ by tensoring with some $\cO_{\P^4}(d)$. Finally, it follows from  Dirichlet prime number theorem that for infinitely many $t$'s, $24t-1$ is a prime number, hence cannot be divided by a square. The discriminant of a rank $2$ vector bundle is multiplied by $d^2$ when considering a pullback along a degree $d$ cover $\C\P^n\to\C\P^n$. From those observations, we see that our examples provides infinitely many Chern classes that are not related through pullbacks along finite covers nor through tensoring by a line bundle.

All the previous examples have odd first Chern class, but we can also produce infinitely many examples with $\uc_1$ even. Consider this time $(c_\rho)_{\rho\in\Sigma(1)}=(1,4t+3,4t+3,4t+3,0)$ with $t\in\N^*$. This choice is motivated by the fact that, letting $T=4t+3$, it gives
$$
\left\{
\begin{array}{ccc}
\uc_1 & = & 3T+1  \\
\uc_2 & = & 3T(T+1)\\ 
\uc_3 & = & T^2(T+3)\\
\uc_4  & = & T^3 
\end{array}
\right.
$$
hence $4(\uc_1\uc_3-\uc_4)+3\uc_3^2 =3\Big(T(T+1)(T+2)\Big)^2$. Then, the condition for solvability of \eqref{eq:P4 case arithmetic conditions} is precisely $T=4t+3$, $t\in\N$. We obtain this time infinitely many rank $2$ stable torsion-free sheaves with even first Chern class, whose Chern polynomial satisfies the known constraints for Chern polynomials of indecomposable rank $2$ locally-free  sheaves. We compute $\Delta=3T^2+6T-1=4(12t^2+24t+11)$. From a Theorem by Ricci (see \cite{Rud} and reference therein), for infinitely many values of $t$, $12t^2+24t+11$
is not divisible by a square. From this we deduce as before that those examples cannot be related by twisting by a line bundle, and that unless for degree $2$ pullbacks, may not be related by pullbacks along finite covers of the projective space. This concludes the proof of Theorem \ref{theo:intro P4 examples}.

\subsection{Five dimensional examples}
\label{sec:P5}
We turn now to $\C\P^5$, and produce, as in the previous section, an infinite family of stable rank $2$ torsion-free sheaves whose Chern classes satisfy the constraints imposed by locally freeness and indecomposability. The equations \eqref{eq:general kth equation} for $(\tilde \uc_i)$ are
\begin{equation}
 \label{eq:P5 case equations tilde ci}
\left\{
\begin{array}{ccc}
\tilde \uc_3 & = & A_{3,3}S_3^0 \\
\tilde \uc_4 & = & 4A_{3,3} S_3^1+A_{4,3}S_3^0+A_{4,4}S_4^0\\
\tilde \uc_5 & = & 10 A_{3,3}S_3^2+5 A_{4,3}S_3^1+A_{5,3}S_3^0+5A_{4,4}S_4^1+A_{5,4}S_4^0+A_{5,5}S_5^0.
\end{array}
\right.
\end{equation}
Using the $\log$-expansion, we compute 
$$
(\tilde \uc_3,\tilde\uc_4,\tilde\uc_5)=(-3\uc_3,4(\uc_1\uc_3-\uc_4),-5(\uc_5-\uc_1\uc_4-\uc_3\uc_2+\uc_3\uc_1^2)).
$$
As in the last section, we will set $c_{\rho_0}=1$, so that the terms $S_k^l$ can be expressed in terms of $(p_3,p_4,p_5)$ by using \eqref{eq:Skl} and \eqref{eq:umsigmakj}. We obtain the following :
$$
(S_3^0,S_3^1,S_3^2)=(\,p_3\,,\,\frac{1}{2}p_3^2-\frac{3}{2}p_3\, ,\, \frac{1}{3}p_3^3-\frac{3}{2}p_3^2 +\frac{13}{6}p_3\, ),
$$
$$
(S_4^0,S_4^1)=(\,p_4 \,,\, \frac{1}{2}p_4(p_4+1)+p_4(p_3-2)\,)
$$
and $S_5^0=p_5$. Recall the values of the coefficients $A_{l,k}$ from table \eqref{eq:some values of Apk}. Then, the system \eqref{eq:P5 case equations tilde ci}  can be expressed as a system of equations for the $(p_i)$'s in terms of the $\uc_i$'s :
\begin{equation}
 \label{eq:P5 case equations pi before solving}
\left\{
\begin{array}{ccc}
p_3 & = & \displaystyle\frac{\uc_3}{2} \\
p_4 & = & \displaystyle\frac{\uc_1\uc_3-\uc_4}{6}+\frac{p_3^2}{2}\\
p_5 & = & \displaystyle\frac{\uc_5-\uc_1\uc_4-\uc_3\uc_2+\uc_3\uc_1^2}{24}-\frac{1}{2}(S_3^2+3S_3^1)-\frac{5}{4}p_3+S_4^1+2p_4
\end{array}
\right.
\end{equation}
that we may solve inductively on the $p_i$'s. After  straightforward but tedious computations, we obtain the equations :
\begin{equation}
 \label{eq:P5 case equations pi after solving}
\left\{
\begin{array}{ccc}
p_3 & = & \displaystyle\frac{\uc_3}{2}, \\
& & \\
p_4 & = & \displaystyle\frac{\uc_1\uc_3-\uc_4}{6}+\frac{\uc_3^2}{8},\\
& & \\
p_5 & = & \displaystyle\frac{\uc_5-\uc_1\uc_4-\uc_3\uc_2+\uc_3\uc_1^2}{24}\\
& & \\
& & \displaystyle-\frac{\uc_3^2\uc_4}{48}-\frac{\uc_1\uc_3\uc_4}{36}-\frac{\uc_3\uc_4}{12}\displaystyle+\frac{\uc_3^3\uc_1}{48}+\frac{\uc_1^2\uc_3^2}{72}+\frac{\uc_3^2\uc_1}{12}+\frac{\uc_1\uc_3}{12}\\
& & \\
    &  &  \displaystyle+\frac{\uc_3^4}{2^7}+\frac{\uc_3^3}{24}+\frac{\uc_3^2}{16}-\frac{\uc_3}{24} \displaystyle +\frac{\uc_4^2}{72}-\frac{\uc_4}{12}.
\end{array}
\right.
\end{equation}
To produce explicit solutions, we will consider simple configurations for the $(c_\rho)$'s, and set 
$$
(c_\rho)_{\rho\in\Sigma(1)}=(1,t,t,0,0,0),\: t\in\N^*.
$$
Then, $\uc_1=2t+1$, $\uc_2=t^2+2t$, $\uc_3=t^2$ and $\uc_4=\uc_5=0$. The equations for $p_4$ and $p_5$ then become 
$$
p_4=\displaystyle\frac{\uc_1\uc_3}{6}+\frac{\uc_3^2}{8}
$$
and 
$$
p_5=\frac{-\uc_3\uc_2+\uc_3\uc_1^2}{24}+\frac{\uc_3^3\uc_1}{48}+\frac{\uc_1^2\uc_3^2}{72}+\frac{\uc_3^2\uc_1}{12}+\frac{\uc_1\uc_3}{12}+\frac{\uc_3^4}{2^7}+\frac{\uc_3^3}{24}+\frac{\uc_3^2}{16}-\frac{\uc_3}{24}.
$$
Clearly, the conditions that $p_4\geq 0$ and $p_5\geq 0$ are met for any $t\in\N^*$, and to produce solutions, one only needs to chose $t$ so that the arithmetic conditions imposed by $(p_3,p_4,p_5)\in\N^3$ are satisfied. For example, if $\uc_3=0\: \mathrm{ mod }\: 24$ we are done, and we could simply take $t\in 12\cdot\N^*$. In order to satisfy Schwarzenberger's conditions \eqref{eq:Schwarzenberger conditions}, we will further impose that $5!$ divides $\uc_2$, and for this we will take $t\in 5!\cdot \N$. To sum up, by setting 
$$
(c_\rho)_{\rho\in\Sigma(1)}=(1,120t,120t,0,0,0),\: t\in\N^*,
$$
we can solve the system \eqref{eq:P5 case equations pi after solving} and thus obtain explicit examples of equivariant and stable torsion-free sheaves whose Chern classes are satisfying the constraints of indecomposable and locally-free sheaves. They have
$$
(\uc_1,\uc_2,\Delta)=(24t+1,144t^2+24t,48t-1).
$$
We can argue as in Section \ref{sec:P4} and conclude that this family really produces an infinite family of examples that are not related by twists by line bundles nor by pullbacks along finite covers.

\subsection{Higher dimensional constructions}
\label{sec:Pn}

The previous constructions may certainly be generalised in higher dimensions. By a simple inductive argument, one can always reformulate the Equations \eqref{eq:general kth equation} as a system of equations on the $p_k$'s in terms of the $\uc_k$'s, as in \eqref{eq:P5 case equations pi after solving} :
$$
p_k=Q_k(\uc_1,\ldots,\uc_n)
$$
for $Q_k\in\Q[X_1,\ldots,X_n]$. The conditions that $p_k\in\N$ for each $k$ impose two type of constraints : positivity and integrality of $Q_k(\uc_1,\ldots,\uc_n)$, for $3\leq k \leq n$. Some numerical evidences leads us to expect that $Q_k(\uc_1,\ldots,\uc_n)\geq 0$ is always satisfied. This may be proved using that the $\uc_k$'s are symmetric elementary functions in the $(c_\rho)_{\rho\in\Sigma(1)}\in\N^{n+1}$. In any case, if one considers the special case of
$$
(c_\rho)_{\rho\in\Sigma(1)}=(1,t,t,0,\ldots,0),\: t\in\N^*,
$$
and let $t$ goes to infinity, a simple inductive argument shows that 
$$
Q_k(\uc_1,\ldots,\uc_n)\underset{t\to +\infty}{\sim}\alpha_k\, t^{\frac{2^k}{4}}
$$ with $\alpha_k >0$. Hence, for $t$ large enough, the positivity constraints of the equations are always satisfied. Then, one only needs to check the arithmetic conditions. By definition of the $\tilde \uc_k$'s, cf Equation \eqref{eq:chern ratio expansion}, as for this special case $\uc_k=0$ for $k\geq 4$, we see that all $\tilde \uc_k$ can be factored by $\uc_3$, hence by $t$. By induction again, we deduce that the same holds for the expressions $Q_k(\uc_1,\ldots,\uc_n)$. Then, for $t$ sufficiently divisible, the equations $p_k=Q_k(\uc_1,\ldots,\uc_n)$ can be solved. Moreover, if $n!$ divides $t$, then $n!$ divides $\uc_2$ and Schwarzenberger's constraints are satisfied. To sum up, we have proved :
\begin{proposition}
 \label{prop:higher dim examples}
 For each $n\geq 3$, there exists $m\in\N^*$ such that for all $t\in\N^*$, there is a stable rank $2$ equivariant torsion-free sheaf $\cE_t$ on $\C\P^n$ with Chern polynomial
 $$
 1+(2mt+1)\cdot H+(m^2t^2+2mt)\cdot H^2.
 $$
\end{proposition}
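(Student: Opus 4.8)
The plan is to realise the prescribed pair $(\uc_1,\uc_2)$ as the Chern polynomial of a saturated sub-chain built from a well chosen reflexive toric sheaf. Fix $T=mt$ for a divisibility parameter $m\in\N^*$ to be determined, and let $\cE$ be the normalised reflexive toric sheaf with family of filtrations \eqref{eq:starting sheaf} attached to the data $(c_\rho)_{\rho\in\Sigma(1)}=(1,T,T,0,\ldots,0)$, with $c_{\rho_0}=1$. By Corollary \ref{cor:chern classes normalized} the Chern classes of $\cE$ are the elementary symmetric functions of the $c_\rho$, so
$$
\uc_1=2T+1,\qquad \uc_2=T^2+2T,\qquad \uc_3=T^2,\qquad \uc_k=0\ \text{ for }k\geq 4,
$$
which already provides $1+\uc_1H+\uc_2H^2=1+(2mt+1)H+(m^2t^2+2mt)H^2$. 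Since in \eqref{eq:starting sheaf} distinct rays carry distinct lines, for every line $L\subset\C^2$ at most one of the active weights $1,T,T$ contributes to $\sum_{L_\rho=L}c_\rho$, so the strict form of Corollary \ref{cor:slope stable rank two} gives that $\cE$ is slope stable ($T<T+1$ and $1<2T$ for $t\geq 1$). Moreover a $k$-elementary injection with $k\geq 3$ leaves every $E^\rho_m$ ($\rho\in\Sigma(1)$) unchanged, so by Proposition \ref{prop:stability equiv} and Lemma \ref{lem:reflexive hull} stability is inherited by all the subsheaves of $\cE$ appearing below.

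Next I would invoke Proposition \ref{prop:existence sequence saturated injections}: for each $\up=(p_3,\ldots,p_n)\in\N^{n-2}$ it produces a chain of saturated $k$-elementary injections ($3\leq k\leq n$) with reflexive hull $\cE$ and controlled weights \eqref{eq:umsigmakj}, so that the quantities $S_k^l$ of \eqref{eq:Skl} become explicit polynomials in $\up$ once $c_{\rho_0}=1$. Writing $\cE_t$ for the final term of this chain, the requirement $\uc(\cE_t)=1+\uc_1H+\uc_2H^2$ becomes, after applying the formal logarithm (Lemma \ref{lem:log properties}) and Proposition \ref{prop:log expansion chern saturated elementary}, exactly the system \eqref{eq:general kth equation} in the unknowns $\up$. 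In the $q$-th equation the unknown $p_q$ enters only through the single term $A_{q,q}S_q^0=A_{q,q}p_q$ (all other $S_k^{q-l}$ involve only $p_3,\ldots,p_{q-1}$), and $A_{q,q}=(-1)^q q!\neq 0$ by Lemma \ref{lem:binomial expressions}; hence one solves recursively and obtains
$$
p_q=Q_q(\uc_1,\ldots,\uc_n),\qquad Q_q\in\Q[X_1,\ldots,X_n],\qquad 3\leq q\leq n.
$$
It then remains to choose $m$ so that, after substituting $\uc_1=2T+1$, $\uc_2=T^2+2T$, $\uc_3=T^2$, $\uc_{\geq 4}=0$ and $T=mt$, all of $Q_3,\ldots,Q_n$ are nonnegative integers for every $t\geq 1$, and so that the Schwarzenberger constraints \eqref{eq:Schwarzenberger conditions} hold.

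For integrality, in this configuration each $\tilde\uc_q$ of \eqref{eq:chern ratio expansion} is a polynomial in $\uc_1,\uc_2,\uc_3$ all of whose monomials carry a factor $\uc_3=T^2$; an induction on $q$ then shows that, viewed as a polynomial in $T$, each $Q_q$ is divisible by $T$ with denominators bounded in terms of $n$ alone. Taking $m$ to be a common multiple of those denominators and of $n!$ makes every value $Q_q(mt)$ an integer, while $n!\mid\uc_2=T(T+2)$ annihilates each Schwarzenberger congruence modulo $l!$ for $2\leq l\leq n$, since every term there contains a power $\uc_2^i$ with $i\geq 1$. For positivity I would track leading terms in the recursion for $\up$: using \eqref{eq:umsigmakj} one finds that, to leading order in $T$, it takes the form $p_q\sim c_q\,p_{q-1}^{\,2}$ with $c_q>0$, whence $Q_q\sim\alpha_q T^{2^{q}/4}$ with $\alpha_q>0$ as $T\to\infty$; enlarging $m$ past the finitely many real roots of $Q_3,\ldots,Q_n$ then forces $Q_q(mt)\geq 0$ for all $t\geq 1$. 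With such an $m$ fixed once and for all, Proposition \ref{prop:existence sequence saturated injections} applied to $\up=(Q_3(\uc),\ldots,Q_n(\uc))$ delivers $\cE_t$, which is equivariant and torsion-free of rank $2$ by Perling's equivalence, has reflexive hull $\cE$ (hence is stable), and has Chern polynomial $1+(2mt+1)H+(m^2t^2+2mt)H^2$.

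The main obstacle is the positivity step: verifying $\alpha_q>0$ (equivalently $c_q>0$) uniformly in $q$ requires a careful inductive control of the leading coefficient of the recursively defined polynomials $Q_q$, with the alternating signs of the $A_{l,k}$ from Lemma \ref{lem:binomial expressions} conspiring correctly. Everything else — the translation into \eqref{eq:general kth equation}, the triangular solution for the $p_q$, the divisibility by $T$ and by $n!$ — is routine bookkeeping with \eqref{eq:umsigmakj} and binomial identities.
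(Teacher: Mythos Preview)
Your proposal is correct and follows essentially the same approach as the paper: both start from the reflexive sheaf with $(c_\rho)=(1,T,T,0,\ldots,0)$, solve the triangular system \eqref{eq:general kth equation} recursively for $p_q=Q_q(\uc)$, observe that each $Q_q$ is divisible by $T$ (so integrality is handled by a divisibility condition on $m$, together with $n!\mid m$ for Schwarzenberger), and argue positivity via the asymptotics $Q_q\sim\alpha_q T^{2^q/4}$ with $\alpha_q>0$. You are slightly more explicit than the paper about why the system is triangular ($A_{q,q}\neq 0$) and about why stability is inherited by the subsheaves, and you are honest that the positivity step is where the real work lies; the paper is equally sketchy there, simply asserting the asymptotic via ``a simple inductive argument''.
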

Of course, solving explicitly the equations for $(p_k)_{3\leq k\leq n}$ may become very tedious for large $n$. The above proposition raises two natural questions :
\begin{enumerate}
 \item Is it true that $Q_k(\uc_1,\ldots,\uc_n)\geq 0$ ?
 \item Can we take $m=n!$ in the statement of Proposition \ref{prop:higher dim examples} ?
\end{enumerate}
We hope to come back to those problems once we have developed the deformation theory for those sheaves.

\subsection{More prescription problems}
\label{sec:more prescriptions}
The methods and tools developed in this work can be used to solve more prescription problems. Indeed, Theorem \ref{theo:existence decomposition elementary injections} is valid for any smooth projective toric variety, and in any rank. Formula \ref{eq:log expansion saturated elementary} works in any rank on the projective space, as well as Proposition \ref{prop:existence sequence saturated injections} as long as one starts with a suitable equivariant reflexive sheaf, cf Remark \ref{rem:higher rank existence saturated}. Stability of equivariant reflexive sheaves can be checked easily in any rank as well (Proposition \ref{prop:stability equiv}). Hence, one may follow
the strategy of Section \ref{sec:prescription} and try to solve more general prescription problems for Chern polynomials of equivariant torsion-free sheaves over the projective space. As potential applications, one may try to solve the following :
\begin{enumerate}
 \item Prescribe Chern classes that satisfy locally freeness conditions in higher rank (vanishing for $i\geq r$ and the higher rank Schwarzenberger's conditions).
 \item Try to describe the set of Chern polynomials $\uc\in\Z[H]/\langle H^{n+1}\rangle$ such that the moduli space of slope stable equivariant torsion-free sheaves with Chern polynomial $\uc$ is non-empty.
\end{enumerate}

\bibliography{Ref}

\providecommand{\bysame}{\leavevmode\hbox to3em{\hrulefill}\thinspace}
\providecommand{\MR}{\relax\ifhmode\unskip\space\fi MR }
% \MRhref is called by the amsart/book/proc definition of \MR.
\providecommand{\MRhref}[2]{%
  \href{http://www.ams.org/mathscinet-getitem?mr=#1}{#2}
}
\providecommand{\href}[2]{#2}
\begin{thebibliography}{10}

\bibitem{Borel}
A.~Borel, \emph{Groupes lin{\'e}aires alg{\'e}briques}, Ann. Math. (2)
  \textbf{64} (1956), 20--82.

\bibitem{CT22}
A.~Clarke and C.~Tipler, \emph{Equivariant stable sheaves and toric {GIT}},
  Proceedings of the Royal Society of Edinburgh: Section A Mathematics (2022),
  ~1--32.

\bibitem{CLS}
D.~Cox, J.~Little, and H.~Schenck, \emph{Toric varieties}, Graduate studies in
  mathematics, American Mathematical Soc., 2011.

\bibitem{DDK20}
J.~Dasgupta, A.~Dey, and B.~Khan, \emph{Stabiliy of equivariant vector bundle
  over toric varieties}, Documenta Mathematica \textbf{25} (2020), ~1787--1833.

\bibitem{Har74}
R.~Hartshorne, \emph{Varieties of small codimension in projective space}, Bull.
  Am. Math. Soc. \textbf{80} (1974), 1017--1032 (English).

\bibitem{Har79}
\bysame, \emph{Algebraic vector bundles on projective spaces: {A} problem
  list}, Topology \textbf{18} (1979), 117--128 (English).

\bibitem{Har80}
\bysame, \emph{Stable reflexive sheaves}, Mathematische Annalen \textbf{254}
  (1980), ~121 -- 176.

\bibitem{HNS19}
M.~Hering, B.~Nill, and H.~Suess, \emph{Stability of tangent bundles on smooth
  toric picard-rank-2 varieties and surfaces}, London Mathematical Society
  Lecture Note Series 473, pp.~1--25, Cambridge University Press, 2022.

\bibitem{Hirz}
F.~Hirzebruch, \emph{Topological methods in algebraic geometry. {Translation}
  from the {German} and appendix one by {R}. {L}. {E}. {Schwarzenberger}.
  {Appendix} two by {A}. {Borel}.}, reprint of the 2nd, corr. print. of the 3rd
  ed. 1978 ed., Class. Math., Berlin: Springer-Verlag, 1995 (English).

\bibitem{HoSch}
A.~Holme and M.~Schneider, \emph{A computer aided approach to codimension 2
  subvarieties of {{\({\mathbb{P}}_ n\)}}, n{{\({{\geq}} 6\)}}}, J. Reine
  Angew. Math. \textbf{357} (1985), 205--220 (English).

\bibitem{HoMu}
G.~Horrocks and D.~Mumford, \emph{A rank 2 vector bundle on {$\mathbb{P}^4$}
  with 15,000 symmetries}, Topology \textbf{12} (1973), 63--81 (English).

\bibitem{HuLe}
D.~Huybrechts and M.~Lehn, \emph{The geometry of moduli spaces of sheaves}, 2
  ed., Cambridge Mathematical Library, Cambridge University Press, 2010.

\bibitem{IlSu}
N.~Ilten and H.~S{\"u}ss, \emph{Equivariant vector bundles on
  {{\(T\)}}-varieties}, Transform. Groups \textbf{20} (2015), no.~4, 1043--1073
  (English).

\bibitem{JaMaTi}
M.~Jardim, D.~Markushevich, and A.~S. Tikhomirov, \emph{New divisors in the
  boundary of the instanton moduli space}, Mosc. Math. J. \textbf{18} (2018),
  no.~1, 117--148.

\bibitem{Kan}
T.~Kaneyama, \emph{Torus-equivariant vector bundles on projective spaces},
  Nagoya Math. J. \textbf{111} (1988), 25--40 (English).

\bibitem{Kempf78}
G.~R. Kempf, \emph{Instability in invariant theory}, Ann. Math. (2)
  \textbf{108} (1978), 299--316.

\bibitem{Kly90}
A.~Klyachko, \emph{Equivariant bundle on toral varieties}, Mathematics of the
  USSR-Izvestiya \textbf{35} (1990), no.~2, ~337 -- 375.

\bibitem{KnuSha98}
A.~Knutson and E.~Sharpe, \emph{Sheaves on toric varieties for physics}, Adv.
  Theor. Math. Phys. \textbf{2} (1998), no.~4, 873--961 (English).

\bibitem{Koo11}
M.~Kool, \emph{Fixed point loci of moduli spaces of sheaves on toric
  varieties}, Advances in Mathematics \textbf{227} (2011), no.~4, ~1700 --
  1755.

\bibitem{Maru}
M.~Maruyama, \emph{Elementary transformations in the theory of algebraic vector
  bundles}, Algebraic geometry, {Proc}. int. {Conf}., {La} {Rabida}/{Spain}
  1981, {Lect}. {Notes} {Math}. 961, 241-266 (1982)., 1982.

\bibitem{NaTip}
A.~Napame and C.~Tipler, \emph{Toric sheaves, stability and fibrations}, J.
  Pure Appl. Algebra \textbf{228} (2024), no.~3, 30 (English), Id/No 107494.

\bibitem{OSS}
C.~Okonek, M.~Schneider, and H.~Spindler, \emph{Vector bundles on complex
  projective spaces. {With} an appendix by {S}. {I}. {Gelfand}}, corrected
  reprint of the 1988 edition ed., Mod. Birkh{\"a}user Class., Basel:
  Birkh{\"a}user, 2011 (English).

\bibitem{Per03}
M.~Perling, \emph{Resolutions and moduli for equivariant sheaves over toric
  varieties}, PhD Dissertation, University of Kaiserslautern (2003).

\bibitem{Per04}
\bysame, \emph{Graded rings and equivariant sheaves on toric varieties},
  Mathematische Nachrichten \textbf{263/264} (2004), ~181–197.

\bibitem{Rud}
Z.~Rudnik, \emph{Square free values of quadratic polynomials - lecture notes},
  (2015).

\bibitem{TipTrGr}
C.~Tipler, \emph{A note on stable toric sheaves of low rank}, Transform. Groups
  (2024), 9pp.

\bibitem{VanLintWilson}
J.~H. van Lint and R.~M. Wilson, \emph{A course in combinatorics.}, 2nd ed.
  ed., Cambridge: Cambridge University Press, 2001 (English).

\end{thebibliography}
\bibliographystyle{amsplain}

\end{document}